\newcommand{\argmin}{\ensuremath{\operatornamewithlimits{argmin}}}
\newcommand{\bSigma}{\boldsymbol \Sigma}
\newcommand{\bPhi}{\boldsymbol \Phi}
\newcommand{\bxi}{\boldsymbol \xi}
\newcommand{\bfeta}{\boldsymbol \eta}
\newcommand{\bvarepsilon}{\boldsymbol \varepsilon}
\newcommand{\be}{{\mathbf e}}
\newcommand{\bbf}{{\mathbf f}}
\newcommand{\bx}{{\mathbf x}}
\newcommand{\by}{{\mathbf y}}
\newcommand{\bv}{{\mathbf v}}
\newcommand{\bA}{{\bf A}}
\newcommand{\bB}{{\bf B}}
\newcommand{\bI}{{\bf I}}
\newcommand{\bS}{{\bf S}}
\newcommand{\bX}{{\bf X}}
\newcommand{\bY}{{\bf Y}}
\newcommand{\bZ}{{\bf Z}}
\newcommand{\bR}{{\bf R}}
\newcommand{\bU}{{\bf U}}
\newcommand{\bV}{{\bf V}}
\newcommand{\bQ}{{\bf Q}}
\newcommand{\bM}{{\bf M}}
\newcommand{\bG}{{\bf G}}
\newcommand{\cC}{{\cal C}}
\newcommand{\cM}{{\cal M}}
\newcommand{\cG}{{\cal G}}
\newcommand{\cT}{{\cal T}}
\newcommand{\cS}{{\cal S}}
\newcommand{\cH}{{\cal H}}
\newcommand{\cN}{{\cal N}}
\newcommand{\cF}{{\cal F}}
\newcommand{\cP}{{\cal P}}
\newcommand{\eZ}{\mathbb{Z}}
\newcommand{\eR}{\mathbb{R}}
\newcommand{\eS}{\mathbb{S}}
\newcommand{\eN}{\mathbb{N}}
\newcommand{\cov}{\text{Cov}}
\newcommand{\var}{\text{Var}}
\newcommand{\du}{{\rm d}u}
\newcommand{\dv}{{\rm d}v}
\newcommand{\pth}[1]{\left( #1 \right)}
\newcommand{\qth}[1]{\left[ #1 \right]}
\newcommand{\sth}[1]{\left\{ #1 \right\}}
\newcommand{\nth}[1]{\left\| #1 \right\|}
\def\T{{ \mathrm{\scriptscriptstyle T} }}
\theoremstyle{plain}
\newtheorem{theorem}{Theorem}[section]
\newtheorem{lemma}[theorem]{Lemma}
\newtheorem{proposition}[theorem]{Proposition}
\theoremstyle{definition}
\newtheorem{definition}[theorem]{Definition}
\newtheorem{example}[theorem]{Example}
\newtheorem{condition}[theorem]{Condition}
\newtheorem{remark}[theorem]{Remark}
\begin{document}

\begin{frontmatter}
\title{Convergence of covariance and spectral density estimates for high-dimensional functional time series}

\begin{aug}
\author[A]{\fnms{Bufan}~\snm{Li}\ead[label=e1]{lbf23@mails.tsinghua.edu.cn}},
\author[B]{\fnms{Xinghao}~\snm{Qiao}\ead[label=e2]{xinghaoq@hku.hk}},
\author[A]{\fnms{Weichi}~\snm{Wu}\ead[label=e3]{wuweichi@mail.tsinghua.edu.cn}}
\and
\author[C]{\fnms{Holger}~\snm{Dette}\ead[label=e4]{holger.dette@rub.de}}
\address[A]{Department of Statistics and Data Science, Tsinghua University, Beijing, P.R. China, \\\printead[presep={\ }]{e1,e3}}

\address[B]{Faculty of Business and Economics, The University of Hong Kong, Hong Kong SAR, \printead[presep={\ }]{e2}}

\address[C]{Fakultät für Mathematik, Ruhr-Universität Bochum, Bochum, Germany, \printead[presep={\ }]{e4}}
\end{aug}

\begin{abstract}
Second-order characteristics including covariance and spectral density functions are fundamentally important for both statistical applications and theoretical analysis in functional time series. In the high-dimensional setting where the number of functional variables is large relative to the length of functional time series, non-asymptotic theory for covariance function estimation has been developed for Gaussian and sub-Gaussian functional linear processes. However, corresponding non-asymptotic results for high-dimensional non-Gaussian and nonlinear functional time series, as well as for spectral density function estimation, are largely unexplored. In this paper, we introduce novel functional dependence measures, based on which we establish systematic non-asymptotic concentration bounds for estimates of (auto)covariance and spectral density functions in high-dimensional and non-Gaussian settings. We then illustrate the usefulness of our convergence results through two applications to dynamic functional principal component analysis and sparse spectral density function estimation. To handle the practical scenario where curves are discretely observed with errors, we further develop convergence rates of the corresponding estimates obtained via a nonparametric smoothing method. Finally, extensive simulation studies are conducted to corroborate our theoretical findings.
\end{abstract}

\begin{keyword}[class=MSC]
\kwd[Primary ]{62M10}
\kwd{62R10}
\kwd[; secondary ]{62M15}
\end{keyword}

\begin{keyword}
\kwd{Discretely observed functional data}
\kwd{Dynamic FPCA}
\kwd{Functional dependence measure}
\kwd{High-dimensional time series}
\kwd{Nagaev-type concentration inequality}
\kwd{Second-order statistics}
\end{keyword}

\end{frontmatter}

\section{Introduction}
\renewcommand{\theequation}{\thesection.\arabic{equation}}

The analysis of functional time series (i.e., time series of random functions defined on a compact interval) has attracted considerable attention in both time series and functional data analysis. Recent advances in data collection technology have led to the increasing prevalence of multivariate and high-dimensional functional time series across various applications.  Examples include cumulative intraday return trajectories \cite[]{horvath2014testing} for a large number of stocks, age-specific mortality rates \cite[]{tang2022}, yield curves \cite[]{hays2012functional} across multiple countries, hourly readings of PM~2.5 concentrations from different monitoring locations \cite[]{tan2024}, daily energy consumption curves from a collection of households \citep{chang2024b}, or human movement data of  individuals \cite[]{bastian2024multiplechangepointdetection}, to list a few. These data can be represented by  $p$-dimensional vectors 
$\bX_1 , \dots  ,\bX_n $ of the form 
$$
\bX_t=\sth{ (X_{t1}(u),\dots,X_{tp}(u))^\T,~~u \in [0,1] },
$$
corresponding to a (stationary) functional time series $ (\bX_t )_{t \in \mathbb{Z}  } $. In the high-dimensional setting, the dimension $p$ is large compared  to the length of functional time series $n$, and may even exceed it.

Estimating second-order characteristics of such processes is fundamentally important in time series and functional data analysis, and for theoretical analysis within the high-dimensional learning framework, it is essential to perform non-asymptotic analysis by deriving relevant concentration inequalities for Hilbert space-valued random elements with temporal dependence. 
\cite{fang2022} and \cite{guo2023consistency} made the first attempts to develop such bounds for estimators of  (auto)covariance functions of Gaussian and sub-Gaussian functional linear processes. The effects of temporal dependence on their results are quantified through a functional stability measure proposed in these references, which, however, lacks an explicit representation  within  the Hilbert space. Moreover, non-asymptotic analysis of other second-order statistics, such as estimates of spectral density functions and the spectral-domain applications, remains largely unexplored. 
Therefore, it is of particular interest to ask: 
\begin{itemize}
\item Is it possible to define functional dependence measures that can on the one hand be easily controlled and employed to establish concentration results for estimates of second-order characteristics, including (auto)covariance and spectral density functions, and can on the other  hand be flexible enough to accommodate non-Gaussian and nonlinear functional processes?

\item How can spectral concentration results be effectively applied to spectral-based methods for high-dimensional functional time series and be adapted to the practical scenario where  random functions are discretely observed with errors?
\end{itemize}

In this paper, we provide affirmative answers to these questions by addressing key theoretical gaps. Our main contributions are as follows.

First, we introduce novel functional dependence measures that offer new insights into how  temporal dependence affects non-asymptotic behaviors for estimators of second-order characteristics in high-dimensional functional time series. Although our work is inspired by the physical dependence adjusted norms for vector-valued scalar time series recently introduced in \cite{Zhang2021ConvergenceOC}, developing the corresponding dependence measures under the functional domain is far from incremental, as the infinite-dimensionality of functional times series introduces significant complexities for characterizing temporal dependence. Instead of discretizing functional data and operating on the maximum difference between two discretized objects  using the approach of \cite{Zhang2021ConvergenceOC} followed by aggregation, we rely on the $L_2$ norm of the difference between coupled curves. As a consequence, the proposed measures effectively capture the intrinsic functional nature of the data. 
Unlike the stability measure in \cite{guo2023consistency}, our dependence measures can be explicitly bounded for a general class of stationary functional processes.

Second, we conduct a systematic non-asymptotic analysis of the second-order statistics by developing concentration inequalities for the estimates of (auto)covariance and spectral density functions. These non-asymptotic results are not only of independent interest but also yield corresponding elementwise maximum rates of convergence, thereby providing foundational theoretical tools for downstream covariance-based and spectral-based high-dimensional learning tasks. Our concentration inequalities are of Nagaev-type, and relax the commonly imposed assumptions of Gaussianity and sub-Gaussianity for functional linear processes in the existing literature. To overcome the complexities introduced by the infinite-dimensionality of functional time series, we employ martingale inequality within the general Banach space in the proofs of our main results.

Third, we demonstrate the value and impact of  our non-asymptotic results in the context of spectral-based estimation for high-dimensional functional time series through two concrete applications. 
Given the infinite-dimensionality of functional time series, it is standard to reduce each function  to a finite set of scalars by principal component analysis (PCA). The common concepts are functional PCA (FPCA) which is applied to the estimated covariance functions, and dynamic FPCA which accounts for the temporal dependence in the data and is applied to the estimated spectral density functions \cite[see, e.g.,][]{hormann2015}.  These frameworks are then used in subsequent regularized estimation to tackle high-dimensionality. In our first application, we investigate the convergence properties of the estimated quantities within this dynamic FPCA framework. The second application involves the thresholded estimation of the matrix of spectral density functions without dimension reduction, under a functional sparsity assumption. This approach can be used to identify pairs of functional time series that are uncorrelated across all lags.

Finally, we address the practical scenario of discretely observed functional time series by employing the local linear smoothing method to obtain (auto)covariance and spectral density function estimators. To the best of our knowledge, such problems have only been studied in an asymptotic framework for univariate functional time series \cite[]{rubin2020}, and in this paper we develop for the first time  non-asymptotic theory for both marginal- and cross-estimators in high-dimensional functional times series, which is particularly relevant to practical applications.

Our work lies at the intersection of high-dimensional and functional time series, both of which have been extensively studied. We focus our review on the literature most relevant to the present context.
Alongside the aforementioned theoretical advancements for high-dimensional functional time series, recent years have seen a surge in various estimation and inference approaches. Notable developments include functional clustering \cite[]{tang2022}, functional vector autoregressions \cite[]{chang2024a}, functional factor models \cite[]{hallin2023,tavakoli2023,guo2025,li2025arxiv}, statistical inference for mean functions \cite[]{ZhouDette2023}, detection and estimation of structural breaks \cite[]{li2024}, graphical principal component analysis \cite[]{tan2024} and functional decorrelation and prediction \cite[]{chang2024b}. 
Additionally, there is a wealth of literature on non-asymptotic theory and different regularized estimators of the (auto)covariance matrix, spectral density matrix and its inverse in high-dimensional time series, see, e.g., \cite{chen2013,basu2015a,chang2018,fiecas2019spectral,Zhang2021ConvergenceOC} and \cite{barigozzi2024algebraic}. 
Finally, different dependence measures have been proposed in the functional time series literature to control the temporal dependence and establish the asymptotic theory for the estimated (auto)covariance and spectral density functions, see, e.g., strong mixing conditions \cite[]{bathia2010,chen2022}, cumulative mixing conditions \cite[]{panaretos2013} and $L^q$-$m$-approximability \cite[]{hormann2010,hormann2015}.

The remainder of this paper is organized as follows. In \Cref{sec.cov}, we propose novel functional dependence measures, and we use them to  establish non-asymptotic concentration bounds for the estimators of  (auto)covariance and spectral-density functions  in \Cref{sec:spectral}. In  \Cref{Sec.Appli}, we demonstrate the impact of our convergence results through applications to two concrete examples: estimation within the dynamic FPCA framework and  spectral density function estimation under the sparsity assumption. \Cref{sec.partial} develops the corresponding convergence rates for the practical scenario of discretely observed functional time series. 
In \Cref{sec:sim}, numerical studies are carried out to validate the established theoretical results. All technical proofs are relegated to the supplementary material.

\textbf{Notation.} For any positive integer \(n\), we write \([n] = \{1,\cdots,n\}\). For \(x,y\in\mathbb{R}\), we write \(x\vee y = \max(x,y)\) and \(x\wedge y = \min(x,y)\). We use $I(\cdot)$ to denote the indicator function. For two positive sequences \(\{a_n\},\{b_n\}\), we write \(a_n\lesssim b_n\) or $b_n\gtrsim a_n$ if there exists a positive constant \(C\) such that \(a_n\le Cb_n\). We write \(a_n\asymp b_n\) if and only if \(a_n\lesssim b_n\) and \(b_n\lesssim a_n\) hold simultaneously. For any vector \(\bv \in\eR^p\), we let \(|\bv|_\infty = \max_{i}|v_i|\), \(|\bv|_1 = \sum_{i}|v_i|\) and \(|\bv|_2 = \left(\sum_{i} v_i^2\right)^{1/2}\). For any matrix $\bA = (A_{jk})_{j,k\in[p]} \in\eR^{p\times p}$, we let $\|\bA\|_{\max} = \max_{j,k}|A_{jk}|$, $\|\bA\|_{1} = \max_{k}\sum_{j}|A_{jk}|$, $\|\bA\|_{\infty} = \max_{j}\sum_{k}|A_{jk}|$ and $\|\bA\|_2=\rho_{\min}(\bA\bA^\T)$. For any random variable \(X\), we denote its \(L_q\) norm (\(q \ge 1\)) as \(\|X\|_q = E(|X|^q)^{1/q}\). Let \(L_2([0,1])\) be the Hilbert space of square integrable (complex-valued) functions defined on \([0,1]\) equipped with the inner product \(\langle f,g\rangle = \int_{[0,1]}f(u)\overline{g(u)}\mathrm{d}u\) for \(f,g\in L_2([0,1])\) and the induced \(L_2\) norm \(\| f\|_{\cH} = \langle f,f \rangle^{1/2}\).
For vector functions $\mathbf{f} = (f_1,\dots,f_p), \mathbf{g} = (g_1,\dots,g_p) \in \otimes^p L_2([0,1])$, the inner product is defined as $
\langle \mathbf{f}, \mathbf{g} \rangle = \sum_{j=1}^p \int_{[0,1]} f_j(u) \overline{g_j(u)} \, \mathrm{d}u$, 
and the corresponding norm  by $\|\mathbf{f} \|_\mathcal{H} = \langle \mathbf{f}, \mathbf{f} \rangle^{1/2} $.
For any $K\in\eS=L_2([0,1] \times [0,1])$, we also use $K$ to denote the linear operator induced from the kernel function $K,$ i.e., 
for any $f\in L_2([0,1])$, $K(f)(\cdot) = \int_{[0,1]}K(\cdot,v)f(v)\mathrm{d}v\in L_2([0,1])$ 
and denote its Hilbert--Schmidt norm by $\|K\|_{\cS} = \big\{\iint_{[0,1]^2} |K(u,v)|^2\mathrm{d}u\mathrm{d}v\big\}^{1/2}$. For any two  $\bX, \bY \in \otimes^p L_2([0,1]) $, we define $(\bX\otimes\bY^\T)(u,v) = \bX(u)\bY^\T(v),$ where $\otimes$ is the Kronecker product. We use $C, C'$ to denote absolute constants whose values may change from line to line. Constants with a symbolic subscript, such as $C_{\star},C'_{\star},$ are used to indicate that their values depend only on the subscript, and may also vary from line to line.

\section{A dependence measure for functional time series}\label{sec.cov}
\renewcommand{\theequation}{\thesection.\arabic{equation}}

For $t\in[n]:=\{1, \ldots, n\} $ let \(\bX_t(\cdot) = \{X_{t1}(\cdot),\dots,X_{tp}(\cdot)\}^\T\)  
be a vector from a $p$-dimensional stationary functional time series $(\bX_t)_{t \in \mathbb{Z}}$ with mean zero and $(p\times p)$-matrix of  (auto)covariance functions \(\bSigma^{(h)}(u,v) = \{\Sigma^{(h)}_{jk}(u,v)\}_{j,k\in[p]}\) at lag $h \in {\mathbb Z}$ 
and $u,v \in [0,1],$ where \(\Sigma^{(h)}_{jk}(u,v) = \mathrm{cov}\{X_{tj}(u),X_{(t+h)k}(v)\}\). We assume that the functional time series $( \bX_t) _{t \in \mathbb{Z}}$ is defined by the  model \begin{equation}\label{Sec1-1}
\bX_t(u) = \bG(u,\cF_t),\quad t\in[n],~u\in[0,1],
\end{equation}
where \(\mathcal{F}_t = (\dots,\varepsilon_{t-1},\varepsilon_t)\) is a sequence of innovations and \((\varepsilon_t)_{t\in\mathbb{Z}}\) are i.i.d. random elements. Here \(\bG(\cdot,\cdot) = \{G_1(\cdot,\cdot),\dots,G_p(\cdot,\cdot)\}^\T\) is a \(p\)-dimensional measurable functional such that  for given $\cF_t$, the vector function $\bG(\cdot,\cF_t)$ takes values in the Hilbert space $(\otimes^p L_2([0,1]), \otimes^pB)$, where $B$ is the Borel sigma field generated by the norm $\|\cdot\|_\cH$ on $\cH = \otimes^p L_2([0,1])$.

Model  \eqref{Sec1-1} defines a physical representation for  the functional time series. Such modeling approaches have  been frequently used in non-linear time series analysis; see \cite{wu2005nonlinear} for a pioneering work.

To derive non-asymptotic results for (auto)covariance and spectral density estimators in high-dimensional  functional time series, we need to introduce appropriate dependence measures. For this purpose, for $l \le t$, we define \(\cF_{t,\{l\}} = (\cF_{l-1},\varepsilon'_{l},\varepsilon_{l+1}\dots,\varepsilon_t)\) as a coupled version of \(\cF_{t} = (\cF_{l-1},\varepsilon_{l},\varepsilon_{l+1}\dots,\varepsilon_t)\), where \(\varepsilon_l\) in \(\cF_{t}\) is replaced by an independent  copy \(\varepsilon_l'\). 
For a norm $\|\cdot\|_{N_1}$ on $L_2([0,1])$ and a norm $|\cdot|_{N_2}$ on $\eR^p$,  we define a composite norm for an element $\bX = (X_1,\dots,X_p)^\T  \in \otimes^p L_2([0,1])$ by 
         $$
        \big  \|  \bX \big \| _{N_1,N_2}  = \big | \big ( \| X_1 \|_{N_1} , \ldots , \| X_p \|_{N_1} \big )^\T \big |_{N_2},
         $$
which means that we first calculate the norm of each component with respect to $\|\cdot \|_{N_1}$ and then compute the  $| \cdot |_{N_2}$-norm of the resulting vector in $\mathbb{R}^p$. 

\begin{definition}[Functional dependence measures]\label{def1}
         For any \(p\)-dimensional functional stationary process of the form \eqref{Sec1-1}, we define 
         $$\omega_{t,q} = \omega_{t,q} (\bX_t ) = \big \| \|\bG(\cdot ,\mathcal{F}_t) - \bG(\cdot,\mathcal{F}_{t,\{0\}}) \|_{\cH,\infty} \big \|_q
         $$
         if $t \ge 0$, and $\omega_{t,q} = 0$ if $t<0$. The dependence adjusted norm of $\bX_t$ is defined as 
         \begin{equation}
         \label{det1}
            \|\|\bX_1\|_{\cH,\infty}\|_{q,\alpha} = \sup\limits_{m\ge 0} (m+1)^\alpha \Omega_{m,q},
        \end{equation}
        where $ \Omega_{m,q} = \sum_{t = m}^{\infty}\omega_{t,q}$ and $\alpha > 0.$ 
        The dependence adjusted norm of the  \(j\)-th entry of $\bX_t$ is defined by  \begin{equation}\label{det2}
            \|\|X_{1j}\|_{\cH}\|_{q,\alpha} = \sup\limits_{m\ge 0} (m+1)^\alpha \Delta_{m,q,j}
        \end{equation}
        where, $\Delta_{m,q,j} = \sum_{t = m}^{\infty}\delta_{t,q,j}$, \(\delta_{t,q,j} =  \delta_{t,q,j}(\bX_t) = \left\|\left\|G_j(u,\mathcal{F}_t) - G_j(u,\mathcal{F}_{t,\{0\}})\right\|_{\cH}\right\|_q\) if $t \geq 0$ and  $\delta_{t,q,j} =0$ if $t<0$. We define \begin{equation*}
            \bPhi_{q,\alpha}^\bX = \max_{j\in[p]}\|\|X_{1j}\|_{\cH}\|_{q,\alpha}^2,\quad \mathcal{M}_{q,\alpha}^\bX = \|\|\bX_1\|_{\cH,\infty}\|_{q,\alpha}^2
        \end{equation*}
        as the uniform and joint functional dependence measures, respectively.
    \end{definition}
    It is easy to verify that $\bPhi_{q,\alpha}^\bX \leq  \mathcal{M}_{q,\alpha}^\bX,$ and both measures can diverge as $p$ increases. We impose the following condition on finite upper bounds for our functional dependence measures.

    \begin{condition} \label{cond.FDM}
         There exist some constants $q>4$ and $\alpha > 0$ such that $\mathcal{M}_{q,\alpha}^\bX < \infty$.
    \end{condition}

    The dependence adjusted norm in \eqref{det1} and \eqref{det2} can be interpreted as \(q\)-norm ($q$-th moment condition) which additionally takes the temporal dependence of the time series into account. The parameter  \(\alpha\) is used to quantify the strength of temporal dependence. A larger value of \(\alpha\) implies faster decay of tail dependence measures and thus weaker temporal dependence. We emphasize that both functional dependence measures are increasing functions with respect to the parameters  $\alpha$ and  $q$. For the functional dependence measures, $\bPhi_{q,\alpha}^\bX$ evaluates the dependence-adjusted norm for each component  and subsequently takes the maximum value. In contrast, $\cM_{q,\alpha}^\bX$ first computes the maximum and then adjusts for dependence. This indicates that $\cM_{q,\alpha}^\bX$ is influenced by the cross-sectional dependence within $\bX_t$, whereas $\bPhi_{q,\alpha}^\bX$ focuses on the maximum temporal dependence strength of each $X_{tj}$ across $j.$ 
 \smallskip

    Recently \cite{Zhang2021ConvergenceOC} considered a similar concept of dependence for high-dimensional locally stationary scalar time series. The key difference of our approach to this work lies in the fact that \eqref{det1} and \eqref{det2} define a dependence concept for  functional times series. For functional objects, there are multiple ways to define the distance between two observed curves. In \eqref{det1} and \eqref{det2} we  first take the $L_2$ norm between the curve and its coupled version, then plug it in the scalar version of the dependence adjusted norm.  An alternative measure of functional dependence is the functional stability measure considered in \cite{guo2023consistency}, among others, to develop non-asymptotic theory for estimators of covariance functions. This measure is proportional to the functional Rayleigh quotient of the matrix of spectral density functions relative to that of covariance functions evaluated over the interval of frequencies. Explicitly computing bounds for this measure can be very challenging within the infinite-dimensional Hilbert space.

    We conclude with  two  examples illustrating that  nice bounds can be  derived for \eqref{det1} and \eqref{det2}, as well as for $\bPhi_{q,\alpha}^\bX$ and $\cM_{q,\alpha}^\bX$ in Definition \ref{def1}, in  general functional time series models. To make our notation clear, we first define the following functional matrix norm. For norm $\|\cdot\|_{N_1}$ on $\eS = L_2([0,1])\otimes L_2([0,1])$ and norm $\|\cdot\|_{N_2}$ on $\eR^{p\times p}$, we define $\|\bA\|_{N_1,N_2},\bA\in\eS^{p\times p}$ to be \begin{align*}
    \|\bA\|_{N_1,N_2} = \|\widetilde{\bA}\|_{N_2},\quad \mbox{where} \quad \widetilde{A}_{jk} = \|A_{jk}\|_{N_1}.
    \end{align*}

    \begin{example}[Vector functional linear process model]\label{example1}
{\rm 
        We consider the $p$-dimensional functional moving average model of infinite order: 
        \begin{equation}\label{Ex1eq1}
            \bX_t(u) = \sum_{m = 0}^{\infty} \int_0^1 \bA_m(u,v)\bvarepsilon_{t-m}(v)\mathrm{d}v,\quad u\in[0,1],
        \end{equation}
        where \(\bvarepsilon_t(\cdot) = \big (\varepsilon_{t1}(\cdot),\dots,\varepsilon_{tp}(\cdot)\big ) ^\T\), and $\{\varepsilon_{tj}(\cdot):t\in[n],j\in[p]\}$ are i.i.d. random curves with mean zero and finite \(q\)-th moment after taking $L_2$ norm, that is  \(\mu_{q} = E(\|\varepsilon_{tj}\|_\cH^q) < \infty\). Assume \(\bA_m(\cdot,\cdot)\) is a \(p\times p\) matrix function with real-valued functions as  entries. Further assume $\sum_{m=0}^\infty \nth{\bA_m}_{\cS,\infty} < \infty$ such that \eqref{Ex1eq1} converges almost surely, see Lemma 7.1 of \cite{Bbosq1}. Let \(\bA_{m\cdot k}(\cdot,\cdot)\) 
        and  \(\bA_{mj\cdot}(\cdot,\cdot)\)
        be the \(k\)-th column  and $j$-th row   of \(\bA_m(\cdot,\cdot)\), respectively. 
        After some derivations (see Section \ref{SectionC1} in the  Supplementary Material for details), we have there exist positive constants $C_q, C_q'$ such that
        \begin{align*}
             \omega_{t,q} & \le C_q\|\bA_t\|_{\cS,\infty}p^{1/q}\mu_q^{1/q}, 
             \\
             \delta_{t,q,j} & \le C_q'\|\bA_{tj\cdot}\|_{\cS,1}p^{1/q}\mu_q^{1/q}.
        \end{align*}
        Suppose that there exists constants  \(\gamma >1\)  and \( K_p > 0\) such that  \(\|\bA_t\|_{\cS,\infty} \le K_p/(t+1)^\gamma\) for all \(t \ge 0\). Then, with \(\alpha = \gamma -1\), there exists a positive constant $C_{\alpha,q}$ such that $$
       \max_{j\in[p]}\|\|X_{1j}\|_{\cH}\|_{q,\alpha} \le \|\|\bX_1\|_{\cH,\infty}\|_{q,\alpha} < C_{\alpha,q}K_pp^{1/q}\mu_{q}^{1/q}.$$
        }
        \end{example}

        \begin{example}[Vector functional autoregressive model]\label{example2}
        {\rm We focus on the $p$-dimensional functional autoregressive model of order $1$ (noting that models of higher order can be reformulated as an equivalent model of order $1$):
        \begin{equation}\label{armodel}
            \bX_t(u) = \int_0^1 \bA(u,v)\bX_{t-1}(v) \dv + \bvarepsilon_t(u),\quad u\in[0,1],
        \end{equation}
        where the error process $(\bvarepsilon_t)_{t \in \mathbb{Z}}$ and matrix function $\bA(\cdot,\cdot) = \big ( A_{jk}(\cdot,\cdot) \big )_{j,k\in[p]} $ are defined the same way as in Example \ref{example1}. Similar to Section 3.1 of \cite{Bbosq1}, we suppose there exists a positive integer $j$ such that $\|\widetilde{\bA}^j\|_{2} = c < 1$, where $\widetilde{\bA} = \big (\widetilde{A}_{jk} \big )_{j,k\in[p]} \in \mathbb{R}^{p \times p} $ is a matrix with entries  $\widetilde{A}_{jk}=\|A_{jk}\|_\cS$. Define $c' = \max_{k\in[j]}\|\widetilde{\bA}^{k}\|_{2}$ and  $f(\alpha) = \sup_{m\ge 0}(m+1)^\alpha c^{m/j-1}/(1-c^{1/j}) < \infty$. After some derivations (see Section \ref{SectionC2} in the  Supplementary Material for details), we conclude that there exists a positive constant $C_q$ such that \begin{align}\label{ex2eq2}
            \max_{j\in[p]}\|\|X_{1j}\|_{\cH}\|_{q,\alpha} \le \|\|\bX_1\|_{\cH,\infty}\|_{q,\alpha} \le C_qc'f(\alpha)p^{1/2}\mu_q^{1/q}.
        \end{align}
        }
        \end{example}
   
\section{Covariance and spectral density function estimation}
  \renewcommand{\theequation}{\thesection.\arabic{equation}}
  \setcounter{equation}{0}\label{sec:spectral}
   
        In this section we establish non-asymptotic results for the estimates of the (auto)covariance and the spectral density functions of high-dimensional functional time series.
        \subsection{Covariance function estimation}\label{Sec.covestimate}

        Based on the observed data, 
        we can estimate the (auto)covariance function at lag $h$ by its sample version:\begin{equation}\label{eq:3.1eq1}
        \widehat{\bSigma}^{(h)}(u,v) = \frac{1}{n-|h|}\sum_{t = 1}^{n}\bX_{t}(u)\bX_{t+h}(v)^\T,\ |h| = 0,1,\dots,\ u,v \in[0,1],
        \end{equation}
        where we set $\bX_{t+h} = 0$ if $t+h \le 0$ or $t + h > n$.

        \begin{theorem}\label{thm1}
        Assume that $|h| < n/2$  and Condition \ref{cond.FDM} holds. Then, there exists positive constants $C_{q,\alpha},C_\alpha$ and $C_\alpha'$ such that, for any  $x>0,$ \begin{align}
            P\left\{\|\widehat{\bSigma}^{(h)}-\bSigma^{(h)}\|_{\cS,\max}>\cM_{q,\alpha}^\bX x\right\} &\le C_{q,\alpha}x^{-q/2}(\log p)^qD_{n,h} + C_\alpha p^2\exp(-C_\alpha' n x^2 ) ,\label{thm1eq1}\\
            P\left\{\|\widehat{\bSigma}^{(h)}-\bSigma^{(h)}\|_{\cS,\max}> \bPhi_{q,\alpha}^\bX ~x\right\} &\le C_{q,\alpha}x^{-q/2}p^2D_{n,h} + C_\alpha p^2\exp(-C_\alpha' n x^2 ).\label{thm1eq2}
        \end{align}
        where 
    $$
    D_{n,h} = \begin{cases}
      n^{1-q/2}(1+|h|)^{q/4-1}&\text{ if } ~~\alpha > 1/2 - 2/q, \\ 
       n^{1-q/2}(1+|h|)^{q/4-1} + n^{-q/4-\alpha q/2} &    \text{ if } ~~\alpha \le 1/2 - 2/q.
    \end{cases}
    $$
    \end{theorem}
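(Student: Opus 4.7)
The plan is to establish the two bounds in three stages: (i) reduce the Hilbert--Schmidt maximum-entry norm to coordinatewise concentration of $\eS$-valued partial sums; (ii) bound the physical-dependence of the product process $X_{tj}X_{(t+h)k}$ via coupling; and (iii) apply a Fuk--Nagaev-type concentration inequality for weakly dependent Hilbert-space-valued random variables, combined with appropriate aggregation over the $p^2$ entries.

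For the reduction, I write $\|\widehat{\bSigma}^{(h)}-\bSigma^{(h)}\|_{\cS,\max} = \max_{j,k\in[p]}\|S_{jk}\|_{\cS}$, where $S_{jk} = (n-|h|)^{-1}\sum_{t=1}^{n-|h|}(Y_{t,jk}-EY_{t,jk})$ and $Y_{t,jk}(u,v) = X_{tj}(u)X_{(t+h)k}(v)\in\eS$. Using the coupling $\cF_{t,\{l\}}$ together with the product identity
\[
Y_{t,jk}-Y_{t,jk,\{l\}} = (X_{tj}-X_{tj,\{l\}})\otimes X_{(t+h)k} + X_{tj,\{l\}}\otimes (X_{(t+h)k}-X_{(t+h)k,\{l\}}),
\]
where $\otimes$ denotes the tensor $(f\otimes g)(u,v)=f(u)g(v)$, Cauchy--Schwarz in $\cH$ controls $\|Y_{t,jk}-Y_{t,jk,\{l\}}\|_{\cS}$ in terms of $\|\bG(\cdot,\cF_t)-\bG(\cdot,\cF_{t,\{l\}})\|_{\cH,\infty}$ and its companion at time $t+h$. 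Subsequent $L_{q/2}$ bounds (using $q>4$, so that $q/2>2$) convert these to products of $\omega_{\cdot,q}$'s (for the joint measure) or $\delta_{\cdot,q,\cdot}$'s (for the uniform measure), which sum in $l$ to $\cM_{q,\alpha}^\bX$ or $\bPhi_{q,\alpha}^\bX$ under Condition~\ref{cond.FDM}.

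For the concentration step, I adopt the physical-dependence martingale method in the Banach space $\eS$. Writing $\cP_l\cdot = E[\cdot\mid\cF_l]-E[\cdot\mid\cF_{l-1}]$, I decompose $\sum_t(Y_{t,jk}-EY_{t,jk}) = \sum_l M_l$ with $M_l = \sum_{t\ge l}\cP_l(Y_{t,jk})$, and I bound the $\eS$-norm of $M_l$ by a Pinelis/Rosenthal-type martingale inequality, exploiting that $\eS$ is a type-$2$ Banach space. Combining the $L_2$ (sub-Gaussian) and $L_{q/2}$ (polynomial) parts in the Fuk--Nagaev spirit, and then summing in $l$ with the weights imposed by $\Omega_{m,q}\lesssim(m+1)^{-\alpha}$, I arrive at a tail bound whose polynomial part carries the factor $D_{n,h}$. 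The term $n^{1-q/2}(1+|h|)^{q/4-1}$ arises because consecutive summands $Y_{t,jk}$ and $Y_{s,jk}$ share overlapping innovation windows of length $|h|$, so the effective block length in the $q/2$-th moment grows as $1+|h|$; the extra $n^{-q/4-\alpha q/2}$ surfaces only in the boundary regime $\alpha\le 1/2-2/q$, where the tail partial sums of $(m+1)^{q/2-2}\omega_{m,q}^{q/2}$ are not uniformly bounded and must be traded off against a sub-Gaussian threshold.

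Finally, I apply a union bound over $(j,k)\in[p]^2$. Bound \eqref{thm1eq2} follows from direct Bonferroni on the coordinatewise tails obtained with $\bPhi_{q,\alpha}^\bX$, producing the $p^2$ factor in front of $D_{n,h}$. For bound \eqref{thm1eq1}, since $\cM_{q,\alpha}^\bX$ already encodes $\|\cdot\|_{\cH,\infty}$ over components, a Nemirovski-type aggregation of the $p^2$ coordinate polynomial tails trades the $p^2$ factor for only $(\log p)^q$, while the sub-Gaussian contribution retains a standard $p^2$ union bound but remains exponentially small. The main obstacle will be establishing the Hilbert-space Fuk--Nagaev bound with the sharp $(1+|h|)^{q/4-1}$ and boundary $n^{-q/4-\alpha q/2}$ scalings, since the scalar argument of \cite{Zhang2021ConvergenceOC} must be lifted to $\eS$-valued product summands while maintaining explicit bookkeeping of the block-overlap contributions induced by the lag $h$ and of the projection weights across $l$.
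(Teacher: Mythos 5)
Your reduction to the $\eS$-valued sums $S_{jk}$, the product/coupling identity with Cauchy--Schwarz (using $\|f\otimes g\|_{\cS}=\|f\|_{\cH}\|g\|_{\cH}$) to transfer the dependence measures of $\bX_t$ to the product process, and the two aggregation mechanisms (Bonferroni for \eqref{thm1eq2}, a $(\log p)^q$-type aggregation for \eqref{thm1eq1}) all match the structure of the paper's argument. The gap is in the concentration step itself. You propose the single-level projection decomposition $\sum_{t}(Y_{t,jk}-EY_{t,jk})=\sum_l M_l$ with $M_l=\sum_{t\ge l}\cP_l(Y_{t,jk})$ and then a Pinelis/Rosenthal martingale inequality in a $2$-smooth space. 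That route controls the $L_{q/2}$ norm of the sum, which scales like $n^{1/2}$ per coordinate; Markov then yields a polynomial tail of order $x^{-q/2}n^{-q/4}$ after normalizing by $n$. Since $q>4$, this is strictly weaker than the claimed Nagaev-type term $n^{1-q/2}(1+|h|)^{q/4-1}$ (indeed $1-q/2<-q/4$), and it is exactly the term that appears only as the \emph{boundary} correction $n^{-q/4-\alpha q/2}$ when $\alpha\le 1/2-2/q$. A moment-plus-Markov argument cannot produce the ``sum of individual tails'' scaling $n\cdot x^{-q/2}$; you need a genuine Fuk--Nagaev structure.

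The paper obtains it differently: it blocks the time axis into blocks of length $B=|h|$ (so $K=\lceil n/B\rceil$ blocks $\bV_k^*$), telescopes each block through conditional expectations on whole innovation blocks, $\bV^*_{k,\tau}=E(\bV^*_k\mid\eta_{k-\tau},\dots,\eta_k)$ with dyadic scales $\tau_l=2^l$ and weights $\lambda_l\asymp l^{-2}$, controls each dyadic increment by a martingale inequality in the $(2,\sqrt{\log p})$-smooth space $(\otimes^pL_2,\|\cdot\|_{\cH,\log p})$ (which is where the $(\log p)^q$ factor comes from, rather than from aggregating $p^2$ coordinate tails), and, crucially, at the base level applies an independence-based Nagaev inequality (Einmahl--Li type) to $\sum_k\{\bV^*_{k2}-E(\bV^*_{k2})\}$, exploiting that blocks separated by at least three are genuinely independent. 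It is this base-level step over $K=n/B$ nearly independent blocks, each with $q/2$-th moment of order $B^{q/4}$, that yields $K\cdot B^{q/4}=nB^{q/4-1}$ and hence $(1+|h|)^{q/4-1}n^{1-q/2}$ after normalization. Your write-up flags this as ``the main obstacle'' but does not supply the blocking/dyadic-conditioning device (or a Banach-valued Fuk--Nagaev inequality for martingales with the $\sum_t E\|d_t\|^{q/2}$ term) needed to overcome it, so as it stands the proposal does not reach the stated rate $D_{n,h}$.
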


The concentration inequalities in Theorem~\ref{thm1} imply the elementwise maximum rate of convergence for the estimated (auto)covariance function:

\begin{equation}
\label{cov.rate}
\|\widehat{\bSigma}^{(h)}-\bSigma^{(h)}\|_{\cS,\max} = O_P\left[\bPhi_{q,\alpha}^\bX\left\{\Big(\frac{\log p}{n}\Big)^{1/2} + C_\bX p^{4/q}D_{n,h}^{2/q}\right\}\right],
\end{equation}
where 
\begin{equation}\label{eq:constCX}
    C_\bX = \min\sth{1, \frac{\cM_{q,\alpha}^\bX (\log p)^2}{\bPhi_{q,\alpha}^\bX p^{4/q}}}.
\end{equation}
This result plays a crucial role in further convergence analysis of downstream covariance-based learning tasks in high-dimensional settings.
\smallskip

We note that the difference between the estimates \eqref{thm1eq1} and \eqref{thm1eq2} is that, the estimate \eqref{thm1eq1} presents a concentration result that takes cross-sectional dependence into account, while the estimate  \eqref{thm1eq2} focuses on the concentration of each dimension individually and then aggregates these results using Bonferroni correction. Consequently, the rate in \eqref{cov.rate} includes the extra coefficient $C_\bX$ to represent the trade-off between these two approaches. 
We also emphasize that the rate in the estimate  (\ref{cov.rate}) consists of two terms. If there exists a sufficiently large $q$ satisfying Condition~\ref{cond.FDM} such that the first term dominates, this rate simplifies to $O_P\big\{\bPhi_{q,\alpha}^\bX(\log p/n)^{1/2}\big\}$ for  general nonlinear and non-Gaussian functional processes  and  aligns with the rate derived in \cite{guo2023consistency} and \cite{fang2022} for Gaussian or sub-Gaussian functional linear processes. Conversely, under a weaker condition where a relatively small $q$ satisfies Condition~\ref{cond.FDM}, the second term becomes dominant, resulting in a rate that grows polynomially with $p$.
We also note that our established rate is consistent with that specified in Theorem~6.4 of \cite{Zhang2021ConvergenceOC} for scalar time series.

\begin{remark}
{\rm Due to the infinite-dimensional nature of functional data, it is standard to perform FPCA by truncating each $X_{tj}(u)$ to the first $M_j$ terms, such that $X_{tj}(u) \approx \sum_{l=1}^{M_j} \xi_{tjl} \psi_{jl}(u)$, where the functional principal component (FPC) scores $\xi_{tjl}= \langle X_{tj} , \psi_{jl}\rangle$ for $l \in [M_j]$ are mean-zero random variables satisfying $\cov(\xi_{tjl}, \xi_{tjl'})=\vartheta_{jl} I(l \neq l')$ and $\vartheta_{j1} \geq \cdots \geq \vartheta_{jM_j} >0$ are eigenvalues of $\Sigma_{jj}^{(0)}(u,v)$ with the associated eigenfunctions $\psi_{j1}(u), \cdots, \psi_{jM_j}(u).$ By applying techniques similar to those in \cite{guo2023consistency} and leveraging the established rate in \eqref{cov.rate}, it is not difficult to derive the elementwise maximum rates of convergence for the estimated eigenvalue/eigenfunction pairs $\{(\hat\vartheta_{jl}, \hat \psi_{jl}(u))\}_{l \in [M_j]}$ of $\widehat \Sigma_{jj}^{(0)}(u,v)$, as well as the sample autocovariances among the estimated FPC scores $\hat \xi_{tjl}= \langle X_{tj} , \hat \psi_{jl}\rangle$ for $l \in [M_j].$ These results are essential for the subsequent convergence analysis of the FPCA-based regularized estimation in high-dimensional settings.}
\end{remark}

\subsection{Spectral density function  estimation}\label{sec2.3}

        Let \(\mathbbm{i}\) be the imaginary unit with \(\mathbbm{i}^2 = -1\). We define the spectral density function at frequency $\theta\in(0,2\pi]$ as \begin{align*}
            \bbf_\theta(\cdot,\cdot) = \frac{1}{2\pi}\sum_{h = -\infty}^{\infty} \bSigma^{(h)}(\cdot,\cdot)\exp(-\mathbbm{i} h\theta),
        \end{align*}
        where \Cref{lem1} below implies that the series converges if Condition \ref{cond.FDM} holds. 
             
        The spectral density function $\bbf_\theta(\cdot,\cdot)$ extends the concept of spectral density matrix \citep{chang2025statistical} to the functional domain and extends the univariate spectral density function \citep{panaretos2013} to the multivariate setting. Theoretical results on the spectral density function estimation in the  high-dimensional regime  have been scarce. In this section, we use a lag-window type statistic  to estimate the spectral density function and derive the elementwise maximum rate of convergence via non-asymptotic results.

        To be precise, we use the statistic \begin{equation}\label{eq2.3-1}
	   \widehat{\bbf}_\theta(\cdot,\cdot) = \frac{1}{2\pi}\sum_{h = -m_0}^{m_0}K(h/m_0)\widehat{\bSigma}^{(h)}(\cdot,\cdot)\exp(-\mathbbm{i} h\theta)
        \end{equation}
        to estimate \(\bbf_\theta(\cdot,\cdot)\), where $m_0$ is a truncation parameter, and $K(\cdot)$ is a symmetric kernel function supported on the interval  $[-1,1]$, $K(0) = 1$, $\sup_{x\in[0,1]}K(x) \le 1$ and $1-K(x) = O(|x|^\tau)$ for some $\tau >0$. Define \begin{align*}
            R(m_0) = \max\limits_{j,k\in[p]}\left[\sum_{|h|> m_0}\|\Sigma^{(h)}_{jk}\|_{\cS} + \sum_{|h|\le m_0}|1-K(h/m_0)|\|\Sigma^{(h)}_{jk}\|_{\cS}\right]
        \end{align*}
        to quantify the combined truncation and smoothing errors. The following lemma shows this quantity can be  nicely controlled  under our setting.

        \begin{lemma}\label{lem1}
            If Condition \ref{cond.FDM} holds, then
        $R(m_0) \le Cm_0^{-\alpha \tau/(\tau+\alpha)}\bPhi_{2,\alpha}^\bX.$
        \end{lemma}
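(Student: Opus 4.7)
The plan is to reduce the lemma to a standard bias--variance balance, after first establishing a decay estimate on each $\|\Sigma^{(h)}_{jk}\|_\cS$ in terms of $\bPhi_{2,\alpha}^\bX$. I would start from the physical representation $\bX_t = \bG(\cdot, \cF_t)$ and the orthogonal martingale decomposition $X_{tj} = \sum_{l \leq t} P_l X_{tj}$ with $P_l \cdot = E[\cdot \mid \cF_l] - E[\cdot \mid \cF_{l-1}]$; the coupling identity $P_l X = E[X - X^{\{l\}} \mid \cF_l]$ immediately gives $\|P_l X_{tj}\|_{\cH, 2} \leq \delta_{t-l, 2, j}$. The $L^2(\Omega)$-orthogonality of the $\{P_l\}$ pointwise in $(u,v)$, followed by a pointwise Cauchy--Schwarz and integration in $(u,v)$, yields
\[
\|\Sigma^{(h)}_{jk}\|_\cS \;\leq\; \sum_{r \geq 0} \delta_{r, 2, j} \, \delta_{r + |h|, 2, k}.
\]

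The next step is to extract the decay rate from the dependence adjusted norm. Using $\delta_{r+|h|,2,k} \leq \Delta_{r+|h|,2,k} \leq (|h|+1)^{-\alpha}\|\|X_{1k}\|_\cH\|_{2,\alpha}$ (valid for any $r\ge 0$, since $(r+|h|+1)^{-\alpha}\le(|h|+1)^{-\alpha}$) together with $\sum_{r\geq 0}\delta_{r,2,j} = \Delta_{0,2,j} \leq \|\|X_{1j}\|_\cH\|_{2,\alpha}$, I obtain the per-lag bound $\|\Sigma^{(h)}_{jk}\|_\cS \leq (|h|+1)^{-\alpha}\bPhi_{2,\alpha}^\bX$. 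Interchanging the order of summation in the tail and using the same decay of $\Delta$ gives the refined tail estimate
\[
\sum_{|h| > m}\|\Sigma^{(h)}_{jk}\|_\cS \;\leq\; \sum_{r\geq 0}\delta_{r,2,j}\,\Delta_{r+m+1,2,k} \;\leq\; C(m+1)^{-\alpha}\bPhi_{2,\alpha}^\bX,
\]
which is the key refinement and is valid for any $\alpha > 0$.

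For the bias--variance balance I would introduce an auxiliary cutoff $m_1 \in [1, m_0]$; the kernel smoothness $|1-K(h/m_0)| \leq C(|h|/m_0)^\tau \leq C(m_1/m_0)^\tau$ for $|h| \leq m_1$ and $|1-K|\leq 2$ for the remainder, combined with the two displayed bounds and the outer truncation $|h| > m_0$ in $R(m_0)$, give
\[
R(m_0) \;\leq\; C\bPhi_{2,\alpha}^\bX\left[\Bigl(\frac{m_1}{m_0}\Bigr)^\tau + m_1^{-\alpha}\right].
\]
Balancing the two pieces by setting $(m_1/m_0)^\tau \asymp m_1^{-\alpha}$ forces $m_1 \asymp m_0^{\tau/(\tau+\alpha)}$, and substituting back yields exactly $R(m_0) \lesssim \bPhi_{2,\alpha}^\bX m_0^{-\alpha\tau/(\tau+\alpha)}$.

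The main obstacle is controlling the partial sum $\sum_{|h|\leq m_1}\|\Sigma^{(h)}_{jk}\|_\cS$ that is hidden in the displayed bound on $R(m_0)$: the naive per-lag estimate $(|h|+1)^{-\alpha}\bPhi_{2,\alpha}^\bX$ gives a convergent sum only when $\alpha>1$. For $\alpha\in(1/2,1]$ one can instead bound $\sum_h \|\Sigma^{(h)}_{jk}\|_\cS \le \sum_r \Delta_{r,2,j}\Delta_{r,2,k}\lesssim \bPhi_{2,\alpha}^\bX \sum_r (r+1)^{-2\alpha}<\infty$, which still yields a finite constant absorbed into $C$; for $\alpha\le 1/2$ the argument has to be refined, either by invoking the tighter $L^2$-tail form $(\sum_{s\geq h}\delta_{s,2,k}^2)^{1/2}$ from Hilbert-space orthogonality of the martingale differences, or via an Abel-type summation exploiting that $K(0)=1$ and the smoothness of $K$, in order to match the rate uniformly in $\alpha>0$.
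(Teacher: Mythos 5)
Your overall route is the same as the paper's: the projection decomposition $X_{tj}=\sum_{l}\cP_l X_{tj}$ plus Cauchy--Schwarz to get $\|\Sigma^{(h)}_{jk}\|_\cS\le\sum_{r\ge0}\delta_{r,2,j}\,\delta_{r+|h|,2,k}$, an interchange of summation for the tail $\sum_{|h|>m}$, and a split of the kernel error at an intermediate cutoff $m_1\asymp m_0^{\tau/(\tau+\alpha)}$ followed by balancing. All of that is correct and matches the paper's argument (its Lemma B.9 and the proof of Lemma 3.1, which splits at $m_0^{\beta}$ with $\beta=\tau/(\tau+\alpha)$).

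However, your proof as written has a genuine gap: it only closes for $\alpha>1/2$, whereas the lemma is claimed for every $\alpha>0$ permitted by Condition 2.2. The gap sits exactly where you flag it — the partial sum $\sum_{|h|\le m_1}\|\Sigma^{(h)}_{jk}\|_\cS$ multiplying $(m_1/m_0)^\tau$ — but the difficulty is an artifact of a lossy estimate, not a real obstruction, and neither of the two "refinements" you gesture at is needed. Interchanging the order of summation in the \emph{full} sum and keeping the inner factor as $\delta$ rather than upgrading it to $\Delta$ gives, for any $\alpha>0$,
\begin{equation*}
\sum_{h\ge 0}\|\Sigma^{(h)}_{jk}\|_\cS\;\le\;\sum_{r\ge 0}\delta_{r,2,j}\Big(\sum_{h\ge 0}\delta_{r+h,2,k}\Big)\;=\;\sum_{r\ge 0}\delta_{r,2,j}\,\Delta_{r,2,k}\;\le\;\Delta_{0,2,j}\,\Delta_{0,2,k}\;\le\;\bPhi_{2,0}^\bX\;\le\;\bPhi_{2,\alpha}^\bX,
\end{equation*}
using only $\Delta_{r,2,k}\le\Delta_{0,2,k}$; no factor $(r+1)^{-2\alpha}$ and hence no convergence constraint ever enters. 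Your bound $\sum_r\Delta_{r,2,j}\Delta_{r,2,k}\lesssim\bPhi^\bX_{2,\alpha}\sum_r(r+1)^{-2\alpha}$ replaces $\delta_{r,2,j}$ by $\Delta_{r,2,j}$ and then spends the decay of \emph{both} factors, which is what manufactures the spurious restriction $\alpha>1/2$. With the display above in place of your final paragraph, the rest of your argument goes through verbatim for all $\alpha>0$ and recovers the stated rate $Cm_0^{-\alpha\tau/(\tau+\alpha)}\bPhi^\bX_{2,\alpha}$; this one-line total-sum bound is exactly the second assertion of the paper's Lemma B.9.
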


        The rate in \Cref{lem1} can be improved to $R(m_0) \leq Cm_0^{-\alpha}\bPhi_{2,\alpha}^\bX$ by using   
        flap-top kernels as introduced by    \cite{POLITIS19991}. The main results of this section are concentration inequalities  for the maximum deviation between $\widehat{\bbf}_\theta $ and its expectation.

        \begin{theorem}\label{thm2} 
        Assume $m_0 < n/3$ and Condition \ref{cond.FDM} holds. Then there exist positive constants $C_{q,\alpha},C_\alpha$ and $C_{\alpha}'$ such that \begin{align*}
            P\left\{\sup\limits_{\theta\in[0,2\pi]}\|\widehat{\bbf}_\theta-E(\widehat{\bbf}_\theta)\|_{\cS,\max}> \cM_{q,\alpha}^\bX x\right\} \le & C_{q,\alpha}x^{-q/2}(\log p)^{5q/4}F_{n,m_0} \\
            &+ C_\alpha m_0p^2\exp\left(-\frac{C_\alpha' x^2n}{m_0}\right),\\
            P\left\{\sup\limits_{\theta\in[0,2\pi]}\|\widehat{\bbf}_\theta-E(\widehat{\bbf}_\theta)\|_{\cS,\max}> \bPhi_{q,\alpha}^\bX ~x\right\} \le & C_{q,\alpha}x^{-q/2}p^2F_{n,m_0} \\
            & + C_\alpha m_0p^2\exp\left(-\frac{C_\alpha' x^2n}{m_0}\right),
        \end{align*}
        where $$
        F_{n,m} = \begin{cases}
      n^{1-q/2}m^{q/2}&\text{ if } ~~\alpha > 1/2 - 2/q, \\ 
       n^{1-q/2}m^{q/2} + n^{-q/4-\alpha q/2}m^{q/4+1} &    \text{ if } ~~\alpha \le 1/2 - 2/q.
    \end{cases}
    $$
        \end{theorem}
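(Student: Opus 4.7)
The plan is to extend the argument of \Cref{thm1} by (i) rewriting the lag-window estimator so the same Banach-space Nagaev machinery applies at fixed $\theta$, (ii) tracking how the aggregation over $2m_0+1$ lags enters the variance and the physical dependence measure, and (iii) discretizing in the frequency variable $\theta$. Fix $(j,k)\in[p]^2$ and $\theta\in[0,2\pi]$, and set $\zeta^{(h)}_{t,jk}(u,v) = X_{tj}(u)X_{(t+h)k}(v) - E[X_{tj}(u)X_{(t+h)k}(v)]$. Exchanging the order of summation in \eqref{eq2.3-1} allows us to write
\[
\bigl[\widehat{\bbf}_\theta - E(\widehat{\bbf}_\theta)\bigr]_{jk}(u,v) = \frac{1}{2\pi}\sum_{t} W^{(jk,\theta)}_t(u,v),
\]
where $W^{(jk,\theta)}_t\in\eS$ is a linear combination, with weights $K(h/m_0)e^{-\mathbbm{i} h\theta}/(n-|h|)$, of the centered products $\zeta^{(h)}_{t,jk}$ for $|h|\le m_0$. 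Structurally this is the same martingale-indexed Hilbert--Schmidt-valued sum as appears in the proof of \Cref{thm1}; the difference is that each summand now aggregates $2m_0+1$ lags.

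Next, I apply the same Banach-space martingale Nagaev-type inequality used for \Cref{thm1}. The key bookkeeping step is a dependence calculation: coupling $\varepsilon_s$ with an independent copy $\varepsilon'_s$ and applying Cauchy--Schwarz together with H\"older bounds the functional dependence of $W^{(jk,\theta)}_t$ in terms of $\omega_{\cdot,q}$ (for the $\cM_{q,\alpha}^\bX$ version) or $\delta_{\cdot,q,j}$ (for the $\bPhi_{q,\alpha}^\bX$ version). Exploiting the orthogonality of the phases $\{e^{-\mathbbm{i} h\theta}\}_h$ in a second-moment computation keeps the aggregated variance at order $m_0/n$ rather than $m_0^2/n$, which produces the exponential tail $\exp(-Cnx^2/m_0)$; the polynomial tail then carries the multiplier $m_0^{q/2}$ of $F_{n,m_0}$, while the additional term for $\alpha\le 1/2-2/q$ arises from the same truncation argument as in \Cref{thm1}.

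For the supremum in $\theta$, differentiating \eqref{eq2.3-1} gives
\[
\Bigl\|\tfrac{d}{d\theta}\widehat{\bbf}_\theta\Bigr\|_{\cS,\max} \le \frac{m_0}{2\pi}\sum_{|h|\le m_0}\|\widehat{\bSigma}^{(h)}\|_{\cS,\max},
\]
which is polynomially bounded in $(n,p,m_0)$ on the high-probability event supplied by \Cref{thm1}. A grid $\{\theta_\ell\}_{\ell=1}^N$ with $N$ polynomial in these quantities therefore renders the discretization error negligible relative to the target deviation level $\cM_{q,\alpha}^\bX x$ (or $\bPhi_{q,\alpha}^\bX x$); its contribution is absorbed into the prefactor $m_0 p^2$ of the exponential term and into logarithmic factors in the polynomial term. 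The $\cM_{q,\alpha}^\bX$ version combines the fixed-$(j,k,\theta)$ bound with an $L_{q/2}$ maximal inequality across the $(j,k)$ pairs; together with the $\theta$-grid, this accounts for the exponent $(\log p)^{5q/4}$ in place of the factor $p^2$ that appears in the $\bPhi_{q,\alpha}^\bX$ version via Bonferroni.

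The principal obstacle is retaining the correct $\sqrt{m_0/n}$ standard-deviation scaling throughout. A naive strategy that applies \Cref{thm1} entrywise for each $|h|\le m_0$ and then sums via the triangle inequality loses a factor of $m_0^{1/2}$ in the deviation bound. Avoiding this loss requires performing the Nagaev/Burkholder step directly on the aggregated increments $W^{(jk,\theta)}_t$ so that the oscillation of the Fourier weights is exploited inside a single second-moment computation in $\eS$, and carefully decomposing the coupling difference for $W^{(jk,\theta)}_t$ according to the position of the replaced innovation relative to the lag window $|h|\le m_0$ so that the resulting bound scales linearly (not quadratically) in $m_0$.
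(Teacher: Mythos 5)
Your fixed-$\theta$ strategy is essentially the paper's: both treat the lag-aggregated object as a single banded quadratic form $\sum_{|s-t|\le m_0}a_{st}(\theta)\bX_s\otimes\bX_t^\T$ and run the Banach-space block-martingale/Nagaev machinery on it directly, precisely to avoid the $m_0^{1/2}$ loss from summing \Cref{thm1} over lags — you correctly identify that as the crux. But there are two concrete problems. First, your explanation of why the variance is $m_0/n$ rather than $m_0^2/n$ — ``orthogonality of the phases $e^{-\mathbbm{i} h\theta}$'' — cannot be the operative mechanism: the bound must hold uniformly in $\theta$, including $\theta=0$ where all phases equal $1$ and there is no cancellation. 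The correct source of the $m_0/n$ scaling is the weak temporal dependence of the centered products $X_{tj}X_{(t+h)k}$ across $t$ and $h$: in the paper this is extracted by partitioning time into blocks of length $m_0$, showing each block contribution has $L_2$ norm of order $m_0$ (Lemma \ref{lemmaTechComposite4}), and summing $n/m_0$ nearly independent blocks, giving total variance $n m_0$ for the unnormalized form. A second-moment computation that genuinely relies on phase cancellation would fail.

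Second, and more seriously, your treatment of $\sup_\theta$ does not deliver the stated bound. A Lipschitz-plus-grid argument with $N$ polynomial in $(n,p,m_0)$ grid points forces a union bound over $N$ points, which multiplies the \emph{polynomial} tail $C_{q,\alpha}x^{-q/2}(\log p)^{5q/4}F_{n,m_0}$ by $N$; a polynomial factor cannot be ``absorbed into logarithmic factors,'' and the discretization error $P(L\epsilon>x/2)$ must also be controlled for every $x>0$, which a fixed grid cannot do. The missing idea is that $\theta\mapsto\|E_0(\widehat{\bbf}_{\theta,jk})\|_{\cS}^2$ is exactly a (real) trigonometric polynomial of degree $2m_0$, so a Bernstein-type maximal inequality for trigonometric polynomials (Lemma D.1 of \cite{Zhang2021ConvergenceOC}) gives $\sup_{\theta}\|E_0(\widehat{\bbf}_{\theta,jk})\|_{\cS}\le\sqrt{2}\max_{h\in[8m_0]}\|E_0(\widehat{\bbf}_{\theta_h,jk})\|_{\cS}$ with \emph{no} error term and only $O(m_0)$ points. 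This is what produces the $m_0$ prefactor on the exponential term and keeps the polynomial term at $(\log p)^{5q/4}F_{n,m_0}$; without it your argument proves only a strictly weaker inequality.
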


      Combining   the  concentration inequalities in  \Cref{thm2} with \Cref{lem1} gives  the  convergence rate 
      of  the estimator \eqref{eq2.3-1} for the  spectral density  function, that is  
      \begin{equation*}\sup\limits_{\theta\in[0,2\pi]}\|\widehat{\bbf}_{\theta} - \bbf_{\theta}\|_{\cS,\max} = O_P ({\cal H}_1) 
      \end{equation*}
    where
    \begin{equation}
      {\cal H}_1 = R(m_0) + \bPhi_{q,\alpha}^\bX\left\{C_\bX' p^{4/q}(F_{n,m_0})^{2/q}  + \Big(\frac{m_0\log(p\vee m_0)}{n}\Big)^{1/2}\right\}\label{thm2eq1}
        \end{equation}
        {and 
    \begin{equation*}
        C_\bX' = \min\sth{1, \frac{\cM_{q,\alpha}^\bX (\log p)^{5/2}}{\bPhi_{q,\alpha}^\bX p^{4/q}}}.
        \end{equation*}
       
        The first term in \eqref{thm2eq1} represents the truncation and smoothing errors of the lag-window statistic \eqref{eq2.3-1}. The second and third terms in \eqref{thm2eq1} arise from the concentration inequalities in Theorem \ref{thm2}, and their behavior is determined by the moment and dependence conditions. While increasing $m_0$ decreases the first term, it enlarges the second and third terms. Therefore, the optimal choice of $m_0$ minimizes \eqref{thm2eq1}, achieving a balance between these terms. 

        Convergence results of $\sup_{\theta\in[0,2\pi]}\|\widehat{\bbf}_{\theta} - \bbf_{\theta}\|_{\cS,\max}$ have many applications, such as dynamic FPCA in Section~\ref{sec.fpca} and estimation of the spectral density functions under the sparsity assumption in Section~\ref{sec.sparse_est}. In Section \ref{sec.suppD} of the Supplementary Material, we also derive the convergence results of $\|\widehat{\bbf}_{\theta} - \bbf_{\theta}\|_{\cS,\max}$ at a fixed frequency $\theta\in[0,2\pi]$.

    \section{Applications}\label{Sec.Appli}
\renewcommand{\theequation}{\thesection.\arabic{equation}}
  \setcounter{equation}{0}
        This section presents two applications of the established theoretical results in Section \ref{sec2.3}.

        \subsection{Dynamic FPCA}\label{sec.fpca}

        In this section we consider dynamic FPCA. The \(j\)-th diagonal element of spectral density function $\widehat{\bbf}_\theta = ( f_{\theta ,jk } )_{j,k\in [p]} $  has the eigen-decomposition
        \begin{equation*}
            f_{\theta ,jj}\{x(\cdot)\} = \sum\limits_{m\ge 1}\lambda_{jm}(\theta)\langle x(\cdot),\varphi_{jm}(\cdot ; \theta)\rangle\varphi_{jm}(\cdot ; \theta),
            \end{equation*} 
            where $(\lambda_{jm})_{m\geq 1} $ and $(\varphi_{jm})_{m\geq 1} $ are the eigenvalues and eigenfunctions of $f_{\theta, jj}$. 
            Similarly, for the $j$-th diagonal element of  the  lag-window estimator \eqref{eq2.3-1}, we have             \begin{equation*}
            \hat{f}_{\theta ,jj}\{x(\cdot)\} = \sum\limits_{m\ge 1}\widehat{\lambda}_{jm}(\theta)\langle x(\cdot),\widehat{\varphi}_{jm}(\cdot ; \theta)\rangle\widehat{\varphi}_{jm}(\cdot ; \theta),
        \end{equation*}
        where $(\widehat{\lambda}_{jm})_{m\geq 1} $ and $(\widehat{\varphi}_{jm})_{m\geq 1} $ are the eigenvalues and eigenfunctions of $\hat{f}_{\theta, jj}$.   
        As proposed in \cite{hormann2015}, the \(m\)-th dynamic FPC score of \(X_{tj}\) and its estimate are respectively defined by  \begin{equation}\label{FPCADef3}
            \zeta_{tjm} = \sum\limits_{l\in\mathbb{Z}}\langle X_{(t-l)j},\phi_{jml}\rangle,\quad \widehat{\zeta}_{tjm} = \sum_{l = -L}^{L}\langle X_{(t-l)j}, \widehat{\phi}_{jml}\rangle,
        \end{equation}
        where \(L\) denotes the truncation parameter and \begin{equation*}
            \phi_{jml}(\cdot) = \frac{1}{2\pi}\int_{0}^{2\pi}\varphi_{jm}(\cdot ; \theta)\exp(-\mathbbm{i} l\theta)\mathrm{d}\theta, \quad \widehat{\phi}_{jml}(\cdot) = \frac{1}{2\pi}\int_{0}^{2\pi}\widehat{\varphi}_{jm}(\cdot ; \theta)\exp(-\mathbbm{i} l\theta)\mathrm{d}\theta
        \end{equation*} 
        are the $m$-th (estimated) dynamic FPC filter coefficients.
      
        Note that we should speak of a version of (estimated) eigenfunctions and dynamic FPC filter coefficients and scores. They are not uniquely defined since eigenfunctions are defined up to any multiplicative factor on the complex unit circle. In the following, for each computed $\widehat{\varphi}_{jm}(\cdot;\theta)$, we specify the particular version of $\varphi_{jm}(\cdot;\theta)$ that is being estimated. We first arbitrarily choose an eigenfunction $\varphi_{jm}^*(\cdot ; \theta)$, and then substitute it with \begin{equation}\label{eigenfunctionversion}
            \varphi_{jm}(\cdot;\theta) = \varphi_{jm}^*(\cdot ; \theta)\overline{\langle\varphi_{jm}^*(\cdot ; \theta),\widehat{\varphi}_{jm}(\cdot ; \theta)\rangle}/|\langle\varphi_{jm}^*(\cdot ; \theta),\widehat{\varphi}_{jm}(\cdot ; \theta)\rangle|
        \end{equation}
        if $\langle\varphi_{jm}^*(\cdot ; \theta),\widehat{\varphi}_{jm}(\cdot ; \theta)\rangle \ne 0$. Multiplying $\varphi_{jm}^*(\cdot ; \theta)$ with $\exp(\mathbbm{i} \alpha)$ does not change the right hand side, so $\varphi^*_{jm}$ is unique if $\langle\varphi_{jm}^*(\cdot ; \theta),\widehat{\varphi}_{jm}(\cdot ; \theta)\rangle \ne 0$. If $\langle\varphi_{jm}^*(\cdot ; \theta),\widehat{\varphi}_{jm}(\cdot ; \theta)\rangle = 0$, we take $\varphi_{jm}(\cdot;\theta) = \varphi_{jm}^*(\cdot;\theta)$.
        Note that the procedure in \eqref{eigenfunctionversion} is simpler than the approach in \cite{hormann2015}, which involves choosing $\varphi_{jl}$ and $\widehat{\varphi}_{jm}$ based on reference curves \(v_j(\cdot)\), while still successfully tackles the specification problem.

        A standard procedure to estimate models involving high-dimensional functional time series consists of three steps. Due to the infinite-dimensionality of functional time series, the first step performs dynamic FPCA that converts the problem of modeling $p$-dimensional functional time series to that of modeling vector time series of dynamic FPC scores. The second step implements the regularization methods under certain structural assumptions based on estimated dynamic FPC scores. The third step re-converts the vector estimates obtained in the second step to functional estimates via estimated dynamic FPC filter coefficients obtained in the first step. 
       
        Before presenting convergence rates of relevant estimated quantities to theoretically support the aforementioned three-step procedure, we impose the following eigengap condition.

        \begin{condition}\label{cond4}
        {\rm 
            Let \(\lambda_{j1}(\theta) > \lambda_{j2}(\theta) > \cdots\) be the eigenvalues of \(f_{\theta, jj}\). Denote \(\alpha_{j1}(\theta) = \lambda_{j1}(\theta) - \lambda_{j2}(\theta)\), and \(\alpha_{jm}(\theta) = \min\{\lambda_{jm}(\theta)-\lambda_{j(m-1)}(\theta),\lambda_{j(m+1)}(\theta) - \lambda_{jm}(\theta)\}\) for \(m\ge 2\). There exists an increasing positive sequence of \((\delta_m)_{m\in\mathbb{N}}\) diverging to $\infty$ such that \begin{equation*}
        \inf\limits_{\theta\in[0,2\pi]}\min\limits_{j\in[p]}|\alpha_{jm}(\theta)| \ge \delta_m^{-1}.
	\end{equation*}
    }
        \end{condition}

        In the following theorem, we establish the elementwise maximum rates of convergence for estimated eigenvalues, eigenfunctions, and dynamic FPC filter coefficients, which can be used to provide theoretical guarantees for the first step and the third step under high-dimensional settings.

    \begin{theorem}\label{thm3}
          Assume that the  conditions of  \Cref{thm2} and  Condition \ref{cond4} hold.
          Then for any $M, l\in\eN$, 
          $$
          \|\widehat{\phi}_{jml}-\phi_{jml}\|_\cH \le\sup_{\theta\in[0,2\pi]} \|\widehat{\varphi}_{jm}(\cdot ; \theta) - \varphi_{jm}(\cdot ; \theta)\|_\cH,
          $$
          and  
          $$\max\limits_{j\in[p],m\in[M]}\sup\limits_{\theta\in[0,2\pi]}\left\{|\widehat{\lambda}_{jm}(\theta) - \lambda_{jm}(\theta)| + \|\widehat{\varphi}_{jm}(\cdot ; \theta) - \varphi_{jm}(\cdot ; \theta)\|_\cH/\delta_m\right\} = O_P\left(\cH_1\right), $$
          where $\cH_1$ is defined in \eqref{thm2eq1}.
        \end{theorem}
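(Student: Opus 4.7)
The first inequality is essentially an application of Minkowski's integral inequality (for Bochner integrals in $L_2([0,1])$). Writing
\[
\widehat{\phi}_{jml}(u)-\phi_{jml}(u)=\frac{1}{2\pi}\int_0^{2\pi}\bigl\{\widehat{\varphi}_{jm}(u;\theta)-\varphi_{jm}(u;\theta)\bigr\}\exp(-\mathbbm{i} l\theta)\,\mathrm{d}\theta,
\]
taking the $\cH$-norm in $u$ and pulling it inside the integral by Minkowski yields
\[
\|\widehat{\phi}_{jml}-\phi_{jml}\|_{\cH}\le\frac{1}{2\pi}\int_0^{2\pi}\|\widehat{\varphi}_{jm}(\cdot;\theta)-\varphi_{jm}(\cdot;\theta)\|_{\cH}\,\mathrm{d}\theta\le\sup_{\theta\in[0,2\pi]}\|\widehat{\varphi}_{jm}(\cdot;\theta)-\varphi_{jm}(\cdot;\theta)\|_{\cH},
\]
which is the claim. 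This step is elementary and independent of the probabilistic analysis.

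For the rate on the eigenvalues and eigenfunctions, the plan is to reduce the problem to a perturbation bound for the self-adjoint compact operators $\widehat{f}_{\theta,jj}$ relative to $f_{\theta,jj}$, and then invoke Theorem~\ref{thm2}. By Weyl's inequality in Hilbert space,
\[
|\widehat{\lambda}_{jm}(\theta)-\lambda_{jm}(\theta)|\le\|\widehat{f}_{\theta,jj}-f_{\theta,jj}\|_{\op}\le\|\widehat{f}_{\theta,jj}-f_{\theta,jj}\|_{\cS}.
\]
For the eigenfunctions, the version-selection rule~\eqref{eigenfunctionversion} forces $\langle\varphi_{jm}(\cdot;\theta),\widehat{\varphi}_{jm}(\cdot;\theta)\rangle\ge 0$, which is precisely the phase alignment that turns the Davis--Kahan $\sin\Theta$ bound into a norm bound. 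Under Condition~\ref{cond4}, the local eigengap of $f_{\theta,jj}$ is bounded below by $\delta_m^{-1}$ uniformly in $\theta\in[0,2\pi]$ and $j\in[p]$, so the Davis--Kahan theorem yields
\[
\|\widehat{\varphi}_{jm}(\cdot;\theta)-\varphi_{jm}(\cdot;\theta)\|_{\cH}\le C\,\delta_m\,\|\widehat{f}_{\theta,jj}-f_{\theta,jj}\|_{\op}\le C\,\delta_m\,\|\widehat{f}_{\theta,jj}-f_{\theta,jj}\|_{\cS}.
\]

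Combining these two perturbation bounds and dividing the eigenfunction bound by $\delta_m$, the quantity on the left-hand side of the second claim is bounded, up to a constant, by
\[
\max_{j\in[p],m\in[M]}\sup_{\theta\in[0,2\pi]}\|\widehat{f}_{\theta,jj}-f_{\theta,jj}\|_{\cS}\le\sup_{\theta\in[0,2\pi]}\|\widehat{\bbf}_{\theta}-\bbf_{\theta}\|_{\cS,\max}.
\]
Applying Theorem~\ref{thm2} together with Lemma~\ref{lem1} shows that the right-hand side is $O_P(\cH_1)$, completing the proof.

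The main obstacle lies in verifying that the phase-alignment convention~\eqref{eigenfunctionversion} interacts correctly with the Davis--Kahan bound uniformly in $\theta$, in particular on the measure-zero set where $\langle\varphi^*_{jm}(\cdot;\theta),\widehat{\varphi}_{jm}(\cdot;\theta)\rangle=0$; however, on that set the spectral perturbation is already so large that the trivial bound $\|\widehat{\varphi}_{jm}-\varphi_{jm}\|_{\cH}\le 2$ is absorbed into $\delta_m\|\widehat{f}_{\theta,jj}-f_{\theta,jj}\|_{\op}$. A secondary technical point is that the sup over the continuous parameter $\theta$ is directly furnished by Theorem~\ref{thm2}, so no additional chaining or covering argument is needed at this stage.
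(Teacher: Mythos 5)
Your proposal is correct and follows essentially the same route as the paper: the first inequality via pulling the $\cH$-norm inside the Fourier integral, and the second via a Weyl-type eigenvalue bound and a Davis--Kahan-type eigenfunction bound under the eigengap Condition~\ref{cond4} (the paper packages these as Lemmas 3.1 and 3.2 of H\"ormann and Kokoszka, using the phase alignment \eqref{eigenfunctionversion} exactly as you describe), followed by reduction to $\sup_{\theta}\|\widehat{\bbf}_\theta-\bbf_\theta\|_{\cS,\max}$ and an appeal to Theorem~\ref{thm2}. No substantive differences.
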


       Before presenting the convergence properties of estimated dynamic FPC scores, we impose a differentiability condition on the eigenfunctions to control the truncation error in \eqref{FPCADef3}.

    \begin{condition}\label{cond5}
    {\rm 
    There exists an integer $\kappa > 2$ such that a version of $\varphi_{jm}^*(\cdot; \theta)$ is $\kappa$-times differentiable with respect to $\theta$ for all $j\in[p],m\in\eN$ with \begin{equation*}
    \max_{j \in [p],\, m \in \mathbb{N}} \int_{0}^{2\pi} \left\| \frac{\partial^\kappa}{\partial \theta^\kappa} \varphi_{jm}^*(\cdot;\theta) \right\|_{\mathcal{H}}^2 \, d\theta = O(1).
    \end{equation*}
    Furthermore, $\varphi_{jm}^*(\cdot; \theta)$ can be extended to a $2\pi$-periodic function in $\theta$ on $\mathbb{R}$.
     }
        \end{condition}
        
    Under Condition \ref{cond5}, the dynamic FPC score $\zeta_{tjm}$ is 
    $\zeta_{tjm} = \sum_{|l|\le L}\langle X_{(t-l)j},\phi_{jml}\rangle + \sum_{|l| > L}\langle X_{(t-l)j},\phi_{jml}^*\rangle,$ where dynamic FPC filter coefficients $\phi_{jml}$ and $ \phi_{jml}^*$ are respectively computed from the eigenfunctions $\varphi_{jm}(\cdot; \theta)$ in~\eqref{eigenfunctionversion} and $\varphi_{jm}^*(\cdot; \theta)$ in Condition~\ref{cond5}.
    Consider the (auto)covariances at lag $h$ between dynamic FPC scores $\sigma_{jkml}^{(h)} = \cov\sth{\zeta_{tjm},\zeta_{(t+h)kl}}$ and the corresponding estimates $$
    \widehat{\sigma}^{(h)}_{jkml} = { 1 \over n-2L-h} \sum_{t = L}^{n-L-h}\widehat{\zeta}_{tjm}\widehat{\zeta}_{(t+h)kl}.
    $$
    To provide theoretical support for the second step that relies on estimated dynamic FPC scores, we establish the elementwise maximum rate of $\{\widehat{\sigma}^{(h)}_{jkml}\}$ in the following theorem.

        \begin{theorem}\label{thm4}
           Assume that the conditions of Theorem \ref{thm3} and Condition \ref{cond5} hold, \(L < n/4\), and $h$ is fixed. Further assume that $\max_{j\in[p]}\int_0^1\Sigma^{(0)}_{jj}(u,u)\du = O(1),$ $\cH_1 = o(1),$ and $ \cH_1\delta_M = O(1),$ where $\cH_1$ is specified in \eqref{thm2eq1}. Then the estimates \(\{\widehat{\sigma}^{(h)}_{jkml}\}\) satisfy \begin{align*}
                \max\limits_{j,k\in[p]}\max\limits_{m,l\in[M]}\frac{|\widehat{\sigma}^{(h)}_{jkml} - \sigma^{(h)}_{jkml}|}{\delta_m \vee \delta_l} = O_P\left(L^{2-\kappa} + L^2\cH_1+\cH_2\right),
            \end{align*}
            where $\cH_2=\bPhi_{q,\alpha}^\bX C_\bX p^{4/q} L^{2+4/q}n^{-1/2}$  with  $C_\bX$ specified in \eqref{eq:constCX}.
    \end{theorem}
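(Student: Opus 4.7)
The plan is to decompose the error into three conceptual pieces and bound each separately. Introduce the intermediate quantity $\widetilde{\zeta}_{tjm}=\sum_{|l|\le L}\langle X_{(t-l)j},\phi_{jml}\rangle$, which uses the true (phase-aligned) filter coefficients but matches the truncation $L$ of the estimator, and set $\widetilde{\sigma}^{(h)}_{jkml}=(n-2L-h)^{-1}\sum_{t=L}^{n-L-h}\widetilde{\zeta}_{tjm}\widetilde{\zeta}_{(t+h)kl}$. Then
$$\widehat{\sigma}^{(h)}_{jkml}-\sigma^{(h)}_{jkml}=\underbrace{(\widehat{\sigma}^{(h)}_{jkml}-\widetilde{\sigma}^{(h)}_{jkml})}_{A_1}+\underbrace{(\widetilde{\sigma}^{(h)}_{jkml}-E\widetilde{\sigma}^{(h)}_{jkml})}_{A_2}+\underbrace{(E\widetilde{\sigma}^{(h)}_{jkml}-\sigma^{(h)}_{jkml})}_{A_3},$$
so that $A_1$ isolates the plug-in error of $\widehat{\phi}_{jml}$ for $\phi_{jml}$, $A_2$ the sample-mean fluctuation around its expectation, and $A_3$ the deterministic truncation error. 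I will show that, after normalization by $\delta_m\vee\delta_l$, these three pieces contribute $O_P(L^2\cH_1)$, $O_P(\cH_2)$, and $O_P(L^{2-\kappa})$, respectively.

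For $A_3$, use the representation $\zeta_{tjm}=\widetilde{\zeta}_{tjm}+R_{tjm}$ with $R_{tjm}=\sum_{|l|>L}\langle X_{(t-l)j},\phi_{jml}^*\rangle$ supplied in the paragraph preceding the theorem, expand $E[\zeta_{tjm}\zeta_{(t+h)kl}]-E[\widetilde{\zeta}_{tjm}\widetilde{\zeta}_{(t+h)kl}]$ into the three cross terms $E[\widetilde{\zeta}R]+E[R\widetilde{\zeta}]+E[RR]$, and apply Cauchy--Schwarz in $L_2$ to reduce each to a product involving $\|R_{tjm}\|_2$. Under Condition \ref{cond5}, Parseval's identity in $\theta$ yields $\sum_l|l|^{2\kappa}\|\phi_{jml}^*\|_\cH^2=O(1)$ uniformly in $(j,m)$, which together with $\max_j\int_0^1\Sigma^{(0)}_{jj}(u,u)\du=O(1)$ controls the tail sums $\sum_{|l|>L}\|\phi_{jml}^*\|_\cH$ and delivers the deterministic rate $L^{2-\kappa}$. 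Since $\delta_m\vee\delta_l$ is bounded below uniformly in $m,l$, the ratio $A_3/(\delta_m\vee\delta_l)$ retains the same order.

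For $A_1$, use the factorization
$$\widehat{\zeta}_{tjm}\widehat{\zeta}_{(t+h)kl}-\widetilde{\zeta}_{tjm}\widetilde{\zeta}_{(t+h)kl}=(\widehat{\zeta}_{tjm}-\widetilde{\zeta}_{tjm})\widehat{\zeta}_{(t+h)kl}+\widetilde{\zeta}_{tjm}(\widehat{\zeta}_{(t+h)kl}-\widetilde{\zeta}_{(t+h)kl})$$
together with the pointwise estimate $|\widehat{\zeta}_{tjm}-\widetilde{\zeta}_{tjm}|\le\sum_{|l|\le L}\|X_{(t-l)j}\|_\cH\,\|\widehat{\phi}_{jml}-\phi_{jml}\|_\cH$. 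The key input is Theorem \ref{thm3}, which supplies $\|\widehat{\phi}_{jml}-\phi_{jml}\|_\cH\le\sup_\theta\|\widehat{\varphi}_{jm}(\cdot;\theta)-\varphi_{jm}(\cdot;\theta)\|_\cH=O_P(\delta_m\cH_1)$ uniformly over $j\in[p]$ and $m\in[M]$. Combining with $E\|X_{tj}\|_\cH^2=O(1)$ and summing the $O(L)$ Fourier terms yields $A_1/(\delta_m\vee\delta_l)=O_P(L^2\cH_1)$, while the assumption $\cH_1\delta_M=O(1)$ guarantees that the second-order cross term $(\widehat{\zeta}-\widetilde{\zeta})(\widehat{\zeta}-\widetilde{\zeta})$ remains of lower order.

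The most delicate term is $A_2$. Writing
$$A_2=\sum_{|a|,|b|\le L}\left\{\frac{1}{n-2L-h}\sum_{t=L}^{n-L-h}\langle X_{(t-a)j},\phi_{jma}\rangle\langle X_{(t+h-b)k},\phi_{klb}\rangle-E[\,\cdot\,]\right\},$$
each inner bracket can, modulo boundary corrections of order $L/n$ absorbed under the stated conditions, be identified with the functional inner product $\langle\phi_{jma},(\widehat{\bSigma}^{(h+a-b)}_{jk}-\bSigma^{(h+a-b)}_{jk})\phi_{klb}\rangle$. Cauchy--Schwarz in Hilbert--Schmidt sense and the crude estimate $\|\phi_{jml}\|_\cH\le 1$ give $|A_2|\lesssim L^2\max_{|\tau|\le 2L+|h|}\|\widehat{\bSigma}^{(\tau)}-\bSigma^{(\tau)}\|_{\cS,\max}$, where the $L^2$ prefactor counts the $(2L+1)^2$ pairs $(a,b)$. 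Theorem \ref{thm1} controls each individual lag, and a union bound over the $O(L)$ lags $\tau\in[-2L-|h|,2L+|h|]$ inflates the polynomial tail through the factor $(1+|\tau|)^{q/4-1}$ in $D_{n,\tau}$, producing an extra $L^{4/q}$ amplification of $p^{4/q}$. The main obstacle lies precisely in this accounting: one must uniformly control $\|\widehat{\bSigma}^{(\tau)}-\bSigma^{(\tau)}\|_{\cS,\max}$ over a growing window of lags and verify that the combined effect of the $L^2$ coefficient aggregation and the $L^{4/q}$ tail inflation reproduces the announced $L^{2+4/q}$ factor in $\cH_2$, after which division by $\delta_m\vee\delta_l$ completes the bound.
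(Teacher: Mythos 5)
Your top-level decomposition is the same as the paper's: what you call $A_1+A_2$ is the paper's ``precision error'' $P^{(h)}_{jkml}$ and your $A_3$ is its truncation error $P^{(h)'}_{jkml}$; the key inputs (Theorem \ref{thm3} for the filter coefficients, Theorem \ref{thm1} applied uniformly over the $O(L)$ lags $h+r_1-r_2$ with $D_{n,h}$ replaced by its worst case $D_{n,n/2}\lesssim n^{-q/4}$, and integration by parts under Condition \ref{cond5} giving $\|\phi^*_{jmr}\|_{\cH}\le C|r|^{-\kappa}$) are also the ones the paper uses. Your $A_3$ and $A_2$ treatments are essentially correct, although for $A_2$ you stop at ``one must verify'' the $L^{2+4/q}$ bookkeeping rather than carrying it out; the accounting you describe ($L^2$ from summing the $(2L+1)^2$ coefficient pairs, $L^{4/q}$ from the union bound over lags entering through the $2/q$ power) is exactly how the paper obtains $\cH_2$.

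The genuine gap is in $A_1$. Your bound $|\widehat{\zeta}_{tjm}-\widetilde{\zeta}_{tjm}|\le\sum_{|a|\le L}\|X_{(t-a)j}\|_{\cH}\|\widehat{\phi}_{jma}-\phi_{jma}\|_{\cH}$, followed by averaging over $t$, requires
\begin{equation*}
\max_{j,k\in[p]}\frac{1}{n-2L-h}\sum_{t}\|X_{(t-a)j}\|_{\cH}\,\|X_{(t+h-b)k}\|_{\cH}=O_P(1),
\end{equation*}
and the stated assumption $\max_j\int_0^1\Sigma^{(0)}_{jj}(u,u)\du=O(1)$ only controls the expectation of each summand for fixed $(j,k)$; the maximum over $p^2$ pairs of these dependent time averages is not $O_P(1)$ for free and would require an additional uniform law of large numbers, potentially injecting extra $p$-dependent factors not present in the claimed rate $L^2\cH_1$. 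The paper avoids this entirely by never averaging norms of raw curves: it rewrites each time average as $\langle\widehat{\omega}_{jma},\widehat{\Sigma}^{(h+a-b)}_{jk}(\phi_{klb})\rangle$ (and symmetrically), then splits $\widehat{\Sigma}=\Sigma+\widehat{\Delta}$, so that the linear-in-$\widehat{\omega}$ term is bounded deterministically by $\|\widehat{\omega}\|_{\cH}\lambda_0$ via Cauchy--Schwarz with $\lambda_0=\max_j\int_0^1\Sigma^{(0)}_{jj}(u,u)\du=O(1)$, and the remaining cross terms involve only $\|\widehat{\Delta}^{(\cdot)}\|_{\cS,\max}$, which is already controlled by $\cH_2$. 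Your argument becomes correct if you replace the norm-product bound by this operator-level split; as written, the step would fail.
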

    The convergence rate in \Cref{thm4} comprises  three terms. The first term $L^{2-\kappa}$ depends on the smoothness parameter $\kappa$ with larger values yielding a faster rate, and arises from the truncation error in (\ref{FPCADef3}). The second term $L^2\cH_1$ is due to the dynamic FPC filter coefficients estimation errors, while the third term $\cH_2$ results from the errors in estimating the (auto)covariance functions.

    \subsection{Estimation of sparse spectral density function}\label{sec.sparse_est}

        In this section, we consider estimating the spectral density function $\bbf_{\theta}$ in the high-dimensional regime, where the estimator $\widehat{\bbf}_\theta$ in \eqref{eq2.3-1} is inconsistent. 
        However, the problem of the curse of dimensionality does not exist, if the  ``true''  spectral density function $\bbf_{\theta}$
        satisfies some lower-dimensional structural assumptions.      

        To define such structural assumptions, for an $\bbf_{\theta} = (f_{\theta, jk})_{j,k\in[p]}$ we write $\bbf_{\theta}\succeq 0$ if for each $\theta\in[0,2\pi]$ it is positive semi-definite, i.e., 
        $\sum_{j,k\in[p]}\iint_{[0,1]^2}f_{\theta,jk}(u,v)\overline{a_j(u)}a_k(v)\du\dv \ge 0
        $ for any $a_j, a_k \in L_2([0,1]).$ 

     \begin{definition}
     \label{defsparse}
        For  $0\le q^* < 1$ we define  \begin{equation*}
            \cC\big\{q^*,s_0(p)\big\} = \Big\{\bbf_{\theta}:\bbf_\theta\succeq 0,~\max\limits_{k\in[p]}\sum_{j = 1}^p\sup\limits_{\theta\in[0,2\pi]}\|f_{\theta, jk}\|^{q^*}_\mathcal{S} \le s_0(p)\Big\}
        \end{equation*}
        as the class of approximately sparse spectral density  functions (uniformly over all frequencies). 
    \end{definition}
    
        In the special case $q^* = 0$, under the convention $0^0 = 0$,  $\cC\{q^*,s_0(p)\}$ corresponds to the class of truly sparse spectral density functions. If $f_{\theta,jk} = 0$ for all $\theta\in[0,2\pi]$, it implies total linear uncorrelatedness between the $j$-th and $k$-th components of functional time series $\bX_t$ across all lags. 
        
        To estimate $\bbf_\theta$ in the sense of Definition \ref{defsparse}, we use the following threshold estimator 
        \begin{equation}\label{det12}
        \widehat{\bbf}_{\theta}^{\cT} = (\hat{f}_{\theta, jk}^{\cT})_{j,k\in[p]}\quad\mbox{with} \quad\hat{f}_{\theta, jk}^{\cT} =
        \hat{f}_{\theta, jk} \Big (1-{ \lambda \over  \sup_{\theta\in[0,2\pi]}\|\hat{f}_{\theta ,jk}\|_\cS} \Big )_+,
        \end{equation}
        where $\lambda > 0$ is the thresholding parameter and  
        $(x)_+:= \max(0,x)$ for any $x \in {\mathbb R}$.

        \begin{theorem}\label{thm5}
        Assume all conditions in Theorem \ref{thm2} hold and $\lambda^{-1}\cH_1 = o(1),$ where \(\cH_1 \) is specified in \eqref{thm2eq1}. Then uniformly on $\cC\{q^*,s_0(p)\}$, \begin{align*}
                &\sup_{\theta\in[0,2\pi]}\|\widehat{\bbf}_{\theta}^{\cT}-\bbf_{\theta}\|_{\cS,1}  = \sup_{\theta\in[0,2\pi]}\max\limits_{k\in[p]}\sum_{j = 1}^p \|\hat{f}_{\theta, jk}^{\cT}-f_{\theta, jk}\|_{\cS}\nonumber = O_P\Big\{s_0(p)\lambda^{1-q^*}\Big\}.
        \end{align*}
        \end{theorem}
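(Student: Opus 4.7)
The plan is to follow a Bickel--Levina-type thresholding argument, adapted to the operator-valued setting with an additional supremum over the frequency $\theta$. The starting point is the uniform elementwise rate $\sup_{\theta\in[0,2\pi]}\|\widehat{\bbf}_\theta-\bbf_\theta\|_{\cS,\max}=O_P(\cH_1)$ derived immediately after Theorem \ref{thm2}. Combined with the assumption $\lambda^{-1}\cH_1=o(1)$, this yields that, for any fixed $c\in(0,1)$, the event
$$
E_c=\Big\{\max_{j,k\in[p]}\sup_{\theta\in[0,2\pi]}\|\hat f_{\theta,jk}-f_{\theta,jk}\|_\cS\le c\lambda\Big\}
$$
has probability tending to $1$. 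All subsequent deterministic bounds will be carried out on $E_c$.

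Writing $a_{jk}=\sup_\theta\|f_{\theta,jk}\|_\cS$ and $\hat a_{jk}=\sup_\theta\|\hat f_{\theta,jk}\|_\cS$, the triangle inequality gives $|\hat a_{jk}-a_{jk}|\le c\lambda$ on $E_c$. Next I would use the trivial bound
$$
\sup_\theta\max_k\sum_j\|\hat f_{\theta,jk}^{\cT}-f_{\theta,jk}\|_\cS\le\max_k\sum_j\sup_\theta\|\hat f_{\theta,jk}^{\cT}-f_{\theta,jk}\|_\cS,
$$
which allows me to reason column-by-column. For each pair $(j,k)$, split into two regimes: (i) if $\hat a_{jk}\le\lambda$, then $\hat f_{\theta,jk}^{\cT}\equiv 0$, so $\sup_\theta\|\hat f_{\theta,jk}^{\cT}-f_{\theta,jk}\|_\cS=a_{jk}$; (ii) if $\hat a_{jk}>\lambda$, then $\hat f_{\theta,jk}^{\cT}=\hat f_{\theta,jk}(1-\lambda/\hat a_{jk})$ and a triangle inequality together with $\|\hat f_{\theta,jk}\|_\cS\le\hat a_{jk}$ gives
$$
\sup_\theta\|\hat f_{\theta,jk}^{\cT}-f_{\theta,jk}\|_\cS\le\sup_\theta\|\hat f_{\theta,jk}-f_{\theta,jk}\|_\cS+\lambda\le(1+c)\lambda.
$$

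In regime (i), the control $\hat a_{jk}\le\lambda$ and $|\hat a_{jk}-a_{jk}|\le c\lambda$ forces $a_{jk}\le(1+c)\lambda$, so $a_{jk}\le a_{jk}^{q^*}\{(1+c)\lambda\}^{1-q^*}$; summing over $j$ and invoking $\sum_j a_{jk}^{q^*}\le s_0(p)$ (uniform in $k$) from the class $\cC\{q^*,s_0(p)\}$ yields $O\{\lambda^{1-q^*}s_0(p)\}$. In regime (ii), I would count the surviving indices: $\hat a_{jk}>\lambda$ forces $a_{jk}>(1-c)\lambda$ on $E_c$, so by a Markov-type inequality $|\{j:\hat a_{jk}>\lambda\}|\le\{(1-c)\lambda\}^{-q^*}s_0(p)$, and since each term contributes at most $(1+c)\lambda$, the total is again $O\{\lambda^{1-q^*}s_0(p)\}$. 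Summing the two regimes uniformly over $k$ and then taking probability inside gives the asserted $O_P\{s_0(p)\lambda^{1-q^*}\}$ rate uniformly over the class.

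The main technical subtlety is reconciling the \emph{global} nature of the thresholding rule (which is driven by $\hat a_{jk}=\sup_\theta\|\hat f_{\theta,jk}\|_\cS$) with the \emph{pointwise-in-$\theta$} loss that appears in $\|\widehat{\bbf}_\theta^{\cT}-\bbf_\theta\|_{\cS,1}$. I circumvent this by pushing $\sup_\theta$ inside the sum at the outset so the sparsity assumption, which is stated in terms of $a_{jk}$, can be applied directly; the bias term $\lambda$ appearing in regime (ii) becomes frequency-free for the same reason. Aside from this bookkeeping, the argument is a direct transcription of the standard soft-thresholding proof into the Hilbert--Schmidt setting, so no further delicate estimates beyond the uniform concentration bound of Theorem \ref{thm2} are required.
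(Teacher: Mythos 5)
Your proposal is correct and follows essentially the same route as the paper: both establish the high-probability event on which the uniform elementwise error $\sup_\theta\max_{j,k}\|\hat f_{\theta,jk}-f_{\theta,jk}\|_\cS$ is bounded by (a constant times) $\lambda$, and then run the standard Bickel--Levina-style deterministic case analysis, splitting indices by whether they survive the threshold and invoking the weak-$\ell_{q^*}$ sparsity bound $\sum_j a_{jk}^{q^*}\le s_0(p)$. The only (immaterial) difference is that the paper abstracts the argument to a general class of uniform thresholding operators satisfying three axioms, whereas you work directly with the explicit soft-thresholding rule and replace part of the paper's indicator bookkeeping with a cardinality count of the surviving set; both yield the same $O_P\{s_0(p)\lambda^{1-q^*}\}$ bound.
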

        Theorem~\ref{thm5} established the uniform convergence rate, over all frequencies, for the threshold estimator of spectral density function  in the functional analogue of matrix $\ell_1$ norm.

        We finally turn to investigate the support recovery  properties of the estimator $\widehat{\bbf}_{\theta}^{\cT}$ over the class of truly sparse spectral density functions defined as      
           \begin{align*}
            \cC\{s_0(p)\} &= \Big \{\bbf_{\theta}:\bbf_{\theta}\succeq 0, \max\limits_{k\in[p]}\sum_{j = 1}^p I \Big (\sup\limits_{\theta\in[0,2\pi]}\|f_{\theta, jk}\|_\cS\ne 0 \Big )\le s_0(p)\Big \}.
        \end{align*}
        Define the support of $\bbf_\theta$ as 
        $$
        \mathrm{supp}(\bbf_\theta) = \Big\{(j,k):\sup_{\theta\in[0,2\pi]}\|f_{\theta ,jk}\|_\cS > 0\Big\}.
        $$

        \begin{theorem}\label{thm6}
            Assume all conditions in Theorem \ref{thm5} hold and $\sup_{\theta\in[0,2\pi]}\|f_{\theta ,jk}\|_\cS > \lambda$ for all $(j,k)\in\mathrm{supp}(\bbf_\theta)$. Then we have \begin{align*}
                \inf\limits_{\bbf_\theta\in\cC\{s_0(p)\}}P\left\{\mathrm{supp}(\widehat{\bbf}_{\theta}^{\cT})=\mathrm{supp}(\bbf_{\theta})\right\}\rightarrow 1~~\mathrm{as}~~n\rightarrow\infty.
            \end{align*}
        \end{theorem}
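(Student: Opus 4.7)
The plan is to reduce support recovery to the uniform elementwise concentration already delivered by Section \ref{sec:spectral}. Combining Theorem \ref{thm2} with Lemma \ref{lem1} yields
$\sup_{\theta\in[0,2\pi]}\|\widehat{\bbf}_\theta-\bbf_\theta\|_{\cS,\max}=O_P(\cH_1)$
with $\cH_1$ as in \eqref{thm2eq1}, and this bound is uniform in $\bbf_\theta\in\cC\{s_0(p)\}$ because the rate depends on the law only through the functional dependence measures $\bPhi_{q,\alpha}^\bX$ and $\cM_{q,\alpha}^\bX$, which Condition~\ref{cond.FDM} bounds by constants that do not involve $\bbf_\theta$. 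Hence, for a sufficiently large absolute constant $C$, the event
$\cE_n=\bigl\{\max_{j,k\in[p]}\sup_{\theta\in[0,2\pi]}\|\hat f_{\theta,jk}-f_{\theta,jk}\|_{\cS}\le C\cH_1\bigr\}$
satisfies $\inf_{\bbf_\theta\in\cC\{s_0(p)\}}P(\cE_n)\to 1$.

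Next, I would exploit a simple structural observation about the estimator in \eqref{det12}: the shrinkage factor $\bigl(1-\lambda/\sup_{\theta}\|\hat f_{\theta,jk}\|_{\cS}\bigr)_+$ depends on $(j,k)$ but \emph{not} on $\theta$, so $\hat f_{\theta,jk}^{\cT}\equiv 0$ in $\theta$ if and only if $\sup_{\theta}\|\hat f_{\theta,jk}\|_{\cS}\le\lambda$. Therefore
$\mathrm{supp}(\widehat{\bbf}_\theta^{\cT})=\{(j,k):\sup_{\theta}\|\hat f_{\theta,jk}\|_{\cS}>\lambda\}$. By the reverse triangle inequality, on $\cE_n$ we have, uniformly in $(j,k)$,
$\bigl|\sup_{\theta}\|\hat f_{\theta,jk}\|_{\cS}-\sup_{\theta}\|f_{\theta,jk}\|_{\cS}\bigr|\le C\cH_1$.

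With this in hand the argument splits into two parts on $\cE_n$. For $(j,k)\notin\mathrm{supp}(\bbf_\theta)$ we have $\sup_{\theta}\|f_{\theta,jk}\|_{\cS}=0$, hence $\sup_{\theta}\|\hat f_{\theta,jk}\|_{\cS}\le C\cH_1$; since $\lambda^{-1}\cH_1=o(1)$, this is strictly smaller than $\lambda$ for all large $n$, so $(j,k)\notin\mathrm{supp}(\widehat{\bbf}_\theta^{\cT})$. For $(j,k)\in\mathrm{supp}(\bbf_\theta)$, the beta-min-type hypothesis $\sup_{\theta}\|f_{\theta,jk}\|_{\cS}>\lambda$ together with $\cH_1=o(\lambda)$ (interpreted uniformly over the class, so that the excess $\sup_{\theta}\|f_{\theta,jk}\|_{\cS}-\lambda$ exceeds $C\cH_1$) gives $\sup_{\theta}\|\hat f_{\theta,jk}\|_{\cS}>\lambda$, so $(j,k)\in\mathrm{supp}(\widehat{\bbf}_\theta^{\cT})$. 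Combining the two cases, on $\cE_n$ the two supports coincide, and taking $n\to\infty$ finishes the proof.

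The main obstacle is really only bookkeeping: one must verify that the $O_P(\cH_1)$ rate from Theorem \ref{thm2} is genuinely uniform over $\cC\{s_0(p)\}$ (so that the infimum over the class can be moved inside the probability) and that the gap $\sup_{\theta}\|f_{\theta,jk}\|_{\cS}-\lambda$ strictly dominates the estimation error $C\cH_1$ in the limit—the standard separation assumption underlying such support-recovery results. No new concentration machinery beyond Theorem \ref{thm2} and Lemma \ref{lem1} is needed.
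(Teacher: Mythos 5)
Your argument is essentially the paper's: the paper likewise reduces support recovery to the event $\Omega_{n1}$ on which $\max_{j,k\in[p]}\sup_{\theta\in[0,2\pi]}\|\hat f_{\theta,jk}-f_{\theta,jk}\|_{\cS}\le\lambda$ (shown to have probability $1-o(1)$ in the proof of Theorem \ref{thm5} via $\lambda^{-1}\cH_1=o(1)$), and then rules out false positives and false negatives through the thresholding rule, exactly as you do. The one delicate point you explicitly flag---that the signal condition $\sup_{\theta}\|f_{\theta,jk}\|_{\cS}>\lambda$ must be read as supplying a margin over $\lambda$ that dominates the estimation error in order to close the false-negative case---is implicitly required by the paper's proof as well (its set $S_{n2}$ is defined with $\sup_{\theta}\|\hat f_{\theta,jk}\|_{\cS}=0$ and so does not capture components shrunk to zero with $0<\sup_{\theta}\|\hat f_{\theta,jk}\|_{\cS}\le\lambda$), so your treatment is, if anything, the more careful of the two.
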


        Theorem \ref{thm6} implies that $\widehat{\bbf}_{\theta}^{\cT}$ can recover the support of truly sparse spectral density functions with probability approaching one, provided that the signal strength is sufficiently large.

        \begin{remark}
          {\rm 
          \Cref{thm5} and \ref{thm6} remain correct for a general class of threshold estimators, see Section~\ref{Sec.A.5} of the Supplementary Material for details.  
          See also similar results for the threshold estimation of the sparse spectral density function at a fixed frequency in Section \ref{Sec.Add_Sparse} of the Supplementary Material. 
            }
        \end{remark}

    \section{Discretely observed functional time series}\label{sec.partial}
\renewcommand{\theequation}{\thesection.\arabic{equation}}
  \setcounter{equation}{0}
  
        In this section, we consider the practical scenario where curves are discretely observed with errors. For each $t\in[n]$ and $j\in[p]$, suppose \(X_{tj}(u)\) is observed with errors at \(T_{tj}\) random time points \(U_{tj1},\dots,U_{tjT_{tj}}\in[0,1]\). Let \(Y_{tji}\) be the observed value of \(X_{tj}(U_{tji})\) satisfying \begin{align*}
            Y_{tji} = X_{tj}(U_{tji})+\varepsilon_{tji},\quad i=1,\dots,T_{tj},
        \end{align*}
        where the random errors \(\varepsilon_{tji}\)'s, independent of $X_{tj}$'s, are i.i.d. with \(E(\varepsilon_{tji}) = 0\) and \(\var(\varepsilon_{tji}) = \sigma_j^2<\infty\).
        
        For densely observed curves with $T_{tj}$'s larger than some order of $n,$ it is conventional to implement local linear smoothing to the observations from each curve, thereby producing reconstructed curves that can be used to compute the second-order statistics as in Section \ref{Sec.covestimate} and \ref{sec2.3}. 
        In what follows, denote  \(K_b = K(\cdot/b)/b\) for a univariate kernel with bandwidth $b>0.$ For each $t,j$ and $u$, the estimation of \(X_{tj}(u)\) is attained via \(\widehat{X}_{tj}(u) = \hat{a}_0(u)\), where \begin{equation*}
            \big\{\hat{a}_0(u),\hat{a}_1(u)\big\} = \argmin\limits_{a_0(u),a_1(u)}\frac{1}{T_{tj}}\sum_{i = 1}^{T_{tj}}\big\{Y_{tji} - a_0(u) - a_1(u)(U_{tji}-u)\big\}^2K_{b_{j}}(U_{tji}-u).
        \end{equation*} 
       
       For any $j \in [p],$ individual functions often exhibit similar smoothness properties and sometimes similar shapes. Therefore, we use the same bandwidth $b_{j}$ for all of them. Denote the reconstructed curves by $\widehat{\bX}_t = (\widehat{X}_{t1},\dots,\widehat{X}_{tp})^\T$. We then estimate the (auto)covariance function at lag $h$ by $$ \widetilde{\bSigma}^{(h)}(u,v) = \frac{1}{n-|h|}\sum\limits_{t =1}^{n}\widehat{\bX}_t(u)\widehat{\bX}_{t+h}^\T(v),\ h=0,1,\dots,\ (u,v)\in[0,1]^2,$$
        where we set $\widehat{\bX}_{t+h} = 0$ if $t+h \le 0$ or $t+h>n$.

    Before presenting the convergence results, we impose some regularity conditions.

        \begin{condition}\label{condpartial1}
~~ \begin{itemize}       \item[(\romannumeral 1)] The errors $\{\varepsilon_{tji}\}$ are sub-Gaussian random variables, i.e., there exists some positive constant $C$ such that for all $z \in {\mathbb R}$, $E\{\exp(\varepsilon_{tji}z)\}\le \exp(C^2\sigma_j^2z^2/2)$ and  $\max_j\sigma_j=O(1)$.                          \item[(\romannumeral 2)] The kernel $K(\cdot)$ is a symmetric probability density function on support \([-1,1]\) and is Lipschitz continuous, i.e., there exists some positive constant \(C\) such that \(|K(u)-K(v)| \le C|u-v|\) for any $u,v\in[-1,1].$

            \item[(\romannumeral 3)] The observational points \(\{U_{tji}:t\in[n],j\in[p],i\in[T_{tj}]\}\) are i.i.d. copies of a random variable $U$ defined on \([0,1]\) with density \(f_U(\cdot)\) satisfying \(0 < m_f \le \inf_{u\in[0,1]}f_U(u) \le \sup_{u\in[0,1]}f_U(u) \le M_f < \infty\). Moreover, $\{\bX_t\},\{U_{tji}\}$ and $\{\varepsilon_{tji}\}$ are mutually independent.          \item[(\romannumeral 4)] There exists a sufficiently large   positive constant $C$ such that $\max_{t,j} T_{tj}(\min_{t,j} T_{tj})^{-1} \le C$ and $\max_{j} b_{j}(\min_{j} b_j)^{-1} \le C$. For each $j\in [p],$ the average sampling frequency $\widebar{T}_j = (n^{-1} \sum_{t=1}^n T_{tj}^{-1})^{-1} \rightarrow \infty, b_{j} \rightarrow 0$ and $\widebar T_j b_{j} \rightarrow \infty$ as $n \rightarrow \infty.$
\item[    (\romannumeral 5)] For each $t\in[n],j, k\in[p]$, $X_{tj}(u)$ is twice continuously differentiable, and $\Sigma^{(h)}_{jk}(u,v)$ is twice continuously differentiable over $u,v\in[0,1]$.
            \item[(\romannumeral 6)] For each $t \in [n],$ 
            $E(|\bX_{t}^*|_\infty^2) \lesssim E(\|\bX_{t}\|_{\cH,\infty}^2)$ and $E(|\bX_{t}^{(2)*}|_\infty^2) \lesssim E(\|\bX_{t}\|_{\cH,\infty}^2),$ where $\bX_t^* = (X_{t1}^*,\dots,X_{tp}^*)^\T$ and $\bX_{t}^{(2)*} = (X_{t1}^{(2)*},\dots,X_{tp}^{(2)*})^\T,$ with 
            $X_{tj}^* = \sup_{u\in[0,1]}|X_{tj}(u)|$ and $X_{tj}^{(2)*} = \sup_{u\in[0,1]}|\partial^2_u X_{tj}(u)|$ for each $j \in [p]$.
            \end{itemize}
        \end{condition}

        Condition~\ref{condpartial1}(\romannumeral 1) imposes the sub-Gaussianity on the random errors. Conditions~\ref{condpartial1}(\romannumeral 2)--(\romannumeral 5) are standard in the literature of local linear smoothing for functional data \cite[]{yao2005functional,zhang2007statistical} adapted to the multivariate setting. Condition~\ref{condpartial1}(\romannumeral 6) requires that the supremum norm of the original curve and its twice differential is not significantly larger than the $L_2$ norm of the original curve. This condition rules out irregular cases in which the curves and their second derivatives exhibit extreme spikes. We give two examples that satisfy Condition~\ref{condpartial1}(\romannumeral 6).

        \begin{example}\label{example4}
            Assume that the curves $X_{tj}(u)$'s are three times  continuously differentiable and $E(\|\partial^\kappa_u\bX_t\|_{\cH,\infty}^2) \lesssim E(\|\bX_t\|_{\cH,\infty}^2)$ for $\kappa=1,2,3$. Then $\bX_t$ satisfies Condition \ref{condpartial1}(\romannumeral 5).
        \end{example}

        \begin{example}\label{example3}
            For each $t \in [n],j \in [p],$ consider the Karhunen–-Loève expansion $X_{tj}(\cdot) = \sum_{l=1}^\infty \xi_{tjl}\psi_{tjl}(\cdot)$ with $\xi_{tjl}=\langle X_{tj},\psi_{tjl}\rangle$ and  $\var(\xi_{tjl})=\vartheta_{jl}$ for $l=1,2,\dots.$ Assume that there exists a large constant $C$ such that $E(\max_{j\in[p]} \xi_{tjl}^2) \le C E(\max_{j\in[p]} \xi_{tj1}^2)$ for $l = 1,2,\dots$. Similar to Assumption 4 in \cite{zhou2022theory}, assume that $\psi_{tjl}$'s are twice continuously differentiable and $\sup_{u\in[0,1]}|\psi_{tjl}(u)| \asymp l^{\tilde{\delta}_1}, \sup_{u\in[0,1]}|\partial^2_u\psi_{tjl}(u)| \asymp l^{\tilde{\delta}_2}$ for some $\tilde{\delta}_1, \tilde{\delta}_2>0.$ 
            Then if $\vartheta_{tjl}\asymp \tilde{\delta}^{-l}$ for some $\tilde{\delta} > 1$, $\bX_t$ satisfies Condition \ref{condpartial1}(\romannumeral 5).
        \end{example}

        \begin{theorem}\label{thm7}
            Assume that all conditions in Theorem \ref{thm1} and Condition \ref{condpartial1} hold, and $\log (p\vee n)/\min_j\widebar{T}_j b_{j}\rightarrow 0.$ Then we have $$\|\widetilde{\bSigma}^{(h)}-\bSigma^{(h)}\|_{\cS,\max} = O_P(\cH_3+\cH_4),$$
        where \(\cH_3\) is specified in \eqref{cov.rate} for the fully observed case, and 
        $$
        \cH_4 = (\Omega_{0,2})^2\{\max_{j}b_{j}^2 + (\min_{j}\widebar{T}_j b_{j})^{-1/2}(\log p)^{1/2}\}. 
        $$
        \end{theorem}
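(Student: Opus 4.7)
The plan is to split the error via the triangle inequality,
\[
\|\widetilde{\bSigma}^{(h)}-\bSigma^{(h)}\|_{\cS,\max} \le \|\widetilde{\bSigma}^{(h)}-\widehat{\bSigma}^{(h)}\|_{\cS,\max} + \|\widehat{\bSigma}^{(h)}-\bSigma^{(h)}\|_{\cS,\max},
\]
where the second summand is $O_P(\cH_3)$ directly by Theorem~\ref{thm1} and \eqref{cov.rate}. All remaining work is therefore to show that the smoothing-induced gap satisfies $\|\widetilde{\bSigma}^{(h)}-\widehat{\bSigma}^{(h)}\|_{\cS,\max}=O_P(\cH_4)$. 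Setting $\Delta_{tj}(u)=\widehat{X}_{tj}(u)-X_{tj}(u)$, the $(j,k)$-entry of the target difference expands into three averages of the rank-one operators $\Delta_{tj}\otimes X_{(t+h)k}$, $X_{tj}\otimes\Delta_{(t+h)k}$ and $\Delta_{tj}\otimes\Delta_{(t+h)k}$. Since $\|f\otimes g\|_{\cS}=\|f\|_{\cH}\|g\|_{\cH}$ for rank-one operators, a single application of Cauchy--Schwarz in $t$ bounds the $\|\cdot\|_{\cS}$-norm of each such average by a product of the square roots of $(n-|h|)^{-1}\sum_t\|\Delta_{tj}\|_{\cH}^2$ and $(n-|h|)^{-1}\sum_t\|X_{(t+h)k}\|_{\cH}^2$, uniform control of which over $j,k\in[p]$ is the remaining task.

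Next I would decompose the smoothing error via the standard local linear representation $\Delta_{tj}(u)=B_{tj}(u)+V_{tj}(u)$. A Taylor expansion under Condition~\ref{condpartial1}(\romannumeral 5) gives $|B_{tj}(u)|\lesssim b_j^2\,X_{tj}^{(2)*}$ uniformly in $u$; Condition~\ref{condpartial1}(\romannumeral 6) transfers this pointwise bound to the $\cH$-norm, and together with stationarity and Definition~\ref{def1} yields $\max_{j\in[p]}\{(n-|h|)^{-1}\sum_t\|B_{tj}\|_{\cH}^2\}^{1/2}=O_P(\max_j b_j^2\cdot\Omega_{0,2})$. For the stochastic piece, $V_{tj}(u)$ is a linear functional of the sub-Gaussian errors $\varepsilon_{tji}$ and the random design $\{U_{tji}\}$; conditioning on the $U$'s and using Condition~\ref{condpartial1}(\romannumeral 1) gives a sub-Gaussian tail for $V_{tj}(u)$ with proxy variance of order $(T_{tj}b_j)^{-1}$, once one has verified (by an exponential inequality and Condition~\ref{condpartial1}(\romannumeral 3)) that the Nadaraya--Watson-type denominator concentrates near its deterministic analogue of order $f_U(u)$. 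The Lipschitz kernel in Condition~\ref{condpartial1}(\romannumeral 2) then allows an $\varepsilon$-net discretisation of $[0,1]$ of size polynomial in $n\vee p$, and a union bound over $(t,j)\in[n]\times[p]$ delivers
\[
\max_{t\in[n],\,j\in[p]}\sup_{u\in[0,1]}|V_{tj}(u)|=O_P\Big(\big\{\log(p\vee n)/\min_j \widebar T_j b_j\big\}^{1/2}\Big),
\]
which is the key uniform bound. Combining this with $E\{(n-|h|)^{-1}\sum_t\|X_{(t+h)k}\|_{\cH}^2\}\lesssim\Omega_{0,2}^2$ (a direct consequence of Definition~\ref{def1}) and Cauchy--Schwarz reproduces the $\cH_4$ rate for the $\Delta\otimes X$ and $X\otimes\Delta$ cross terms, while $\Delta\otimes\Delta$ is strictly smaller since it squares the smoothing bound.

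The main technical obstacle is the uniform-in-$(u,t,j)$ control of $V_{tj}(u)$ without losing the sharp $(\log p)^{1/2}$ factor. This is delicate because (i) the random denominator of the local linear estimator must be handled by a uniform concentration argument combining Condition~\ref{condpartial1}(\romannumeral 3) with an exponential inequality, so that one can invert it on a high-probability event; (ii) the supremum over $u\in[0,1]$ must be converted to a finite-net-plus-Lipschitz argument whose net size cannot inflate the logarithmic factor beyond $\log(p\vee n)$, which in turn is why the hypothesis $\log(p\vee n)/\min_j\widebar T_j b_j\to 0$ enters; and (iii) the bias analysis must absorb $\max_j(X_{tj}^{(2)*})^2$ into $\Omega_{0,2}^2$ via Condition~\ref{condpartial1}(\romannumeral 6), rather than producing extra $p$-dependent constants, which is precisely the role played by the joint functional dependence measure of Definition~\ref{def1}.
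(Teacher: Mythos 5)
Your overall architecture matches the paper's: triangle inequality to isolate $\|\widetilde{\bSigma}^{(h)}-\widehat{\bSigma}^{(h)}\|_{\cS,\max}$, a bias/variance split of the smoothing error through an intermediate ``population-denominator'' local-linear fit, concentration of the design matrix $\widehat{\bS}_{tj}(u)$ on a high-probability event, and Condition~\ref{condpartial1}(\romannumeral 6) to convert sup-norm bounds on curves into $\Omega_{0,2}^2$. The bias analysis and the treatment of the $\Delta\otimes\Delta$ term are fine.

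However, there is a genuine gap in how you control the stochastic part, and it shows up in the final rate. You propose to bound $\max_{t\in[n],\,j\in[p]}\sup_{u}|V_{tj}(u)|$ by a union bound over all $(t,j)$, obtaining $O_P\big(\{\log(p\vee n)/\min_j\widebar T_j b_j\}^{1/2}\big)$, and then feed this uniform bound into the Cauchy--Schwarz step. That route proves a rate with $\{\log(p\vee n)\}^{1/2}$ in place of $(\log p)^{1/2}$, which is strictly weaker than $\cH_4$ whenever $n$ grows faster than $p$; it does not ``reproduce the $\cH_4$ rate'' as claimed. The paper never takes a maximum over $t$ for the variance term: it bounds the \emph{time average} $\frac{1}{n-h}\sum_t E\big\{\|\bX_t\|_{\cH,\infty}\,\|\widehat\bX_{t+h}-\widetilde\bX_{t+h}\|_{\cH,\infty}\,I\{\Omega_{(t+h)1}(1)\}\big\}$ by conditioning on $\bX_t$, integrating the conditional tail $1\wedge C_1p\exp\{-C_2\cT_t\min(\delta,\delta^2)\}$ (Lemmas~\ref{LemmaPartial1} and \ref{LemmaPartial2}) to get $C(\log p/\cT_t)^{1/2}$ \emph{per time point}, so only the union over $j\in[p]$ contributes a logarithm. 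The hypothesis $\log(p\vee n)/\min_j\widebar T_j b_j\to 0$ is used solely to make the complement of $\bigcap_t\Omega_{t1}(1)$ negligible (the term $I_{13}$), not as the source of the variance rate. To repair your argument you would need to replace the sup-over-$t$ bound by a first- or second-moment bound on the time average, exactly as the paper does.

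A secondary omission: the stochastic component of the local-linear numerator includes $T_{tj}^{-1}\sum_i X_{tj}(U_{tji})K_{b_j}(U_{tji}-u)$ minus its conditional mean, whose Bernstein proxy variance scales with $(X_{tj}^*)^2$, not just with $(T_{tj}b_j)^{-1}$. You therefore cannot claim a sub-Gaussian tail with proxy variance of order $(T_{tj}b_j)^{-1}$ unconditionally; you must normalize by $1+|\bX_t^*|_\infty$ (as in Lemma~\ref{LemmaPartial1}) and reabsorb that factor via Cauchy--Schwarz and Condition~\ref{condpartial1}(\romannumeral 6), which you invoke only for the bias.
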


    \begin{remark}\label{remark9} 
        {\rm 
        To facilitate further discussion, we consider a simplified scenario where $b_{j} \asymp b$ and $ T_{tj}\asymp T$ for all $t\in[n],j\in[p].$ To balance the variance and bias terms, we can choose the optimal bandwidth $b \asymp T^{-1/5}(\log p)^{1/5}$ and the convergence rate in Theorem \ref{thm7} is reduced to 
            $$\|\widetilde{\bSigma}^{(h)}-\bSigma^{(h)}\|_{\cS,\max} = O_P\qth{\bPhi_{q,\alpha}^\bX\sth{\Big(\frac{\log p}{n}\Big)^{1/2} + C_\bX p^{4/q} D_{n,h}^{2/q}}+ \Omega_{2,0}^2 \Big(\frac{\log p}{T}\Big)^{2/5}} .$$
            Compared to the fully observed case, the additional term $(\Omega_{2,0})^2 T^{-2/5}(\log p)^{2/5}$ in the rate arises from the local linear smoothing step, and is proportional to the optimal rate in \cite{zhang2007statistical} up to a factor of $(\log p)^{2/5}(\Omega_{2,0})^2$ due to the high-dimensional effect and temporal dependence. This rate exhibits an interesting phase transition phenomenon depending on the relative order of $T$ to $n.$ When $T$ grows very fast, the resulting rate $O_P\pth{\cH_3}$ coincides with that of the fully observed case, implying that the theory for very dense functional time series falls within the parametric paradigm. When $T$ grows moderately fast, it leads to a slower rate \(O_P\{(\Omega_{2,0})^2 T^{-2/5}(\log p)^{2/5}\}\).
        }
    \end{remark}

    We finally establish the convergence rate of spectral density function estimator based on discrete observations, denoted by \begin{equation}\label{part.spec}
        \widetilde{\bbf}_{\theta}(\cdot,\cdot) = \frac{1}{2\pi}\sum_{h = -m_0}^{m_0}K(h/m_0)\widetilde{\bSigma}^{(h)}(\cdot,\cdot)\exp(-\mathbbm{i} h\theta).
    \end{equation}

    \begin{theorem}\label{thm8}
        Assume that all conditions in Theorem \ref{thm2} and Condition \ref{condpartial1} hold, and $\log (p\vee n)/\min_j\widebar{T}_j b_{j}\rightarrow 0$. Then we have $$\sup\limits_{\theta\in[0,2\pi]}\|\widetilde{\bbf}_{\theta}-\bbf_\theta\|_{\cS,\max} = O_P(\cH_1+\cH_5),$$
        where \(\cH_1\) is specified in \eqref{thm2eq1} for the fully observed case, and 
        $$
        \cH_5 = m_0\Omega_{0,2}^2\{\max_{j}b_{j}^2 + (\min_{j}\widebar{T}_j b_{j})^{-1/2}(\log p)^{1/2}\}.
        $$
        \end{theorem}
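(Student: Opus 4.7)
\textbf{Proof proposal for Theorem \ref{thm8}.} The plan is to reduce the problem to an application of Theorem \ref{thm2} (combined with Lemma \ref{lem1}) for the ``fully observed" spectral density estimator plus a control on the discrepancy between $\widetilde{\bbf}_\theta$ and $\widehat{\bbf}_\theta$ that is driven entirely by the local linear smoothing error. Specifically, I would start from the triangle inequality
\begin{equation*}
\sup_{\theta\in[0,2\pi]}\|\widetilde{\bbf}_\theta-\bbf_\theta\|_{\cS,\max}\le \sup_{\theta\in[0,2\pi]}\|\widetilde{\bbf}_\theta-\widehat{\bbf}_\theta\|_{\cS,\max}+\sup_{\theta\in[0,2\pi]}\|\widehat{\bbf}_\theta-\bbf_\theta\|_{\cS,\max}.
\end{equation*}
The second term is exactly the quantity analyzed in Section~\ref{sec2.3}, so by Theorem \ref{thm2} together with Lemma \ref{lem1} it is $O_P(\cH_1)$.

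For the first term, I would exploit the fact that both $\widetilde{\bbf}_\theta$ and $\widehat{\bbf}_\theta$ are lag-window statistics with the same kernel weights $K(h/m_0)$, so
\begin{equation*}
\widetilde{\bbf}_\theta-\widehat{\bbf}_\theta=\frac{1}{2\pi}\sum_{|h|\le m_0}K(h/m_0)\bigl\{\widetilde{\bSigma}^{(h)}-\widehat{\bSigma}^{(h)}\bigr\}\exp(-\mathbbm{i} h\theta).
\end{equation*}
Taking $\|\cdot\|_{\cS,\max}$ inside the sum and using $|K(h/m_0)|\le 1$ together with $|\exp(-\mathbbm{i} h\theta)|=1$ yields the uniform (in $\theta$) bound $(2m_0+1)/(2\pi)$ times $\max_{|h|\le m_0}\|\widetilde{\bSigma}^{(h)}-\widehat{\bSigma}^{(h)}\|_{\cS,\max}$. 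So the task reduces to controlling $\max_{|h|\le m_0}\|\widetilde{\bSigma}^{(h)}-\widehat{\bSigma}^{(h)}\|_{\cS,\max}$ by $\cH_4$.

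To obtain that maximal bound, I would revisit the decomposition used in the proof of Theorem \ref{thm7}. Writing $\bD_t=\widehat{\bX}_t-\bX_t$, one has
\begin{equation*}
\widetilde{\bSigma}^{(h)}-\widehat{\bSigma}^{(h)}=\frac{1}{n-|h|}\sum_{t=1}^{n}\bigl\{\bD_t\bX_{t+h}^\T+\bX_t\bD_{t+h}^\T+\bD_t\bD_{t+h}^\T\bigr\}.
\end{equation*}
The crucial observation is that the smoothing residuals $\bD_t$ are identical for every lag $h$, so the uniform-over-$u$ bounds on $\widehat{X}_{tj}-X_{tj}$ (yielding the $\cH_4$ rate in Theorem \ref{thm7}) can be invoked once and then combined with uniform moment bounds on $\max_{|h|\le m_0,j,k}\|(n-|h|)^{-1}\sum_t X_{tj}X_{(t+h)k}\|_{\cS}$, which are available from Theorem \ref{thm1}. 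Using Condition \ref{condpartial1}(\romannumeral 6) to pass between the supremum and $L_2$ norms for the discrete evaluation that enters local linear smoothing, and applying a union bound over the $2m_0+1$ lags (which only affects the logarithmic factor already present in $\cH_4$), one arrives at $\max_{|h|\le m_0}\|\widetilde{\bSigma}^{(h)}-\widehat{\bSigma}^{(h)}\|_{\cS,\max}=O_P(\cH_4)$. Multiplying by $(2m_0+1)$ then delivers the $\cH_5=m_0\Omega_{0,2}^2\{\max_j b_{j}^2+(\min_j\widebar T_j b_j)^{-1/2}(\log p)^{1/2}\}$ term, and the triangle inequality concludes.

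The main obstacle, as in Theorem \ref{thm7}, is showing that the local linear smoothing error $\sup_{u,t,j}|\widehat{X}_{tj}(u)-X_{tj}(u)|$ admits the stated uniform rate across all $j\in[p]$ under temporal dependence and the sub-Gaussian noise, while simultaneously keeping the cross-product $\bD_t\bX_{t+h}^\T$ controllable uniformly over the $O(m_0)$ lags. Since the $\bD_t$-part is independent of $h$ and the $h$-dependence enters only through the lagged $\bX_{t+h}$ factor, this uniformity is achieved by one application of the sub-Gaussian tail bound for local linear smoothing coupled with the Nagaev-type bounds from Theorem \ref{thm1} applied to the cross terms, at the cost of a factor $\log(p m_0)$ absorbed into the existing $(\log p)^{1/2}$ in $\cH_4$ under the assumption $m_0<n/3$.
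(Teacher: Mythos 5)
Your proposal is correct and follows essentially the same route as the paper: triangle inequality to split off the fully observed term (handled by Theorem \ref{thm2} as $O_P(\cH_1)$), then reduction of $\sup_\theta\|\widetilde{\bbf}_\theta-\widehat{\bbf}_\theta\|_{\cS,\max}$ to a sum over the $2m_0+1$ lags of per-lag covariance discrepancies, each controlled by the $I_1$/$I_2$ machinery from the proof of Theorem \ref{thm7}, yielding the extra factor $m_0$ in $\cH_5$. One minor imprecision: your intermediate claim that $\max_{|h|\le m_0}\|\widetilde{\bSigma}^{(h)}-\widehat{\bSigma}^{(h)}\|_{\cS,\max}=O_P(\cH_4)$ is not directly delivered by a union bound over Markov-type first-moment estimates (that costs a factor $m_0$, not a log), but this is harmless since bounding the sum over lags by $(2m_0+1)$ times the per-lag expectation — which is what the paper does — gives the same final rate $\cH_5$.
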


        \section{Simulations}
    \label{sec:sim}
    \renewcommand{\theequation}{\thesection.\arabic{equation}}
  \setcounter{equation}{0}
  
    In this section, we carry out simulations to validate our established theoretical results in Section \ref{sec:spectral}, \ref{Sec.Appli} and \ref{sec.partial}. 
    We generate $p$-dimensional functional time series by  
    \begin{align}
        X_{tj}(u) = \sum_{l=1}^4 (3l/2)^{-1/2} \xi_{tjl} \psi_{l}(u), ~~t \in [n], ~j \in [p], ~u\in[0,1], \label{eq6.1}
    \end{align}
    where $\{\psi_{1}(u),\psi_{2}(u),\psi_{3}(u),\psi_{4}(u)\} = \sqrt{2}\{\sin(2\pi u), \cos(2\pi u),\sin(4\pi u),\cos(4\pi u)\}$ are the basis functions. For each $l\in[4]$, the basis coefficient vector $\bxi_{tl} = (\xi_{t1l},\dots,\xi_{tpl})$ follows a vector autoregressive model $\bxi_{tl} = \rho \bA \bxi_{(t-1)l} + \bfeta_{tl}$, where \(\rho \in (0,1)\) and \(\{\bfeta_{tl}\}\) are independently sampled from a \(p\)-dimensional random vector with each entry following an independent \( t \)-distribution with 6 degrees of freedom. The matrix $\bA = \bI_{p/50} \otimes\bA_0$ with $\bA_0 = \bv_1\bv_1^\T/|\bv_1|_2 + \bv_2\bv_2^\T/|\bv_2|_2$, where the $j$-th entries of $\bv_1$ and $\bv_2 \in {\mathbb R}^{50}$ are
    respectively $1$ and $\cos(2j)$ for $j\in[50]$. It can be easily verified that \(\cM_{q,\alpha}(\bX) \lesssim p^{1/q}\) with $q = 5$ for any \(\alpha > 0\).

    \subsection{Fully observed functional scenario}\label{sec6.1}

    \begin{figure}[thbp]
    \centering
    \includegraphics[width=1\linewidth]{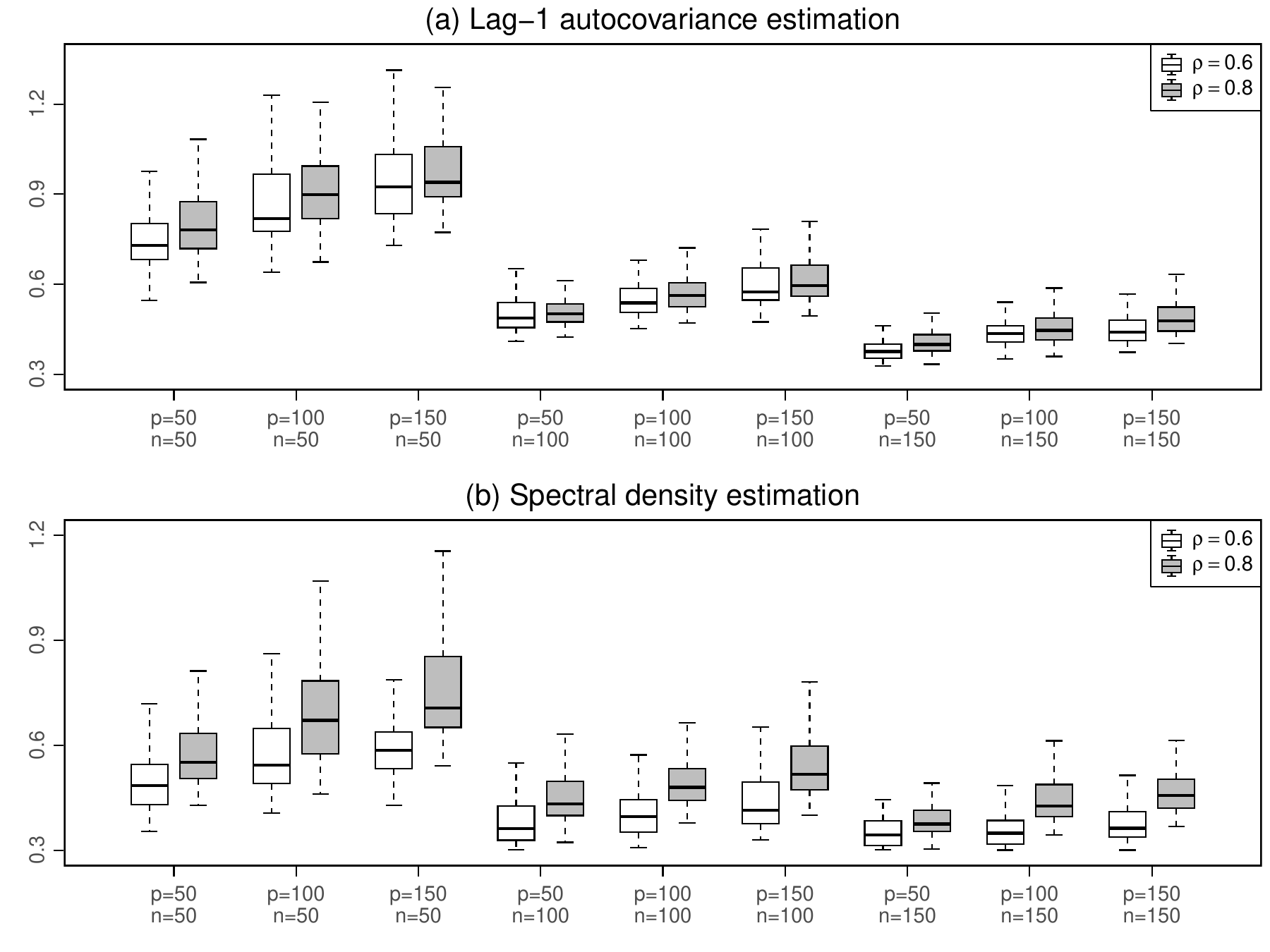}
    \caption{Boxplots of $\mathrm{MaxErr}(\widehat{\bSigma})$ and $\mathrm{MaxErr}(\widehat{\bbf_\theta})$.}
        \label{fig:1}
    \end{figure}

    We first focus on fully observed functional times series generated by \eqref{eq6.1} and evaluate the finite-sample performance of the estimated lag-1 autocovariance function $\widehat{\bSigma}^{(1)}$ in \eqref{eq:3.1eq1} and the estimated spectral density function $\widehat{\bbf}_\theta$ in \eqref{eq2.3-1}. We use the rectangular kernel and choose the truncation parameter \( m_0 = \lceil \log n \rceil \) when obtaining $\widehat{\bbf}_\theta$. We consider settings of $n \in \{50, 100, 150\}$, $p \in \{50, 100, 150\}$, and $\rho \in \{0.6, 0.8\}$. The estimation accuracy is evaluated in terms of the following elementwise maximum estimation errors, 
    \begin{align*}
    \mathrm{MaxErr}(\widehat{\bSigma}) &= \big\|\widehat{\bSigma}^{(1)} - \bSigma^{(1)}\big\|_{\cS,\max} ~~ \text{and}~~\mathrm{MaxErr}(\widehat{\bbf_\theta}) = \sup_{\theta \in [0, 2\pi]} \big\|\widehat{\bbf}_{\theta} - \bbf_{\theta}\big\|_{\cS,\max}.
    \end{align*}

    Figure~\ref{fig:1} displays boxplots of $\mathrm{MaxErr}(\widehat{\bSigma}^{(1)})$ and $\mathrm{MaxErr}(\widehat{\bbf_\theta})$ based on 100 simulation runs. Some patterns are observable. First, as $p$ increases, the functional dependence measure $\cM_{5,\alpha}(\bX) \lesssim p^{1/5}$ grows relatively slowly, and the estimation errors exhibit a modest upward trend, which is consistent with the convergence rates established in Theorems~\ref{thm1} and~\ref{thm2}. Second, as the strength of temporal dependence increases (i.e., as $\rho$ varies from $0.6$ to $0.8$), the estimation performance deteriorates.

    \subsection{Discretely observed functional scenario}

    \begin{figure}[htbp]
        \centering
        \includegraphics[width= 1\linewidth]{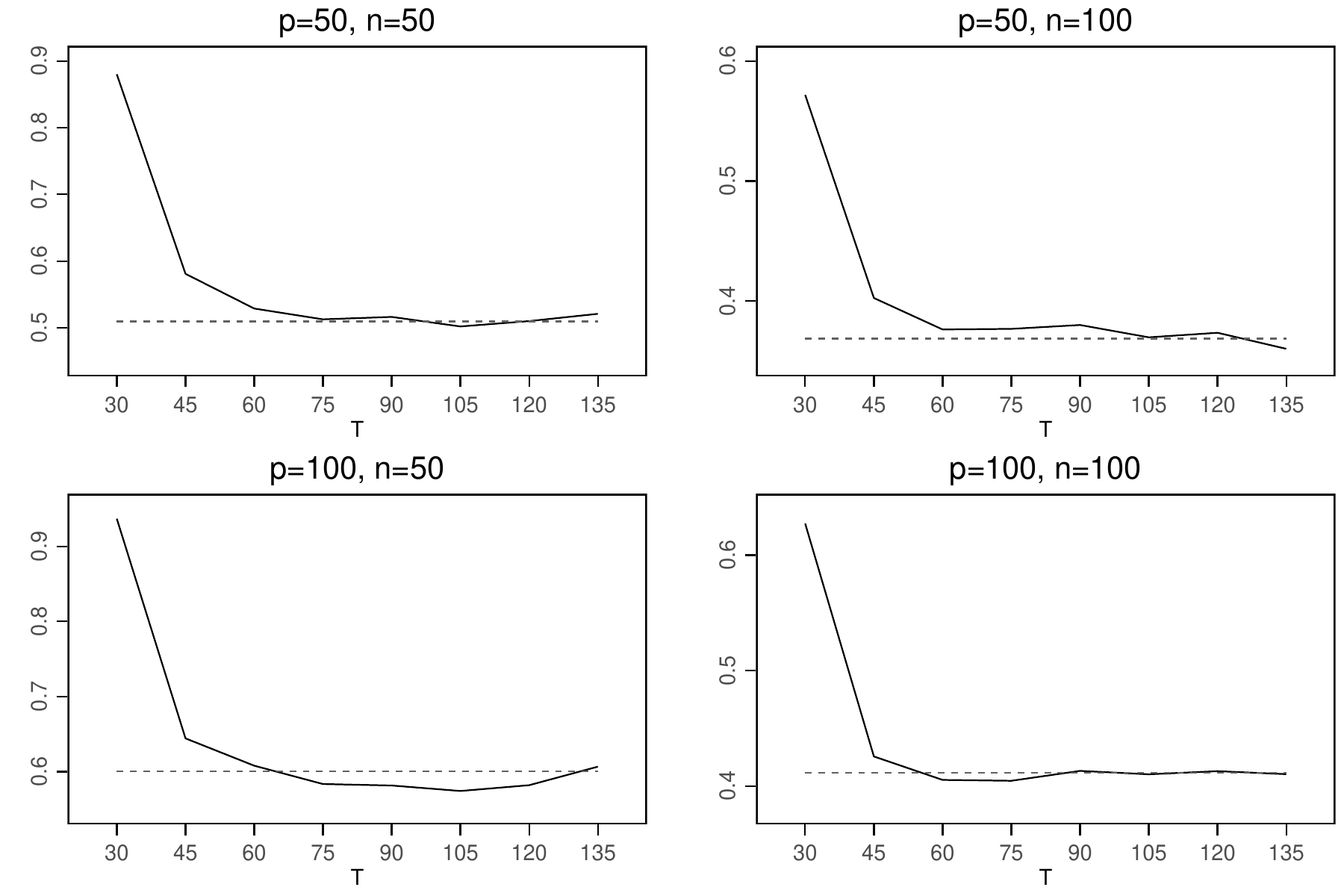}
        \caption{Plots of average $\mathrm{MaxErr}(\widetilde{\bbf}_\theta)$ against $T$ under discretely observed scenario (black solid) and under fully observed scenario (gray dashed).}
        \label{fig:2}
    \end{figure}

    This section considers the practical scenario where the functional time series $X_{tj}(\cdot)$ in \eqref{eq6.1} are discretely observed with errors. Specifically, we generate the observed values  $Y_{tji} = X_{tj}(U_{tji})+\varepsilon_{tji},$ for $t\in[n],j\in[p]$ and $i\in[T],$ where the time points $U_{tji}$'s and the errors \(\varepsilon_{tji}\)'s are sampled independently from $\mathrm{Unif}(0,1)$ and $\cN(0,4)$ respectively. We examine settings of \(n \in\sth{50,100}, p\in\sth{50,100}, \rho = 0.6,\) and \( T\in\{30, 45, \dots, 135\}\). For simplicity, we assess the performance of the spectral density function estimator $\widetilde{\bbf}_\theta$ in \eqref{part.spec} using the minimum elementwise maximum estimation error across a grid of candidate bandwidths in the prespecified set $\cH_b,$ whose elements are proportional to $(T ^{-1}\log p)^{1/5}:$ 
    \begin{align*}
    \mathrm{MaxErr}(\widetilde{\bbf}_\theta) = \min_{b\in\cH_b} \sup_{\theta \in [0, 2\pi]} \big\|\widetilde{\bbf}_{\theta} - \bbf_{\theta}\big\|_{\cS,\max}.
    \end{align*}
    It is noted that the bandwidth selection here is to corroborate the theoretical results in Section~\ref{sec.partial}. In practice when $\bbf_\theta$ is unknown, one may adopt the standard cross-validation method to select the optimal bandwidth. 

    Figure \ref{fig:2} plots the averages of $\mathrm{MaxErr}(\widetilde{\bbf}_\theta)$ over $100$ simulation runs. As $T$ increases, we observe a sharp decline in the averages of $\mathrm{MaxErr}(\widetilde{\bbf}_\theta)$ followed by a plateau, aligning well with the result under the fully observed case. This trend provides empirical evidence supporting the occurrence of a phase transition from the \textit{moderate dense} to the \textit{very dense} regime, as discussed in Remark \ref{remark9}.

    \subsection{Application to dynamic FPCA}

    \begin{figure}[htbp]
        \centering
        \includegraphics[width=1.0\linewidth]{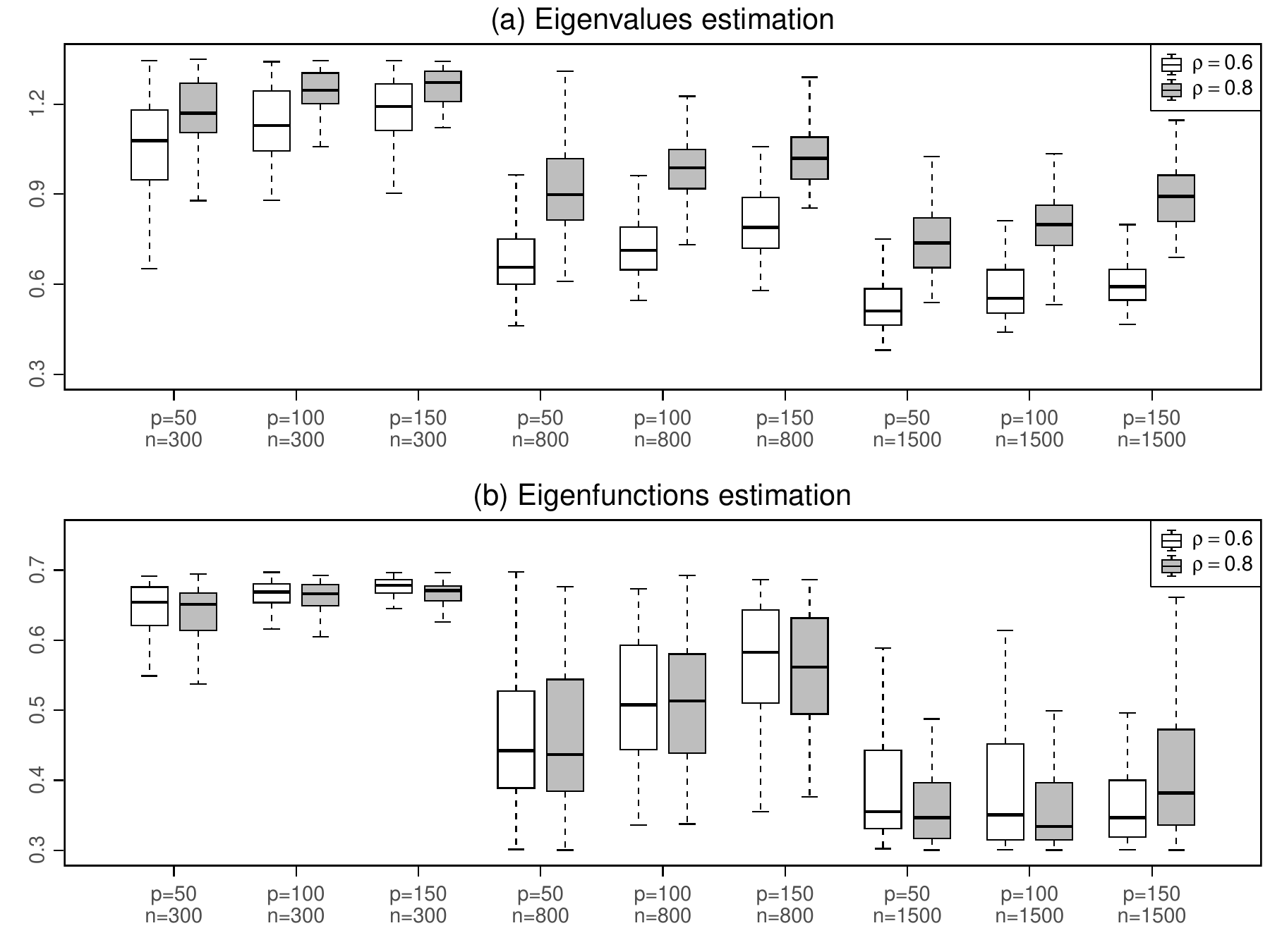}
        \caption{Boxplots of  $\mathrm{MaxErr}(\widehat{\lambda})$ and $\mathrm{MaxErr}(\widehat{\varphi})$.}
        \label{fig:3}
    \end{figure}

    We finally conduct simulations to validate the established theory for dynamic FPCA in Theorem \ref{thm3}. We focus on the estimated spectral density function $\widehat{\bbf}_\theta$ for fully observed functional time series as in \Cref{sec6.1}. We consider settings of $n\in\sth{300,800,1500}, p\in\sth{50,100,150},$ and $\rho\in\sth{0.6,0.8}$. 

    The accuracy of the estimated eigenvalues and eigenfunctions is quantified via the following elementwise maximum estimation errors: 
    \begin{align*}
    \mathrm{MaxErr}(\widehat{\lambda}) &= \max_{j\in[p],m \in [4]} \sup\limits_{\theta \in [0, 2\pi]}  |\widehat{\lambda}_{jm}(\theta) - \lambda_{jm}(\theta)|,\\
    \mathrm{MaxErr}(\widehat{\varphi}) &= \max_{j\in[p],m \in [4]} \sup\limits_{\theta \in [0, 2\pi]} \frac{\|\widehat{\varphi}_{jm}(\cdot ; \theta) - \varphi_{jm}(\cdot ; \theta)\|_\mathcal{H}}{\delta_m}.
    \end{align*}
    Similar to the findings in Section \ref{sec6.1}, we observe in Figure \ref{fig:3} that the errors decrease markedly as $n$ enlarges, while they exhibit a slight increase with larger values of $p.$ 


\begin{funding}
Li and Wu have been supported by NSFC 12271287. Qiao has been partially supported by the Seed Fund for Basic Research for New Staff at the University of Hong Kong. Dette has been supported by the Deutsche Forschungsgemeinschaft (DFG) TRR 391 {\it Spatio-temporal Statistics for the Transition of Energy and Transport}, project number 520388526 (DFG).
\end{funding}


\begin{supplement}
\textbf{Supplement to “Convergence of Covariance and Spectral Density Estimates for High-dimensional Functional Time Series”:} We present the proofs of all theorems and technical lemmas, additional derivations and results in the Supplementary Material.
\end{supplement}


\bibliographystyle{imsart-nameyear} 
\bibliography{ref}       

@book{vershynin2018high, 
address={Cambridge}, 
title={High-Dimensional Probability: An Introduction with Applications in Data Science},
DOI={10.1017/9781108231596},
publisher={Cambridge University Press}, 
author={Vershynin, Roman}, 
year={2018}, 
}

@article{hays2012functional,
author = {Spencer Hays and Haipeng Shen and Jianhua Z. Huang},
title = {{Functional dynamic factor models with application to yield curve forecasting}},
volume = {6},
journal = {The Annals of Applied Statistics},
number = {3},
publisher = {Institute of Mathematical Statistics},
pages = {870--894},
keywords = {cross-validation, expectation maximization algorithm, Functional data analysis, natural cubic splines, roughness penalty},
year = {2012},
doi = {10.1214/12-AOAS551},
URL = {https://doi.org/10.1214/12-AOAS551}
}

@article{li2025arxiv,
  title   = {Large-scale curve time series with common stochastic trends},
  author  = {Li, Degui and Li, Yuning and Phillips, Peter C.B.},
  journal = {arXiv:2509.11060},
  year    = {2025}
}

@book{Bbosq1,
  address   = {New York},
  author    = {Bosq, D.},
  publisher = {Springer},
  title     = {Linear Processes in Function Spaces},
  year      = {2000}
}

@article{fang2022,
author = {Qin Fang and Shaojun Guo and Xinghao Qiao},
title = {{Finite sample theory for high-dimensional functional/scalar time series with applications}},
volume = {16},
journal = {Electronic Journal of Statistics},
number = {1},
publisher = {Institute of Mathematical Statistics and Bernoulli Society},
pages = {527--591},
keywords = {Cross-spectral stability measure, Functional linear regression, functional principal component analysis, non-asymptotics, Sparsity, sub-Gaussian functional linear process},
year = {2022},
doi = {10.1214/21-EJS1960},
URL = {https://doi.org/10.1214/21-EJS1960}
}

@article{tavakoli2023,
  title={Factor models for high-dimensional functional time series {II}: Estimation and forecasting},
  author={Tavakoli, S. and Nisol, G. and Hallin, M},
  journal={Journal of Time Series Analysis},
  volume={44},
  pages={601--621},
  year={2023},
doi = {10.1111/jtsa.12675}
}

@article{basu2015a,
author = {Sumanta Basu and George Michailidis},
title = {{Regularized estimation in sparse high-dimensional time series models}},
volume = {43},
journal = {The Annals of Statistics},
number = {4},
publisher = {Institute of Mathematical Statistics},
pages = {1535--1567},
keywords = {Covariance estimation, high-dimensional time series, Lasso, stochastic regression, vector autoregression},
year = {2015},
doi = {10.1214/15-AOS1315},
URL = {https://doi.org/10.1214/15-AOS1315}
}

@article{bathia2010,
author = {Neil Bathia and Qiwei Yao and Flavio Ziegelmann},
title = {{Identifying the finite dimensionality of curve time series}},
volume = {38},
journal = {The Annals of Statistics},
number = {6},
publisher = {Institute of Mathematical Statistics},
pages = {3352--3386},
keywords = {autocovariance, curve time series, Dimension reduction, eigenanalysis, Karhunen–Loéve expansion, n convergence rate, root-n convergence rate},
year = {2010},
doi = {10.1214/10-AOS819},
URL = {https://doi.org/10.1214/10-AOS819}
}

@article{chang2018,
author = {Jinyuan Chang and Bin Guo and Qiwei Yao},
title = {{Principal component analysis for second-order stationary vector time series}},
volume = {46},
journal = {The Annals of Statistics},
number = {5},
publisher = {Institute of Mathematical Statistics},
pages = {2094--2124},
keywords = {$\alpha$-mixing, Autocorrelation, cross-correlation, Dimension reduction, eigenanalysis, high-dimensional time series, weak stationarity},
year = {2018},
doi = {10.1214/17-AOS1613},
URL = {https://doi.org/10.1214/17-AOS1613}
}

@article{guo2023consistency,
author = {Shaojun Guo and Xinghao Qiao},
title = {{On consistency and sparsity for high-dimensional functional time series with application to autoregressions}},
volume = {29},
journal = {Bernoulli},
number = {1},
publisher = {Bernoulli Society for Mathematical Statistics and Probability},
pages = {451--472},
keywords = {functional principal component analysis, functional stability measure, high-dimensional functional time series, non-asymptotics, Sparsity, vector functional autoregression},
year = {2023},
doi = {10.3150/22-BEJ1464},
URL = {https://doi.org/10.3150/22-BEJ1464}
}

@article{hormann2010,
doi = {10.1214/09-AOS768},
URL = {https://doi.org/10.1214/09-AOS768},
author = {Siegfried H{\"o}rmann and Piotr Kokoszka},
title = {Weakly dependent functional data},
volume = {38},
journal = {The Annals of Statistics},
number = {3},
publisher = {Institute of Mathematical Statistics},
pages = {1845--1884},
keywords = {asymptotics, Change points, Eigenfunctions, functional principal components, functional time series, long-run variance, Weak dependence},
year = {2010}
}

@article{hormann2015,
 ISSN = {13697412, 14679868},
 URL = {http://www.jstor.org/stable/24774739},
 abstract = {We address the problem of dimension reduction for time series of functional data (Xt : t ∈ ℤ). Such functional time series frequently arise, for example, when a continuous time process is segmented into some smaller natural units, such as days. Then each Xt represents one intraday curve. We argue that functional principal component analysis, though a key technique in the field and a benchmark for any competitor, does not provide an adequate dimension reduction in a time series setting. Functional principal component analysis indeed is a static procedure which ignores the essential information that is provided by the serial dependence structure of the functional data under study. Therefore, inspired by Brillinger's theory of dynamic principal components, we propose a dynamic version of functional principal component analysis which is based on a frequency domain approach. By means of a simulation study and an empirical illustration, we show the considerable improvement that the dynamic approach entails when compared with the usual static procedure.},
 author = {Siegfried H\"ormann and Lukasz Kidziński and Marc Hallin},
 journal = {Journal of the Royal Statistical Society. Series B (Statistical Methodology)},
 number = {2},
 pages = {319--348},
 publisher = {[Royal Statistical Society, Wiley]},
 title = {Dynamic functional principal components},
 urldate = {2025-12-04},
 volume = {77},
 year = {2015}
}

@article{panaretos2013,
author = {Victor M. Panaretos and Shahin Tavakoli},
title = {{Fourier analysis of stationary time series in function space}},
volume = {41},
journal = {The Annals of Statistics},
number = {2},
publisher = {Institute of Mathematical Statistics},
pages = {568--603},
keywords = {Cumulants, discrete Fourier transform, Functional data analysis, functional time series, periodogram operator, spectral density operator, Weak dependence},
year = {2013},
doi = {10.1214/13-AOS1086},
URL = {https://doi.org/10.1214/13-AOS1086}
}

@article{chen2022,
author = {Cheng Chen and Shaojun Guo and Xinghao Qiao},
title = {Functional linear regression: dependence and error contamination},
journal = {Journal of Business \& Economic Statistics},
volume = {40},
number = {1},
pages = {444--457},
year = {2022},
publisher = {ASA Website},
doi = {10.1080/07350015.2020.1832503},
URL = {    
        https://doi.org/10.1080/07350015.2020.1832503
},
eprint = {    
        https://doi.org/10.1080/07350015.2020.1832503}
}

@article{guo2025,
author = {Shaojun Guo and Xinghao Qiao and Qingsong Wang and Zihan Wang},
title = {Factor modeling for high-dimensional functional time series},
journal = {Journal of Business \& Economic Statistics},
volume = {0},
number = {0},
pages = {1--14},
year = {2025},
publisher = {ASA Website},
doi = {10.1080/07350015.2025.2505493},


URL = { 
    
        https://doi.org/10.1080/07350015.2025.2505493
    
    

},
eprint = { 
    
        https://doi.org/10.1080/07350015.2025.2505493
    
    

}

}

@article{hallin2023,
title={Factor models for high-dimensional functional time series I: Representation results},
  author={Hallin, Marc and Nisol, Gilles and Tavakoli, Shahin},
  journal={Journal of Time Series Analysis},
  volume={44},
  number={5-6},
  pages={578--600},
  year={2023},
  publisher={Wiley Online Library},
  doi = {10.1111/jtsa.12676},
    URL = {}
}

@article{horvath2014testing,
title = {Testing stationarity of functional time series},
journal = {Journal of Econometrics},
volume = {179},
number = {1},
pages = {66-82},
year = {2014},
issn = {0304-4076},
doi = {https://doi.org/10.1016/j.jeconom.2013.11.002},
url = {https://www.sciencedirect.com/science/article/pii/S0304407613002327},
author = {Lajos Horváth and Piotr Kokoszka and Gregory Rice},
keywords = {Change point, Functional data, Integrated time series, Intraday price curves, Test of stationarity},
abstract = {Economic and financial data often take the form of a collection of curves observed consecutively over time. Examples include, intraday price curves, yield and term structure curves, and intraday volatility curves. Such curves can be viewed as a time series of functions. A fundamental issue that must be addressed, before an attempt is made to statistically model such data, is whether these curves, perhaps suitably transformed, form a stationary functional time series. This paper formalizes the assumption of stationarity in the context of functional time series and proposes several procedures to test the null hypothesis of stationarity. The tests are nontrivial extensions of the broadly used tests in the KPSS family. The properties of the tests under several alternatives, including change-point and I(1), are studied, and new insights, present only in the functional setting are uncovered. The theory is illustrated by a small simulation study and an application to intraday price curves.}
}

@article{tang2022,
author = {Chen Tang and Han Lin Shang and Yanrong Yang},
title = {{Clustering and forecasting multiple functional time series}},
volume = {16},
journal = {The Annals of Applied Statistics},
number = {4},
publisher = {Institute of Mathematical Statistics},
pages = {2523--2553},
keywords = {age-specific mortality forecasting, Functional panel data, functional principal component analysis, functional time series, multilevel functional data},
year = {2022},
doi = {10.1214/22-AOAS1602},
URL = {https://doi.org/10.1214/22-AOAS1602}
}

@article{li2024,
author = {Degui Li and Runze Li and Han Lin Shang},
title = {{Detection and estimation of structural breaks in high-dimensional functional time series}},
volume = {52},
journal = {The Annals of Statistics},
number = {4},
publisher = {Institute of Mathematical Statistics},
pages = {1716--1740},
keywords = {clustering, CUSUM, functional time series, power enhancement, structural breaks},
year = {2024},
doi = {10.1214/24-AOS2414},
URL = {https://doi.org/10.1214/24-AOS2414}
}

@article{Zhang2021ConvergenceOC,
author = {Danna Zhang and Wei Biao Wu},
title = {{Convergence of covariance and spectral density estimates for high-dimensional locally stationary processes}},
volume = {49},
journal = {The Annals of Statistics},
number = {1},
publisher = {Institute of Mathematical Statistics},
pages = {233--254},
keywords = {convergence rate, Hanson–Wright-type inequalities, high-dimensional time series, Locally stationary processes, second-order statistics},
year = {2021},
doi = {10.1214/20-AOS1954},
URL = {https://doi.org/10.1214/20-AOS1954}
}

@article{chen2013,
author = {Xiaohui Chen and Mengyu Xu and Wei Biao Wu},
title = {Covariance and precision matrix estimation for high-dimensional time series},
volume = {41},
journal = {The Annals of Statistics},
number = {6},
publisher = {Institute of Mathematical Statistics},
pages = {2994--3021},
year = {2013},
doi = {10.1214/13-AOS1182}
}

@article{guo2025sparse,
  title={From sparse to dense functional data in high dimensions: Revisiting phase transitions from a non-asymptotic perspective},
  author={Guo, Shaojun and Li, Dong and Qiao, Xinghao and Wang, Yizhu},
  journal={Journal of Machine Learning Research},
  volume={26},
  number={15},
  pages={1--40},
  year={2025}
}

@book{Boucheron2013,
    author = {Boucheron, Stéphane and Lugosi, Gábor and Massart, Pascal},
    title = {Concentration Inequalities: A Nonasymptotic Theory of Independence},
    publisher = {Oxford University Press},
    year = {2013},
    month = {02},
    isbn = {9780199535255},
    doi = {10.1093/acprof:oso/9780199535255.001.0001},
    url = {https://doi.org/10.1093/acprof:oso/9780199535255.001.0001},
}

@article{wu2005nonlinear,
author = {Wei Biao Wu },
title = {Nonlinear system theory: another look at dependence},
journal = {Proceedings of the National Academy of Sciences},
volume = {102},
number = {40},
pages = {14150--14154},
year = {2005},
doi = {10.1073/pnas.0506715102},
URL = {https://www.pnas.org/doi/abs/10.1073/pnas.0506715102},
eprint = {https://www.pnas.org/doi/pdf/10.1073/pnas.0506715102},
abstract = {Based on the nonlinear system theory, we introduce previously undescribed dependence measures for stationary causal processes. Our physical and predictive dependence measures quantify the degree of dependence of outputs on inputs in physical systems. The proposed dependence measures provide a natural framework for a limit theory for stationary processes. In particular, under conditions with quite simple forms, we present limit theorems for partial sums, empirical processes, and kernel density estimates. The conditions are mild and easily verifiable because they are directly related to the data-generating mechanisms.}}

@article{fiecas2019spectral,
author = {Mark Fiecas and Chenlei Leng and Weidong Liu and Yi Yu},
title = {{Spectral analysis of high-dimensional time series}},
volume = {13},
journal = {Electronic Journal of Statistics},
number = {2},
publisher = {Institute of Mathematical Statistics and Bernoulli Society},
pages = {4079--4101},
keywords = {frequency domain time series, functional dependency, high dimension, smoothed periodogram, sparse precision matrix estimation},
year = {2019},
doi = {10.1214/19-EJS1621},
URL = {https://doi.org/10.1214/19-EJS1621}
}

@article{barigozzi2024algebraic,
author = {Matteo Barigozzi and Matteo Farn{\`e}},
title = {An algebraic estimator for large spectral density matrices},
journal = {Journal of the American Statistical Association},
volume = {119},
number = {545},
pages = {498--510},
year = {2024},
publisher = {ASA Website},
doi = {10.1080/01621459.2022.2126780},
URL = {    
        https://doi.org/10.1080/01621459.2022.2126780   
},
eprint = { 
        https://doi.org/10.1080/01621459.2022.2126780
}
}

@article{chang2025statistical,
author = {Jinyuan Chang and Qing Jiang and Tucker McElroy and Xiaofeng Shao},
title = {Statistical inference for high-dimensional spectral density matrix},
journal = {Journal of the American Statistical Association},
volume = {120},
number = {551},
pages = {1960--1974},
year = {2025},
publisher = {ASA Website},
doi = {10.1080/01621459.2025.2468013},
URL = { https://doi.org/10.1080/01621459.2025.2468013},
eprint = { https://doi.org/10.1080/01621459.2025.2468013
}
}

@article{chang2024a,
title = {An autocovariance-based learning framework for high-dimensional functional time series},
journal = {Journal of Econometrics},
volume = {239},
number = {2},
pages = {105385},
year = {2024},
issn = {0304-4076},
doi = {https://doi.org/10.1016/j.jeconom.2023.01.007},
url = {https://www.sciencedirect.com/science/article/pii/S0304407623000167},
author = {Jinyuan Chang and Cheng Chen and Xinghao Qiao and Qiwei Yao},
keywords = {Block regularized minimum distance estimation, Dimension reduction, Functional time series, High-dimensional data, Non-asymptotics, Sparsity},
abstract = {Many scientific and economic applications involve the statistical learning of high-dimensional functional time series, where the number of functional variables is comparable to, or even greater than, the number of serially dependent functional observations. In this paper, we model observed functional time series, which are subject to errors in the sense that each functional datum arises as the sum of two uncorrelated components, one dynamic and one white noise. Motivated from the fact that the autocovariance function of observed functional time series automatically filters out the noise term, we propose a three-step framework by first performing autocovariance-based dimension reduction, then formulating a novel autocovariance-based block regularized minimum distance estimation to produce block sparse estimates, and based on which obtaining the final functional sparse estimates. We investigate theoretical properties of the proposed estimators, and illustrate the proposed estimation procedure with the corresponding convergence analysis via three sparse high-dimensional functional time series models. We demonstrate via both simulated and real datasets that our proposed estimators significantly outperform their competitors.}
}

@article{chang2024b,
author = {Jinyuan Chang and Qin Fang and Xinghao Qiao and Qiwei Yao},
title = {On the modeling and prediction of high-dimensional functional time series},
journal = {Journal of the American Statistical Association},
volume = {0},
number = {0},
pages = {1--15},
year = {2024},
publisher = {ASA Website},
doi = {10.1080/01621459.2024.2413201},
URL = {     
        https://doi.org/10.1080/01621459.2024.2413201
},
eprint = { 
        https://doi.org/10.1080/01621459.2024.2413201
}
}

@article{tan2024,
author = {Jianbin Tan and Decai Liang and Yongtao Guan and Hui Huang},
title = {Graphical principal component analysis of multivariate functional time series},
journal = {Journal of the American Statistical Association},
volume = {119},
number = {548},
pages = {3073--3085},
year = {2024},
publisher = {ASA Website},
doi = {10.1080/01621459.2024.2302198},


URL = { 
    
        https://doi.org/10.1080/01621459.2024.2302198
    
    

},
eprint = { 
    
        https://doi.org/10.1080/01621459.2024.2302198
    
    

}

}

@article{POLITIS19991,
title = {Multivariate density estimation with general flat-top kernels of infinite order},
journal = {Journal of Multivariate Analysis},
volume = {68},
number = {1},
pages = {1-25},
year = {1999},
issn = {0047-259X},
doi = {https://doi.org/10.1006/jmva.1998.1774},
url = {https://www.sciencedirect.com/science/article/pii/S0047259X98917742},
author = {Dimitris N. Politis and Joseph P. Romano},
keywords = {bias reduction, Fourier transform, kernel, mean squared error, nonparametric density estimation, rate of convergence, smoothing},
abstract = {The problem of nonparametric estimation of a multivariate density function is addressed. In particular, a general class of estimators with favorable asymptotic performance (bias, variance, rate of convergence) is proposed. The proposed estimators are characterized by the flatness near the origin of the Fourier transform of the kernel and are actually shown to be exactlyN-consistent provided the density is sufficiently smooth.}
}

@book{pisier2016martingales,
  title={Martingales in Banach spaces},
  author={Pisier, Gilles},
  volume={155},
  year={2016},
  publisher={Cambridge University Press},
doi = {10.1017/CBO9781316480588},
URL = {https://doi.org/10.1017/CBO9781316480588}
}

@article{pinelis1994optimum,
author = {Iosif Pinelis},
title = {{Optimum bounds for the distributions of martingales in Banach spaces}},
volume = {22},
journal = {The Annals of Probability},
number = {4},
publisher = {Institute of Mathematical Statistics},
pages = {1679--1706},
keywords = {2-smooth Banach spaces, bounds on moments, Distribution inequalities, Exponential inequalities, martingales in Banach spaces, Sums of independent random variables},
year = {1994},
doi = {10.1214/aop/1176988477},
URL = {https://doi.org/10.1214/aop/1176988477}
}

@article{einmahl2008characterization,
 ISSN = {00029947},
 URL = {http://www.jstor.org/stable/20161536},
 abstract = {In a recent paper by the authors a general result characterizing two-sided LIL behavior for real valued random variables has been established. In this paper we look at the corresponding problem in the Banach space setting. We show that there are analogous results in this more general setting. In particular, we provide a necessary and sufficient condition for LIL behavior with respect to sequences of the form $\sqrt{nh(n)}$ , where h is from a suitable subclass of the positive, non-decreasing slowly varying functions. To prove these results we have to use a different method. One of our main tools is an improved Fuk-Nagaev type inequality in Banach space which should be of independent interest.},
 author = {Uwe Einmahl and Deli Li},
 journal = {Transactions of the American Mathematical Society},
 number = {12},
 pages = {6677--6693},
 publisher = {American Mathematical Society},
 title = {Characterization of LIL behavior in Banach space},
 urldate = {2025-12-04},
 volume = {360},
 year = {2008}
}

@article{chernozhukov2015comparison,
	title = {Comparison and anti-concentration bounds for maxima of Gaussian random vectors},
	volume = {162},
	issn = {1432-2064},
	url = {https://doi.org/10.1007/s00440-014-0565-9},
	doi = {10.1007/s00440-014-0565-9},
	abstract = {Slepian and Sudakov–Fernique type inequalities, which compare expectations of maxima of Gaussian random vectors under certain restrictions on the covariance matrices, play an important role in probability theory, especially in empirical process and extreme value theories. Here we give explicit comparisons of expectations of smooth functions and distribution functions of maxima of Gaussian random vectors without any restriction on the covariance matrices. We also establish an anti-concentration inequality for the maximum of a Gaussian random vector, which derives a useful upper bound on the Lévy concentration function for the Gaussian maximum. The bound is dimension-free and applies to vectors with arbitrary covariance matrices. This anti-concentration inequality plays a crucial role in establishing bounds on the Kolmogorov distance between maxima of Gaussian random vectors. These results have immediate applications in mathematical statistics. As an example of application, we establish a conditional multiplier central limit theorem for maxima of sums of independent random vectors where the dimension of the vectors is possibly much larger than the sample size.},
	pages = {47--70},
	number = {1},
	journaltitle = {Probability Theory and Related Fields},
	shortjournal = {Probability Theory and Related Fields},
	author = {Chernozhukov, Victor and Chetverikov, Denis and Kato, Kengo},
	date = {2015-06-01},
}

@article{yao2005functional,
author = {Fang Yao and Hans-Georg M{\"u}ller and Jane-Ling Wang},
title = {Functional data analysis for sparse longitudinal data},
journal = {Journal of the American Statistical Association},
volume = {100},
number = {470},
pages = {577--590},
year = {2005},
publisher = {ASA Website},
doi = {10.1198/016214504000001745},


URL = { 
    
        https://doi.org/10.1198/016214504000001745
    
    

},
eprint = { 
    
        https://doi.org/10.1198/016214504000001745
    
    

}

}

@article{zhang2007statistical,
author = {Jin-Ting Zhang and Jianwei Chen},
title = {{Statistical inferences for functional data}},
volume = {35},
journal = {The Annals of Statistics},
number = {3},
publisher = {Institute of Mathematical Statistics},
pages = {1052--1079},
keywords = {Asymptotic Gaussian process, Asymptotic normal distribution, functional data, hypothesis test, local polynomial smoothing, nonparametric estimation, reconstructed individual functions, root-n consistent},
year = {2007},
doi = {10.1214/009053606000001505},
URL = {https://doi.org/10.1214/009053606000001505}
}

@book{van1997weak,
  title={Weak convergence and empirical processes: with applications to statistics},
  author={Van Der Vaart, Aad W and Wellner, Jon A},
  year={1997},
  publisher={Springer New York}
}

@book{durrett2019probability,
  title={Probability: theory and examples},
  author={Durrett, Rick},
  volume={49},
  year={2019},
  publisher={Cambridge university press}
}

@article{ZhouDette2023,
    author = {Zhou, Zhou and Dette, Holger},
    title = {Statistical inference for high-dimensional panel functional time series},
    journal = {Journal of the Royal Statistical Society Series B: Statistical Methodology},
    volume = {85},
    number = {2},
    pages = {523--549},
    year = {2023},
    month = {04},
    abstract = {In this paper, we develop statistical inference tools for high-dimensional functional time series. We introduce a new concept of physical dependent processes in the space of square integrable functions, which adopts the idea of basis decomposition of functional data in these spaces, and derive Gaussian and multiplier bootstrap approximations for sums of high-dimensional functional time series. These results have numerous important statistical consequences. Exemplarily, we consider the development of joint simultaneous confidence bands for the mean functions and the construction of tests for the hypotheses that the mean functions in the panel dimension are parallel. The results are illustrated by means of a small simulation study and in the analysis of Canadian temperature data.},
    issn = {1369-7412},
    doi = {10.1093/jrsssb/qkad015},
    url = {https://doi.org/10.1093/jrsssb/qkad015},
    eprint = {https://academic.oup.com/jrsssb/article-pdf/85/2/523/50285757/qkad015\_supplementary\_data.pdf},
}

@article{rubin2020,
author = {Tom{\'a}š Rub{\'i}n and Victor M. Panaretos},
title = {{Sparsely observed functional time series: estimation and prediction}},
volume = {14},
journal = {Electronic Journal of Statistics},
number = {1},
publisher = {Institute of Mathematical Statistics and Bernoulli Society},
pages = {1137--1210},
keywords = {autocovariance operator, confidence bands, Functional data analysis, Nonparametric regression, spectral density operator},
year = {2020},
doi = {10.1214/20-EJS1690},
URL = {https://doi.org/10.1214/20-EJS1690}
}

@article{bastian2024multiplechangepointdetection,
	author = {Patrick Bastian and Rupsa Basu and Holger Dette},
	doi = {10.1214/24-AOAS1926},
	journal = {The Annals of Applied Statistics},
	keywords = {biomechanical joint angles, functional data, human gait analysis, Multiple change detection, relevant changes},
	number = {4},
	pages = {3109--3129},
	publisher = {Institute of Mathematical Statistics},
	title = {{Multiple change point detection in functional data with applications to biomechanical fatigue data}},
	url = {https://doi.org/10.1214/24-AOAS1926},
	volume = {18},
	year = {2024},
	bdsk-url-1 = {https://doi.org/10.1214/24-AOAS1926}}

@article{zhou2022theory,
author = {Hang Zhou and Dongyi Wei and Fang Yao},
title = {{Theory of functional principal component analysis for discretely observed data}},
volume = {53},
journal = {The Annals of Statistics},
number = {5},
publisher = {Institute of Mathematical Statistics},
pages = {2103--2127},
keywords = {eigen-decomposition, functional data, perturbation series, phase transition},
year = {2025},
doi = {10.1214/25-AOS2531},
URL = {https://doi.org/10.1214/25-AOS2531}
}

\clearpage

    \appendix
	\renewcommand{\bibname}{refa}
	\linespread{1.0}\selectfont
	\begin{center}
		{\noindent \bf \large Supplementary material  to ``Convergence of covariance and spectral density estimates for high-dimensional functional time series}\\
	\end{center}
	\begin{center}
		{\noindent Bufan Li, Xinghao Qiao, Weichi Wu and Holger Dette}
	\end{center}
	\bigskip
	
	\setcounter{page}{1}
	\setcounter{section}{0}
	\renewcommand\thesection{\Alph{section}}
	
	This supplementary material contains the proofs of main theorems in Section \ref{supsec.mainpf}, technical lemmas and their proofs in Section \ref{supsec.tech}, derivations in examples in Section \ref{supsec.exmcal} and some additional results in Section \ref{sec.suppD} and \ref{Sec.Add_Sparse}.

    For any random variable $X$, we write $E_0(X) = X - E(X)$.

        \section{Proofs of Main Theorems}\label{supsec.mainpf}

 \renewcommand{\theequation}{\thesection.\arabic{equation}}
  \setcounter{equation}{0}
  \subsection{Proof of Theorem \ref{thm1}}

        We prove the results for $h \ge 0$ while $h < 0$ can be similarly dealt with. For the first concentration inequality, using Lemma \ref{nonGaussianConcen2}, we have \begin{align*}
            P\left\{(n-h)\|\widehat{\bSigma}^{(h)}-E(\widehat{\bSigma}^{(h)})\|_{\cS,\max}> x\right\} \le & C_{q,\alpha}x^{-q/2}(\log p)^q\|\|\bX_1\|_{\cH,\infty}\|_{q,\alpha}^qD'_{n,h} \\
            &+ C_\alpha p^2\exp\left\{-\frac{x^2}{C_\alpha\left(\bPhi_{4,\alpha}^\bX\right)^2n}\right\}.
        \end{align*}
        Here $D'_{n,h}=n(1+|h|)^{q/4-1}$ (resp., $n(1+|h|)^{q/4-1}+n^{q/4-\alpha q/2}$) if $\alpha > 1/2-2/q$ (resp., $\alpha \le 1/2-2/q$). Notice that $n/2\le n-h\le n$ and $\cM_{q,\alpha}^\bX = \|\|\bX_1\|_{\cH,\infty}\|_{q,\alpha}^2 \ge \bPhi_{4,\alpha}^\bX$. This gives \begin{align*}
            P\left\{\|\widehat{\bSigma}^{(h)}-E(\widehat{\bSigma}^{(h)})\|_{\cS,\max} > \cM_{q,\alpha}^\bX x\right\} \le C_{q,\alpha}x^{-q/2}(\log p)^qD_{n,h} + C_\alpha p^2\exp\left\{-C_\alpha' nx^2\right\}.
        \end{align*}
        Here $D_{n,h}=n^{1-q/2}(1+|h|)^{q/4-1}$ (resp., $n^{1-q/2}(1+|h|)^{q/4-1}+n^{-q/4-\alpha q/2}$) if $\alpha > 1/2-2/q$ (resp., $\alpha \le 1/2-2/q$).

        For the second concentration inequality, using Lemma \ref{lm:elementwisecon2} and noticing that for any $j\in[p]$, $\|\|X_{1j}\|_{\cH,\infty}\|_{q,\alpha} \le(\bPhi_{q,\alpha}^\bX)^{1/2}$, we have \begin{align*}
           &P\left\{(n-h)\|\widehat{\Sigma}^{(h)}_{jk}-E(\widehat{\Sigma}^{(h)}_{jk})\|_{\cS,\max}> x\right\} \\
           &\le C_{q,\alpha}x^{-q/2}\|\|X_{1j}\|_{\cH,\infty}\|_{q,\alpha}^{q/2}\|\|X_{1k}\|_{\cH,\infty}\|_{q,\alpha}^{q/2}D'_{n,h} + C_\alpha \exp\left\{-\frac{x^2}{C_\alpha\left(\bPhi_{4,\alpha}^\bX\right)^2n}\right\}\\
            \le & C_{q,\alpha}x^{-q/2}(\bPhi_{q,\alpha}^\bX)^{q/2}D'_{n,h} + C_\alpha \exp\left\{-\frac{x^2}{C_\alpha\left(\bPhi_{4,\alpha}^\bX\right)^2n}\right\},
        \end{align*} 
        where $D_{n,h}'$ is defined in the same way. By $\bPhi_{q,\alpha}^\bX\ge\bPhi_{4,\alpha}^\bX$ and Bonferroni inequality, we have \begin{align*}
        P\left\{\|\widehat{\bSigma}^{(h)}-E(\widehat{\bSigma}^{(h)})\|_{\cS,\max} > \bPhi_{q,\alpha}^\bX x\right\} \le C_{q,\alpha}x^{-q/2}p^2D_{n,h} + C_\alpha p^2\exp\left\{-C_\alpha' nx^2\right\}.
        \end{align*}
        Here $D_{n,h}$ is defined in the same way.

        \subsection{Proof of Theorem \ref{thm2}}

        \subsubsection{Proof of the first concentration inequality}
        
        The estimator \(\widehat{\bbf}_\theta\) can be decomposed as $$2\pi n \widehat{\bbf}_\theta = \sum_{l = -m_0}^{m_0}\frac{n}{n-|l|}K(l/m_0)\exp(-\mathbbm{i} l\theta)\left\{\sum\limits_{t}\bX_t \otimes\bX_{t+l}^\T\right\}=\bQ_1(\theta) + \bQ_2(\theta),$$
        where we define \(\bQ_1(\theta) = \sum_{1\le s < t\le n}a_{st}(\theta)\bX_s\otimes\bX_{t}^\T , \bQ_2(\theta) = \sum_{1\le t\le s\le n}a_{st}(\theta)\bX_s\otimes\bX_{t}^\T \), and \(a_{st}(\theta) = K\{(t-s)/m_0\}\exp\{-\mathbbm{i} (t-s)\theta\}n/(n-|t-s|)\). Recall the notation that for any random variable $X$, $E_0(X) = X-E(X)$. Notice that \begin{equation*}
            \begin{aligned}
                \big\|E_0\big(\widehat{\bbf}_{\theta, jk}\big)\big\|_{\cS} & = \left\|\sum_{l = -m_0}^{m_0} \frac{n}{n-|l|}\exp(-\mathbbm{i} l\theta) E_0\left(\sum_{t} X_{tj}\otimes X_{(t+l)k}\right)\right\|_{\cS}\\
                & = \left[\sum_{l_1,l_2=-m_0}^{m_0} \exp\left\{-\mathbbm{i}(l_1-l_2)\theta\right\}\frac{n^2A_{l_1,l_2}}{(n-l_1)(n-l_2)}\right]^{1/2}
            \end{aligned}
        \end{equation*}
        with $A_{l_1,l_2} = \int_{[0,1]^2}E_0\{\sum_{t}X_{tj}(u)X_{(t+l_1)k}(v)\}E_0\{\sum_{t}X_{tj}(u)X_{(t+l_2)k}(v)\}\du\dv$. This shows $\|E_0(\widehat{\bbf}_{\theta, jk})\|_\cS^2$ is a (random and real) trigonometric polynomial of $\theta$ of order $2m_0$. Denote \(\theta_h = \pi h/(4m_0),0\le h\le 8m_0,h\in\mathbb{N}\). By Lemma D.1 in \cite{Zhang2021ConvergenceOC}, we have for any $j,k\in[p]$ \begin{equation*}
            \begin{aligned}
                \sup_{\theta\in[0,2\pi]}\big\|E_0\big(\widehat{\bbf}_{\theta, jk}\big)\big\|_{\cS} \le \sqrt{2}\max\limits_{h\in[8m_0]}\big\|E_0\big(\widehat{\bbf}_{\theta_h ,jk}\big)\big\|_{\cS}.
            \end{aligned}
        \end{equation*}
        Using Bonferroni inequality, we have \begin{align}\label{pfthm2-1}
            &P\left\{2\pi n\sup_{\theta \in[0,2\pi]} \big\|E_0\big(\widehat{\bbf}_{\theta}\big)\big\|_{\cS,\max} > x\right\} \\
            &\le (8m_0+1)\sum_{\omega= 1}^{2}\max\limits_{h\in[8m_0]}P\left[\left\|E_0\left\{\bQ_\omega\left(\theta_h\right)\right\}\right\|_{\cS,\max} >\frac{x}{2\sqrt{2}}\right].\notag
        \end{align}
        Notice that $|a_{st}(\theta)|\le 3$. It is easy to verify that Lemma \ref{nonGaussianConcen} also holds when coefficients of the quadratic form is upper bounded by $3$. We obtain \begin{equation*}
            \begin{aligned}
                P\left[\left\|E_0\left\{\bQ_\omega\left(\theta_h\right)\right\}\right\|_{\cS,\max} > x\right] \le  & C_{q,\alpha}x^{-q/2}(\log p)^{5q/4}\|\|\bX_1\|_{\cH,\infty}\|_{q,\alpha}^qF_{n,m_0}' \\
                &+ C_\alpha p^2\exp\left\{-\frac{x^2}{C_\alpha\left(\bPhi_{4,\alpha}^\bX\right)^2nm_0}\right\}.
            \end{aligned}
        \end{equation*}
        Here $F'_{n,m_0}=nm_0^{q/2-1}$ (resp., $nm_0^{q/2-1}+n^{q/4-\alpha q/2}m_0^{q/4}$) if $\alpha > 1/2-2/q$ (resp., $\alpha \le 1/2-2/q$). Noticing that $\bPhi_{4,\alpha}^\bX \le \cM_{q,\alpha}^\bX , \|\|\bX_1\|_{\cH,\infty}\|_{q,\alpha}^2 = \cM_{q,\alpha}^\bX $, elementary calculation gives \begin{equation*}
            \begin{aligned}
                P\left\{\sup_{\theta \in[0,2\pi]} \big\|E_0\big(\widehat{\bbf}_{\theta}\big)\big\|_{\cS,\max} > \cM_{q,\alpha}^\bX  x\right\} \le & C_{q,\alpha}x^{-q/2}(\log p)^{5q/4}F_{n,m_0} \\
                & + C_\alpha m_0p^2\exp\left(-\frac{C_\alpha' x^2n}{m_0}\right).
            \end{aligned}
        \end{equation*}
        where \(F_{n,m_0} = n^{1-q/2}m_0^{q/2}\) (resp., \(n^{1-q/2}m_0^{q/2} + n^{-q/4-\alpha q/2}m_0^{q/4+1}\)) if \(\alpha > 1/2 - 2/q\) (resp., \(\alpha \le 1/2-2/q\)).

        \subsubsection{Proof of the second concentration inequality}

        By Lemma \ref{lm:elementwisecon1}, for $w\in[2]$ and $j,k\in[p]$ we have \begin{align*}
            P\left[\left\|E_0\left\{Q_{\omega jk}\left(\theta_h\right)\right\}\right\|_{\cS} > x\right] \le  & C_{q,\alpha}x^{-q/2}\|\|\bX_{1j}\|_{\cH,\infty}\|_{q,\alpha}^{q/2}\|\|\bX_{1k}\|_{\cH,\infty}\|_{q,\alpha}^{q/2}F_{n,m_0}' \\
                &+ C_\alpha \exp\left\{-\frac{x^2}{C_\alpha\left(\bPhi_{4,\alpha}^\bX\right)^2nm_0}\right\}.
        \end{align*}
        Here $F'_{n,m_0}=nm_0^{q/2-1}$ is defined in the same way as the previous section. Notice that for any $j\in[p]$, $\|\bX_{1j}\|_{\cH,\infty}\|_{q,\alpha} \le (\bPhi_{q,\alpha}^\bX)^{1/2}$ and $\bPhi_{4,\alpha}^\bX \le \bPhi_{q,\alpha}^\bX$. Using Bonferroni inequality, we have \begin{align*}
            P\left[\left\|E_0\left\{\bQ_{\omega}\left(\theta_h\right)\right\}\right\|_{\cS,\max} > x\right] \le  & C_{q,\alpha}x^{-q/2}p^2(\bPhi_{q,\alpha}^\bX)^{q/2}F_{n,m_0}'\\
            & + C_\alpha p^2 \exp\left\{-\frac{x^2}{C_\alpha\left(\bPhi_{q,\alpha}^\bX\right)^2nm_0}\right\}.
        \end{align*}
        And elementary calculation gives \begin{equation*}
            \begin{aligned}
                P\left\{\sup_{\theta \in[0,2\pi]} \big\|E_0\big(\widehat{\bbf}_{\theta}\big)\big\|_{\cS,\max} > \bPhi_{q,\alpha}^\bX  x\right\} \le C_{q,\alpha}x^{-q/2}p^2F_{n,m_0} + C_\alpha m_0p^2\exp\left(-\frac{C_\alpha' x^2n}{m_0}\right).
            \end{aligned}
        \end{equation*}
        where \(F_{n,m_0}\) is defined in the same way as the previous section.

        \subsubsection{Control of the truncation error}

        We also need to control the truncation error induced by $m_0$, and the kernel smoothing error. We have \begin{equation*}
            \begin{aligned}
            &\sup\limits_{\theta\in[0,2\pi]}\big\|E\big(\widehat{\bbf}_{\theta}\big)-\bbf_{\theta}\big\|_{\cS,\max} = \sup\limits_{\theta\in[0,2\pi]}\max\limits_{j,k\in[p]} \big\|E\big(\hat{f}_{\theta, jk}\big)-f_{\theta, jk}\big\|_\cS\\
            & \le \sup\limits_{\theta\in[0,2\pi]}\max\limits_{j,k\in[p]}\left[\left\|\sum_{|h|>m_0}e^{\mathbbm{i} h\theta}\Sigma_{jk}^{(h)}\right\|_\cS\ + \left\|\sum_{|h|\le m_0}\left\{1-K\left(h/m_0\right)\right\}e^{\mathbbm{i} h\theta}\Sigma_{jk}^{(h)}\right\|_\cS\right]\\
            & \le \sup\limits_{\theta\in[0,2\pi]}\max\limits_{j,k\in[p]}\left[\sum_{|h|>m_0}\left\|e^{\mathbbm{i} h\theta}\Sigma_{jk}^{(h)}\right\|_\cS + \sum_{|h|\le m_0}\left\|\left\{1-K\left(h/m_0\right)\right\}e^{\mathbbm{i} h\theta}\Sigma_{jk}^{(h)}\right\|_\cS\right]\\
            & \le \sup\limits_{\theta\in[0,2\pi]}\max\limits_{j,k\in[p]}\left[\sum_{|h|>m_0}\|\Sigma_{jk}^{(h)}\|_\cS + \sum_{|h|\le m_0}\left\{1-K\left(h/m_0\right)\right\}\|\Sigma_{jk}^{(h)}\|_\cS\right]= R(m_0).
            \end{aligned}
        \end{equation*}
        
        \subsection{Proof of \Cref{thm3}}

        For $\big\|\phi_{jml} - \widehat{\phi}_{jml}\big\|$, we have \begin{align*}
            \big\|\phi_{jml} - \widehat{\phi}_{jml}\big\| = &\nth{\int_{0}^{2\pi}\varphi_{jm}(\cdot ; \theta)\exp(-\mathbbm{i} l\theta)\mathrm{d}\theta - \int_{0}^{2\pi}\widehat{\varphi}_{jm}(\cdot ; \theta)\exp(-\mathbbm{i} l\theta)\mathrm{d}\theta}\\
            \le & \int_{0}^{2\pi}\nth{\varphi_{jm}(\cdot ; \theta)\exp(-\mathbbm{i} l\theta) - \widehat{\varphi}_{jm}(\cdot ; \theta)\exp(-\mathbbm{i} l\theta)}\mathrm{d}\theta\\
            = & \int_{0}^{2\pi}\nth{\varphi_{jm}(\cdot ; \theta)- \widehat{\varphi}_{jm}(\cdot ; \theta)}\mathrm{d}\theta \le \sup_{\theta\in[0,2\pi]} \nth{\varphi_{jm}(\cdot ; \theta)- \widehat{\varphi}_{jm}(\cdot ; \theta)}.
        \end{align*}

        From Lemma 3.1 in \cite{hormann2010}, we have for any \(\theta\in[0,2\pi]\), \(|\widehat{\lambda}_{jl}(\theta) - \lambda_{jl}(\theta)| \le \|\hat{f}_{\theta, jj} - f_{\theta, jj}\|_{\cS}\). Notice that we identify $\varphi_{jl}$ in \eqref{eigenfunctionversion}.  From Lemma 3.2 in \cite{hormann2010}, we have for any \(\theta\in[0,2\pi]\), \(\|\widehat{\varphi}_{jl}(\cdot ; \theta) - \varphi_{jl}(\cdot ; \theta)\|/\delta_l \le 2\sqrt{2}\|\hat{f}_{\theta, jj} - f_{\theta, jj}\|_{\cS}\). This gives $$\max\limits_{j\in[p]}\max\limits_{l\in[M]}\sup\limits_{\theta\in[0,2\pi]}\left\{|\widehat{\lambda}_{jl}(\theta) - \lambda_{jl}(\theta)| + \|\widehat{\varphi}_{jl}(\cdot;\theta) - \varphi_{jl}(\cdot ; \theta)\|\delta_l^{-1}\right\} \le C\sup\limits_{\theta\in[0,2\pi]}\max_{j\in[p]}\|\hat{f}_{\theta, jj} - f_{\theta, jj}\|_{\cS}.$$
        And the right hand side is smaller than \(\sup\limits_{\theta\in[0,2\pi]}\|\widehat{\bbf}_\theta - \bbf_\theta\|_{\cS,\max}\). Using the result in \Cref{thm2} we finish our proof.

        \subsection{Proof of \Cref{thm4}}

        We organize our proof in three steps. First, we decompose the estimation error into two parts: precision error and truncation error. Second, we use our results in Section \ref{sec.cov} to control the precision error. Finally, we analyze the truncation error.

        \subsubsection{Decomposition and definition}

        Recall that \begin{align*}
            &\widehat{\sigma}^{(h)}_{jkml} = \left\{\sum_{|r_1|,|r_2|\le L}\sum_{t = L}^{n-L-h}\langle X_{(t-r_1)j},\widehat{\phi}_{jmr_1} \rangle\langle X_{(t+h-r_2)k},\widehat{\phi}_{klr_2}\rangle\right\}/(n-2L-h),\\
            &\zeta_{tjm} = \sum_{|r_1|\le L}\langle X_{(t-r_1)k},\phi_{jmr_1}\rangle + \sum_{|r_1|>L}\langle X_{(t-r_1)j},\phi^*_{jmr_1}\rangle,~~\mbox{and}\\
            &\zeta_{(t+h)kl} = \sum_{|r_2|\le L}\langle X_{(t+h-r_2)k},\phi_{klr_2}\rangle + \sum_{|r_2|> L}\langle X_{(t+h-r_2)k},\phi^*_{klr_2}\rangle.
        \end{align*}
        From Proposition 3 of \cite{hormann2015} we have $E(\zeta_{tjm})=E(\zeta_{(t+h)kl})=0$. The definition of covariance of dynamic FPC score gives \begin{align*}
            \sigma_{jkml}^{(h)} =& E(\zeta_{tjm}\zeta_{(t+h)kl})\\
            =&\sum_{|r_1|\le L,|r_2|\le L} \langle\phi_{jmr_1},\Sigma_{jk}^{(h+r_1-r_2)}(\phi_{klr_2})\rangle +\sum_{|r_1|>L,|r_2|\le L} \langle\phi^*_{jmr_1},\Sigma_{jk}^{(h+r_1-r_2)}(\phi_{klr_2})\rangle \\
            & +\sum_{|r_1|\le L,|r_2|> L} \langle\phi_{jmr_1},\Sigma_{jk}^{(h+r_1-r_2)}(\phi^*_{klr_2})\rangle+\sum_{|r_1|>L,|r_2|> L} \langle\phi^*_{jmr_1},\Sigma_{jk}^{(h+r_1-r_2)}(\phi^*_{klr_2})\rangle.
        \end{align*}
        Define $P^{(h)}_{jkml} = \widehat{\sigma}_{jkml}^{(h)} - \sum_{|r_1|,|r_2|\le L} \langle\phi_{jmr_1},\Sigma_{jk}^{(h+r_1-r_2)}(\phi_{klr_2})\rangle$. Also define \begin{align*}
            P^{(h)'}_{jkml} =& \sigma_{jkml}^{(h)} - \sum_{|r_1|\le L,|r_2|\le L} \langle\phi_{jmr_1},\Sigma_{jk}^{(h+r_1-r_2)}(\phi_{klr_2})\rangle = \sum_{|r_1|\le L,|r_2| > L}\langle\phi_{jmr_1},\Sigma_{jk}^{(h+r_1-r_2)}(\phi^*_{klr_2})\rangle \\
            &+ \sum_{|r_1|>L,|r_2|\le L }\langle\phi^*_{jmr_1},\Sigma_{jk}^{(h+r_1-r_2)}(\phi_{klr_2})\rangle + \sum_{|r_1|,|r_2| > L}\langle\phi^*_{jmr_1},\Sigma_{jk}^{(h+r_1-r_2)}(\phi^*_{klr_2})\rangle 
        \end{align*}
        as the truncation error that arises from (\ref{FPCADef3}). We decompose the estimation error into two parts, namely $$\widehat{\sigma}^{(h)}_{jkml} - \sigma^{(h)}_{jkml} = P^{(h)}_{jkml} - P^{(h)'}_{jkml}.$$
        
        To simplify our notation, let \(\widehat{\Delta}_{jk}^{(h)} = \widehat{\Sigma}_{jk}^{(h)} - \Sigma^{(h)}_{jk},\widehat{\omega}_{jmr} = \widehat{\phi}_{jmr} - \phi_{jmr}\).
        The precision error can be further decomposed as \(P^{(h)}_{jkml} = \sum_{|r_1|,|r_2|\le L}P^{(h)}_{jkmlr_1r_2}\), where \begin{align*}
	P^{(h)}_{jkmlr_1r_2} = & \frac{1}{n-2L-h}\left\{\sum_{t = L}^{T-L-h}\langle X_{(t-r_1)j},\widehat{\phi}_{jmr_1} \rangle\langle X_{(t+h-r_2)k},\widehat{\phi}_{klr_2}\rangle\right\} \\
    & - \langle\Sigma^{(h+r_1-r_2)}_{jk}(\phi_{jmr_1}),\phi_{klr_2}\rangle\\
	= & \langle\widehat{\Sigma}_{jk}^{(h+r_1-r_2)}(\widehat{\phi}_{jmr_1}),\widehat{\phi}_{klr_2}\rangle - \langle\Sigma^{(h+r_1-r_2)}_{jk}(\phi_{jmr_1}),\phi_{klr_2}\rangle \\
	= & P_{jkmlr_1r_2}^{(h1)} + P_{jkmlr_1r_2}^{(h2)}+P_{jkmlr_1r_2}^{(h3)}+P_{jkmlr_1r_2}^{(h4)}.
        \end{align*}
        Here \begin{align*}
	P_{jkmlr_1r_2}^{(h1)} & = \langle\widehat{\omega}_{jmr_1},\widehat{\Sigma}_{jk}^{(h+r_1-r_2)}(\widehat{\omega}_{klr_2})\rangle,\\
	P_{jkmlr_1r_2}^{(h2)} & = \langle\phi_{jmr_1},\widehat{\Delta}_{jk}^{(h+r_1-r_2)}(\widehat{\omega}_{klr_2})\rangle + \langle\widehat{\omega}_{jmr_1},\widehat{\Delta}_{jk}^{(h+r_1-r_2)}(\phi_{klr_2})\rangle,\\
	P_{jkmlr_1r_2}^{(h3)} & = \langle\phi_{jmr_1},\Sigma_{jk}^{(h+r_1-r_2)}(\widehat{\omega}_{klr_2})\rangle + \langle\widehat{\omega}_{jmr_1},\Sigma_{jk}^{(h+r_1-r_2)}(\phi_{klr_2})\rangle,\\
	P_{jkmlr_1r_2}^{(h4)} & = \langle\phi_{jmr_1},\widehat{\Delta}_{jk}^{(h+r_1-r_2)}(\phi_{klr_2})\rangle.
        \end{align*}

        \subsubsection{Control of $P^{(h)}_{jkml}$}

        The analysis of $P^{(h)}_{jkml}$ relies on the control of $\widehat{\omega}_{jmr}$ and $\widehat{\Delta}_{jk}^{(h+r_1-r_2)}$. For $\widehat{\omega}_{jmr}$, by Lemma 3.2 in \cite{hormann2010}, we have \begin{align*}
	2\pi \|\widehat{\omega}_{jmr}\|_\cH &= 2\pi\|\widehat{\phi}_{jmr} - \phi_{jmr}\|_\cH = \int_{0}^{2\pi}\|\{\varphi_{jm}(\cdot ; \theta) - \widehat{\varphi}_{jm}(\cdot ; \theta)\}\exp(\mathbbm{i} r\theta)\|_\cH\mathrm{d}\theta \\
        &\le \int_{0}^{2\pi}\|\varphi_{jm}(\cdot ; \theta) - \widehat{\varphi}_{jm}(\cdot ; \theta)\|_\cH\mathrm{d}\theta \le 2\sqrt{2}\delta_m\int_{0}^{2\pi}\|f_{\theta, jj} - \hat{f}_{\theta, jj}\|_\mathcal{S}\mathrm{d}\theta.
        \end{align*}
        Also we have \begin{align*}
            &P\left\{\sum_{|r_1|,|r_2|\le L}\sup\limits_{j,k\in[p]}\|\widehat{\Delta}^{(h+r_1-r_2)}_{jk}\|_\cS>\cM_{q,\alpha}^\bX L^2x\right\} \le \sum_{|r_1|,|r_2|\le L}P\left\{\sup\limits_{j,k\in[p]}\|\widehat{\Delta}^{(h+r_1-r_2)}_{jk}\|_\cS>\cM_{q,\alpha}^\bX x\right\}\\
            &\le C_{q,\alpha}L^2x^{-q/2}D_{n,n/2}(\log p)^q + Cp^2L^2\exp(-C_\alpha x^2 n),\\
            &P\left\{\sum_{|r_1|,|r_2|\le L}\sup\limits_{j,k\in[p]}\|\widehat{\Delta}^{(h+r_1-r_2)}_{jk}\|_\cS>\bPhi_{q,\alpha}^\bX L^2x\right\} \le \sum_{|r_1|,|r_2|\le L}P\left\{\sup\limits_{j,k\in[p]}\|\widehat{\Delta}^{(h+r_1-r_2)}_{jk}\|_\cS>\bPhi_{q,\alpha}^\bX x\right\}\\
            &\le C_{q,\alpha}L^2x^{-q/2}p^2D_{n,n/2} + Cp^2L^2\exp(-C_\alpha x^2 n).
        \end{align*}
        These two inequalities use results in \Cref{thm1}. Notice that we need to substitute $D_{n,h}$ in \Cref{thm1} with $D_{n,n/2} \le 2n^{-q/4}$ since we are considering many lag orders at the same time and $|h+r_1-r_2| < n/2$. Combining these two concentration inequalities, we have \begin{align*}
            \sum_{|r_1|,|r_2|\le L}\|\widehat{\Delta}^{(h+r_1-r_2)}_{jk}\|_{\cS,\max}= O_P\pth{\bPhi_{q,\alpha}^\bX C_\bX p^{4/q} L^{2+4/q}n^{-1/2}}=O_P(\cH_2),
        \end{align*}
        where $C_\bX$ is defined in \eqref{eq:constCX}.

        Now we analyze  \(P_{jkmlr_1r_2}^{(ha)},a = 1,2,3,4\). For \(P_{jkmlr_1r_2}^{(h1)}\), we have \begin{align*}
        \sup\limits_{j,k\in[p]}|P_{jkmlr_1r_2}^{(h1)}| &\le \sup\limits_{j\in[p]}\|\widehat{\omega}_{jmr_1}\|_\cH\sup\limits_{k\in[p]}\|\widehat{\omega}_{klr_2}\|_\cH\sup\limits_{j,k\in[p]}\|\widehat{\Sigma}_{jk}^{(h+r_1 -r_2)}\|_\mathcal{S} \\
        &\le C\delta_m\delta_l\left(\sup_{\theta\in[0,2\pi]}\|\widehat{\bbf}_\theta-\bbf_\theta\|_{\cS,\max}\right)^2\|\widehat{\bSigma}^{(h+r_1-r_2)}\|_{\cS,\max}.
        \end{align*}
        For \(P_{jkmlr_1r_2}^{(h2)}\), we have \begin{align*}
        \sup\limits_{j,k\in[p]}|P_{jkmlr_1r_2}^{(h2)}| \le &\sup\limits_{j\in[p]}\|\phi_{jmr_1}\|_\cH\sup\limits_{k\in[p]}\|\widehat{\omega}_{klr_2}\|_\cH\sup\limits_{j,k\in[p]}\|\widehat{\Delta}_{jk}^{(h+r_1 -r_2)}\|_\mathcal{S}\\ &+ \sup\limits_{j\in[p]}\|\widehat{\omega}_{jmr_1}\|_\cH\sup\limits_{k\in[p]}\|\phi_{klr_2}\|_\cH\sup\limits_{j,k\in[p]}\|\widehat{\Delta}_{jk}^{(h+r_1 -r_2)}\|_\mathcal{S}\\
        \le & C(\delta_m\vee \delta_l)\left(\sup_{\theta\in[0,2\pi]}\|\widehat{\bbf}_\theta-\bbf_\theta\|_{\cS,\max}\right)\left(\sup\limits_{j,k\in[p]}\|\widehat{\Delta}^{(h+r_1-r_2)}_{jk}\|_{\cS}\right).
        \end{align*}
        For \(P_{jkmlr_1r_2}^{(h3)}\), recalling that $\lambda_0 = \max_{j\in[p]}\int_0^1\Sigma^{(0)}_{jj}(u,u)\du $, we have \begin{align*}
        \sup\limits_{j,k\in[p]}|P_{jkmlr_1r_2}^{(h3)}| \le & \sup\limits_{j\in[p]}\|\phi_{jmr_1}\|_\cH\sup\limits_{k\in[p]}\|\widehat{\omega}_{klr_2}\|_\cH\sup\limits_{j,k\in[p]}\|\Sigma_{jk}^{(h+r_1 -r_2)}\|_\mathcal{S} \\
        &+ \sup\limits_{j\in[p]}\|\widehat{\omega}_{jmr_1}\|_\cH\sup\limits_{k\in[p]}\|\phi_{klr_2}\|_\cH\sup\limits_{j,k\in[p]}\|\Sigma_{jk}^{(h+r_1 -r_2)}\|_\mathcal{S}\\
        =& C\lambda_0(\delta_m\vee \delta_l)\sup_{\theta\in[0,2\pi]}\|\widehat{\bbf}_\theta-\bbf_\theta\|_{\cS,\max}.
        \end{align*}
        In the last line we use the fact that for any $h$, $\sup_{j,k\in[p]} \|\Sigma_{jk}^{(h)}\|_\cS \le \lambda_0$, which follows from Cauchy--Schwarz inequality. For \(P_{jkmlr_1r_2}^{(h4)}\), we have\begin{align*}
        &\sup\limits_{j,k\in[p]}|P_{jkmlr_1r_2}^{(h4)}| \le \sup\limits_{j\in[p]}\|\phi_{jmr_1}\|_\cH\sup\limits_{k\in[p]}\|\phi_{klr_2}\|_\cH\sup\limits_{j,k\in[p]}\|\widehat{\Delta}_{jk}^{(h+r_1 -r_2)}\|_\mathcal{S} \le  C\sup\limits_{j,k\in[p]}\|\widehat{\Delta}_{jk}^{(h+r_1 -r_2)}\|_\mathcal{S}.
        \end{align*}
        In the above argument we use the fact that $\varphi(\theta,\cdot)$ is the eigenfunction with $\|\varphi(\theta,\cdot)\|_\cH=1$. Thus $\|\phi_{jmr_1}\|_\cH = \left[\int_0^1\left|\int_0^{2\pi}\varphi(\theta,u)\exp(-\mathbbm{i} r_1\theta)\mathrm{d}\theta\right|^2\mathrm{d}u\right]^{1/2}\le \left\{\int_0^{2\pi}\int_0^{1}|\varphi(\theta,u)|^2\mathrm{d}u\mathrm{d}\theta\right\}^{1/2} = (2\pi)^{1/2}$. Taking summation, we have \begin{align*}
            &\sup\limits_{j,k\in[p]}\frac{1}{\delta_m\vee \delta_l}\left|P^{(h)}_{jkml}\right| \le  \sum\limits_{|r_1|,|r_2|\le L}\sum_{a = 1}^{4} \sup\limits_{j,k\in[p]}\left|P_{jkmlr_1r_2}^{(ha)}\right|\\
            \le & C(\delta_m\wedge \delta_l)\left(\sup_{\theta\in[0,2\pi]}\|\widehat{\bbf}_\theta-\bbf_\theta\|_{\cS,\max}\right)^2\sum_{|r_1|,|r_2|<L}\|\widehat{\bSigma}^{(h+r_1-r_2)}\|_{\cS,\max}\\ 
            & + C\left(\sup_{\theta\in[0,2\pi]}\|\widehat{\bbf}_\theta-\bbf_\theta\|_{\cS,\max}\right)\sum_{|r_1|,|r_2|<L}\|\widehat{\bSigma}^{(h+r_1-r_2)} - \bSigma^{(h+r_1-r_2)}\|_{\cS,\max}\\
            & + C\lambda_0 L^2\left(\sup_{\theta\in[0,2\pi]}\|\widehat{\bbf}_\theta-\bbf_\theta\|_{\cS,\max}\right) + C\sum_{|r_1|,|r_2|<L}\|\widehat{\bSigma}^{(h+r_1-r_2)} - \bSigma^{(h+r_1-r_2)}\|_{\cS,\max}.
        \end{align*}
        Recall that $\sup_{\theta\in[0,2\pi]}\|\widehat{\bbf}_\theta-\bbf_\theta\|_{\cS,\max} = O_P(\cH_1), \sum_{|r_1|,|r_2|<L}\|\widehat{\bSigma}^{(h+r_1-r_2)} - \bSigma^{(h+r_1-r_2)}\|_{\cS,\max} = O_P(\cH_2), \sum_{|r_1|,|r_2|<L}\|\bSigma^{(h+r_1-r_2)}\|_{\cS,\max} \le \lambda_0 L^2$, $m,l \in [M]$ and $\delta_m\wedge\delta_l \le \delta_M$.
        This gives \begin{align*}
            \sup\limits_{m,l\in[M]}\sup\limits_{j,k\in[p]}\frac{1}{\delta_m\vee \delta_l}\left|P^{(h)}_{jkml}\right| = O_P\{\delta_M\cH_1^2(L^2\lambda_0+\cH_2)+\cH_1\cH_2+\lambda_0 L^2\cH_1+\cH_2\}.
        \end{align*}
        Since we assume that $\lambda_0 = \max_{j\in[p]}\int_0^1\Sigma^{(0)}_{jj}(u,u)\du = O(1),$ $\cH_1 = o(1),$ and $ \cH_1\delta_M = O(1),$ we have \begin{align*}
            \sup\limits_{m,l\in[M]}\sup\limits_{j,k\in[p]}\frac{1}{\delta_m\vee \delta_l}\left|P^{(h)}_{jkml}\right| = O_P(L^2\cH_1 + \cH_2).
        \end{align*}

        \subsubsection{Control of $P^{(h)'}_{jkml}$}
        
        For $P^{(h)'}_{jkml}$ we have \begin{align*}
            \left|P^{(h)'}_{jkml}\right| \le &  \Big|\sum\limits_{|r_1|,|r_2| > L}\langle\phi^*_{jmr_1},\Sigma_{jk}^{(h+r_1-r_2)}(\phi^*_{klr_2})\rangle\Big| + \Big|\sum\limits_{|r_1|\le L,|r_2| > L}\langle\phi_{jmr_1},\Sigma_{jk}^{(h+r_1-r_2)}(\phi^*_{klr_2})\rangle\Big| \\
            & + \Big|\sum\limits_{|r_1|>L,|r_2|\le L}\langle\phi^*_{jmr_1},\Sigma_{jk}^{(h+r_1-r_2)}(\phi_{klr_2})\rangle\Big|\\
            \le & \lambda_0\pth{\sum_{|r_1|\le L,|r_2| > L}\|\phi_{jmr_1}\|_\cH\|\phi^*_{klr_2}\|_\cH + \sum_{|r_1|>L,|r_2|\le L}\|\phi^*_{jmr_1}\|_\cH\|\phi_{klr_2}\|_\cH} \\
            & + \lambda_0\sum_{|r_1|,|r_2| > L}\|\phi^*_{jmr_1}\|_\cH\|\phi^*_{klr_2}\|_\cH.
        \end{align*}
        Using integral by parts and noticing that $\varphi^*(u;\theta)$ satisfies $\partial_\theta^i\varphi^*(u;0) = \partial_\theta^i\varphi^*(u;2\pi)$ for $i=0,\dots,\kappa-1$, we have $|\phi^*_{jmr}(u)| = \left|\int_{0}^{2\pi}\varphi^*_{jm}(u ; \theta)\exp(-\mathbbm{i} r\theta)\mathrm{d}\theta\right|\le |r|^{-\kappa}\int_{0}^{2\pi}|\frac{\partial^\kappa}{\partial \theta^\kappa}\varphi^*_{jm}(u ; \theta)|\mathrm{d}\theta$.  By Condition \ref{cond5}, \(\|\phi^*_{jmr}\|_\cH = \left\{\int_{0}^{1} |\phi^*_{jmr}(u)|_2^2\mathrm{d}u\right\}^{1/2} \le |r|^{-\kappa}\left\{\int_{0}^{1}\int_{0}^{2\pi}\left|\frac{\partial^\kappa}{\partial \theta^\kappa}\varphi^*_m(u;\theta)\right|_2^2\mathrm{d}\theta\mathrm{d}u\right\}^{1/2}\le C|r|^{-\kappa}\). Recall $m,l\in[M]$. Under all conditions in Theorem \ref{thm4}, we have \begin{equation*}
		\sup\limits_{j,k\in[p],m,l\in[M]} P^{(h)'}_{jkml} = O(\lambda_0 L^{2-\kappa}).
	\end{equation*}
        Since \(\delta_m\rightarrow\infty\) and $\lambda_0 = O(1)$, we have \(\sup_{j,k\in[p],m,l\in[M]} P^{(h)'}_{jkml}/(\delta_m\vee \delta_l) = O(L^{2-\kappa})\).

    \subsection{Proof of Theorem \ref{thm5}}\label{Sec.A.5}

        First of all, we define the following uniform thresholding operators (across all frequencies).

        \begin{definition}[Uniform thresholding operators across all frequencies]\label{defS.1}
        We define $s_\lambda:[0,2\pi]\times\eS \rightarrow [0,2\pi]\times \eS$ as a uniform thresholding operators across all frequencies if it satisfies the following three conditions:

        (\romannumeral 1) For some $c > 0$, $\sup_{\theta\in[0,2\pi]}\|s_\lambda(Z_\theta)\|_{\cS}\le c\sup_{\theta\in[0,2\pi]}\|Y_\theta\|_{\cS}$ for all $Z_\theta, Y_\theta\in[0,2\pi]\times\eS$ that satisfy $\sup_{\theta\in[0,2\pi]}\|Z_\theta-Y_\theta\|_{\cS}\le \lambda$.

        (\romannumeral 2) $\sup_{\theta\in[0,2\pi]}\|s_\lambda(Z_\theta)\|_{\cS} = 0$ for all $\sup_{\theta\in[0,2\pi]}\|Z_\theta\|_{\cS} \le \lambda$.

        (\romannumeral 3) $\sup_{\theta\in[0,2\pi]}\|s_\lambda(Z_\theta)-Z_\theta\|_{\cS}\le \lambda$ for all $Z_\theta\in[0,2\pi]\times\eS$.
            
        \end{definition}

        It is straightforward to verify that $s_\lambda(Z_\theta) = Z_\theta(1-\lambda/\sup_{\theta\in[0,2\pi]}\|Z_\theta\|_\cS)_+$ satisfies \Cref{defS.1}. By \eqref{det12}, we have $\hat{f}_{\theta, jk}^{\cT} = s_\lambda\big(\hat{f}_{\theta, jk}\big)$. So it suffices to prove that the statement in Theorem \ref{thm5} holds for all uniform thresholding operators (across all frequencies).

        Assume that $\hat{f}_{\theta, jk}^{\cT} = s_\lambda\big(\hat{f}_{\theta, jk}\big)$ and $s_\lambda$ is a uniform thresholding operators (across all frequencies) as defined in \Cref{defS.1}. Let $\Omega_{n1} = \left\{\sup_{\theta\in[0,2\pi]}\max_{j,k\in[p]}\|\hat{f}_{\theta, jk}-f_{\theta, jk}\|_\cS\le\lambda\right\}$. Under $\Omega_{n1}$, $\sup_{\theta\in[0,2\pi]}\|s_\lambda(\hat{f}_{\theta, jk})\|_\cS\le c\sup_{\theta\in[0,2\pi]}\|f_{\theta, jk}\|$ and $\sup_{\theta\in[0,2\pi]}\|s_\lambda(\hat{f}_{\theta, jk})-\hat{f}_{\theta, jk}\|_\cS\le \lambda$. Then on the event $\Omega_{n1}$, we have \begin{align*}
            &\sup_{\theta\in[0,2\pi]}\sum_{k = 1}^p \|\hat{f}_{\theta, jk}^{\cT}-f_{\theta, jk}\|_\cS\\
            \le & \sum_{k = 1}^p \sup_{\theta\in[0,2\pi]} \|\hat{f}_{\theta, jk}^{\cT}-f_{\theta, jk}\|_\cS I\pth{\sup_{\theta\in[0,2\pi]}\|\hat{f}_{\theta, jk}\|_\cS \ge \lambda} \\
            &+ \sum_{k = 1}^p \sup_{\theta\in[0,2\pi]}\|f_{\theta, jk}\|_\cS I\pth{\sup_{\theta\in[0,2\pi]}\|\hat{f}_{\theta, jk}\|_\cS < \lambda}\\
            \le & \sum_{k = 1}^p\sup_{\theta\in[0,2\pi]}\|s_\lambda(\hat{f}_{\theta, jk}) - \hat{f}_{\theta, jk}\|_\cS I\pth{\sup_{\theta\in[0,2\pi]}\|\hat{f}_{\theta, jk}\|_\cS \ge \lambda} I\pth{\sup_{\theta\in[0,2\pi]}\|f_{\theta, jk}\|_\cS \ge \lambda} \\
            & + \sum_{k = 1}^p\sup_{\theta\in[0,2\pi]}\|\hat{f}_{\theta, jk} - f_{\theta, jk}\|_\cS I\pth{\sup_{\theta\in[0,2\pi]}\|\hat{f}_{\theta, jk}\|_\cS \ge \lambda} I\pth{\sup_{\theta\in[0,2\pi]}\|f_{\theta, jk}\|_\cS \ge \lambda}\\
            & + \sum_{k = 1}^p\sup_{\theta\in[0,2\pi]}\|s_\lambda(\hat{f}_{\theta, jk}) - f_{\theta, jk}\|_\cS I\pth{\sup_{\theta\in[0,2\pi]}\|\hat{f}_{\theta, jk}\|_\cS \ge \lambda} I\pth{\sup_{\theta\in[0,2\pi]}\|f_{\theta, jk}\|_\cS < \lambda}\\
            & + \sum_{k = 1}^p \sup_{\theta\in[0,2\pi]}\|f_{\theta, jk}\| I\pth{\sup_{\theta\in[0,2\pi]}\|f_{\theta, jk}\|_\cS < 2\lambda}\\
            \le & \sum_{k = 1}^p 2\lambda  I\pth{\sup_{\theta\in[0,2\pi]}\|f_{\theta, jk}\|_\cS \ge \lambda} + (1+c)\sum_{k = 1}^p\sup_{\theta\in[0,2\pi]}\|f_{\theta, jk}\|_\cS I\pth{\sup_{\theta\in[0,2\pi]}\|f_{\theta, jk}\|_\cS < \lambda}\\
            & + \sum_{k = 1}^p\sup_{\theta\in[0,2\pi]}\|f_{\theta, jk}\|_\cS I\pth{\sup_{\theta\in[0,2\pi]}\|f_{\theta, jk}\|_\cS < 2\lambda}\\
            \le & C\lambda^{1-q^*}\sum_{k = 1}^{p}\sup_{\theta\in[0,2\pi]}\|f_{\theta, jk}\|_{\cS}^{q^*} \le Cs_0(p)\lambda^{1-q^*}.
        \end{align*}
        Here $C$ is an absolute constant. 
        Then using the convergence rate result in \eqref{thm2eq1} and $\lambda^{-1}\cH_1\rightarrow 0$, where $\cH_1$ is exactly the convergence \eqref{thm2eq1}, we have  $1-P\{\Omega_{n1}\} = o(1)$, and this finishes the proof.

        \subsection{Proof of Theorem \ref{thm6}}

        For any $\bbf_\theta\in\cC_0(s_0)$ and its estimation $\widehat{\bbf}_\theta$, we define the following two sets: \begin{align*}
            S_{n1} & = \left\{(j,k):\sup_{\theta\in[0,2\pi]}\|\hat{f}_{\theta, jk}\|_\cS> \lambda,\sup_{\theta\in[0,2\pi]}\|f_{\theta, jk}\|_\cS = 0\right\},\quad \\
            S_{n2} & = \left\{(j,k):\sup_{\theta\in[0,2\pi]}\|\hat{f}_{\theta, jk}\|_\cS = 0,\sup_{\theta\in[0,2\pi]}\|f_{\theta, jk}\|_\cS > \lambda\right\}.
        \end{align*}
        We have $\{|S_{n1}|>0\}\subset \{\sup_{\theta\in[0,2\pi]}\|\widehat{\bbf}_\theta-\bbf_\theta\|_{\cS,\max}>\lambda\},\{|S_{n2}|>0\}\subset \{\sup_{\theta\in[0,2\pi]}\|\widehat{\bbf}_\theta-\bbf_\theta\|_{\cS,\max}>\lambda\}$. Recall that in the proof of Theorem \ref{thm5}, we prove that under our choice of $\lambda$, $P(\Omega_{n1})=1+o(1)$, where $\Omega_{n1} = \sth{\max_{j,k\in[p]}\|\hat{f}_{\theta, jk}-f_{\theta, jk}\|_\cS\le \lambda}$. This gives $P\{|S_{n1}|>0\} + P\{|S_{n2}|>0\} = o(1)$ and it is uniform over $\bbf_\theta\in\cC_0$. Since for any $\bbf_\theta\in\cC_0$, $\left\{\mathrm{supp}(\widehat{\bbf}_{\theta}^{\cT})\ne \mathrm{supp}(\bbf_\theta)\right\} \subset S_{n1}\cup S_{n2}$, we finish the proof.

    \subsection{Proof of Theorem \ref{thm7}}

        Without loss of generality, we deal with the case $h \ge 0$. We first have the following decomposition $$\|\widetilde{\bSigma}^{(h)}-\bSigma^{(h)}\|_{\cS,\max} \le \|\widetilde{\bSigma}^{(h)}-\widehat{\bSigma}^{(h)}\|_{\cS,\max} + \|\widehat{\bSigma}^{(h)}-\bSigma^{(h)}\|_{\cS,\max}.$$
        The second term is \(O_P(\cH_3)\) using results in Theorem \ref{thm1}. To proceed the analysis for the first term, we need to define a set of new notations. Denote \(\be_0 = (1,0)^\T,\widetilde{\bU}_{tji} = \{1,(U_{tji}-u)/b_{j}\}^\T\). Define $$\widehat{\bS}_{tj}(u) = \frac{1}{T_{tj}}\sum_{i = 1}^{T_{tj}}\widetilde{\bU}_{tji}\widetilde{\bU}_{tji}^\T K_{b_{j}}(U_{tji}-u),\quad \widehat{\bR}_{tj}(u) = \frac{1}{T_{tj}}\sum_{i = 1}^{T_{tj}}\widetilde{\bU}_{tji}Y_{tji} K_{b_{j}}(U_{tji}-u).$$
        Let \(\widetilde{X}_{tj}(u) = \be_0^\T\left[E\{\widehat{\bS}_{tj}(u)\}\right]^{-1}E\{\widehat{\bR}_{tj}(u)\mid \bX_t\}\). For any square matrix $\bB$, write $\|\bB\|_{\min} = \{\lambda_{\min}(\bB^\T\bB)\}^{1/2}$. Define the event $$\Omega_{tj1}(\delta')= \left\{\sup_{u\in[0,1]}\|\widehat{\bS}_{tj}(u)-E\{\widehat{\bS}_{tj}(u)\}\|_F\le C_S\delta'/2\right\},\quad \delta'\in(0,1]$$
        and $\Omega_{t1}(\delta') = \bigcap_{j\in[p]} \Omega_{tj1}(\delta')$. Here $C_S$ is a constant that can be decomposed as $C_S=m_fC_K$, where $m_f$ is the lower bound of density $f_U$, and $C_K$ is a constant that only depends on kernel function $K$. It also satisfies for any bandwidth $b_{j}$, $C_S\le \inf_{u\in[0,1]}\|E\{\widehat{\bS}_{tj}(u)\}\|_{\min}$. For the existence of such constants, see Lemma \ref{LemmaPartial3} for details.

        For the first term $\|\widetilde{\bSigma}^{(h)}-\widehat{\bSigma}^{(h)}\|_{\cS,\max}$, using triangle inequality, we have \begin{equation*}
            \begin{aligned}
                &\|\widetilde{\bSigma}^{(h)}-\widehat{\bSigma}^{(h)}\|_{\cS,\max}\\
                = &\Big\| \frac{1}{n-h}\sum_{t = 1}^{n-h}\pth{\widehat{\bX}_t\otimes\widehat{\bX}_{t+h}^\T - \bX_t\otimes\widehat{\bX}_{t+h}^\T +\bX_t\otimes\widehat{\bX}_{t+h}^\T- \bX_t\otimes\bX_{t+h}^\T}\Big\|_{\cS,\max}\\
                \le & \Big\|\frac{1}{n-h}\sum_{t = 1}^{n-h}\pth{\bX_t\otimes\widehat{\bX}_{t+h}^\T- \bX_t\otimes\bX_{t+h}^\T}\Big\|_{\cS,\max} \\
                & + \Big\|\frac{1}{n-h}\sum_{t = 1}^{n-h}\pth{\widehat{\bX}_t\otimes\widehat{\bX}_{t+h}^\T - \bX_t\otimes\widehat{\bX}_{t+h}^\T}\Big\|_{\cS,\max}\\
                := & I_1 + I_2.
            \end{aligned}
        \end{equation*}

        \subsubsection{Evaluation of $I_1$}
        Using triangle inequality, we have the following decomposition \begin{equation*}
            \begin{aligned}
                I_1 = &\Big\|\frac{1}{n-h}\sum_{t=1}^{n-h}\bX_{t}\otimes\widehat{\bX}_{t+h}^\T - \bX_{t}\otimes\bX_{t+h}^\T\Big\|_{\cS,\max}\\
                \le & \frac{1}{n-h}\sum_{t=1}^{n-h}\left\|\bX_{t}\otimes\widehat{\bX}_{t+h}^\T - \bX_{t}\otimes\widetilde{\bX}_{t+h}^\T\right\|_{\cS,\max} + \frac{1}{n-h}\sum_{t=1}^{n-h}\left\| \bX_{t}\otimes\widetilde{\bX}_{t+h}^\T-\bX_{t}\otimes \bX_{t+h}^\T\right\|_{\cS,\max} \\
                \le & \frac{1}{n-h}\sum_{t=1}^{n-h}\|\bX_t\|_{\cH,\infty}\|\widehat{\bX}_{t+h}-\widetilde{\bX}_{t+h}\|_{\cH,\infty} + \frac{1}{n-h}\sum_{t=1}^{n-h}\|\bX_t\|_{\cH,\infty}\|\widetilde{\bX}_t-\bX_t\|_{\cH,\infty}\\
                \le & \frac{1}{n-h}\sum_{t=1}^{n-h}\|\bX_t\|_{\cH,\infty}\|\widehat{\bX}_{t+h}-\widetilde{\bX}_{t+h}\|_{\cH,\infty}I\{\Omega_{(t+h)1}(1)\} + \frac{1}{n-h}\sum_{t=1}^{n-h}\|\bX_t\|_{\cH,\infty}\|\widetilde{\bX}_t-\bX_t\|_{\cH,\infty}\\
                & + \frac{1}{n-h}\sum_{t=1}^{n-h}\|\bX_t\|_{\cH,\infty}\|\widehat{\bX}_{t+h}-\widetilde{\bX}_{t+h}\|_{\cH,\infty}\qth{1-I\{\Omega_{(t+h)1}(1)\}}\\
                := & I_{11}+I_{12}+I_{13},
            \end{aligned}
        \end{equation*}   
        where $I_{11},I_{12},I_{13}$ are defined in an obvious way. We first deal with $I_{12}$. For any $q \ge 2$, using (a) of Lemma \ref{lemmaTechComposite1}, we have \begin{equation*}
            \begin{aligned}
                \left\|\|\bX_t\|_{\cH,\infty}\right\|_q & = \left\|\Big\|\sum_{h=0}^\infty \cP_{t-h}\bX_t\Big\|_{\cH,\infty}\right\|_q \le \sum_{h = 0}^\infty \left\|\Big\|\cP_{t-h}\bX_t\Big\|_{\cH,\infty}\right\|_q\le \sum_{h = 0}^\infty \omega_{h,q} = \Omega_{0,q}.
            \end{aligned}
        \end{equation*}
        Using Cauchy--Schwarz inequality with (\ref{lemmaB8eq2}) and Condition \ref{condpartial1}(\romannumeral 6), we have $E(\|\widetilde{\bX}_t-\bX_t\|_{\cH,\infty}\|\bX_t\|_{\cH,\infty}) \le C\Omega_{0,2}^2(\max_j b_j)^2$, which implies $E(I_{12}) \le C\Omega_{0,2}^2\max_jb_j^2$. This gives \begin{equation}\label{pfthm7-1}
            \begin{aligned}
                I_{12} = O_P\pth{\Omega_{0,2}^2 \max_{j\in[p]}b_{j}^2}.
            \end{aligned}
        \end{equation}
        For $I_{11}$, we have \begin{equation*}
            \begin{aligned}
                E(I_{11}) & = \frac{1}{n-h}\sum_{t=1}^{n-h}E\sth{E\pth{\|\bX_t\|_{\cH,\infty}\|\widehat{\bX}_{t+h}-\widetilde{\bX}_{t+h}\|_{\cH,\infty}I\{\Omega_{(t+h)1}(1)\}\mid \bX_t}}\\
                & =  \frac{1}{n-h}\sum_{t=1}^{n-h}E\sth{\|\bX_t\|_{\cH,\infty}(1+|\bX_{t+h}^*|_\infty)E\pth{\frac{\|\widehat{\bX}_{t+h}-\widetilde{\bX}_{t+h}\|_{\cH,\infty}}{1+|\bX_{t+h}^*|_\infty} I\{\Omega_{(t+h)1}(1)\}\mid \bX_t}}.
            \end{aligned}
        \end{equation*}
        Combining this with \eqref{lemmaB8eq1} of Lemma \ref{LemmaPartial1}, we have \begin{equation*}
            \begin{aligned}
                &E\pth{\frac{\|\widehat{\bX}_{t+h}-\widetilde{\bX}_{t+h}\|_{\cH,\infty}}{1+|\bX_{t+h}^*|_\infty}I\{\Omega_{(t+h)1}(1)\mid \bX_t} \\
                & = \int_0^\infty P\pth{\frac{\|\widehat{\bX}_{t+h}-\widetilde{\bX}_{t+h}\|_{\cH,\infty}}{1+|\bX_{t+h}^*|_\infty} > \delta, \Omega_{(t+h)1}(1) \mid \bX_t}\mathrm{d}\delta\\
                & \le \int_0^\infty 1\wedge C_1p\exp\{-C_2\cT_t\min(\delta,\delta^2)\}\mathrm{d}\delta,
            \end{aligned}
        \end{equation*}
        where $\cT_t = \min_j T_{tj}b_{j}$. By Lemma \ref{LemmaPartial2}, this integral is smaller than $C(\log p/\cT_t)^{1/2}$. Since in Condition \ref{condpartial1}(\romannumeral 4) we assume that $T_{tj}$ are of the same order, we have \begin{equation}\label{pfthm7-2}
            E(I_{11}) \le \frac{C}{n-h}\sum_{t=1}^{n-h}\frac{(\log p)^{1/2}}{\cT_t^{1/2}}E\sth{\|\bX_t\|_{\cH,\infty}(1+|\bX_{t+h}^*|_\infty)} \le C\pth{\min_j\widebar{T}_j b_{j}}^{-1/2}(\log p)^{1/2}\Omega_{0,2}^2.
        \end{equation}
        Finally we analyze $I_{13}$. By \eqref{lemmaB8eq3}, we have
        \begin{equation}\label{pfthm7-10}
            \begin{aligned}
                P(I_{13}\ne 0) &\le \sum_{t=1}^n P\qth{1-I\{\Omega_{t1}(1)\}\ne 0} = \sum_{t=1}^n 1-P\sth{\Omega_{tj}(1)}\\
            &\le C_1\sum_{t=1}^n p\exp(-C_2\cT_t).
            \end{aligned}
        \end{equation}
        Since we have assumed $\min_j\widebar{T}_j b_{j}/\log (p\vee n)\rightarrow \infty$, and also $T_{tj}$'s are of the same order, we have $P(I_{13}\ne 0)\rightarrow 0$. Combining this with (\ref{pfthm7-1}) and (\ref{pfthm7-2}), we have \begin{equation}\label{pfthm7-8}
            \begin{aligned}
                I_1 = O_P\big\{\Omega_{0,2}^2\max_{j}b_{j}^2 + \big(\min_j \widebar{T}_j b_{j}\big)^{-1/2}(\log p)^{1/2}\Omega_{0,2}^2\big\}.
            \end{aligned}
        \end{equation}

        \subsubsection{Evaluation of $I_2$}
        Using triangle inequality, we have the following decomposition 
            \begin{align*}
                I_2&=\Big\|\frac{1}{n-h}\sum_{t = 1}^{n-h}\pth{\widehat{\bX}_t\otimes\widehat{\bX}_{t+h}^\T - \bX_t\otimes\widehat{\bX}_{t+h}^\T}\Big\|_{\cS,\max}\le \frac{1}{n-h}\sum_{t = 1}^{n-h}\nth{\widehat{\bX}_t\otimes\widehat{\bX}_{t+h}^\T - \bX_t\otimes\widehat{\bX}_{t+h}^\T}_{\cS,\max}\\
                &\le \frac{1}{n-h}\sum_{t = 1}^{n-h}\nth{\widehat{\bX}_t - \bX_t}_{\cH,\infty}\nth{\widehat{\bX}_{t+h}^\T}_{\cH,\infty}\\
                &\le \frac{1}{n-h}\sum_{t = 1}^{n-h}\nth{\widehat{\bX}_t - \bX_t}_{\cH,\infty}\pth{\nth{\bX_{t+h}-\widehat{\bX}_{t+h}^\T}_{\cH,\infty} + \nth{\bX_{t+h}}_{\cH,\infty}}\\
                &= \frac{1}{n-h}\sum_{t = 1}^{n-h}\|\bX_{t+h}\|_{\cH,\infty}\|\widehat{\bX}_t - \bX_t\|_{\cH,\infty} + \frac{1}{n-h}\sum_{t = 1}^{n-h}\|\widehat{\bX}_{t+h}-\bX_{t+h}\|_{\cH,\infty}\|\widehat{\bX}_{t}-\bX_{t}\|_{\cH,\infty}\\
                &\le \frac{1}{n-h}\sum_{t = 1}^{n-h}\|\bX_{t+h}\|_{\cH,\infty}\|\widehat{\bX}_t - \bX_t\|_{\cH,\infty} + \frac{1}{2n-2h} \qth{\sum_{t = 1}^{n-h}\|\widehat{\bX}_{t+h}-\bX_{t+h}\|_{\cH,\infty}^2 + \sum_{t = 1}^{n-h}\|\widehat{\bX}_{t}-\bX_{t}\|_{\cH,\infty}^2}\\
                &\le \frac{1}{n-h}\sum_{t = 1}^{n-h}\|\bX_{t+h}\|_{\cH,\infty}\|\widehat{\bX}_t - \bX_t\|_{\cH,\infty}+ \frac{1}{n-h}\sum_{t = 1}^{n-h}\|\widehat{\bX}_{t}-\bX_{t}\|_{\cH,\infty}^2\\
                & \le \frac{1}{n-h}\sum_{t = 1}^{n-h}\|\bX_{t+h}\|_{\cH,\infty}\|\widehat{\bX}_t - \bX_t\|_{\cH,\infty} +  \frac{1}{n-h}\sum_{t = 1}^{n-h}(\|\widehat{\bX}_{t}-\widetilde{\bX}_t\|_{\cH,\infty} + \|\widetilde{\bX}_t - \bX_{t}\|_{\cH,\infty})^2\\
                &\le \frac{1}{n-h}\sum_{t = 1}^{n-h}\|\bX_{t+h}\|_{\cH,\infty}\|\widehat{\bX}_t - \bX_t\|_{\cH,\infty} + \frac{2}{n-h}\sum_{t = 1}^{n-h}\|\widehat{\bX}_{t}-\widetilde{\bX}_t\|_{\cH,\infty}^2 + \frac{2}{n-h}\sum_{t = 1}^{n-h}\|\widetilde{\bX}_t - \bX_{t}\|_{\cH,\infty}^2\\
                & \le \frac{1}{n-h}\sum_{t = 1}^{n-h}\|\bX_{t+h}\|_{\cH,\infty}\|\widehat{\bX}_t - \bX_t\|_{\cH,\infty} + \frac{2}{n-h}\sum_{t = 1}^{n-h}\|\widehat{\bX}_{t}-\widetilde{\bX}_t\|_{\cH,\infty}^2I\{\Omega_{t1}(1)\} \\
                &+ \frac{2}{n-h}\sum_{t = 1}^{n-h}\|\widetilde{\bX}_t - \bX_{t}\|_{\cH,\infty}^2 + \frac{2}{n-h}\sum_{t = 1}^{n-h}\|\widehat{\bX}_{t}-\widetilde{\bX}_t\|_{\cH,\infty}^2\qth{1-I\{\Omega_{t1}(1)\}}\\
                &:= I_{21}+I_{22}+I_{23}+I_{24},
            \end{align*}
        where $I_{21},I_{22},I_{23}$ and $I_{24}$ are defined in an obvious way. The analysis of $I_{21}$ is identical to $I_1$ so from \eqref{pfthm7-8}  we have \begin{equation}\label{pfthm7-7}
            I_{21} = O_P\left\{\Omega_{0,2}^2\max_{j}b_{j}^2 + \big(\min_j \widebar{T}_j b_{j}\big)^{-1/2}(\log p)^{1/2}\Omega_{0,2}^2\right\}.
        \end{equation}
        For $I_{22}$, we have \begin{equation}\label{pfthm7-5}
            \begin{aligned}
                E(I_{22}) & = \frac{2}{n-h}\sum_{t=1}^{n-h}E\sth{E\pth{\|\widehat{\bX}_{t}-\widetilde{\bX}_{t}\|_{\cH,\infty}^2I\{\Omega_{t1}(1)\}\mid \bX_t}}\\
                & = \frac{2}{n-h}\sum_{t=1}^{n-h}E\sth{(1+|\bX_t^*|_\infty)^2 E\pth{\frac{\|\widehat{\bX}_{t}-\widetilde{\bX}_{t}\|_{\cH,\infty}^2}{(1+|\bX_t^*|_\infty)^2}I\sth{\Omega_{t1}(1)}\mid \bX_t}}.
            \end{aligned}
        \end{equation}
        Using \eqref{lemmaB8eq1} in Lemma \ref{LemmaPartial1}, we have \begin{equation*}
            \begin{aligned}
                E\pth{\frac{\|\widehat{\bX}_{t}-\widetilde{\bX}_{t}\|_{\cH,\infty}^2}{(1+|\bX_t^*|_\infty)^2}I\sth{\Omega_{t1}(1)}\mid \bX_t} \le C\int_0^\infty \delta\qth{1\wedge C_1p\exp\{-C_2\cT_t\min(\delta,\delta^2)\}}\mathrm{d}\delta,
            \end{aligned}
        \end{equation*}
        where $\cT_t=\min_j T_{tj}b_{j}$. By Lemma \ref{LemmaPartial2}, this integral is smaller than $C\log p/\cT_t$. Since we assume $T_{tj}$'s are of the same order as in Condition \ref{condpartial1}(\romannumeral 4) as well as $\log(p\vee n)/\min_{j}\widebar{T}_j b_{j}\rightarrow 0$ in the condition of Theorem \ref{thm7}, there exists a positive absolute constant $C$ such that $\cT_t \ge C\log p$. Using Condition \ref{condpartial1}(\romannumeral 6) we have \begin{align*}
            E(I_{22}) & \le \frac{C}{n-h}\sum_{t=1}^{n-h}\frac{\log p}{\cT_t}E\sth{(1+|\bX_{t}^*|_\infty)^2} \\
            &\le \frac{C}{n-h}\sum_{t=1}^{n-h}\pth{\frac{\log p}{\cT_t}}^{1/2}E\sth{(1+|\bX_{t}^*|_\infty)^2} \le C\big(\min_j\widebar{T}_j b_{j}\big)^{-1/2}(\log p)^{1/2}\Omega_{0,2}^2.
        \end{align*}
        These $C$'s are generic constants that may vary from line to line. For $I_{23}$, by \eqref{lemmaB8eq2}, $b_{j} \le 1$ and Condition \ref{condpartial1}(\romannumeral 5), we have \begin{equation}\label{pfthm7-6}
            \begin{aligned}
                E(I_{23}) & = \frac{2}{n-h}\sum_{t=1}^{n-h}E\left(\|\widetilde{\bX}_{t+h}-\bX_{t+h}\|_{\cH,\infty}^2\right) \\
                &\le \frac{2}{n-h}\sum_{t=1}^{n-h}E\left\{\max_j b_{j}^4(\bX_t^{(2)*})^2\right\} \le C\Omega_{0,2}^2\max_{j\in[p]} b_{j}^2.
            \end{aligned}
        \end{equation}
        For $I_{24}$, using similar argument as in \eqref{pfthm7-10}, we have $P(I_{24}\ne 0)\rightarrow 0$.
        Combining \eqref{pfthm7-7}, \eqref{pfthm7-5} and \eqref{pfthm7-6}, we obtain
        \begin{equation*}
            I_2 = O_P\left\{\Omega_{0,2}^2\max_j b_{j}^2 + \pth{\min_j \widebar{T}_j b_{j}}^{-1/2}\Omega_{0,2}^2\log^{1/2}( p)\right\}.
        \end{equation*}
        This finishes the proof of Theorem \ref{thm7}.

        \subsection{Proof of Theorem \ref{thm8}}

        We first have the following decomposition $$\sup\limits_{\theta\in[0,2\pi]}\|\widetilde{\bbf}_{\theta}-\bbf_\theta\|_{\max} \le \sup\limits_{\theta\in[0,2\pi]}\|\widetilde{\bbf}_{\theta}-\widehat{\bbf}_\theta\|_{\max} + \sup\limits_{\theta\in[0,2\pi]}\|\widehat{\bbf}_\theta-\bbf_\theta\|_{\max}.$$
        The last term is \(O_P(\cH_1)\) using results in Theorem \ref{thm2}. For the first term, let $\bX_{t+h} = \widehat{\bX}_{t+h} = 0$ if $t+h < 0$ or $t+h > n$. We have \begin{equation*}
            \begin{aligned}
                &\widetilde{\bbf}_{\theta}-\widehat{\bbf}_\theta = \frac{1}{2\pi}\sum_{h=-m_0}^{m_0}K(h/m_0)\pth{\widetilde{\bSigma}^{(h)}-\widehat{\bSigma}}\exp(-\mathbbm{i} h\theta)\\
                = & \frac{1}{2\pi }\sum_{h=-m_0}^{m_0}\frac{\exp(-\mathbbm{i} h\theta)K(h/m_0)}{n-|h|} \sum_{t = 1}^{n}\pth{\widehat{\bX}_t\otimes\widehat{\bX}_{t+h}^\T - \bX_t\otimes\widehat{\bX}_{t+h}^\T +\bX_t\otimes\widehat{\bX}_{t+h}^\T- \bX_t\otimes\bX_{t+h}^\T}\\
                = & \frac{1}{2\pi }\sum_{h=-m_0}^{m_0}\frac{\exp(-\mathbbm{i} h\theta)K(h/m_0)}{n-|h|} \sum_{t = 1}^{n}\pth{\widehat{\bX}_t\otimes\widehat{\bX}_{t+h}^\T - \bX_t\otimes\widehat{\bX}_{t+h}^\T}\\
                & + \frac{1}{2\pi }\sum_{h=-m_0}^{m_0}\frac{\exp(-\mathbbm{i} h\theta)K(h/m_0)}{n-|h|}\sum_{t = 1}^{n}\pth{\bX_t\otimes\widehat{\bX}_{t+h}^\T- \bX_t\otimes\bX_{t+h}^\T}\\
                := & \bI_1(\theta) + \bI_2(\theta)
            \end{aligned}
        \end{equation*}

        We first take a look at $\bI_2(\theta)$. We have \begin{equation*}
            \begin{aligned}
                \sup_{\theta\in[0,2\pi]}\|\bI_2(\theta)\|_{\cS,\max} \le &  \frac{C}{n}\sum_{h=-m_0}^{m_0}\left\|\sum_{t = 1}^{n}\bX_t\otimes\widehat{\bX}_{t+h}^\T- \bX_t\otimes\bX_{t+h}^\T\right\|_{\cS,\max}.
            \end{aligned}
        \end{equation*}
        Then using argument similar to the evaluation of $I_1$ in the proof of Theorem \ref{thm7}, we have \begin{equation}\label{pfthm8-1}
            \sup_{\theta\in[0,2\pi]}\|\bI_2(\theta)\|_{\cS,\max} = O_P\sth{m_0\Omega_{0,2}^2\max_{j}b_{j}^2 + m_0\pth{\min_j \widebar{T}_j b_{j}}^{-1/2}\Omega_{0,2}^2(\log p)^{1/2}}.
        \end{equation}
        We then deal with $\bI_1(\theta)$. We have \begin{equation*}
            \sup_{\theta\in[0,2\pi]}\|\bI_1(\theta)\|_{\cS,\max} \le \frac{C}{n}\sum_{h=-m_0}^{m_0}\left\|\sum_{t = 1}^{n}\widehat{\bX}_t\otimes\widehat{\bX}_{t+h}^\T- \widehat{\bX}_t\otimes\bX_{t+h}^\T\right\|_{\cS,\max}.
        \end{equation*}
        Then using argument similar to the evaluation of $I_2$ in the proof of Theorem \ref{thm7}, we have \begin{equation}\label{pfthm8-2}
            \begin{aligned}
                \sup_{\theta\in[0,2\pi]}\|\bI_1(\theta)\|_{\cS,\max} = O_P\sth{m_0\Omega_{0,2}^2\max_{j}b_{j}^2 + m_0\pth{\min_j \widebar{T}_j b_{j}}^{-1/2}\Omega_{0,2}^2(\log p)^{1/2}}.
            \end{aligned}
        \end{equation}
        The result of Theorem \ref{thm8} is the direct conclusion of (\ref{pfthm8-1}) and (\ref{pfthm8-2}).

        \section{Additional Technical Proofs}\label{supsec.tech}
\renewcommand{\theequation}{\thesection.\arabic{equation}}
  \setcounter{equation}{0}
                \subsection{Lemma \ref{leTechComposite2} and its proof}

        \begin{lemma}\label{leTechComposite2}

            (\romannumeral 1) Let $\bZ_t,t = 1,\dots,n$ be a $p$-dimensional martingale difference sequence or backward martingale difference sequence taking values in $L_2([0,1])$ with respect to the filtration $(\cG_t)_{t\in[n]}$. Let $l = 1\vee \log p$ and $q \ge 2$. Then we have  \begin{align*}
                \Bigg\|\Big\|\sum_{t = 1}^n\bZ_{t}\Big\|_{\cH,\infty}\Bigg\|_q^2 \le C\frac{q^2l}{(\log q)^2}\sum_{t=1}^n\|\|\bZ_t\|_{\cH,\infty}\|_q^2.
            \end{align*}

            (\romannumeral 2)  Let $\bZ_t,t = 1,\dots,n$ be a $p$-dimensional martingale difference sequence or backward martingale difference sequence taking values in $\eS = L_2([0,1])\otimes L_2([0,1])$ with respect to the filtration $(\cG_t)_{t\in[n]}$. Let $l = 1\vee \log p$ and $q \ge 2$. Then we have \begin{align*}
                \Bigg\|\Big\|\sum_{t = 1}^n\bZ_{t}\Big\|_{\cS,\max}\Bigg\|_q^2 \le C\frac{q^2l}{(\log q)^2}\sum_{t=1}^n\|\|\bZ_t\|_{\cS,\max}\|_q^2.
            \end{align*}
        \end{lemma}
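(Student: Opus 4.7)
\textbf{Proof sketch for Lemma \ref{leTechComposite2}.} The plan is to embed the functional $\ell_\infty$-type norms of the lemma into a uniformly $2$-smooth Banach space and then invoke a Burkholder-type inequality for martingale differences with values in that space.

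For part (\romannumeral 1), set $q' = 1 + l$ (so that $q'-1 = l$) and endow $\otimes^p L_2([0,1])$ with the mixed norm $\|(f_1,\dots,f_p)\|_{B_1} := (\sum_{j=1}^p \|f_j\|_\cH^{q'})^{1/q'}$. Because $p^{1/q'} \le e$, one obtains the two-sided comparison
\begin{equation*}
\|\cdot\|_{\cH,\infty} \;\le\; \|\cdot\|_{B_1} \;\le\; e\,\|\cdot\|_{\cH,\infty}.
\end{equation*}
The key structural fact is that $B_1$ is an $\ell^{q'}$-direct sum of copies of the Hilbert space $L_2([0,1])$ and is therefore uniformly $2$-smooth with smoothness constant $D \le \sqrt{q'-1} = \sqrt{l}$ (classical theorem of Pisier).

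For any $(2,D)$-smooth Banach space, the Burkholder--Pinelis inequality for Banach-valued (forward or backward) martingale differences gives, for every $q \ge 2$,
\begin{equation*}
\Big\|\Big\|\sum_{t=1}^n \bZ_t\Big\|_{B_1}\Big\|_q \;\le\; C\sqrt{q}\,D\, \Big\|\Big(\sum_{t=1}^n \|\bZ_t\|_{B_1}^2\Big)^{1/2}\Big\|_q.
\end{equation*}
Squaring and applying the triangle inequality in $L^{q/2}$ to the quadratic-variation term,
\begin{equation*}
\Big\|\Big(\sum_{t=1}^n \|\bZ_t\|_{B_1}^2\Big)^{1/2}\Big\|_q^{2} \;=\; \Big\|\sum_{t=1}^n \|\bZ_t\|_{B_1}^2\Big\|_{q/2} \;\le\; \sum_{t=1}^n \big\|\|\bZ_t\|_{B_1}\big\|_q^{2}.
\end{equation*}
Combining the last two displays with the norm comparison and the elementary inequality $q \le q^2/(\log q)^2$ valid for $q \ge 2$ yields the claimed bound. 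Part (\romannumeral 2) is proved by the identical route with $B_1$ replaced by $\ell^{q'}(\cS)$, since $\cS = L_2([0,1]) \otimes L_2([0,1])$ is itself a Hilbert space under the Hilbert--Schmidt inner product; the same $2$-smoothness constant $\sqrt{l}$ applies and $\|\cdot\|_{\cS,\max}$ satisfies the analogous two-sided comparison with its mixed $\ell^{q'}(\cS)$-norm up to the same factor $e$.

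The main technical point is the Banach-valued Burkholder--Pinelis inequality with the explicit $\sqrt{q}\,D$ dependence. A self-contained derivation starts from the defining two-point inequality $\|x+y\|^2 + \|x-y\|^2 \le 2\|x\|^2 + 2D^2\|y\|^2$ of $2$-smoothness, conditions successively on the filtration $(\cG_t)$ to obtain $E(\|S_t\|_{B_k}^2 \mid \cG_{t-1}) \le \|S_{t-1}\|_{B_k}^2 + D^2 E(\|\bZ_t\|_{B_k}^2 \mid \cG_{t-1})$, iterates this recursion over $t$, and finally upgrades from the $L^2$ to the $L^q$ moment either by Doob's maximal inequality combined with interpolation or by a direct good-$\lambda$ argument. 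The backward-martingale variant follows by applying the forward result to the time-reversed filtration.
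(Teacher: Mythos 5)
Your proposal follows essentially the same route as the paper: renorm $\otimes^p L_2([0,1])$ (resp.\ $\otimes^p\eS$) with a mixed $\ell^s$-norm with $s\asymp \log p$, observe that this norm is equivalent to the $\max$-type norm up to the absolute factor $e$, note that the resulting space is $(2,\sqrt{l})$-smooth, and apply a Banach-valued martingale moment inequality before converting back. The paper verifies the $2$-smoothness by a direct second-derivative computation rather than citing Pisier, but that is immaterial. The one substantive difference is the martingale inequality you invoke: you claim a Burkholder--Davis--Gundy-type bound $\|\|\sum_t \bZ_t\|_{B_1}\|_q \le C\sqrt{q}\,D\,\|(\sum_t\|\bZ_t\|_{B_1}^2)^{1/2}\|_q$ with the \emph{unconditional} square function and constant $\sqrt{q}\,D$, whereas the paper uses Pinelis's Rosenthal-type inequality (Theorem~4.1 of Pinelis, 1994), which has constant $q/\log q$ and involves both the maximal increment $\|\sup_t\|\bZ_t\|\|_q$ and the \emph{conditional} square function. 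Your claimed inequality is stronger than needed, and your sketch of its proof (iterating the two-point smoothness inequality and then ``Doob's maximal inequality combined with interpolation or a good-$\lambda$ argument'') does not obviously deliver the $\sqrt{q}$ dependence --- iterating $2$-smoothness only gives the $L^2$ bound, and neither Doob nor interpolation upgrades an $L^2$ estimate to $L^q$ for $q>2$. This is a soft spot rather than a fatal gap: since the target bound only requires the factor $q^2/(\log q)^2$, you can replace your BDG step by the off-the-shelf Pinelis inequality (handling the extra maximal-increment term via $\|\sup_t\|\bZ_t\|\|_q\le(\sum_t\|\|\bZ_t\|\|_q^2)^{1/2}$ and the conditional square function via Minkowski in $L^{q/2}$ plus Jensen, exactly as the paper does), and the rest of your argument goes through unchanged.
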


        \begin{proof}
            We deal with (\romannumeral 1) and (\romannumeral 2) can be proved identically. We only prove results for martingale difference sequences; results for backward martingale difference sequences follow mutatis mutandis. Assume $s \ge 2$. We first verify that under the norm $\|\cdot\|_{\cH,s}$ the $p$-dimensional $L_2([0,1])$ curves consist of a $(2,D)$-smooth Banach space. For the concept of $(2,D)$-smooth Banach space, see (2.1) of \cite{pinelis1994optimum}.  We first need to derive the smoothness parameter $D$. For $\bX(\cdot),\bV(\cdot)$, define $\bY(t,\cdot) = \bX(\cdot) + t\bV(\cdot)$, and define $\widetilde{\bY}(t)$ as a $p$-dimensional vector with $\widetilde{Y}_{j}(t) = \|Y_{j}(t,\cdot)\|_{\cH} = (\|X_j\|_{\cH}^2 + 2t\langle X_j,V_j\rangle + t^2 \|V_j\|_{\cH}^2)^{1/2}$. Notice that $\|\bY(t,\cdot)\|_{\cH,s} = (\sum_{j = 1}^p \widetilde{Y}_{j}(t)^{s})^{1/s} $. This gives \begin{align*}
                \partial_t\|\bY(t,\cdot)\|_{\cH,s}^2 = & \frac{2}{s}\left(\sum_{j=1}^p \widetilde{Y}_{j}(t)^s \right)^{2/s-1}\sum_{j=1}^p \frac{s}{2}\widetilde{Y}_{j}(t)^{s-2}(2\langle X_j,V_j \rangle + 2t\|V_j\|_{\cH}^2)\\
                = & 2\left(\sum_{j=1}^p \widetilde{Y}_{j}(t)^s \right)^{2/s-1}\sum_{j=1}^p \widetilde{Y}_{j}(t)^{s-2}(\langle X_j,V_j \rangle + t\|V_j\|_{\cH}^2).
                \end{align*}
                For the second derivative, we have \begin{align*}
                \partial^2_{tt}\mid_{t = 0}\|\bY(t,\cdot)\|_{\cH,s}^2  =& 2\left(\sum_{j=1}^p \widetilde{Y}_{j}(0)^s \right)^{2/s-1}\sum_{j=1}^p\left\{(s-2)\widetilde{Y}_{j}(0)^{s-4}\langle X_j,V_j \rangle^2 + \widetilde{Y}_{j}(0)^{s-2}\|V_j\|_{\cH}^2\right\}\\
                & + \left(4-2s\right)\left(\sum_{j=1}^p \widetilde{Y}_{j}(0)^{s-2}\langle X_j,V_j \rangle\right)^2 \\
                \le & 2\left(\sum_{j=1}^p \widetilde{Y}_{j}(0)^s \right)^{2/s-1}\sum_{j=1}^p\left\{(s-2)\widetilde{Y}_{j}(0)^{s-4}\langle X_j,V_j \rangle^2 + \widetilde{Y}_{j}(0)^{s-2}\|V_j\|_{\cH}^2\right\}\\
                \le & 2\|\bX\|_{\cH,s}^{2-s}(s-1)\|\bX\|_{\cH,s}^{s-2}\|\bV\|_{\cH,s}^2 \le 2(s-1)\|\bV\|_{\cH,s}^2,
            \end{align*}
             where in the last line we use Cauchy--Schwarz inequality for $\langle X_j,V_j\rangle$ and H{\"o}lder's inequality for $\sum_{j=1}^p \|X_j\|_\cH^{s-2}\|V_j\|_\cH^{2}$. By equation (2.2) in \cite{pinelis1994optimum}, we have $(\otimes^p L_2([0,1]),\|\cdot\|_{\cH,s})$ is a $(2,\sqrt{s-1})$-smooth Banach space.  It is straightforward to verify that this Banach space is also separable. \color{black} By \color{blue} (4.3) in \color{black} Theorem 4.1 of \cite{pinelis1994optimum}, we have  \begin{align*}
                \left\|\left\|\sum_{t = 1}^n\bZ_t\right\|_{\cH,s}\right\|_q \le C\frac{q}{\log q} \left[\left\|\sup_{t\in[n]}\|\bZ_t\|_{\cH,s}\right\|_q + \sqrt{s-1}\left\|\left\{\sum_{t = 1}^n E(\|\bZ_t\|_{\cH,s}^2\mid \cG_{t-1})\right\}^{1/2}\right\|_q \right].
            \end{align*}
        For the first item, we have\begin{align*}
            &\left\|\sup_{t\in[n]}\|\bZ_t\|_{\cH,s}\right\|_q \le \left(\sum_{t = 1}^n \|\|\bZ_t\|_{\cH,s}\|_q^q \right)^{1/q} \le \left(\sum_{t = 1}^n \|\|\bZ_t\|_{\cH,s}\|_q^2 \right)^{1/2}.
        \end{align*}
        For the second item, using triangle inequality and Jensen's inequality, we have
        \begin{align*}
            \left\|\left\{\sum_{t = 1}^n E(\|\bZ_t\|_{\cH,s}^2\mid \cG_{t-1})\right\}^{1/2}\right\|_q \le \left\{\sum_{t=1}^n \|E(\|\bZ_t\|_{\cH,s}^2\mid \cG_{t-1})\|_{q/2}\right\}^{1/2} \le \left(\sum_{t=1}^n \|\|\bZ_t\|_{\cH,s}\|_q^2\right)^{1/2}.
        \end{align*}
        This implies \begin{align}\label{pflemB1eq1}
            \left\|\left\|\sum_{t = 1}^n\bZ_{t}\right\|_{\cH,s}\right\|_q^2 \le C\frac{q^2s}{(\log q)^2}\sum_{t=1}^n\|\|\bZ_t\|_{\cH,s}\|_q^2.
        \end{align}
        When dimension $p \le 7 < e^2$, we have $\|\bZ_t\|_{\cH,\infty} \le \|\bZ_t\|_{\cH,2} \le \sqrt{7}\|\bZ_t\|_{\cH,\infty}$. When dimension $p \ge 8 > e^2$, pick $s = \log p$, and then$\|\bZ_t\|_{\cH,\infty} \le \|\bZ_t\|_{\cH,\log p} \le p^{1/\log p}\|\bZ_t\|_{\cH,\infty} = e\|\bZ_t\|_{\cH,\infty}$. Combining these facts with (\ref{pflemB1eq1}) we obtain \begin{align*}
           \left\|\Big\|\sum_{t = 1}^n\bZ_{t}\Big\|_{\cH,\infty}\right\|_q^2 \le C\frac{q^2l}{(\log q)^2}\sum_{t=1}^n\|\|\bZ_t\|_{\cH,\infty}\|_q^2. 
        \end{align*}
        \end{proof} 
        
        \subsection{Lemma \ref{lemmaTechComposite3} and its proof}

        \begin{lemma}\label{lemmaTechComposite3}
        (\romannumeral 1) Let $\bX_1,\dots,\bX_n$ be a sequence of independent $p$-dimensional vector functions with each entry in $(L_2([0,1]),\|\cdot\|_{\cH})$. Assume they are centered and $\|\|X_{tj}\|_{\cH}\|_q < \infty$ for all $t\in[n],j\in[p]$ and some fixed $q\ge 2$. Let $\bS_n = \sum_{t=1}^n\bX_t, \sigma^2 = \max_{j\in[p]}\sum_{t=1}^n E(\|X_{tj}\|_{\cH}^2)$ and $l = 1\vee \log p$. Then for any $x > 0$, \begin{equation*}\label{lemmaB6-1}
            P(\|\bS_n\|_{\cH,\infty}\ge x) \le C_qx^{-q}l^q\sum_{t=1}^nE\{\|\bX_t\|_{\cH,\infty}^q\}+2p\exp\left(-\frac{x^2}{C\sigma^2}\right).
        \end{equation*}

        (\romannumeral 2) Let $\bX_1,\dots,\bX_n$ be a sequence of independent $p$-dimensional vector functions with each entry in $(L_2([0,1])\otimes L_2([0,1]),\|\cdot\|_{\cS})$. Assume they are centered and $\|\|X_{tj}\|_{\cS}\|_q < \infty$ for all $t\in[n],j\in[p]$ and $q\ge 2$. Let $\bS_n= \sum_{t=1}^n\bX_t, \sigma^2 = \max_{j\in[p]}\sum_{t=1}^n E(\|X_{tj}\|_{\cS}^2)$ and $l = 1\vee \log p$. Then for any $x > 0$, \begin{equation}\label{lemmaB6-2}
            P(\|\bS_n\|_{\cS,\infty}\ge x) \le C_qx^{-q}l^q\sum_{t=1}^nE\{\|\bX_t\|_{\cS,\infty}^q\}+2p\exp\left(-\frac{x^2}{C\sigma^2}\right).
        \end{equation}
        \end{lemma}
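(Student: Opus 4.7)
I would prove both parts by a Fuk--Nagaev style truncation, combined with a union bound across the $p$ coordinates that reduces the heavy lifting to a Bernstein-type concentration inequality for independent mean-zero Hilbert-valued sums (Pinelis's inequality, already invoked via Theorem 4.1 of Pinelis 1994 in the proof of Lemma~\ref{leTechComposite2}). Since $\eS=L_2([0,1])\otimes L_2([0,1])$ is itself a separable Hilbert space, part (\romannumeral 2) is structurally identical to part (\romannumeral 1) after replacing $\|\cdot\|_\cH$ by $\|\cdot\|_\cS$, so I describe the plan only for (\romannumeral 1).

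\textbf{Plan.} Let $l=1\vee\log p$ and pick a truncation level $y=x/(cl)$, where $c>0$ is an absolute constant to be chosen. Decompose $\bX_t=\bX_t^{\le}+\bX_t^{>}$ where $\bX_t^{\le}=\bX_t\mathbbm{1}\{\|\bX_t\|_{\cH,\infty}\le y\}$, and re-center $\widetilde\bX_t=\bX_t^{\le}-E\bX_t^{\le}$; using $E\bX_t=\mathbf{0}$, write
\begin{equation*}
\bS_n=\sum_{t=1}^n\widetilde\bX_t+\sum_{t=1}^n\bX_t^{>}-\sum_{t=1}^n E\bX_t^{>}.
\end{equation*}
The first step is to handle the big-jump contribution by a crude Markov estimate,
\begin{equation*}
P\Big(\sum_{t=1}^n\bX_t^{>}\neq\mathbf{0}\Big)\le \sum_{t=1}^n P(\|\bX_t\|_{\cH,\infty}>y)\le y^{-q}\sum_{t=1}^n E\|\bX_t\|_{\cH,\infty}^q=(cl)^q x^{-q}\sum_{t=1}^n E\|\bX_t\|_{\cH,\infty}^q,
\end{equation*}
which already produces the polynomial part of the claimed bound. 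The deterministic recentering term satisfies $\|\sum_t E\bX_t^{>}\|_{\cH,\infty}\le y^{1-q}\sum_t E\|\bX_t\|_{\cH,\infty}^q$ and is negligible compared with $x$ in the regime of interest. For the truncated centered sum, whose coordinates satisfy $\|\widetilde X_{tj}\|_\cH\le 2y$ a.s.\ and $\sum_t E\|\widetilde X_{tj}\|_\cH^2\le\sigma^2$, I apply Bonferroni over $j\in[p]$ and invoke the Bernstein inequality in the Hilbert space $L_2([0,1])$,
\begin{equation*}
P\Big(\Big\|\sum_{t=1}^n\widetilde X_{tj}\Big\|_\cH\ge x/4\Big)\le 2\exp\!\Big\{-\frac{(x/4)^2}{2\sigma^2+(2y)(x/4)/3}\Big\}.
\end{equation*}
With $y=x/(cl)$ the ``$My$'' term in the Bernstein denominator is of order $x^2/l$; choosing $c$ large enough absorbs it into $C\sigma^2$, and the union bound over $p$ then yields the $2p\exp\{-x^2/(C\sigma^2)\}$ contribution.

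\textbf{Main obstacle.} The delicate part is calibrating $c$ and $x$-regimes so that three things hold simultaneously: (a) the Bernstein denominator is genuinely dominated by $C\sigma^2$; (b) the deterministic recentering $y^{1-q}\sum_t E\|\bX_t\|_{\cH,\infty}^q$ is at most $x/4$; and (c) the polynomial term carries the dimensional factor $l^q$ rather than the naive $p$ that a per-coordinate Fuk--Nagaev would produce. Items (a)--(b) are reconciled by a case split: when $x^q\ge 4(cl)^{q-1}\sum_t E\|\bX_t\|_{\cH,\infty}^q$ the Bernstein bound is valid, while in its complement the polynomial term alone already exceeds $1$ and the claim is trivial. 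Item (c) is exactly why we truncate at the $\log p$-dependent scale $y=x/(cl)$: Markov is applied to the single scalar random variable $\|\bX_t\|_{\cH,\infty}$ (the max of the $p$ coordinate norms) instead of per coordinate, trading an otherwise unavoidable prefactor $p$ for $(cl)^q$. Part (\romannumeral 2) follows \emph{mutatis mutandis}: replacing $\|\cdot\|_\cH$ by $\|\cdot\|_\cS$, Pinelis's Bernstein inequality still applies in $(\eS,\|\cdot\|_\cS)$ and the same truncation-plus-union argument delivers \eqref{lemmaB6-2}.
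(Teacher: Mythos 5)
Your truncation-plus-Bernstein route has a genuine gap at exactly the point you flag as item (a). The case split you propose --- on whether $x^q\ge 4(cl)^{q-1}\sum_t E\|\bX_t\|_{\cH,\infty}^q$ --- controls the recentering term and makes the polynomial bound trivial in its complement, but it says nothing about the relative sizes of $x^2$ and $l\sigma^2$, which is what actually governs the Bernstein denominator $2\sigma^2+x^2/(3cl)$. No choice of the absolute constant $c$ makes $x^2/(3cl)$ dominated by a multiple of $\sigma^2$ uniformly in $x$: whenever $x^2>6cl\sigma^2$ (which certainly happens for all large $x$, simultaneously with your moment condition), the Bernstein exponent saturates at $-3cl/32$ and your estimate degenerates to $2p\exp(-3cl/32)$, a quantity that does not decay in $x$. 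The right-hand side of \eqref{lemmaB6-2}, by contrast, tends to zero like $x^{-q}$ as $x\to\infty$ (and can be made arbitrarily small for fixed $x$ by shrinking $\sigma^2$ and the $q$-th moments), so your chain of inequalities cannot terminate at the claimed bound. This is the classical reason why a Fuk--Nagaev inequality with the true variance in the exponent cannot be derived from truncation plus plain Bernstein: in the large-deviation branch one must track the truncated higher moments inside the exponential Chebyshev bound (Nagaev's argument, where the Bennett-type tail $(\,\cdot\,)^{x/(2y)}$ is compared against the truncated $q$-th moment sum and folded back into the polynomial term), not just the second moment.

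The paper avoids this by invoking Theorem 4 of Einmahl and Li (2008), a ready-made Banach-space Fuk--Nagaev inequality that already has the two-term structure $P\{\|\bS_n\|\ge 2E(\|\bS_n\|)+x\}\le \exp\{-x^2/(3\Lambda_n)\}+C_qx^{-q}\sum_t E(\|\bX_t\|^q)$ with the weak variance $\Lambda_n$ in the exponent. The real work there --- and the source of the $l^q$ factor --- is (i) identifying the dual unit ball of $\big(\otimes^p\eS,\|\cdot\|_{\cS,\infty}\big)$ via Riesz representation to show $\Lambda_n\le\sigma^2$, and (ii) bounding $E(\|\bS_n\|_{\cS,\infty})\lesssim \sigma l^{1/2}+l\big(\sum_t E\|\bX_t\|_{\cS,\infty}^q\big)^{1/q}$ by symmetrization, a conditional McDiarmid/Orlicz-norm maximal inequality over the $p$ coordinates, and a maximal inequality of Chernozhukov, Chetverikov and Kato; a final case split on $x$ against this expectation bound then gives the statement (note this route needs no union bound over coordinates, so the $p$ in front of the exponential is pure slack). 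If you wish to keep your hands-on per-coordinate approach, you would need to replace Bernstein by Nagaev's master inequality in each coordinate and carry out the comparison of its large-deviation term with the truncated moment sum; the truncation, recentering, and union-bound steps of your sketch are otherwise sound.
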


        \begin{proof}
        We prove (\romannumeral 2) and (\romannumeral 1) can be proved similarly. Using Theorem 4 in \cite{einmahl2008characterization}, we have 
            \begin{align*}
                P\{\|\bS_n\|_{\cS,\infty} \ge 2E(\|\bS_n\|_{\cS,\infty}) + x\} \le \exp\left(-\frac{x^2}{3\Lambda_n}\right) + C_qx^{-q}\sum_{t=1}^n E(\|\bX_t\|_{\cS,\infty}^q)
            \end{align*}
        Here $\Lambda_n = \sup\{\sum_{t=1}^n E\{f^2(\bX_t)\}:f\in B_1^*\}$, where $B_1^*$ is the unit ball of dual space of $(\otimes^p \{L_2([0,1])\otimes L_2([0,1])\},\|\cdot\|_{\cS,\infty})$. 

        In the following we characterize functions in $B_1^*$. Define inner product $\langle\cdot,\cdot\rangle_{\cS,2}$ that for any $\bV_1,\bV_2\in \otimes^p \{L_2([0,1])\otimes L_2([0,1])\}$, we have \begin{align*}
            \langle \bV_1,\bV_2\rangle_{\cS,2} = \sum_{j=1}^p\int_{[0,1]^2} V_{1j}(u,v)V_{2j}(u,v)\du\dv.
        \end{align*}
        Then it induces the norm $\|\cdot\|_{\cS,2}$, and $(\otimes^p \{L_2([0,1])\otimes L_2([0,1])\},\|\cdot\|_{\cS,2})$ can be easily verified to be a Hilbert space. Using Riesz representation theorem, any bounded linear functional $f:\otimes^p \{L_2([0,1])\otimes L_2([0,1])\}\rightarrow\eR$ on Hilbert space $(\otimes^p \{L_2([0,1])\otimes L_2([0,1])\},\|\cdot\|_{\cS,2})$ takes the form \begin{align}\label{reizerep}
            f(\bX_t) = \sum_{j=1}^p\int_0^1X_{tj}(u,v)Y_{j}(u,v)\du\dv,\quad \|\bY\|_{\cS,2} \le 1.
        \end{align}
        We next show that any bounded linear functional of $(\otimes^p \{L_2([0,1])\otimes L_2([0,1])\},\|\cdot\|_{\cS,\infty})$ is also a bounded linear functional of $(\otimes^p \{L_2([0,1])\otimes L_2([0,1])\},\|\cdot\|_{\cS,2})$. Assume $f$ is a bounded linear functional of $(\otimes^p \{L_2([0,1])\otimes L_2([0,1])\},\|\cdot\|_{\cS,\infty})$, then there exists $M$ such that $f(\bX) \le M\|\bX\|_{\cS,\infty}$. Since $\|\bX\|_{\cS,\infty}\le \|\bX\|_{\cS,2}$, we have $f(\bX) \le M\|\bX\|_{\cS,2}$. The linearity still holds. Thus it is a bounded linear functional of $(\otimes^p \{L_2([0,1])\otimes L_2([0,1])\},\|\cdot\|_{\cS,2})$.

        For $f\in B_1^*$, it is a bounded linear functional of $(\otimes^p \{L_2([0,1])\otimes L_2([0,1])\},\|\cdot\|_{\cS,2})$, so it has the representation of (\ref{reizerep}). Next we show that $\|\bY\|_{\cS,1}\le 1$. Assume otherwise $\|\bY\|_{\cS,1} > 1$. Define $X_j = 0$ if $\|Y_j\|_\cS =0$, and $X_j=Y_j/\|Y_j\|_\cS$ if $\|Y_j\|_\cS >0$. Let $\bX=(X_1,\dots,X_p)$, then we have $\|\bX\|_{\cS,\infty}=1, f(\bX) > 1$, which contradicts $f\in B_1^*$. Thus any function $f\in B_1^*$ takes the form \begin{align*}
            f(\bX_t) = \sum_{j=1}^p\int_0^1X_{tj}(u,v)Y_{j}(u,v)\du\dv,\quad \|\bY\|_{\cS,1} \le 1.
        \end{align*}
        This gives \begin{equation}\label{pflemmaB6-1}
            \begin{aligned}
                &\sum_{t=1}^n E\{f^2(\bX_t)\} = \sum_{t=1}^n E\left\{\left(\sum_{j=1}^p\int_0^1 X_{tj}(u,v)Y_j(u,v)\du\dv\right)^2\right\} \\
                \le & \sum_{t=1}^n E\left\{\left(\sum_{j=1}^p\|Y_j\|_{\cS}\|X_{tj}\|_{\cS}\right)^2\right\} \le \sum_{t=1}^n \sum_{j=1}^p \|Y_j\|_{\cS} E(\|X_{tj}\|_\cS^2) \le \max_{j\in[p]}\sum_{t=1}^n E(\|X_{tj}\|_\cS^2) = \sigma^2.
            \end{aligned}
        \end{equation}
        Thus $\Lambda_n \le \sigma^2$. The remaining task is to bound $E(\|\bS_n\|_{\cS,\infty})$. We first use symmetrization method. Define $\varepsilon_t,t\in[n]$ to be i.i.d. Rademacher random variables with $P(\varepsilon_t=1)=P(\varepsilon_t=-1)=1/2$. Let $X_{tj}'$ be an i.i.d. copy of $X_{tj}$. Define $\bX^n = (\bX_1,\dots,\bX_n)$. For any random variable $X$ that takes value in $L_2([0,1])\otimes L_2([0,1])$, recall that $\|X\|_{\cS}$ is a non-negative random variable. Then we have \begin{equation*}
            \begin{aligned}
                E(\|\bS_n\|_{\cS,\infty}) & = E\pth{\max_{j\in[p]}\nth{\sum_{t=1}^n X_{tj}}_\cS} = E\sth{\max_{j\in[p]}\nth{\sum_{t=1}^n X_{tj}-E\pth{X_{tj}'}}_\cS}\\
                & = E\qth{\max_{j\in[p]}\nth{E\sth{\sum_{t=1}^n \pth{X_{tj}-X_{tj}'}\mid\bX^n}}_\cS}\\
                & \le E\qth{\max_{j\in[p]}E\sth{\nth{\sum_{t=1}^n \pth{X_{tj}-X_{tj}'}}_\cS\mid\bX^n}}\\
                & \le E\sth{\max_{j\in[p]}\nth{\sum_{t=1}^n \pth{X_{tj}-X_{tj}'}}_\cS} = E\sth{\max_{j\in[p]}\nth{\sum_{t=1}^n \varepsilon_t\pth{X_{tj}-X_{tj}'}}_\cS}\\
                & \le 2E\pth{\max_{j\in[p]}\nth{\sum_{t=1}^n \varepsilon_tX_{tj}}_\cS} := E(I_1).
            \end{aligned}
        \end{equation*}
        In the first inequality we use Proposition 1.12 in \cite{pisier2016martingales}. Then we have \begin{equation*}
            \begin{aligned}
                E\pth{I_1\mid \bX^n} \le & 2E\sth{\max_{j\in[p]}\left|\nth{\sum_{t=1}^n\varepsilon_t X_{tj}}_\cS - E\pth{\nth{\sum_{t=1}^n\varepsilon_t X_{tj}}_\cS\mid \bX^n}\right| \mid \bX^n} \\
                & + 2\max_{j\in[p]}E\pth{\nth{\sum_{t=1}^n\varepsilon_t X_{tj}}_\cS \mid \bX^n} := 2I_2 + 2I_3.
            \end{aligned}
        \end{equation*}
        We first deal with $I_3$. We have \begin{equation*}
            \begin{aligned}
                I_3 & = \max_{j\in[p]}E\qth{\sth{\int_0^1\pth{\sum_{t=1}^n\varepsilon_tX_{tj}(u,v)}^2\du\dv}^{1/2}\mid \bX^n}\\
                & \le \max_{j\in[p]}\qth{E\sth{\int_0^1\pth{\sum_{t=1}^n\varepsilon_tX_{tj}(u,v)}^2\du\dv\mid \bX^n}}^{1/2}\\
                & = \max_{j\in[p]}\qth{\sum_{t_1,t_2 =1}^nE\sth{\int_0^1\varepsilon_{t_1}\varepsilon_{t_2}X_{t_1j}(u,v)X_{t_2j}(u,v)\du\dv\mid\bX^n}}^{1/2}\\
                & = \max_{j\in[p]}\sth{\sum_{t=1}^n\int_0^1X_{tj}(u,v)^2\du\dv}^{1/2}  = \pth{\max_{j\in[p]}\sum_{t=1}^n\|X_{tj}\|_\cS^2\du}^{1/2}.
            \end{aligned}
        \end{equation*}
        Next we deal with $I_2$. Recall that for any $X_{tj}$ it is a random variable defined on probability space $(\Omega,\cF,P)$ and takes value in $L_2([0,1])\otimes L_2([0,1])$ endowed with Borel algebra $\mathcal{B}$ induced by norm $\|\cdot\|_2$. The space $(L_2([0,1])\otimes L_2([0,1]),\mathcal{B})$ is a Polish space, and thus nice and admitting existence of regular conditional probabilities (see Theorem 4.1.17 in \citealt{durrett2019probability}). This enables us to generalize many unconditional concentration inequalities to conditional cases. Using McDiarmid's Inequality (see Theorem 2.9.1 in \citealt{vershynin2018high}), we have \begin{equation*}
            \begin{aligned}
                P\sth{\left|\nth{\sum_{t=1}^n\varepsilon_tX_{tj}}_\cS - E\pth{\nth{\sum_{t=1}^n\varepsilon_tX_{tj}}_\cS}\right|>x \mid \bX^n} & \le \exp\pth{-\frac{x^2}{2\sum_{t=1}^n\|X_{tj}\|_\cS^2}} \quad \mbox{a.s.}
            \end{aligned}
        \end{equation*}
        Define $\tilde{\psi}(x) = \exp(x^2)-1$ and the Orlicz norm $\nth{X}_{\tilde{\psi}} = \inf\sth{c>0:E\sth{\tilde{\psi}\pth{|X|/c}}}\le 1$. This gives $\left\|E\sth{\nth{\sum_{t=1}^n\varepsilon_tX_{tj}}_\cS - E\pth{\nth{\sum_{t=1}^n\varepsilon_tX_{tj}}_\cS}\mid \bX^n}\right\|_{\tilde{\psi}} \le C\pth{\sum_{t=1}^n\|X_{tj}\|_\cS^2}^{1/2}$ holds almost surely. Here $C$ is an absolute constant that may vary from line to line. Using Lemma 2.2.2 in \cite{van1997weak}, we obtain $I_2 \le C\pth{\log p\max_{j\in[p]}\sum_{t=1}^n\|X_{tj}\|_\cS^2}^{1/2}$. Here we use the fact that for any random variable $X$, $E(|X|)\le E(|X|^2)^{1/2} \le \|X\|_{\tilde{\psi}}$, see Section 2.2 in \cite{van1997weak}. Combining the bounds of $I_2$ and $I_3$, we obtain \begin{equation}\label{pflemmaB2-5}
            E(I_1\mid \bX^n) \le Cl^{1/2}\sth{\max_{j\in[p]}\sum_{t=1}^n\|X_{tj}\|_\cS^2}^{1/2}.
        \end{equation}
        Using Lemma 9 in \cite{chernozhukov2015comparison}, we obtain \begin{equation}\label{pflemmaB2-6}
            E\sth{\max_{j\in[p]}\sum_{t=1}^n\|X_{tj}\|_\cS^2} \le C\log pE\pth{\max_{j\in[p]}\max_{t\in[t]}\|X_{tj}\|_\cS^2} + C\max_{j\in[p]}E\pth{\sum_{t=1}^n\|X_{tj}\|_\cS^2}.
        \end{equation}
        Combining (\ref{pflemmaB2-5}) and (\ref{pflemmaB2-6}), we have there exists an absolute large enough constant $C'$, such that \begin{equation}\label{pflemmaB6-2}
        \begin{aligned}
            E(I_1) = E\sth{E\pth{I_1\mid\bX^n}} &\le C'l^{1/2}\sigma + C'l\sth{E\pth{\max_{j\in[p]}\max_{t\in[n]}\|X_{tj}\|_\cS^2}}^{1/2}\\
            & = C'l^{1/2}\sigma + C'l\qth{E\sth{\pth{\max_{t\in[n]}\|\bX_{t}\|_{\cS,\infty}}^2}}^{1/2}\\
            & \le C'l^{1/2}\sigma + C'l\qth{E\sth{\pth{\max_{t\in[n]}\|\bX_{t}\|_{\cS,\infty}}^q}}^{1/q}\\
            & \le C'\left\{\sigma l^{1/2}+\left(\sum_{t=1}^n E(\|\bX_t\|_{\cS,\infty}^q)\right)^{1/q}l\right\}.
        \end{aligned}
        \end{equation}

        Assume first $x < 4C'\left\{\sigma l^{1/2}+\left(\sum_{t=1}^n E(\|\bX_t\|_{\cS,\infty}^q)\right)^{1/q}l\right\}$. Under this assumption, if $\sigma l^{1/2} < \left(\sum_{t=1}^n E(\|\bX_t\|_{\cS,\infty}^q)\right)^{1/q}l$, we have $x < 8C'\left(\sum_{t=1}^n E(\|\bX_t\|_{\cS,\infty}^q)\right)^{1/q}l$, and we can find a large enough constant $C_q$ such that $C_qx^{-q}l^q\sum_{t=1}^nE\{\|\bX_t\|_{\cS,\infty}^q\} > 1$. If $\sigma l^{1/2} > \left(\sum_{t=1}^n E(\|\bX_t\|_{\cS,\infty}^q)\right)^{1/q}l$, we have $x < 8C'\sigma l^{1/2}$, and we can find a constant $C$ large enough such that $2p\exp(-x^2/C\sigma^2) > 1$. To sum up, we can pick large enough constant $C_q,C$ such that, when $x < 4C'\left\{\sigma l^{1/2}+\left(\sum_{t=1}^n E(\|\bX_t\|_{\cS,\infty}^q)\right)^{1/q}l\right\}$, the upper bound in (\ref{lemmaB6-2}) trivially holds.
        
        Next, for the case $x > 4C'\left\{\sigma l^{1/2}+\left(\sum_{t=1}^n E(\|\bX_t\|_{\cS,\infty}^q)\right)^{1/q}l\right\}$, combining (\ref{pflemmaB6-1}) and (\ref{pflemmaB6-2}), we have \begin{align*}
            P(\|\bS_n\|_{\cS,\infty}\ge x) &\le P\{\|\bS_n\|_{\cS,\infty} \ge 2E(\|\bS_n\|_{\cS,\infty}) + x/2\}\\
            & \le \exp\left(-\frac{x^2}{12\sigma^2}\right)+2^{-q}C_qx^{-q}\sum_{t=1}^n E(\|\bX_t\|_{\cS,\infty}^q).
        \end{align*}
        Thus the upper bound in (\ref{lemmaB6-2}) holds. Combining the above argument, we see that there exist large enough constants $C_q,C$ such that (\ref{lemmaB6-2}) holds for all $x$ and we finish the proof.
        \end{proof}
        
        \subsection{Lemma \ref{lemmaTechComposite1} and its proof}

        \begin{lemma}\label{lemmaTechComposite1}
         Recall \eqref{Sec1-1}. Define the projection operator $\cP_t$ as $\cP_t\bX = E(\bX\mid\cF_t)-E(\bX\mid\cF_{t-1})$. Define $l = 1\vee \log p$. Recall for two curves $\bX(\cdot),\bY(\cdot)$, their tensor product is defined as $(\bX\otimes\bY^\T) (u,v) = \bX(u)\bY(v)^\T$. Then we have the following bounds: \begin{enumerate}[label=(\alph*)]
                \item For $h \ge 0$, $\|\|\cP_{t-h}\bX_t\|_{\cH,\infty}\|_q \le \omega_{h,q}$ and $\|\|\cP_{t-h}X_{tj}\|_\cH\|_q \le \delta_{h,q,j}$.
                \item For any $c_t,t\in[n]$, we have $\|\|\sum_{t = 1}^n c_t\bX_t\|_{\cH,\infty}\|_q \le C\Omega_{0,q}ql^{1/2}(\sum_{t = 1}^nc_t^2)^{1/2}/\log q$.
                \item For any $c_{st},s,t\in[n]$, we have that  $\|\|\sum_{s,t=1}^n c_{st}\{\bX_s\otimes\bX_t^\T - E\{\bX_s\otimes\bX_t^\T\}\|_{\cS,\max}\|_q \le Cn^{1/2}q^2l\Omega_{0,2q}^2\cC/(\log q)^2$. Here $\cC = \max\{\max_{t\in[n]}(\sum_{s=1}^n c_{st}^2)^{1/2},\max_{s\in[n]}(\sum_{t=1}^n c_{st}^2)^{1/2}\}$.
                \item For any $c_{st},s,t\in[n]$, and $j,k\in[p]$, we have  $\|\|\sum_{s,t=1}^n c_{st}\{X_{sj}\otimes X_{tk} - E\{X_{sj}\otimes X_{tk}\}\|_{\cS}\|_q \le Cn^{1/2}q^2\Delta_{0,2q,j}\Delta_{0,2q,k}\cC/(\log q)^2$. Here $\cC$ is defined in the same way as in (c).
            \end{enumerate}
        \end{lemma}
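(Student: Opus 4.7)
For part (a), the plan is to exploit the coupling construction that underlies both the projection operator $\cP_{t-h}$ and the dependence measure $\omega_{h,q}$. Because $\varepsilon'_{t-h}$ is independent of $\cF_{t-h}$ and has the same distribution as $\varepsilon_{t-h}$, the coupled field $\bG(\cdot,\cF_{t,\{t-h\}})$ satisfies $E(\bX_{t,\{t-h\}}\mid\cF_{t-h})=E(\bX_t\mid\cF_{t-h-1})$, so
\begin{equation*}
\cP_{t-h}\bX_t \;=\; E\bigl(\bX_t-\bX_{t,\{t-h\}}\mid \cF_{t-h}\bigr).
\end{equation*}
Taking $\|\cdot\|_{\cH,\infty}$ inside the conditional expectation by Jensen and then applying the tower property yields $\|\,\|\cP_{t-h}\bX_t\|_{\cH,\infty}\,\|_q\le \omega_{h,q}$. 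The entrywise inequality $\|\,\|\cP_{t-h}X_{tj}\|_{\cH}\,\|_q\le \delta_{h,q,j}$ follows by restricting to the $j$-th coordinate before taking norms.

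For part (b), the plan is to decompose $\bX_t=\sum_{h\ge 0}\cP_{t-h}\bX_t$ (a convergent telescoping expansion under Condition \ref{cond.FDM}) and swap summation to get $\sum_{t=1}^n c_t\bX_t=\sum_{h\ge 0}\bigl(\sum_{t=1}^n c_t\,\cP_{t-h}\bX_t\bigr)$. For each fixed $h$, the inner sequence is a martingale difference sequence in the Banach space $(\otimes^p L_2([0,1]),\|\cdot\|_{\cH,\infty})$ adapted to $(\cF_{t-h})_t$, so Lemma \ref{leTechComposite2}(i) combined with part (a) yields an inner-sum bound of order $(q/\log q)\,l^{1/2}\,\omega_{h,q}\,(\sum_t c_t^2)^{1/2}$. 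Summing over $h$ via the triangle inequality and using $\sum_h\omega_{h,q}\le\Omega_{0,q}$ gives the stated bound.

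For parts (c) and (d), the plan is to decompose both factors using the telescoping $\bX_s=\sum_{i\le s}\cP_i\bX_s$, $\bX_t=\sum_{j\le t}\cP_j\bX_t$ and to rewrite
\begin{equation*}
\bX_s\otimes\bX_t^\T-E(\bX_s\otimes\bX_t^\T)\;=\;\sum_{i,j}\Bigl\{(\cP_i\bX_s)\otimes(\cP_j\bX_t)^\T - E\bigl[(\cP_i\bX_s)\otimes(\cP_j\bX_t)^\T\bigr]\Bigr\},
\end{equation*}
where the expectation is nontrivial only for the diagonal terms $i=j$ (by the orthogonality of martingale differences). Re-indexing by the lags $h_1=s-i$, $h_2=t-j$, one obtains, for each pair $(h_1,h_2)\ge 0$, a double-indexed sum in $(s,t)$ that carries a nested martingale structure: conditional on $\cF_{s\wedge t-\max(h_1,h_2)-1}$, the summand is a martingale difference in the outer index, and once the outer index is fixed, in the inner index. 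Two successive applications of Lemma \ref{leTechComposite2}(ii) (the Banach-space maximal inequality under $\|\cdot\|_{\cS,\max}$, respectively the scalar-Hilbert version under $\|\cdot\|_{\cS}$ for part (d)) produce the factor $q^2/(\log q)^2$ and the factor $l$ in (c) (absent in (d)), with the weight $\cC$ coming from the $\ell_2$-norms $(\sum_s c_{st}^2)^{1/2}$ and $(\sum_t c_{st}^2)^{1/2}$ that appear at each application, and an extra $n^{1/2}$ coming from the dimension of one of the sums. Part (a) then controls each $\|\,\|\cP_i\bX_s\|_{\cH,\infty}\,\|_{2q}\le\omega_{s-i,2q}$, and Cauchy--Schwarz in the lag variables converts the resulting double sum $\sum_{h_1,h_2}\omega_{h_1,2q}\omega_{h_2,2q}$ into $\Omega_{0,2q}^2$; the analogue with $\delta$ and $\Delta$ yields (d).

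The main obstacle is the bookkeeping in the last step: one must verify that the expanded quadratic form, after projection and re-indexing, is indeed a \emph{nested} martingale (inner index martingale for fixed outer index, and the conditional sum over the inner index is then itself a martingale in the outer index), because only under this structure do both applications of Lemma \ref{leTechComposite2} yield the sharp $q/\log q$ factors. Handling the diagonal $i=j$ contributions, which require the explicit centering by expectation, is where the martingale property would otherwise be lost; once that is done carefully, the off-diagonal contributions and the two halves $s\le t$ and $s>t$ are symmetric, and the absolute-constant factors merge into the $C$ absorbed in the final bound.
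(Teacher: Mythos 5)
Your treatment of (a) and (b) is correct and coincides with the paper's: (a) uses the identity $\cP_{t-h}\bX_t=E(\bX_t-\bX_{t,\{t-h\}}\mid\cF_{t-h})$ plus Jensen, and (b) expands $\bX_t=\sum_{r\ge0}\cP_{t-r}\bX_t$, applies the Banach-space martingale inequality (Lemma \ref{leTechComposite2}) to each fixed lag, and sums via the triangle inequality.

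For (c) and (d), however, you take a genuinely different route from the paper, and it has an unresolved gap. The paper does \emph{not} expand both factors into a double chaos sum; it applies a single martingale decomposition $\sum_{r\le n}\cP_r(\cdot)$ to the whole quadratic form, bounds each projection by the coupled difference via $\|\cP_r Y\|_q\le\|Y-Y_{\{r\}}\|_q$, splits $\bX_s\otimes\bX_t^\T-\bX_{s,\{r\}}\otimes\bX_{t,\{r\}}^\T$ into two cross terms, controls the inner linear sums $\sum_t c_{st}\bX_t$ by part (b) together with Cauchy--Schwarz, and then applies Lemma \ref{leTechComposite2} once to the outer sum in $r$; the two applications of the $q/\log q$ inequality (one hidden inside part (b)) give $q^2l/(\log q)^2$, and $\sum_r(\sum_s\omega_{s-r,2q})^2\le n\Omega_{0,2q}^2$ gives the $n^{1/2}$. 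Your double expansion $\sum_{u,v}(\cP_u\bX_s)\otimes(\cP_v\bX_t)^\T$ works for the off-diagonal part $u\ne v$, where the nested martingale structure you describe is genuine. The problem is the diagonal $u=v$: for fixed lags $(h_1,h_2)$ this is a single sum $\sum_u\{(\cP_u\bX_{u+h_1})\otimes(\cP_u\bX_{u+h_2})^\T-E[\cdot]\}$ whose summands are $\cF_u$-measurable but whose conditional expectations given $\cF_{u-1}$ do not vanish, so it is \emph{not} a martingale difference sequence and neither application of Lemma \ref{leTechComposite2} is available. The crude triangle inequality on this sum costs $\sum_u|c_{u+h_1,u+h_2}|$, which in the worst case (e.g. $c_{st}=I(t=s+d)$, exactly the autocovariance setting this lemma must cover) is of order $n\cC$ rather than $n^{1/2}\cC$, so the bound fails by a factor $n^{1/2}$. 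Repairing this requires a further projection decomposition of the predictable part $E[(\cP_u\bX_s)\otimes(\cP_u\bX_t)^\T\mid\cF_{u-1}]-E[\cdot]$ and a verification that the resulting lag sums still collapse to $\Omega_{0,2q}^2$; you flag this as "the main obstacle" but do not supply the argument, and without it the proof of (c) and (d) does not close.
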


    \begin{proof}
        (a) For $h \ge 0$, $\cP_{t-h}\bX_t = E\{\bX_t\mid\cF_{t-h}\}-E\{\bX_t\mid\cF_{t-h-1}\}$. This gives \begin{align*}
            \|\|\cP_{t-h}\bX_t(u)\|_{\cH,\infty}\|_q & = \|\|E\{\bX_t-\bX_{t,\{t-h\}}\mid\cF_{t-h-1}\}\|_{\cH,\infty}\|_q\\
            & \le \left\|\max_{j\in[p]}E\{\|\bG_j(\cdot,\cF_{t})-\bG_j(\cdot,\cF_{t,\{t-h\}})\|_{\cH}\mid\cF_{t-h-1}\}\right\|_q\\
            & \le \|E\{\|\bG(\cdot,\cF_t)-\bG(\cdot,\cF_{t,\{t-h\}})\|_{\cH,\infty}\mid\cF_{t-h-1}\}\|_q\\
            & \le \|\|\bG(\cdot,\cF_t)-\bG(\cdot,\cF_{t,\{t-h\}})\|_{\cH,\infty}\|_q = \omega_{t,q}.
        \end{align*}
        The first inequality uses Proposition 1.12 in \cite{pisier2016martingales}. The second and third inequalities follow from Jensen's inequality. The same argument gives $\|\|\cP_{t-h}\bX_{tj}(u)\|_\cH\|_q \le \delta_{h,q,j}$.

        (b) We write $\sum_{t = 1}^n c_t\bX_t = \sum_{r = 0}^\infty\sum_{t = 1}^n c_t\cP_{t-r}\bX_t$. For any fixed $r$, $\cP_{t-r}\bX_t,t = 1,\dots,n$ is a martingale difference of Banach space with respect to $\cF_{t-r}$. Using Lemma \ref{leTechComposite2}, we have \begin{align*}
            \left\|\left\|\sum_{t=1}^nc_t\bX_t\right\|_{\cH,\infty}\right\|_q &\le \sum_{r=0}^\infty \left\|\left\|\sum_{t=1}^nc_t\cP_{t-r}\bX_t\right\|_{\cH,\infty}\right\|_q\\
            & \le C\left(\frac{q^2l}{(\log q)^2}\right)^{1/2}\sum_{r = 0}^\infty \left(\sum_{t=1}^n c_t^2\right)^{1/2}\omega_{r,q} = C\Omega_{0,q}\frac{ql^{1/2}}{\log q}\left(\sum_{t=1}^n c_t^2\right)^{1/2}.
        \end{align*}

        (c) We write $\sum_{s,t=1}^n c_{st}\{\bX_s\otimes\bX_t^\T - E\{\bX_s\otimes\bX_t^\T\} = \sum_{r = -\infty}^n\cP_r(\sum_{s,t=1}^n c_{st}\bX_s\otimes\bX_t^\T)$. Recall that $\bX_s(\cdot)=\bG(\cdot,\cF_s)$. Denote $\bX_{s,\{r\}}(\cdot)=\bG(\cdot,\cF_{s,\{r\}})$, and the definition of $\cF_{s,\{r\}}$ can be seen in Definition \ref{def1}. For any fixed $r$, using Jensen's inequality and triangle inequality, we have \begin{align*}
            \left\|\left\|\cP_r\left(\sum_{s,t=1}^n c_{st}\bX_s\otimes\bX_t^\T\right)\right\|_{\cS,\max}\right\|_q & \le \left\|\left\|\sum_{s,t=1}^n c_{st}\left(\bX_s\otimes\bX_t^\T-\bX_{s,\{r\}}\otimes\bX_{t,\{r\}}^\T\right)\right\|_{\cS,\max}\right\|_q \\
            & \le I_{r1} + I_{r2},
        \end{align*}
        where \begin{align*}
            I_{r1} & = \left\|\left\|\sum_{s,t=1}^n c_{st}\left(\bX_s-\bX_{s,\{r\}}\right)\otimes\bX_{t}^\T\right\|_{\cS,\max}\right\|_q,\\
            I_{r2} & = \left\|\left\|\sum_{s,t=1}^n c_{st}\bX_{s,\{r\}}\otimes\left(\bX_t^\T-\bX_{t,\{r\}}^\T\right)\right\|_{\cS,\max}\right\|_q.
        \end{align*}
        Notice that $\|X_{sj}\otimes X_{tk}\|_{\cS} = \{\int_0^1\int_0^1 X_{sj}^2(u)X_{tk}^2(v) \du\dv\}^{1/2} = \{\int_0^1 X_{sj}^2(u)\du\}^{1/2}\{\int_0^1 X_{tk}^2(v)\dv\}^{1/2}  = \|X_{sj}\|_{\cH}\|X_{tk}\|_{\cH}$. By Cauchy--Schwarz inequality and the result in (b), it follows that  \begin{align*}
            I_{r1} & \le \sum_{s=1}^n \|\|\bX_s-\bX_{s,\{r\}}\|_{\cH,\infty}\|_{2q} \left\|\left\|\sum_{t=1}^n c_{st}\bX_t\right\|_{\cH,\infty}\right\|_{2q}\le C\Omega_{0,2q}q(\log q)^{-1}l^{1/2}\cC\sum_{s=1}^n\omega_{s-r,2q}.
        \end{align*}
        Via some elementary calculation we can obtain \begin{align*}
            \sum_{r=-\infty}^n I_{r1}^2 & \le C\{\Omega_{0,2q}^2q^2l/(\log q)^2\}\cC^2\sum_{r=-\infty}^n\left(\sum_{s=1}^n\omega_{s-r,2q}\right)^2 \le C\Omega_{0,2q}^4\cC^2nq^2l/(\log q)^2.
        \end{align*}
        The same upper bound applies to $I_{r2}$. Using results in Lemma \ref{leTechComposite2},  we have \begin{align*}
            & \left\|\left\|\sum_{s,t=1}^n c_{st}\{\bX_s\otimes\bX_t^\T - E(\bX_s\otimes\bX_t^\T)\}\right\|_{\cS,\max}\right\|_q^2 \\
            & \le C\frac{q^2l}{(\log q)^2}\sum_{r=-\infty}^n \left\|\left\|\cP_r\left(\sum_{s,t=1}^n c_{st}\bX_s\otimes\bX_t^\T\right)\right\|_{\cS,\max}\right\|_q^2\\
            & \le C\frac{q^2l}{(\log q)^2}\sum_{r=-\infty}^n(I_{r1}^2+I_{r2}^2) \le C\frac{nq^4l^2}{(\log q)^{4}}\Omega_{0,2q}^4\cC^2.
        \end{align*}
        This implies the result in (c).

        (d) Similar to (c), we have the following decomposition $\sum_{s,t=1}^n c_{st}\{X_{sj}\otimes X_{tk} - E\{X_{sj}\otimes X_{tk}\} = \sum_{r = -\infty}^n\cP_r(\sum_{s,t=1}^n c_{st}X_{sj}\otimes X_{tk})$, and we have \begin{align*}
            \left\|\left\|\cP_r\left(\sum_{s,t=1}^n c_{st}X_{sj}\otimes X_{tk}\right)\right\|_{\cS}\right\|_q & \le \left\|\left\|\sum_{s,t=1}^n c_{st}\left(X_{sj}\otimes X_{tk}-X_{sj,\{r\}}\otimes X_{tk,\{r\}}\right)\right\|_{\cS}\right\|_q \\
            & \le I_{r1} + I_{r2},
        \end{align*}
        where \begin{align*}
            I_{r1} & = \left\|\left\|\sum_{s,t=1}^n c_{st}\left(X_{sj}-X_{sj,\{r\}}\right)\otimes X_{tk}\right\|_{\cS}\right\|_q,\\
            I_{r2} & = \left\|\left\|\sum_{s,t=1}^n c_{st} X_{sj,\{r\}}\otimes\left( X_{tk}-X_{tk,\{r\}}\right)\right\|_{\cS}\right\|_q.
        \end{align*}
        By Cauchy--Schwarz inequality and the result in (b), it follows that \begin{align*}
            I_{r1} & \le \sum_{s=1}^n \|\|X_{sj}-X_{sj,\{r\}}\|_{\cH}\|_{2q} \left\|\Big\|\sum_{t=1}^n c_{st}X_{tk}\Big\|_{\cH}\right\|_{2q}\le C\Delta_{0,2q,k}q(\log q)^{-1}\cC\sum_{s=1}^n\delta_{s-r,2q,j}.
        \end{align*}
        Via some elementary calculation we can obtain \begin{align*}
            \sum_{r=-\infty}^n I_{r1}^2 & \le C\{\Delta_{0,2q,k}^2 q^2/(\log q)^2\}\cC^2\sum_{r=-\infty}^n\left(\sum_{s=1}^n\delta_{s-r,2q,j}\right)^2 \le C\Delta_{0,2q,j}^2\Delta_{0,2q,k}^2\cC^2nq^2/(\log q)^2.
        \end{align*}
        The same upper bound applies to $I_{r2}$. Using result in Lemma \ref{leTechComposite2},  we have \begin{align*}
            & \left\|\left\|\sum_{s,t=1}^n c_{st}\{X_{sj}\otimes X_{tk} - E(X_{sj}\otimes X_{tk})\}\right\|_{\cS}\right\|_q^2 \le C\frac{q^2}{(\log q)^2}\sum_{r=-\infty}^n \left\|\left\|\cP_r\left(\sum_{s,t=1}^n c_{st}X_{sj}\otimes X_{tk}\right)\right\|_{\cS}\right\|_q^2\\
            & \le C\frac{q^2}{(\log q)^2}\sum_{r=-\infty}^n(I_{r1}^2+I_{r2}^2) \le C\frac{nq^4}{(\log q)^{4}}\Delta_{0,2q,j}^2\Delta_{0,2q,k}^2\cC^2.
         \end{align*}
        This finishes the proof of (d).
    \end{proof}
        
        \subsection{Lemma \ref{lemmaTechComposite4} and its proof }

        \begin{lemma}\label{lemmaTechComposite4}
            For the stationary process $\bX_t(\cdot) = \bG(\cdot,\cF_t)$ with innovation $\cF_t=(\cdots,\varepsilon_{t-1},\varepsilon_t)$ defined in (\ref{Sec1-1}), assume it is centered. Let $B$ be a positive integer that is smaller than $n$, and define $\eta_d = (\varepsilon_{(d-1)B+1},\dots,\varepsilon_{dB})$ for $d\in\eZ$. 
            
            (\romannumeral 1) For $k\in \eN$ such that $Bk\le n$ and $h\in\eN$, define $\bV_k = \sum_{t=(k-1)B+1}^{kB\wedge n}\sum_{1\le s\le t\le n}a_{st}\bX_s\otimes\bX_t^\T$, and $\bV_{k,h} = E(\bV_k \mid \eta_{k-h},\dots,\eta_{k})$. Assume that $a_{st}=0$ if $|s-t| \ge B$ and $|a_{st}| \le 1$. Then for $h\ge 2$, there exists some constant $C_q$ such that \begin{align*}
                \|\|\bV_{k,h}-\bV_{k,h-1}\|_{\cS,\max}\|_{q/2} \le C_q(1\vee\log p)B\Omega_{0,q}\sum_{d=(h-2)B+1}^{(h+1)B}\omega_{d,q}.
            \end{align*}
            The same bound also applies to $\bV_k = \sum_{t=(k-1)B+1}^{kB\wedge n}\sum_{1\le t\le s\le n}a_{st}\bX_s\otimes\bX_t^\T$.

            (\romannumeral 2) For $k\in \eN$ such that $Bk\le n$ and $h\in\eN$, define $\bV^*_k = \sum_{t=(k-1)B+1}^{kB\wedge n}a_{t}\bX_{t-B}\otimes\bX_t^\T$, and $\bV^*_{k,h} = E(\bV^*_k \mid \eta_{k-h},\dots,\eta_{k})$. Assume $|a_t|\le 1$. Then for $h\ge 2$, there exists some constant $C_q$ such that \begin{align*}
                \|\|\bV^*_{k,h}-\bV^*_{k,h-1}\|_{\cS,\max}\|_{q/2} \le C_q(1\vee\log p)^{1/2}B^{1/2}\Omega_{0,q}\sum_{d=(h-2)B+1}^{(h+1)B}\omega_{d,q}.
            \end{align*}
            The same bound also applies to $\bV_k^* = \sum_{t=(k-1)B+1}^{kB\wedge n}a_{t}\bX_t\otimes\bX_{t-B}^\T$.

           (\romannumeral 3) For $j_1,j_2\in[p]$, let $V_{k,j_1j_2,h},V_{k,j_1j_2,h}^*$ be the $(j_1,j_2)$-th element of $\bV_{k,h}, \bV_{k,h}^*$ respectively. Assume all assumptions in (\romannumeral 1) and (\romannumeral 2). Then we have \begin{align*}
              &\|\|V_{k,j_1j_2,h}-V_{k,j_1j_2,h-1}\|_{\cS,\max}\|_{q/2} \\
              &\le C_qB \pth{ \Delta_{0,q,j_1}\sum_{d=(h-2)B+1}^{(h+1)B}\delta_{d,q,j_2} + \Delta_{0,q,j_2}\sum_{d=(h-2)B+1}^{(h+1)B}\delta_{d,q,j_1}},\\
              &\|\|V^*_{k,j_1j_2,h}-V^*_{k,j_1j_2,h-1}\|_{\cS,\max}\|_{q/2} \\
              &\le C_qB^{1/2}\pth{ \Delta_{0,q,j_1}\sum_{d=(h-2)B+1}^{(h+1)B}\delta_{d,q,j_2} + \Delta_{0,q,j_2}\sum_{d=(h-2)B+1}^{(h+1)B}\delta_{d,q,j_1}}.
            \end{align*}
        \end{lemma}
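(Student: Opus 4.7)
The plan is to reduce each estimate to a coupling inequality and then to combine the projection-based moment bounds of Lemma~\ref{lemmaTechComposite1} with the Banach-space Burkholder/Pinelis inequality from Lemma~\ref{leTechComposite2}. First, I would introduce an independent copy $\eta'_{k-h}$ of $\eta_{k-h}$, write $\bX_s'=\bG(\cdot,\cF_s')$ for the version of $\bX_s$ in which the innovations in the block $\eta_{k-h}$ are swapped with those in $\eta'_{k-h}$, and let $\bV_k^{\flat}$ denote the corresponding swapped version of $\bV_k$. With $\cG_h=\sigma(\eta_{k-h},\ldots,\eta_k)$, independence of $\eta'_{k-h}$ from $\cG_{h-1}$ together with the tower property yields $\bV_{k,h}-\bV_{k,h-1}=E(\bV_k-\bV_k^{\flat}\mid\cG_h)$, and Jensen's inequality in the Banach space $(\otimes^{p\times p}\eS,\|\cdot\|_{\cS,\max})$ reduces the problem to bounding the unconditional quantity $\|\|\bV_k-\bV_k^{\flat}\|_{\cS,\max}\|_{q/2}$.

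For part~(i), set $\Delta_s=\bX_s-\bX_s'$. The tensor-product identity $\bX_s\otimes\bX_t^\T-\bX_s'\otimes(\bX_t')^\T=\Delta_s\otimes\bX_t^\T+\bX_s'\otimes\Delta_t^\T$ splits $\bV_k-\bV_k^{\flat}$ into two symmetric pieces $\bI_1$ and $\bI_2$. I would regroup $\bI_2=\sum_t\bY_t'\otimes\Delta_t^\T$ with $\bY_t'=\sum_s a_{st}\bX_s'$, and, noting that $(\bX_s')$ is stationary with the same dependence-adjusted norms as $(\bX_s)$ by i.i.d.\ of the innovations, invoke Lemma~\ref{lemmaTechComposite1}(b) to obtain $\|\|\bY_t'\|_{\cH,\infty}\|_q\lesssim q(\log q)^{-1}(1\vee\log p)^{1/2}\Omega_{0,q}B^{1/2}$, using $(\sum_s a_{st}^2)^{1/2}\le B^{1/2}$. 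The coupling estimate $\|\|\Delta_t\|_{\cH,\infty}\|_q\le\sum_{r=(k-h-1)B+1}^{(k-h)B}\omega_{t-r,q}$ follows from unfolding the block swap into $B$ single-innovation replacements and applying the triangle inequality. Cauchy--Schwarz in $L_{q/2}$, the triangle inequality in $t$, and a pair-counting argument (for each lag $d\in[(h-2)B+1,(h+1)B]$ there are at most $B$ admissible $(t,r)$ pairs) then collapse $\sum_t\sum_r\omega_{t-r,q}$ to $B\sum_d\omega_{d,q}$, producing the claimed factor $(1\vee\log p)B\Omega_{0,q}\sum_d\omega_{d,q}$; the symmetric bound on $\bI_1$ is handled identically after interchanging $s$ and $t$.

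Part~(ii) follows verbatim except that $\bV_k^{*}$ contains only a single summand per $t$ (fixed lag $B$), so the $B^{1/2}$ factor enters through one coupling rather than two and only one invocation of Lemma~\ref{lemmaTechComposite1}(b) is required, yielding the sharper prefactor $(1\vee\log p)^{1/2}B^{1/2}$. For part~(iii), I would specialise the decomposition to the fixed entry $(j_1,j_2)$: the ambient Banach space reduces to the Hilbert space $\eS$, which is $(2,1)$-smooth, so the Pinelis step underlying Lemma~\ref{lemmaTechComposite1}(b) applies to the scalar sum $\sum_t a_{st}X_{tj_2}$ without the $(\log p)^{1/2}$ penalty, and the coupling estimate refines to $\|\|\Delta_{sj_1}\|_\cH\|_q\le\sum_r\delta_{s-r,q,j_1}$. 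The pair-counting and combining steps are unchanged and yield the target involving $\Delta_{0,q,j_\bullet}$ and $\delta_{d,q,j_\bullet}$.

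The main technical obstacle is the simultaneous control of index ranges and pair-counts: one must verify that after restricting $t$ to block $k$, $s$ to $[t-B+1,t]$, and $r$ to block $k-h$, the lags $t-r$ lie in $[(h-2)B+1,(h+1)B]$ and that each lag admits only $O(B)$ rather than $O(B^2)$ admissible triples, which is precisely what keeps the final prefactor linear (rather than quadratic) in $B$. A secondary issue is to justify that the swapped process $(\bX_s')$ inherits the dependence-adjusted norms of $(\bX_s)$ by stationarity of the innovations, a point that should be stated explicitly before Lemma~\ref{lemmaTechComposite1}(b) is applied to $\bY_t'$.
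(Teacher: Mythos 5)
Your reduction $\bV_{k,h}-\bV_{k,h-1}=E(\bV_k-\bV_k^{\flat}\mid\cG_h)$ and the telescoping coupling bound $\|\|\Delta_t\|_{\cH,\infty}\|_q\le\sum_{r}\omega_{t-r,q}$ are both correct, and the use of Lemma \ref{lemmaTechComposite1}(b) to control $\bY_t'$ matches the paper. The gap is in how you aggregate the $B$ innovations of the swapped block. Your own constituent factors are $(1\vee\log p)^{1/2}B^{1/2}\Omega_{0,q}$ from $\|\|\bY_t'\|_{\cH,\infty}\|_q$ and, after the triangle inequality in $t$ and the pair count, $B\sum_d\omega_{d,q}$ from $\sum_t\sum_r\omega_{t-r,q}$; their product is
\begin{equation*}
(1\vee\log p)^{1/2}\,B^{3/2}\,\Omega_{0,q}\sum_{d}\omega_{d,q},
\end{equation*}
not the claimed $(1\vee\log p)B\,\Omega_{0,q}\sum_d\omega_{d,q}$. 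The same miscount occurs in part (\romannumeral 2), where your route gives $B\sum_d\omega_{d,q}$ rather than $(1\vee\log p)^{1/2}B^{1/2}\sum_d\omega_{d,q}$. Since $B$ is later taken of order $m_0$ or $|h|$, losing $B^{1/2}$ is not cosmetic: it would degrade $D_{n,h}$ and $F_{n,m_0}$ in Theorems \ref{thm1} and \ref{thm2}.

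The missing idea is that the block swap must not be collapsed by the triangle inequality. The paper writes $\bV_{k,h}-\bV_{k,h-1}=\sum_{m=1}^{B}\cP_{(k-h-1)B+m,kB}\bV_k$, where $\cP_{a,b}$ is the truncated projection replacing the single innovation $\varepsilon_a$; each summand is bounded exactly as in your argument by $(1\vee\log p)^{1/2}B^{1/2}\Omega_{0,q}\sum_{d\in R_m}\omega_{d,q}$ with $R_m$ a window of length $O(B)$, but the $B$ summands form a \emph{backward martingale difference sequence} in $m$ with respect to $\big(\cF_{(k-h-1)B+m}^{kB}\big)_{m}$, so Lemma \ref{leTechComposite2} gives the square-function aggregation $\big(\sum_{m=1}^B\|\cdot\|_{q/2}^2\big)^{1/2}$, contributing only $(1\vee\log p)^{1/2}B^{1/2}$ rather than the factor $B$ your triangle inequality produces. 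This is precisely the saving of $B^{1/2}(1\vee\log p)^{-1/2}$ you are missing, and the same device yields the stated prefactors in parts (\romannumeral 2) and (\romannumeral 3) (in the latter, the ambient space is a Hilbert space, so the $\log p$ factors drop out as you anticipated). To repair your proof, keep your coupling identity but stop short of the full block swap: decompose into the $B$ single-innovation projections and invoke Lemma \ref{leTechComposite2} before bounding each one.
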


        \begin{proof}
            (\romannumeral 1) We prove the result for $\bV_k = \sum_{t=(k-1)B+1}^{kB\wedge n}\sum_{1\le s\le t\le n}a_{st}\bX_s\otimes\bX_t^\T$ and the other case can be identically dealt with. We define the truncated innovation sequence $\cF_{a}^b = (\varepsilon_{a},\dots,\varepsilon_b)$ for $a\le b$. Define truncated projection operator $\cP_{a,b}\bX = E(\bX\mid\cF_a^b)-E(\bX\mid\cF_{a+1}^b)$. For $h\ge 2$, we write \begin{equation}\label{PfLB4eq1}
            \begin{aligned}
                \bV_{k,h}-\bV_{k,h-1} &= E(\bV_k\mid \eta_{k-h},\dots,\eta_k)-E(\bV_k\mid\eta_{k-h+1},\dots,\eta_k)\\
                &= \sum_{m=1}^B \sth{E(\bV_k\mid \cF^{kB}_{(k-h-1)B+m})-E(\bV_k\mid \cF^{kB}_{(k-h-1)B+m+1})}\\
                & = \sum_{m=1}^B \cP_{(k-h-1)B+m,kB}\bV_k.
            \end{aligned}\end{equation}
            Using Jensen's inequality and triangle inequality, we have \begin{align*}
            &\|\|\cP_{(k-h-1)B+m,kB}\bV_k\|_{\cS,\max}\|_{q/2} \le I_1 + I_2,\\
            & I_1 = \left\|\left\|\sth{\sum_{t=(k-1)B+1}^{kB\wedge n}\left(\bX_t-\bX_{t,\{(k-h-1)B+m\}}\right)}\otimes\pth{\sum_{s = (t-B)\vee 1}^t a_{st}\bX_s^\T}\right\|_{\cS,\max}\right\|_{q/2},\\
            & I_2 = \left\|\left\|\sth{\sum_{s=\{(k-2)B+1\}\vee 1}^{kB\wedge n}\left(\bX_s-\bX_{s,\{(k-h-1)B+m\}}\right)}\otimes\pth{\sum_{t=s}^{(s+B)\wedge n}a_{st}\bX_{t,\{(k-h-1)B+m\}}^\T}\right\|_{\cS,\max}\right\|_{q/2}.
        \end{align*}
        Notice that for $Y_1,Y_2\in L_2([0,1])$, we have $\|Y_1\otimes Y_2\|_\cS = \|Y_1\|_\cH\|Y_2\|_\cH$. Thus we can apply H{\"o}lder's inequality and (a), (b) of Lemma \ref{lemmaTechComposite1} to obtain that \begin{equation}\label{pflemmaB8-1}
            \begin{aligned}
             I_1 &\le \sum_{t=(k-1)B+1}^{(kB)\wedge n} \|\|\bX_t-\bX_{t,\{(k-h-1)B+m\}}\|_{\cH,\infty}\|_q \left\|\left\|\sum_{s=(t-B)\vee 1}^t a_{st}\bX_s\right\|_{\cH,\infty}\right\|_q\\
            & \le C_q(1\vee\log p)^{1/2}B^{1/2}\Omega_{0,q} \sum_{t=(k-1)B+1}^{kB\wedge n}\omega_{t-(k-h-1)B-m,q}.   
            \end{aligned}
        \end{equation}
        Here $C_q$ is a constant that only depends on $q$. Similarly, we have $$I_2 \le C_q(1\vee\log p)^{1/2}B^{1/2}\Omega_{0,q}\sum_{s=(k-2)B+1}^{kB}\omega_{s-(k-h-1)B-m,q}.$$ Combining the results above, we have \begin{align*}
            \|\|\cP_{(k-h-1)B+m,kB}\bV_k\|_{\cS,\max}\|_{q/2} \le C_q(1\vee\log p)^{1/2}B^{1/2}\Omega_{0,q}\sum_{d=(h-1)B-m+1}^{(h+1)B-m}\omega_{d,q}.
        \end{align*}
        Notice that $\{\cP_{(k-h-1)B+m,kB}\bV_k:1\le m\le B\}$ forms a backward martingale differences with respect to $\left(\cF_{(k-h-1)B+m}^{kB}\right)_{1\le m\le B}$. Thus by Lemma \ref{leTechComposite2}, we have \begin{align*}
            \|\|\bV_{k,h}-\bV_{k,h-1}\|_{\cS,\max}\|_{q/2}^2 &\le C_q(1\vee\log p)\sum_{m=1}^B\|\cP_{(k-h-1)B+m,kB}\|_{q/2}^2\\
            & \le C_q(1\vee\log p)^2B\Omega_{0,q}^2\sum_{m=1}^B\left(\sum_{d=(h-1)B-m+1}^{(h+1)B-m} \omega_{d,q}\right)^2\\
            & \le C_q(1\vee\log p)^2B^2\Omega_{0,q}^2\left(\sum_{d=(h-2)B+1}^{(h+1)B}\omega_{d,q}\right)^2.
        \end{align*}
        This finishes the proof of (\romannumeral 1).

        (\romannumeral 2) We explain why there is an elimination of $(1\vee \log p)^{1/2}B^{1/2}$ terms in the bound of $\|\|\bV_{k,h}-\bV_{k,h-1}\|_{\cS,\max}\|_{q/2}$. We still have the decomposition in \eqref{PfLB4eq1} with $V_{k,h},V_{k,h-1},V_k$ substituted by $V_{k,h}^*,V_{k,h-1}^*,V_k^*$. Compared to (\ref{pflemmaB8-1}), we have \begin{align*}
            \|\|\cP_{(k-h-1)B+m,kB}\bV_k^*\|_{\cS,\max}\|_{q/2} & \le \sum_{t=(k-1)B+1}^{(kB)\wedge n} \|\|\bX_t-\bX_{t,\{(k-h-1)B+m\}}\|_{\cH,\infty}\|_q \left\|\left\| a_{t}\bX_{t-B}\right\|_{\cH,\infty}\right\|_q\\
            & \le C_q\Omega_{0,q} \sum_{t=(k-1)B+1}^{kB\wedge n}\omega_{t-(k-h-1)B-m,q}. 
        \end{align*}
        In the above calculation we use the fact $|a_t|\le 1$ and $\|\|\bX_t\|_{\cH,\infty}\|_q\le \sum_{s=-\infty}^t \|\|\cP_s\bX_t\|_{\cH,\infty}\|_{q} \le \sum_{s=-\infty}^t\omega_{t-s,q}=\Omega_{0,q}$. The remaning derivation remains the same. 

        (\romannumeral 3) Comparing the result of (\romannumeral 3) with those of (\romannumeral 1) and (\romannumeral 2), we make two major changes. The first difference is the elimination of the factor \(1 \vee \log p\), which is due to our consideration of one-dimensional cases. The second difference involves the dependence measures  \(\Omega_{0,q}\) and \(\sum_{d=(h-2)B+1}^{(h+1)B} \omega_{d,q}\). Here, \(\Omega_{0,q}\) is changed to \(\max_{j\in[p]} \Delta_{0,q,j}\), and the summation \(\sum_{d=(h-2)B+1}^{(h+1)B} \omega_{d,q}\) is transformed into the summation of one-dimensional physical dependence measure, which is given by \(\max_{j\in[p]} \sum_{d=(h-2)B+1}^{(h+1)B} \delta_{d,q,j}\).
        \end{proof}    
    
        \subsection{Lemma \ref{nonGaussianConcen} and its proof }

    \begin{lemma}\label{nonGaussianConcen}
        Consider the quadratic form $\bQ_n = \sum_{1\le s\le t\le n}a_{st}\bX_s\otimes\bX_t^\T$. Assume $\bX_t$ comes from (\ref{Sec1-1}) and it is mean zero. Assume $\sup_{s,t\in[n]}|a_{st}|\le 1$ and $a_{st}=0$ if $|t-s| > B$ and $B<n$. Then we have \begin{align*}
            P\left\{\|\bQ_n-E(\bQ_n)\|_{\cS,\max}\ge x\right\}\le & C_{q,\alpha}x^{-q/2}(1\vee \log p)^{5q/4}\|\|\bX_1\|_{\cH,\infty}\|_{q,\alpha}^qF_{n,B}' \\
            &+ C_\alpha p^2\exp\left\{-\frac{x^2}{C_\alpha\left(\bPhi_{4,\alpha}^\bX\right)^2nB}\right\}.
        \end{align*}
        Here $F'_{n,B}=nB^{q/2-1}$ (resp., $nB^{q/2-1}+n^{q/4-\alpha q/2}B^{q/4}$) if $\alpha > 1/2-2/q$ (resp., $\alpha \le 1/2-2/q$). The same bound also holds for $\bQ_n = \sum_{1\le t\le s\le n}a_{st}\bX_s\otimes\bX_t^\T$.
    \end{lemma}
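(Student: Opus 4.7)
The plan is to reduce everything to an elementwise concentration bound and then apply a union bound over the $p^2$ entries of $\bQ_n - E(\bQ_n)$. Concretely, I would bound $\|Q_{n,j_1 j_2} - E(Q_{n,j_1 j_2})\|_{\cS}$ for each fixed $(j_1,j_2) \in [p]^2$ using the per-entry bound from Lemma \ref{lemmaTechComposite4}(iii), and then feed the result through the composite-norm martingale inequality of Lemma \ref{leTechComposite2} to pick up the factor $(1 \vee \log p)^{5q/4}$, which is needed to convert the uniform functional dependence measure $\|\|\bX_1\|_{\cH,\infty}\|_{q,\alpha}$ into control of $\max_{j_1,j_2}$ norms.

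Next, I would exploit the bandedness $a_{st} = 0$ for $|s-t| > B$ by partitioning $[n]$ into $K = \lceil n/B \rceil$ blocks of size $B$ and writing $\bQ_n = \sum_{k=1}^K \bV_k$ with $\bV_k = \sum_{t=(k-1)B+1}^{kB \wedge n}\sum_{1 \le s \le t} a_{st}\,\bX_s \otimes \bX_t^\T$. Using the telescoping martingale decomposition with respect to the block innovations $(\eta_j)_{j \in \mathbb{Z}}$, I would write $\bV_k - E(\bV_k) = \sum_{h \ge 0}(\bV_{k,h} - \bV_{k,h-1})$. For each fixed $h$, $\{\bV_{k,h} - \bV_{k,h-1}\}_k$ is a backward martingale difference sequence whose step sizes are controlled by Lemma \ref{lemmaTechComposite4}(i), giving $\|\|\bV_{k,h}-\bV_{k,h-1}\|_{\cS,\max}\|_{q/2} \lesssim (\log p)\, B\, \Omega_{0,q}\sum_{d=(h-2)B+1}^{(h+1)B}\omega_{d,q}$.

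The core step is then to apply a truncated martingale Nagaev-type inequality for each fixed $h$ and then sum over $h$. For each $h$ I would split each martingale increment at a level $M_{h,x}$, apply a Freedman/Bernstein-type inequality to the truncated part to obtain the sub-Gaussian contribution of order $\exp\{-x_h^2/(C\,\sigma_h^2)\}$, and control the tail by Markov's inequality at moment order $q/2$. The variance bound $\sigma_h^2 \lesssim nB\,(\bPhi_{4,\alpha}^\bX)^2$ for the lowest-order $h$ is what produces the exponential tail in the statement. Summing over $h$ uses the tail decay $\sum_{d \ge (h-2)B}\omega_{d,q} \lesssim (hB)^{-\alpha}\|\|\bX_1\|_{\cH,\infty}\|_{q,\alpha}$ supplied by Condition \ref{cond.FDM}, and with a suitable choice of $x_h \propto h^{-\beta} x$ one balances the two contributions.

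The main obstacle will be the careful bookkeeping of this final summation, in particular the dichotomy in $F'_{n,B}$. In the regime $\alpha > 1/2 - 2/q$, the series $\sum_h h^{-\alpha q/2}$ coming from the heavy-tailed part converges, so only the leading blocks contribute and one obtains the clean rate $nB^{q/2-1}$. In the borderline regime $\alpha \le 1/2 - 2/q$, this sum diverges; the fix is to repool the contributions of high-order $h$ into a single $\Omega_{m,q}$-type block of length $\sim n/B$ and apply Lemma \ref{lemmaTechComposite4} once at that scale, producing the extra term $n^{q/4 - \alpha q/2} B^{q/4}$. Propagating the $(\log p)$ powers cleanly through the composite-norm inequalities at every application of Lemma \ref{leTechComposite2} (per block, per $h$, and per union bound) is the other delicate point, as the exponent $5q/4$ has to be tracked exactly.
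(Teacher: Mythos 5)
Your overall architecture is close to the paper's: blocking into $K=\lceil n/B\rceil$ blocks, the telescoping decomposition $\bV_k-E(\bV_k)=\sum_h(\bV_{k,h}-\bV_{k,h-1})$ over the conditioning depth, increment control via Lemma \ref{lemmaTechComposite4}, a Nagaev-type bound per level, and the repooling of the far tail (the $h\gtrsim K$ contribution) into a single Markov bound at moment order $q/2$ --- which is indeed exactly where the extra $n^{q/4-\alpha q/2}B^{q/4}$ term comes from when $\alpha\le 1/2-2/q$. However, your opening move is confused about where the $(1\vee\log p)^{5q/4}$ comes from. An elementwise bound via Lemma \ref{lemmaTechComposite4}(iii) followed by a union bound over the $p^2$ entries produces a $p^2$ prefactor on the polynomial term; that is the proof of the \emph{companion} elementwise result (Lemma \ref{lm:elementwisecon1}, feeding the second inequality of Theorem \ref{thm2}), not of this lemma. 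You cannot ``feed'' an elementwise bound through Lemma \ref{leTechComposite2} to recover a logarithmic factor afterwards. To get $(\log p)^{5q/4}$ one must work with $\|\cdot\|_{\cS,\max}$ throughout: one power of $\log p$ per increment from Lemma \ref{lemmaTechComposite4}(i), a $(\log p)^{1/2}$ from the Pinelis composite-norm inequality when summing the backward martingale differences within a batch, and a $(\log p)^{q/2}$ (at moment order $q/2$) from the Banach-space Nagaev inequality; only the exponential term arises from a $p^2$ union bound over entries with the entrywise variance proxy $\bPhi_{4,\alpha}^{\bX}$.

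The more substantive gap is your per-level concentration step. You propose truncating the backward martingale differences $\{\bV_{k,h}-\bV_{k,h-1}\}_k$ and applying a Freedman/Bernstein inequality to the truncated part. Truncation destroys the conditional centering, so the increments must be re-centered and the drift controlled, and more importantly you would need a Fuk--Nagaev-type inequality for martingales taking values in the non-Hilbertian space $(\otimes^{p^2}\eS,\|\cdot\|_{\cS,\max})$ that simultaneously yields the $p^2\exp\{-x^2/(C(\bPhi_{4,\alpha}^{\bX})^2 nB)\}$ term; no such tool is established in the paper and it is not off the shelf. The paper sidesteps this entirely: it groups the block indices $k$ into batches of dyadic length $\tau_l=2^l$, observes that batches separated by at least two are \emph{independent} (the increments at depth $h\le\tau_l$ depend on disjoint innovation windows), splits into even and odd batches, and applies the Einmahl--Li Nagaev inequality for sums of independent Banach-valued variables (Lemma \ref{lemmaTechComposite3}), with batch moments supplied by the martingale inequality of Lemma \ref{leTechComposite2}. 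The dyadic weights $\lambda_l\asymp(l-1)^{-2}$ are also what make both the polynomial series $\sum_l\lambda_l^{-q/2}\tau_l^{q/4-\alpha q/2-1}$ and the exponential series summable and produce the stated dichotomy in $F'_{n,B}$; your level-by-level allocation $x_h\propto h^{-\beta}x$ over all integers $h$ would require a separate, delicate verification that both series still close. Unless you can supply the Banach-valued martingale Fuk--Nagaev inequality your route needs, you should replace that step with the independence-by-batching argument.
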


    \begin{proof}
        We prove the result for $\bQ_n = \sum_{1\le s\le t\le n}a_{st}\bX_s\otimes\bX_t^\T$ and the other case can be similarly handled. Let $K=\lceil n/B\rceil \ge 2$. For $k\in[K]$, define $\bV_k = \sum_{t=(k-1)B+1}^{kB\wedge n}\sum_{s=1}^t a_{st}\bX_s\otimes\bX_t^\T$, and $V_{kij} = \sum_{t=(k-1)B+1}^{(kB)\wedge n}\sum_{s=1}^t a_{st}X_{si}\otimes X_{tj}$ for $i,j\in[p]$. Define the innovation set $\eta_k=(\varepsilon_{(k-1)B+1},\dots,\varepsilon_{kB})$. Let $L=\lfloor\log(K)/\log(2)\rfloor$, $\tau_l = 2^l$ for $1 \le l \le L-1$ and $\tau_L=K$. Define $\bV_{k\tau} = E(\bV_k\mid \eta_{k-\tau},\dots,\eta_k), V_{kij\tau}=E(V_{kij}\mid\eta_{k-\tau},\dots,\eta_k)$ for $\tau \ge 0$, and $\bM_{Kl} = \sum_{k=1}^K(\bV_{k\tau_l}-\bV_{k\tau_{l-1}})$ for $2\le l\le L$. Then $\bQ_{n}-E(\bQ_n)$ can be decomposed as \begin{align}\label{pflemmaB9-1}
            \bQ_n-E(\bQ_n) & =\sum_{k=1}^K\pth{\bV_k-\bV_{kK}}+\sum_{l=2}^L\bM_{Kl}+\sum_{k=1}^K\{\bV_{k2}-E(\bV_{k2})\} = I_1 + I_2 + I_3.
        \end{align}
        We first deal with $I_1$. Notice that $I_1 = \sum_{k=1}^K\sum_{h=K+1}^\infty(\bV_{kh}-\bV_{k(h-1)})$. Then we have \begin{equation}\label{pflemmaB9-2}
            \begin{aligned}
                \left\|\left\|\sum_{k=1}^K(\bV_k-\bV_{kK})\right\|_{\cS,\max}\right\|_{q/2} &\le \sum_{h=K+1}^\infty \left\|\left\|\sum_{k=1}^K(\bV_{kh}-\bV_{k(h-1)})\right\|_{\cS,\max}\right\|_{q/2}\\
            & \le \sum_{h=K+1}^\infty C_q (1\vee \log p)^{3/2}K^{1/2}B\Omega_{0,q}\sum_{d=(h-2)B+1}^{(h+1)B}\omega_{d,q}\\
            & \le C_q(1\vee \log p)^{3/2}K^{1/2}B\Omega_{0,q}\Omega_{(K-1)B+1,q}\\
            & \le C_{q,\alpha}(1\vee \log p)^{3/2}K^{1/2}Bn^{-\alpha}\|\|\bX_1\|_{\cH,\infty}\|_{q,\alpha}^2,
            \end{aligned}
        \end{equation}
        where we combine results in Lemma \ref{leTechComposite2} and Lemma \ref{lemmaTechComposite4}, and utilize the fact $\Omega_{0,q} \le \|\|\bX_1\|_{\cH,\infty}\|_{q,\alpha}$, and for $K\ge 2$, $\Omega_{(K-1)B}\le \{(K-1)B\}^{-\alpha} \|\|\bX_1\|_{\cH,\infty}\|_{q,\alpha} \le C_\alpha n^{-\alpha}\|\|\bX_1\|_{\cH,\infty}\|_{q,\alpha}$. Using Markov's inequality and the fact that $K=\lceil n/B\rceil$, we have \begin{align*}
            P\left\{\left\|\sum_{k=1}^K(\bV_k-\bV_{kK})\right\|_{\cS,\max}\ge x\right\} \le C_{q,\alpha}x^{-q/2}(1 \vee \log p)^{3q/4}n^{q/4-\alpha q/2}B^{q/4}\|\|\bX_1\|_{\cH,\infty}\|_{q,\alpha}^q.
        \end{align*}

        Next we deal with the term $I_2$ in (\ref{pflemmaB9-1}). Define $\bY_{hl} = \sum_{k=(h-1)\tau_l+1}^{(h\tau_l)\wedge n} (\bV_{k\tau_l}-\bV_{k\tau_{l-1}}), Y_{hijl} = \sum_{k=(h-1)\tau_l+1}^{(h\tau_l)\wedge n} (V_{kij\tau_l}-V_{kij\tau_{l-1}})$ for $1\le h\le \lceil K/\tau_l\rceil$. Define $\eN^e$ to be the set of even positive integers, and $\bR_{nl}^e = \sum_{h\in \eN^e,1\le h\le \lceil K/\tau_l\rceil}\bY_{hl}, \bR_{nl}^o = \sum_{h\in \eN/\eN^e,1\le h\le \lceil K/\tau_l\rceil}\bY_{hl}$. Define a sequence of constant $\lambda_l = 3(l-1)^{-2}\pi^{-2}$ if $2\le l\le L/2$ and $\lambda_l = 3(L+1-l)^{-2}\pi^{-2}$ if $L/2 < l \le L$. Since $\sum_{k=1}^\infty k^{-2} = \pi^2/6$, we have $\sum_{l=2}^L\lambda_l \le 1$. Notice that $\bY_{h,l}$ and $\bY_{h',l}$ are independent if $|h-h'|\ge 2$. Thus we can use the Nagaev-type inequality in Lemma \ref{lemmaTechComposite3} to obtain that, for any $x > 0$\begin{equation}\label{pflemmaB9-3}
            \begin{aligned}
            P(\|\bR_{nl}^e\|_{\cS,\max}\ge \lambda_l x) = & P\{\|\mathrm{vec}(\bR_{nl}^e)\|_{\cS,\infty}\ge \lambda_l x\} \\
            \le & C_q(\lambda_l x)^{-q/2}(1 \vee \log p)^{q/2}\sum_{h\in\eN^e,1\le h\le \lceil K/\tau_l\rceil}E\{\|\mathrm{vec}(\bY_{hl})\|_{\cS,\infty}^{q/2}\} \\
            &+ 2p^2\exp\left\{-C\frac{\lambda_l^2x^2}{\max_{i,j\in[p]}\sum_{h\in\eN^e,1\le h\le \lceil K/\tau_l\rceil}E(\|Y_{hijl}\|_{\cS}^2)}\right\}.
        \end{aligned}
        \end{equation}
        Using similar argument in (\ref{pflemmaB9-2}), for any $h\in\lceil K/\tau_l\rceil$, we obtain\begin{equation}\label{pflemmaB9-4}
            \begin{aligned}
                \|\|\bY_{hl}\|_{\cS,\max}\|_{q/2} &\le C_q(1 \vee \log p)^{3/2}\tau_l^{1/2}B\Omega_{0,q}\Omega_{(\tau_{l-1}-1)B+1,q}\\
                &\le C_{q,\alpha}(1 \vee \log p)^{3/2}\tau_l^{1/2}B(\tau_l B)^{-\alpha}\|\|\bX_1\|_{\cH,\infty}\|_{q,\alpha}^2.
            \end{aligned}
        \end{equation}
        And similar to \eqref{pflemmaB9-4}, for any $h\in\lceil K/\tau_l\rceil,i,j\in[p]$, we have \begin{equation}\label{pflemmaB9-5}
            \|\|Y_{hijl}\|_{\cS}\|_{2}\le C_{\alpha}\tau_l^{1/2}B(\tau_l B)^{-\alpha}\|\|X_{1i}\|_\cH\|_{4,\alpha}\|\|X_{1j}\|_\cH\|_{4,\alpha} \le C_{\alpha}\tau_l^{1/2}B(\tau_l B)^{-\alpha}\bPhi_{4,\alpha}^\bX.
        \end{equation}
        The concentration in (\ref{pflemmaB9-3}) also holds for $\bR_{nl}^o$. Combining with (\ref{pflemmaB9-4}), (\ref{pflemmaB9-5}) and the fact that $\sum_{l=1}^L \lambda_l \le 1$, we have \begin{equation}\label{pflemmaB9-6}
            \begin{aligned}
                & P\left(\left\|\sum_{l=2}^L\bM_{Kl}\right\|_{\cS,\max} > 2x\right) \le \sum_{l=2}^L P\left(\|\bM_{Kl}\|_{\cS,\max} > 2\lambda_l x\right)\\
                \le & \sum_{l=2}^L P\left(\|\bR_{nl}^e\|_{\cS,\max} > \lambda_l x\right) + P\left(\|\bR_{nl}^o\|_{\cS,\max} > \lambda_l x\right)\\
                \le & C_{q,\alpha}x^{-q/2}(1 \vee \log p)^{5q/4}\|\|\bX_1\|_{\cH,\infty}\|_{q,\alpha}^qnB^{q/2-\alpha q/2-1}\sum_{l=2}^L\lambda_l^{-q/2}\tau_{l}^{q/4-q\alpha/2-1}\\
                & + 4p^2\sum_{l=2}^L\exp\left\{- \frac{x^2\lambda_l^2(\tau_l B)^{2\alpha}}{C_\alpha\left(\bPhi_{4,\alpha}^\bX\right)^2nB}\right\}\\
                := & I_4 + I_5.
            \end{aligned}
        \end{equation}
        
        Recall the fact $\lambda_l = 3(l-1)^{-2}\pi^{-2},\tau_{l} = 2^{l}$.
        By some elementary calculation, if $\alpha > 1/2-2/q$, then $q/4-1-\alpha q/2 < 0$, and $\sum_{l=2}^L\lambda_l^{-q/2}\tau_{l}^{q/4-q\alpha/2-1} \le C$. If $\alpha \le 1/2-2/q$, then $\sum_{l=2}^L\lambda_l^{-q/2}\tau_{l}^{q/4-q\alpha/2-1} \le CK^{q/4-q\alpha /2 -1}$. This implies $I_4 \le C_{q,\alpha}x^{-q/2}(1 \vee \log p)^{5q/4}\|\|\bX_1\|_{\cH,\infty}\|_{q,\alpha}^qF'_{n,B}$.
        
        For any $\alpha$, $\min_{L\in\eN}\min_{l\in[L]}\lambda_l^2\tau_l^{2\alpha} > 0$ and there exists an absolute constant integer $K_\alpha'$ such that for any $l\ge K_\alpha'$, $\lambda_{l+1}^2\tau_{l+1}^\alpha - \lambda_{l}^2\tau_{l}^\alpha \ge 1$. If $\exp\left\{- x^2\lambda_{K'_\alpha}^2(\tau_{K'_\alpha} B)^{2\alpha}/C_\alpha\left(\bPhi_{4,\alpha}^\bX\right)^2nB\right\} < 1/4p^2$, we have $$\sum_{l=K'_\alpha}^\infty\exp\left\{- x^2\lambda_{l}^2(\tau_l B)^{2\alpha}/C_\alpha\left(\bPhi_{4,\alpha}^\bX\right)^2nB\right\} \le C_\alpha'\exp\left\{- x^2\lambda_{K'_\alpha}^2(\tau_{K'_\alpha} B)^{2\alpha}/C_\alpha\left(\bPhi_{4,\alpha}^\bX\right)^2nB\right\}$$ where $C_\alpha'$ is an absolute constant. This implies $I_5 \le C_\alpha p^2 \exp\sth{- x^2/C_\alpha\pth{\bPhi_{4,\alpha}^\bX}^2nB}$. Notice that here we use the same constant $C_\alpha$ since we can enlarge the smaller one of $C_\alpha,C'_\alpha$ and the bound still holds. Thus \begin{equation*}
            \begin{aligned}
                P\left(\left\|\sum_{l=2}^L\bM_{Kl}\right\|_{\cS,\max} > 2x\right) \le & C_{q,\alpha}x^{-q/2}(1 \vee \log p)^{5q/4}\|\|\bX_1\|_{\cH,\infty}\|_{q,\alpha}^qF'_{n,B} \\
                & + C_\alpha p^2\exp\left\{-\frac{x^2}{C_\alpha \left(\bPhi_{4,\alpha}^\bX\right)^2nB}\right\}.
            \end{aligned}
        \end{equation*}
        If $\exp\left\{-C_\alpha x^2\lambda_{K'_\alpha}^2(\tau_{K'_\alpha} B)^{2\alpha}/\left(\bPhi_{4,\alpha}^\bX\right)^2nB\right\} \ge 1/4p^2$, the above bound trivially holds since the probability in the left hand side is always smaller than $1$.

        Now it remains to deal with $I_3$ in (\ref{pflemmaB9-1}). By the definition of $\bV_{k2}$, we have $\bV_{k2}$ and $\bV_{k'2}$ are independent if $|k-k'| \ge 3$. Using (c) of Lemma \ref{lemmaTechComposite1} and the similar argument in (\ref{pflemmaB9-6}), we obtain \begin{equation*}
            \begin{aligned}
                &P\left\{\left\|\sum_{k=1}^K \bV_{k2}-E(\bV_{k2})\right \|_{\cS,\max}\ge x\right \}\\
                & \le C_qx^{-q/2}(1 \vee \log p)^{q/2}\sum_{k=1}^K \|\|\bV_{k2}-E(\bV_{k,2})\|_{\cS,\max}\|_{q,\alpha}^{q/2} + 6p^2\exp\left\{-\frac{x^2}{C_\alpha\left(\bPhi_{4,\alpha}^\bX\right)^2nB }\right\}\\
                & \le C_qx^{-q/2}(1 \vee \log p)^{q}\|\|\bX_1\|_{\cH,\infty}\|_{q,\alpha}^qnB^{q/2-1}+ 6p^2\exp\left\{-\frac{x^2}{C_\alpha\left(\bPhi_{4,\alpha}^\bX\right)^2nB }\right\}.
            \end{aligned}
        \end{equation*}
        Combining all the bounds for $I_1,I_2$ and $I_3$ above we finish our proof.
        \end{proof}

        \subsection{Lemma \ref{nonGaussianConcen2} and its proof }

    \begin{lemma}\label{nonGaussianConcen2}
        Consider the quadratic form $\bQ_{n}^{(B)} = \sum_{B+1 \le t\le n}a_{t}\bX_{t-B}\otimes\bX_t^\T$. Assume $\bX_t$ comes from \eqref{Sec1-1} and it is mean zero. Assume $\sup_{t\in[n]}|a_{t}|\le 1$. Then we have \begin{align*}
            P(\|\bQ_n^{(B)}-E(\bQ_n^{(B)})\|_{\cS,\max}\ge x)&\le C_{q,\alpha}x^{-q/2}(1 \vee \log p)^q\|\|\bX_1\|_{\cH,\infty}\|_{q,\alpha}^qD'_{n,B} \\
            &+ C_\alpha p^2\exp\left\{-\frac{x^2}{C_\alpha\left(\bPhi_{4,\alpha}^\bX\right)^2n}\right\}.
        \end{align*}
        Here $D’_{n,B}=nB^{q/4-1}$ (resp., $nB^{q/4-1}+n^{q/4-\alpha q/2}$) if $\alpha > 1/2-2/q$ (resp., $\alpha \le 1/2-2/q$). And the same bound also holds for $\bQ_{n}^{(B)} = \sum_{B+1 \le t\le n}a_{t}\bX_{t}\otimes\bX_{t-B}^\T$.
    \end{lemma}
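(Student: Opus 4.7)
The plan is to mirror the block-martingale dyadic decomposition used for Lemma \ref{nonGaussianConcen}, but to substitute the sharper block bounds from Lemma \ref{lemmaTechComposite4}(ii)--(iii) for those of part (i). The key observation is that each block of $\bQ_n^{(B)}$ contains only $O(B)$ single-lag cross products, whereas a block of the banded quadratic form in Lemma \ref{nonGaussianConcen} contained $O(B^2)$. This sparsity reduces the per-block scaling from $B$ to $B^{1/2}$, exactly the saving needed to replace $B^{q/2-1}$ by $B^{q/4-1}$ in the polynomial term and to remove the factor $B$ from the exponential denominator.

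Concretely, set $K = \lceil n/B \rceil$, $\bV_k^* = \sum_{t=(k-1)B+1}^{kB\wedge n} a_t\bX_{t-B}\otimes\bX_t^\T$, $\eta_k = (\varepsilon_{(k-1)B+1},\ldots,\varepsilon_{kB})$, $\bV^*_{k,h} = E(\bV^*_k\mid \eta_{k-h},\ldots,\eta_k)$, and choose the dyadic scales $\tau_l = 2^l$ for $1\le l\le L-1$, $\tau_L = K$, $L = \lfloor\log_2 K\rfloor$. The decomposition to be used is
\begin{equation*}
\bQ_n^{(B)} - E\bigl(\bQ_n^{(B)}\bigr) = \sum_{k=1}^K \bigl(\bV^*_k - \bV^*_{k,K}\bigr) + \sum_{l=2}^L\sum_{k=1}^K \bigl(\bV^*_{k,\tau_l} - \bV^*_{k,\tau_{l-1}}\bigr) + \sum_{k=1}^K\bigl(\bV^*_{k,2} - E\bV^*_{k,2}\bigr) =: I_1 + I_2 + I_3,
\end{equation*}
exactly as in the proof of Lemma \ref{nonGaussianConcen}. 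For $I_1$ I telescope the tail $\sum_{h\ge K+1}(\bV^*_{k,h} - \bV^*_{k,h-1})$ via Lemma \ref{lemmaTechComposite4}(ii) and the backward-martingale form of Lemma \ref{leTechComposite2}; the resulting $(1\vee\log p)^{1/2}B^{1/2}$ per-block scaling (rather than the $(1\vee\log p)B$ produced by part (i)) propagates through Markov's inequality to feed the $n^{q/4-\alpha q/2}$ component of $D'_{n,B}$ in the regime $\alpha\le 1/2-2/q$.

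For $I_2$ I split each scale into odd- and even-indexed super-blocks to obtain independence within each parity, apply the vector Nagaev inequality of Lemma \ref{lemmaTechComposite3} using the per-super-block $q/2$-moment from Lemma \ref{lemmaTechComposite4}(ii) together with the per-entry $L^2$-variance from Lemma \ref{lemmaTechComposite4}(iii), and sum over scales with Stout-type weights $\lambda_l\propto (l-1)^{-2}$. The resulting geometric series $\sum_l \lambda_l^{-q/2}\tau_l^{q/4-\alpha q/2-1}$ produces the $nB^{q/4-1}$ polynomial contribution (plus the $n^{q/4-\alpha q/2}$ piece when $\alpha\le 1/2-2/q$), while the accumulated $L^2$-variance is of order $n(\bPhi_{4,\alpha}^\bX)^2$, giving the Gaussian tail $p^2\exp\bigl(-x^2/(C_\alpha(\bPhi_{4,\alpha}^\bX)^2n)\bigr)$; the $n$ here, rather than the $nB$ of Lemma \ref{nonGaussianConcen}, is precisely the gain from the single-lag structure. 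For $I_3$, the block $\bV^*_{k,2}$ depends only on $(\eta_{k-2},\ldots,\eta_k)$, so blocks at distance $\ge 3$ are independent; grouping by $k\bmod 3$ and applying Lemma \ref{lemmaTechComposite3} together with Lemma \ref{lemmaTechComposite1}(c) (applied with $c_{st} = a_t\mathbf{1}\{s = t-B\}$, whose row and column sums are bounded by $1$) yields matching $nB^{q/4-1}$ and Gaussian tails. The companion expression $\bQ_n^{(B)} = \sum_t a_t\bX_t\otimes\bX_{t-B}^\T$ follows by swapping the roles of $\bX_t$ and $\bX_{t-B}$ throughout.

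The main obstacle is the bookkeeping: verifying that the $B\to B^{1/2}$ saving from Lemma \ref{lemmaTechComposite4}(ii)--(iii) propagates consistently through the Nagaev tail of $I_2$, through the dyadic variance accounting in both $I_2$ and $I_3$, and through the tail telescoping in $I_1$, without inadvertently reintroducing the extra $B^{1/2}$ factor that naturally arises in the banded case. A minor side issue is the degenerate regime $K\in\{1,2\}$ (equivalently $B\ge n/2$), where the dyadic layer $I_2$ is absent and the bound follows directly from Lemma \ref{lemmaTechComposite1}(c) applied to the entire sum.
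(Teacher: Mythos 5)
Your proposal is correct and follows essentially the same route as the paper, which proves this lemma by repeating the block--dyadic martingale decomposition of Lemma \ref{nonGaussianConcen} and substituting the single-lag block bounds of Lemma \ref{lemmaTechComposite4}(ii)--(iii) for those of part (i), yielding exactly the $(1\vee\log p)B\to(1\vee\log p)^{1/2}B^{1/2}$ per-block saving you identify and hence the removal of $B^{q/4}(1\vee\log p)^{q/4}$ from the polynomial term and of $B$ from the exponential denominator. Your treatment of $I_3$ via Lemma \ref{lemmaTechComposite1}(c) with $c_{st}=a_t\mathbf{1}\{s=t-B\}$ (so $\cC\le 1$) and your remark on the degenerate case $K\le 2$ are consistent with, and slightly more explicit than, the paper's argument.
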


    \begin{proof}
        Lemma \ref{nonGaussianConcen2} can be proved similarly as Lemma \ref{nonGaussianConcen} with two necessary modifications. 
        
        (\romannumeral 1) Let $K=\lceil n/B\rceil \ge 2$. For $k\in[K]$, define $\bV^*_k = \sum_{t=k(B-1)}^{kB\wedge n} a_{t}\bX_{t-B}\otimes\bX_t^\T$, and $V_{kij} = \sum_{t=k(B-1)}^{kB\wedge n}a_{t}X_{(t-B)i}X_{tj}$ for $i,j\in[p]$ and $k\in[K]$. Define the innovation set $\eta_k=(\varepsilon_{(k-1)B+1},\dots,\varepsilon_{kB})$. Let $L=\lfloor\log(K)/\log(2)\rfloor$, $\tau_l = 2^l$ for $1 \le l \le L-1$ and $\tau_L=K$. Define $\bV^*_{k\tau} = E(\bV^*_k\mid \eta_{k-\tau},\dots,\eta_k), V^*_{kij\tau}=E(V^*_{kij}\mid\eta_{k-\tau},\dots,\eta_k)$ for $\tau \ge 0$. Following the same process as in (\ref{pflemmaB9-2}) (but using (\romannumeral 2) instead of (\romannumeral 1) of Lemma \ref{lemmaTechComposite4}) we have \begin{equation*}
            \begin{aligned}
                \left\|\left\|\sum_{k=1}^K \bV_k^*-\bV^*_{kK}\right\|_{\cS,\max}\right\|_{q/2} & \le \sum_{h=K+1}^\infty C_q (1\vee \log p)K^{1/2}B^{1/2}\Omega_{0,q}\sum_{d=(h-2)B+1}^{(h+1)B}\omega_{d,q}\\
                & \le C_q(1\vee \log p)K^{1/2-\alpha}B^{1/2-\alpha}\|\|\bX_1\|_{\cH,\infty}\|^2_{q,\alpha}\\
                & \le C_q(1\vee \log p)n^{1/2-\alpha}\|\|\bX_1\|_{\cH,\infty}\|^2_{q,\alpha}.
            \end{aligned}
        \end{equation*}
        
        (\romannumeral 2) Define $\bY^*_{hl} = \sum_{k=(h-1)\tau_l+1}^{(h\tau_l)\wedge n} (\bV^*_{k\tau_l}-\bV^*_{k\tau_{l-1}}), Y^*_{hijl} = \sum_{k=(h-1)\tau_l+1}^{(h\tau_l)\wedge n} (V^*_{kij\tau_l}-V^*_{kij\tau_{l-1}})$ for $1\le h\le \lceil K/\tau_l\rceil$. Then similar as in (\romannumeral 1), using (\romannumeral 2) of Lemma  \ref{lemmaTechComposite4} and (\romannumeral 2) of Lemma \ref{leTechComposite2}, \begin{align*}
            \|\|\bY^*_{hl}\|_{\cS,\max}\|_{q/2} & \le (1\vee \log p)\tau_l^{1/2}B^{1/2}(\tau_lB)^{-\alpha}\|\|\bX_1\|_{\cH,\infty}\|^2_{q,\alpha} \\
            \|\|Y^*_{hijl}\|_{\cS}\|_{2} & \le C_{\alpha}\tau_l^{1/2}B^{1/2}(\tau_lB)^{-\alpha}\left(\bPhi_{4,\alpha}^\bX\right)^2.
        \end{align*}

        The modification of (\romannumeral 1) and (\romannumeral 2) will result in an elimination of $(1 \vee \log p)^{q/4}B^{q/4}$ in polynomial bound, and an elimination of $B$ in exponential bound.
        \end{proof}

        \subsection{Lemma \ref{lm:elementwisecon1} and its proof}

        \begin{lemma}\label{lm:elementwisecon1}
            Assume $\bX_t$ comes from (\ref{Sec1-1}) and it is mean zero. Consider the $(j,k)$-th element of quadratic form $Q_{njk} = \sum_{1\le s\le t\le n}a_{st}X_{sj}\otimes X_{tk}$. Assume $\sup_{s,t\in[n]}|a_{st}|\le 1$ and $a_{st}=0$ if $|t-s| > B$ and $B<n$. Then we have \begin{align*}
            P\left\{\|Q_{njk}-E(Q_{njk})\|_{\cS,\max}\ge x\right\}\le & C_{q,\alpha}x^{-q/2}\|\|\bX_{1j}\|_{\cH,\infty}\|_{q,\alpha}^{q/2}\|\|\bX_{1k}\|_{\cH,\infty}\|_{q,\alpha}^{q/2}F_{n,B}' \\
            &+ C_\alpha \exp\left\{-\frac{x^2}{C_\alpha\left(\bPhi_{4,\alpha}^\bX\right)^2nB}\right\}.
        \end{align*}
        Here $F'_{n,B}=nB^{q/2-1}$ (resp., $nB^{q/2-1}+n^{q/4-\alpha q/2}B^{q/4}$) if $\alpha > 1/2-2/q$ (resp., $\alpha \le 1/2-2/q$). And the same bound also holds for $\bQ_n = \sum_{1\le t\le s\le n}a_{st}\bX_s\otimes\bX_t^\T$.
        \end{lemma}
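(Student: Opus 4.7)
The plan is to mimic the block-martingale decomposition used in the proof of \Cref{nonGaussianConcen}, but applied to the scalar-valued (in $\eS$) object $Q_{njk}$ rather than the full matrix of such objects. Since we are tracking a single pair of indices $(j,k)$, all $(1\vee\log p)$ factors that arose from the Banach-space smoothness inequality in \Cref{leTechComposite2} collapse to constants, and the joint measure $\Omega_{0,q}$ is replaced by the coordinatewise measures $\Delta_{0,q,j}$ and $\Delta_{0,q,k}$. This explains why the bound loses the $(1\vee\log p)^{5q/4}$ and $p^2$ factors present in \Cref{nonGaussianConcen}.

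Concretely, set $K=\lceil n/B\rceil$ and partition $[n]$ into blocks of length $B$. Define $V_{kjk}=\sum_{t=(k-1)B+1}^{kB\wedge n}\sum_{s=1}^{t}a_{st}X_{sj}\otimes X_{tk}$, the truncated conditional expectations $V_{kjk\tau}=E(V_{kjk}\mid\eta_{k-\tau},\ldots,\eta_k)$ with $\eta_k=(\varepsilon_{(k-1)B+1},\ldots,\varepsilon_{kB})$, and set dyadic scales $\tau_l=2^l$, $l=1,\ldots,L=\lfloor\log K/\log 2\rfloor$. Then decompose
\begin{equation*}
Q_{njk}-E(Q_{njk})=\underbrace{\sum_{k=1}^K\big(V_{kjk}-V_{kjk,K}\big)}_{I_1}+\underbrace{\sum_{l=2}^L\sum_{k=1}^K\big(V_{kjk,\tau_l}-V_{kjk,\tau_{l-1}}\big)}_{I_2}+\underbrace{\sum_{k=1}^K\big\{V_{kjk,2}-E(V_{kjk,2})\big\}}_{I_3}.
\end{equation*}
For $I_1$, apply part (iii) of \Cref{lemmaTechComposite4} (the scalar analogue of the block increment bound) together with the backward-martingale inequality (ii) of \Cref{leTechComposite2} specialised to scalars, and the tail decay $\Omega_{(K-1)B+1,q}\lesssim n^{-\alpha}\|\|X_{1j}\|_\cH\|_{q,\alpha}$ and similarly for $k$, to obtain via Markov's inequality the polynomial term with factor $n^{q/4-\alpha q/2}B^{q/4}\|\|X_{1j}\|_{\cH}\|_{q,\alpha}^{q/2}\|\|X_{1k}\|_{\cH}\|_{q,\alpha}^{q/2}$, absorbed into $F'_{n,B}$ when $\alpha\le 1/2-2/q$.

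The term $I_2$ is the main engine. Group the blocks at scale $\tau_l$ into even/odd chunks $Y^{e}_{hl},Y^{o}_{hl}$, which within each parity are independent. Apply the scalar Nagaev inequality (the $p=1$ version of \Cref{lemmaTechComposite3}) to each $\sum_h Y^{e/o}_{hl}$: the polynomial part contributes $(\lambda_l x)^{-q/2}\sum_h E\|Y^{e/o}_{hl}\|_{\cS}^{q/2}$, controlled by part (iii) of \Cref{lemmaTechComposite4}; the exponential part contributes $\exp\{-c\lambda_l^2 x^2/\sum_h E\|Y^{e/o}_{hl}\|_{\cS}^2\}$, controlled using $\|\|Y^{e/o}_{hijl}\|_{\cS}\|_2\lesssim\tau_l^{1/2}B(\tau_lB)^{-\alpha}\bPhi_{4,\alpha}^\bX$ which yields the exponent $x^2/(C_\alpha(\bPhi_{4,\alpha}^\bX)^2 nB)$ after summation over $l$ with the usual $\lambda_l=O(l^{-2})$ telescoping. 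For $I_3$, the summands $V_{kjk,2}$ are $3$-dependent across $k$, so a direct application of the scalar Nagaev inequality combined with part (d) of \Cref{lemmaTechComposite1} (for the second moment and $q/2$-moment bound of each $V_{kjk,2}$) gives the remaining $nB^{q/2-1}$ polynomial term together with the same exponential bound. Collecting the three contributions and absorbing constants depending only on $q,\alpha$ yields the stated inequality. The case $\sum_{1\le t\le s\le n}$ is handled identically by reversing roles.

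The only step that requires genuine care is the $I_2$ analysis: one must verify that, at each dyadic scale $l$, the product $\lambda_l^2\tau_l^{2\alpha}$ is increasing fast enough so that $\sum_l\exp(-c x^2\lambda_l^2(\tau_lB)^{2\alpha}/((\bPhi_{4,\alpha}^\bX)^2 nB))$ is dominated by its $l=K_\alpha'$ term (as in the proof of \Cref{nonGaussianConcen}). This is automatic for $\alpha>0$ with the choice $\lambda_l\asymp (l-1)^{-2}$ on $l\le L/2$ and $\lambda_l\asymp(L+1-l)^{-2}$ on $l>L/2$, and is the only subtlety beyond the cosmetic replacement of the joint dependence measures by coordinatewise ones.
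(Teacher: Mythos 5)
Your proposal is correct and follows essentially the same route as the paper: the authors likewise prove this lemma by rerunning the block-martingale decomposition of \Cref{nonGaussianConcen} on the single $(j,k)$ entry, invoking part (\romannumeral 3) of \Cref{lemmaTechComposite4} in place of part (\romannumeral 1) so that $\Omega_{0,q},\omega_{d,q}$ are replaced by $\Delta_{0,q,j},\delta_{d,q,j}$ and the $(1\vee\log p)^{5q/4}$ and $p^2$ factors disappear. Your write-up simply makes explicit the three-term decomposition and the dyadic even/odd chunking that the paper leaves implicit by reference.
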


        \begin{proof}
            Lemma \ref{lm:elementwisecon1} can be proven using the same procedure as in the proof of Lemma \ref{nonGaussianConcen}. There are two major differences. First, since we are considering only one-dimensional concentration, the terms \((1 \vee \log p)^{5q/4}\) and \(p^2\) are eliminated. Second, for the concentration of each dimension, we utilize the results from (\romannumeral 3) instead of (\romannumeral 1) of Lemma \ref{lemmaTechComposite4}. Consequently, the dependence measures are modified from \(\Omega_{0,q}, \omega_{d,q}\) to \(\Delta_{0,q,j}, \delta_{d,q,j}\). As a result, in the concentration inequality, \(\|\|\bX_1\|_{\cH,\infty}\|_{q,\alpha}^q\) is modified to \(\|\|\bX_{1j}\|_{\cH,\infty}\|_{q,\alpha}^{q/2}\|\|\bX_{1k}\|_{\cH,\infty}\|_{q,\alpha}^{q/2}\).
        \end{proof}

        \subsection{Lemma \ref{lm:elementwisecon2} and its proof}

        \begin{lemma}\label{lm:elementwisecon2}
            Assume $\bX_t$ comes from \eqref{Sec1-1} and it is mean zero. Consider the $(j,k)$-th element of the quadratic form $Q_{njk}^{(B)} = \sum_{B+1 \le t\le n}a_{t}X_{(t-B)j}\otimes X_{tk}$. Assume that $\sup_{t\in[n]}|a_{t}|\le 1$. Then we have \begin{align*}
            P(\|Q_{njk}^{(B)}-E(Q_{njk}^{(B)})\|_{\cS}\ge x) \le & C_{q,\alpha}x^{-q/2}\|\|X_{1j}\|_{\cH,\infty}\|_{q,\alpha}^{q/2}\|\|X_{1k}\|_{\cH,\infty}\|_{q,\alpha}^{q/2}D'_{n,B} \\
            &+ C_\alpha \exp\left\{-\frac{x^2}{C_\alpha\left(\bPhi_{4,\alpha}^\bX\right)^2n}\right\}.
        \end{align*}
        Here $D’_{n,B}=nB^{q/4-1}$ (resp., $nB^{q/4-1}+n^{q/4-\alpha q/2}$) if $\alpha > 1/2-2/q$ (resp., $\alpha \le 1/2-2/q$). And the same bound also holds for $Q_{njk}^{(B)} = \sum_{B+1 \le t\le n}a_{t} X_{tj}\otimes X_{(t-B)k}$.
        \end{lemma}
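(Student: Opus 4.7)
The plan is to mirror the proof of Lemma \ref{nonGaussianConcen2} while tracking only the scalar $(j,k)$-component, which removes the $(1\vee\log p)$ and $p^2$ factors, and to swap the joint version of Lemma \ref{lemmaTechComposite4}(\romannumeral 2) with the entrywise version in Lemma \ref{lemmaTechComposite4}(\romannumeral 3). I will work with $Q_{njk}^{(B)}=\sum_{t=B+1}^{n} a_t X_{(t-B)j}\otimes X_{tk}$; the other case (with indices swapped) is identical.

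First I would set $K=\lceil n/B\rceil$, partition the time index into dyadic blocks $\eta_k=(\varepsilon_{(k-1)B+1},\dots,\varepsilon_{kB})$, and introduce the block sums $V^{*}_{k,jk}=\sum_{t=(k-1)B+1}^{kB\wedge n} a_t X_{(t-B)j}\otimes X_{tk}$ together with their conditional expectations $V^{*}_{k,jk,\tau}=E(V^{*}_{k,jk}\mid\eta_{k-\tau},\dots,\eta_k)$. Following the same telescoping as in \eqref{pflemmaB9-1}, write
\begin{equation*}
Q_{njk}^{(B)}-E(Q_{njk}^{(B)}) = \underbrace{\sum_{k=1}^K (V^{*}_{k,jk}-V^{*}_{k,jk,K})}_{I_1} + \underbrace{\sum_{l=2}^L M_{K,l,jk}}_{I_2} + \underbrace{\sum_{k=1}^K (V^{*}_{k,jk,2}-E(V^{*}_{k,jk,2}))}_{I_3},
\end{equation*}
with $\tau_l=2^l$, $L=\lfloor\log K/\log 2\rfloor$, and $M_{K,l,jk}=\sum_k (V^{*}_{k,jk,\tau_l}-V^{*}_{k,jk,\tau_{l-1}})$.

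Next I would bound each piece using the entrywise analogue of Lemma \ref{lemmaTechComposite4}(\romannumeral 3): for the scalar case the $h$-step coupling bound reads
\begin{equation*}
\|\|V^{*}_{k,jk,h}-V^{*}_{k,jk,h-1}\|_{\cS}\|_{q/2}\le C_q B^{1/2}\Big(\Delta_{0,q,j}\sum_{d\sim hB}\delta_{d,q,k}+\Delta_{0,q,k}\sum_{d\sim hB}\delta_{d,q,j}\Big).
\end{equation*}
Summing in $h$ and using the definition \eqref{det2} of $\|\|X_{1\cdot}\|_{\cH}\|_{q,\alpha}$, this produces the tail residual $I_1$ with $L_{q/2}$-norm of order $n^{1/2-\alpha}\|\|X_{1j}\|_{\cH}\|_{q,\alpha}\|\|X_{1k}\|_{\cH}\|_{q,\alpha}$. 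Markov's inequality then gives the $n^{q/4-\alpha q/2}$ contribution. For $I_2$ I would split $M_{K,l,jk}$ into the even/odd parts $R^e_{nl,jk},R^o_{nl,jk}$ of independent blocks and apply the scalar Nagaev bound (Lemma \ref{lemmaTechComposite3} restricted to $p=1$, which removes the $(1\vee\log p)$ factor and the $p^2$ in the Gaussian term). For the variance in the Gaussian tail I would use the crude bound $E\|Y^{*}_{h,jk,l}\|_{\cS}^2\lesssim \tau_l^{1/2-\alpha}B^{1/2-\alpha}(\bPhi_{4,\alpha}^\bX)^2$, exactly as in \eqref{pflemmaB9-5}, so the exponential factor retains the form $\exp\{-x^2/(C_\alpha(\bPhi_{4,\alpha}^\bX)^2 n)\}$ (note there is no $B$ here, inherited from part (\romannumeral 2) of Lemma \ref{lemmaTechComposite4}). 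Finally, $I_3$ is a sum of $3$-dependent block random variables and yields $nB^{q/4-1}$ via Lemma \ref{lemmaTechComposite1}(d) applied to $j,k$.

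Combining the three contributions and rewriting the dependence factors through the inequality
\begin{equation*}
\Delta_{0,q,j}\Delta_{0,q,k}\;\le\;\|\|X_{1j}\|_{\cH,\infty}\|_{q,\alpha}\|\|X_{1k}\|_{\cH,\infty}\|_{q,\alpha},
\end{equation*}
and balancing the powers so that the polynomial term carries $\|\|X_{1j}\|_{\cH,\infty}\|_{q,\alpha}^{q/2}\|\|X_{1k}\|_{\cH,\infty}\|_{q,\alpha}^{q/2}$, produces the stated concentration inequality. The main obstacle will be the bookkeeping in this last step: one must verify, in the dyadic Nagaev estimate for $I_2$, that the product of marginal dependence-adjusted norms $\|\|X_{1j}\|_{\cH,\infty}\|_{q,\alpha}^{q/2}\|\|X_{1k}\|_{\cH,\infty}\|_{q,\alpha}^{q/2}$ absorbs cleanly the sum $\sum_l \lambda_l^{-q/2}\tau_l^{q/4-\alpha q/2-1}$, giving the case split $\alpha>1/2-2/q$ vs.\ $\alpha\le 1/2-2/q$ that defines $D'_{n,B}$.
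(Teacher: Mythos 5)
Your proposal is correct and follows essentially the same route as the paper, which proves this lemma by repeating the block/dyadic decomposition of Lemma \ref{nonGaussianConcen2} for the single $(j,k)$-entry, dropping the $(1\vee\log p)$ and $p^2$ factors, and substituting part (\romannumeral 3) of Lemma \ref{lemmaTechComposite4} so that $\Omega_{0,q},\omega_{d,q}$ become $\Delta_{0,q,j},\delta_{d,q,j}$ and the dependence factor becomes $\|\|X_{1j}\|_{\cH}\|_{q,\alpha}^{q/2}\|\|X_{1k}\|_{\cH}\|_{q,\alpha}^{q/2}$. The only blemish is a minor bookkeeping slip in the exponent of your variance bound for $E\|Y^{*}_{h,jk,l}\|_{\cS}^2$ (it should be the square of the $L_2$-norm bound, i.e. $\tau_l^{1-2\alpha}B^{1-2\alpha}$ up to constants), but this does not affect the resulting exponential term $\exp\{-x^2/(C_\alpha(\bPhi_{4,\alpha}^\bX)^2 n)\}$.
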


        \begin{proof}
            Lemma \ref{lm:elementwisecon2} can be proven following the same procedure as in the proof of Lemma \ref{nonGaussianConcen2}. The differences between these two results can be explained in the same manner as in the proof of Lemma \ref{lm:elementwisecon1}. For simplicity, we omit the details.
        \end{proof}

        \subsection{Lemma \ref{lemmaTechComposite5} and its proof}

        \begin{lemma}\label{lemmaTechComposite5}
            Assume $\|\nth{\bX_1}_{\cH,\infty}\|_{q,\alpha} < \infty$. Then we have \begin{align*}
                \max_{j,k\in[p]}\sum_{|h|\ge m_0}\|\Sigma_{jk}^{(h)}\|_\cS \le 2m_0^{-\alpha}(\bPhi_{2,0}^\bX\bPhi_{2,\alpha}^\bX)^{1/2}.
            \end{align*}
            Also we have $\max_{j,k\in[p]}\sum_{h\in\eZ}\|\Sigma_{jk}^{(h)}\|_\cS \le 2\bPhi_{2,0}^\bX$.
        \end{lemma}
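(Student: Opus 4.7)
My plan is to combine a martingale-difference (projection) decomposition with the scalar-tensor Cauchy--Schwarz inequality in $L_2([0,1]^2)$, and then exploit the polynomial decay encoded in the functional dependence adjusted norm.

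First, I would write each component as the $L^2$-convergent sum of projections $X_{tj}(u) = \sum_{r\le t}\cP_r X_{tj}(u)$ and $X_{(t+h)k}(v) = \sum_{r\le t+h}\cP_r X_{(t+h)k}(v)$ for $h\ge 0$. Since $\{\cP_r(\cdot)\}_{r\in\mathbb{Z}}$ are orthogonal in $L_2(\Omega)$, the mean-zero assumption gives
\begin{equation*}
\Sigma_{jk}^{(h)}(u,v) \;=\; E\{X_{tj}(u)X_{(t+h)k}(v)\} \;=\; \sum_{r\le t} E\{\cP_r X_{tj}(u)\,\cP_r X_{(t+h)k}(v)\}.
\end{equation*}
Applying the triangle inequality in $\cS$ and then the Cauchy--Schwarz inequality under the double integral $\iint\,du\,dv$ yields the basic tensor bound
\begin{equation*}
\|\Sigma_{jk}^{(h)}\|_\cS \;\le\; \sum_{r\le t}\bigl(E\|\cP_r X_{tj}\|_\cH^2\bigr)^{1/2}\bigl(E\|\cP_r X_{(t+h)k}\|_\cH^2\bigr)^{1/2}.
\end{equation*}

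Next I would invoke part (a) of Lemma~\ref{lemmaTechComposite1}, namely $\|\|\cP_{t-s} X_{tj}\|_\cH\|_2 \le \delta_{s,2,j}$, to recast the right-hand side as a convolution in the physical dependence measures:
\begin{equation*}
\|\Sigma_{jk}^{(h)}\|_\cS \;\le\; \sum_{s=0}^{\infty}\delta_{s,2,j}\,\delta_{s+h,2,k}, \qquad h\ge 0.
\end{equation*}
Summing over $h\ge m_0$ and exchanging the order of summation gives $\sum_{h\ge m_0}\|\Sigma_{jk}^{(h)}\|_\cS \le \sum_{s=0}^{\infty}\delta_{s,2,j}\,\Delta_{s+m_0,2,k}$. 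Using $\Delta_{s+m_0,2,k}\le (s+m_0+1)^{-\alpha}\|\|X_{1k}\|_\cH\|_{2,\alpha}\le m_0^{-\alpha}\|\|X_{1k}\|_\cH\|_{2,\alpha}$ and $\sum_s \delta_{s,2,j}=\Delta_{0,2,j}\le \|\|X_{1j}\|_\cH\|_{2,0}$, one obtains $m_0^{-\alpha}(\bPhi^{\bX}_{2,0}\bPhi^{\bX}_{2,\alpha})^{1/2}$ as the bound for the positive tail.

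For the negative tail $h\le -m_0$, I would use stationarity to write $\Sigma_{jk}^{(h)}(u,v)=\Sigma_{kj}^{(-h)}(v,u)$, so that $\|\Sigma_{jk}^{(h)}\|_\cS=\|\Sigma_{kj}^{(-h)}\|_\cS$ and the preceding estimate applies with the roles of $j$ and $k$ swapped. Adding the two contributions and taking the maximum over $j,k\in[p]$ gives the factor $2$ and the first bound. The second bound is obtained by the same chain of inequalities with $m_0=0$ (so the weight $(s+m_0+1)^{-\alpha}$ is replaced by $1$), using $\Delta_{0,2,k}\le\|\|X_{1k}\|_\cH\|_{2,0}\le(\bPhi_{2,0}^\bX)^{1/2}$. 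The only delicate point is verifying the scalar-tensor Cauchy--Schwarz step $\|E(Y_1\otimes Y_2)\|_\cS\le (E\|Y_1\|_\cH^2)^{1/2}(E\|Y_2\|_\cH^2)^{1/2}$, which follows by squaring, applying Cauchy--Schwarz pointwise in $(u,v)$, and using Fubini; once this is in place the rest is bookkeeping.
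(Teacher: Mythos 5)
Your proposal is correct and follows essentially the same route as the paper's proof: decompose each component into martingale-difference projections, use orthogonality so only the diagonal terms survive, bound $\|E(\cP_r X_{tj}\otimes \cP_r X_{(t+h)k})\|_{\cS}$ by Cauchy--Schwarz to obtain $\|\Sigma_{jk}^{(h)}\|_{\cS}\le\sum_{s\ge 0}\delta_{s,2,j}\,\delta_{s+h,2,k}$, and then sum the tails using the dependence-adjusted norm to extract the factor $m_0^{-\alpha}$. The only cosmetic differences are that you handle negative lags via the identity $\Sigma_{jk}^{(h)}(u,v)=\Sigma_{kj}^{(-h)}(v,u)$ where the paper writes the two symmetric tail terms directly, and you apply Cauchy--Schwarz to the expectation of the tensor rather than via Jensen plus $\|Y_1\otimes Y_2\|_{\cS}=\|Y_1\|_{\cH}\|Y_2\|_{\cH}$; both are valid and yield the same bound.
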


        \begin{proof}
            Recall the projection operator $\cP_t$ that $\cP_tX = E(X\mid\cF_t)-E(X\mid\cF_{t-1})$. For any fixed $r$, since $X_{rj}$ is mean zero, we have $X_{rj} = \sum_{t=0}^\infty \cP_{r-t}X_{tj}$ for any $r,j$. This gives \begin{align*}
           \|\Sigma_{jk}^{(h)}\|_\cS = & \left\|E\{X_{(r-h)j}\otimes X_{rk}\}\right\|_{\cS}\\
            = & \left\|E\left[\left\{\sum\limits_{t = 0}^\infty \cP_{r-t}X_{(r-h)j}\right\}\otimes\left\{\sum\limits_{t = 0}^\infty \cP_{r-t}X_{rk}\right\}\right]\right\|_{\cS}
            \end{align*}
            Using Proposition 1.12 in \cite{pisier2016martingales} and the fact that for any $Y_1,Y_2$ we have $\|Y_1\otimes Y_2\|_\cS = \|Y_1\|_\cH\|Y_2\|_\cH$, for any $h > 0$ we obtain 
            \begin{align*}
             \|\Sigma_{jk}^{(h)}\|_\cS \le & \sum\limits_{t = 0}^\infty E\left\{\|\cP_{r-t}X_{(r-h)j}\|_\cH\left\|\cP_{r-t}X_{rk}\right\|_{\cH}\right\}\\
            \le & \sum\limits_{t = 0}^\infty\|\|\cP_{r-t}X_{(r-h)j}\|_\cH\|_2\|\|\cP_{r-t}X_{rk}\|_\cH\|_2,
        \end{align*}
        where the last inequality is Cauchy--Schwarz inequality. Using (a) of Lemma \ref{lemmaTechComposite1}, we have $\|\Sigma_{jk}^{(h)}\|_{\cS} \le \sum\limits_{t = 0}^\infty\delta_{t-h,2,j}\delta_{t,2,k}$.  Hence \begin{align*}
            \max\limits_{j,k\in[p]}\sum_{|h|\ge m_0} \|\Sigma_{jk}^{(h)}\|_{\cS} & \le \max\limits_{j,k\in[p]}\sth{\left(\sum\limits_{t\ge 0}\delta_{t,2,j}\right)\left(\sum\limits_{t\ge m_0}\delta_{t,2,k}\right) + \left(\sum\limits_{t\ge 0}\delta_{t,2,k}\right)\left(\sum\limits_{t\ge m_0}\delta_{t,2,j}\right)}\\
            & \le 2m_0^{-\alpha}(\bPhi_{2,0}^{\bX}\bPhi_{2,\alpha}^{\bX})^{1/2}.
        \end{align*}
        And also we have $\max_{j,k\in[p]}\sum_{h\in\eZ} \|\Sigma_{jk}^{(h)}\|_\cS \le 2\bPhi_{2,0}^\bX$.
        \end{proof}
        
        \subsection{Proof of Lemma \ref{lem1}}

        First we have \begin{align*}
            R(m_0) \le \max\limits_{j,k\in[p]}\left\{\sum_{|h|\ge m_0}\|\Sigma_{jk}^{(h)}\|_{\cS}\right\} + \max\limits_{j,k\in[p]}\left[\sum_{|h|<m_0}\left\{1-K(h/m_0)\right\} \|\Sigma_{jk}^{(h)}\|_{\cS}\right].
        \end{align*}
        For the first part, Lemma \ref{lemmaTechComposite5} implies \begin{align*}
            \max\limits_{j,k\in[p]}\sum_{|h|\ge m_0} \|\Sigma_{jk}^{(h)}\|_\cS & \le \max\limits_{j,k\in[p]}2\left(\sum\limits_{t\ge 0}\delta_{t,2,j}\right)\left(\sum\limits_{t\ge m_0}\delta_{t,2,k}\right)\le 2m_0^{-\alpha}(\bPhi_{2,0}^{\bX}\bPhi_{2,\alpha}^{\bX})^{1/2}.
        \end{align*}
        For the second part, Lemma \ref{lemmaTechComposite5} implies \begin{align*}
            & \max\limits_{j,k\in[p]}\sum_{|h|<m_0}\{1-K(h/m_0)\}\|\Sigma_{jk}^{(h)}\|_{\cS} \\
            \le & \max_{j,k\in[p]}\left[\sum_{|h|<m_0^\beta}|1-K(h/m_0)|\|\Sigma_{jk}^{(h)}\|_\cS + \sum_{m_0^\beta\le|h|<m_0}\|\Sigma_{jk}^{(h)}\|_{\cS} \right]\\
            \le &  Cm_0^{(\beta-1)\tau}\bPhi_{2,0}^\bX + 2m_0^{-\alpha\beta}(\bPhi_{2,0}^\bX\bPhi_{2,\alpha}^\bX)^{1/2}.
        \end{align*}
        Taking $\beta = \tau/(\tau+\alpha)$ and noticing $\bPhi^\bX_{2,0}\le \bPhi_{2,\alpha}^\bX$, we have \begin{align*}
            R(m_0) \le C(\bPhi_{2,0}^\bX\bPhi_{2,\alpha}^\bX)^{1/2}\left\{m_0^{-\alpha} + m_0^{-\alpha \tau/(\tau+\alpha )}\right\}\le C\bPhi_{2,\alpha}^\bX m_0^{-\alpha \tau/(\tau+\alpha)}.
        \end{align*}

    \subsection{Lemma \ref{LemmaPartial1} and its proof}

    \begin{lemma}\label{LemmaPartial1}
            Suppose Condition \ref{condpartial1} holds. Define $\cT_{tj} = T_{tj}b_{j},\cT_t = \min_{j\in[p]}\cT_{tj}$. For any given curve \(\bX_t\), assume for all $t\in[n]$ we have $\cT_t \ge C$, here $C$ is an absolute constant. Recall that \(\be_0 = (1,0)^\T,\widetilde{\bU}_{tji} = \{1,(U_{tji}-u)/b_{j}\}^\T\) and $$\widehat{\bS}_{tj}(u) = \frac{1}{T_{tj}}\sum_{i = 1}^{T_{tj}}\widetilde{\bU}_{tji}\widetilde{\bU}_{tji}^\T K_{b_{j}}(U_{tji}-u),\quad \widehat{\bR}_{tj}(u) = \frac{1}{T_{tj}}\sum_{i = 1}^{T_{tj}}\widetilde{\bU}_{tji}Y_{tji} K_{b_{j}}(U_{tji}-u).$$
            Let \(\widetilde{X}_{tj}(u) = \be_0^\T\left[E\{\widehat{\bS}_{tj}(u)\}\right]^{-1}E\{\widehat{\bR}_{tj}(u)\mid \bX_t\}\). Define the event $$\Omega_{tj1}(\delta')= \left\{\sup_{u\in[0,1]}\|\widehat{\bS}_{tj}(u)-E\{\widehat{\bS}_{tj}(u)\}\|_F\le C_S\delta'/2\right\},\delta'\in(0,1]$$
            and $\Omega_{t1}(\delta') = \bigcap_{j\in[p]} \Omega_{tj1}(\delta')$. The detail of constant $C_S$ is in the proof of Theorem \ref{thm7}.  Recall in Condition \ref{condpartial1}(\romannumeral 6) we define that $\bX_t^* = (X_{t1}^*,\dots,X_{tp}^*)^\T, X_{tj}^* = \sup_{u\in[0,1]}|X_{tj}(u)|$, and $\bX_{t}^{(2)*} = (X_{t1}^{(2)*},\dots,X_{tp}^{(2)*})^\T,X_{tj}^{(2)*} = \sup_{u\in[0,1]}|\partial^2_u X_{tj}(u)|$. Then for any $\delta > 0$, there exists absolute constants $C_1,C_2$ such that for any dimension $p > 0$ \begin{equation}\label{lemmaB8eq1}
                \begin{aligned}
                P\left(\frac{\|\widehat{\bX}_{t} - \widetilde{\bX}_{t}\|_{\cH,\infty}}{1+|\bX^*_{t}|_\infty} \ge \delta, \Omega_{t1}(1) \mid \bX_t\right)  \le C_1p\exp\{-C_2\cT_{t}\min(\delta,\delta^2)\} \quad \mbox{a.s.,}
                \end{aligned}
            \end{equation} 
            and $\Omega_{t1}(1)$ satisfies there exists constants $C_3,C_4$ such that \begin{equation}\label{lemmaB8eq3}
                1-P\{\Omega_{t1}(1)\} \le C_4p\exp(-C_5\cT_{t}).
            \end{equation}
            Additionally we have there exists absolute constant $C_5$ such that \begin{equation}\label{lemmaB8eq2}
                \max_{j\in[p]}\|\widetilde{X}_{tj}-X_{tj}\|_\cH \le C_3\max_{j\in[p]}b_{j}^2X_{tj}^{(2)*}\quad \mbox{a.s.}
            \end{equation}
    \end{lemma}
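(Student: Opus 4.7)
The three claims call for three distinct techniques. The bias bound \eqref{lemmaB8eq2} is a standard Taylor-expansion argument: substituting $X_{tj}(U_{tji})=X_{tj}(u)+(U_{tji}-u)X_{tj}'(u)+\tfrac{1}{2}(U_{tji}-u)^2 X_{tj}''(\xi)$ into the closed form $\widetilde{X}_{tj}(u)=\be_0^{\T}[E\{\widehat{\bS}_{tj}(u)\}]^{-1}E\{\widehat{\bR}_{tj}(u)\mid\bX_t\}$ and invoking the reproducing property of local-linear smoothing (it is exact for linear polynomials) cancels the first two terms, leaving a pointwise remainder bounded by $C\,b_j^2 X_{tj}^{(2)*}$ uniformly in $u\in[0,1]$ under Condition \ref{condpartial1}(\romannumeral 5); taking supremum and then integrating in $u$ yields \eqref{lemmaB8eq2}.

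For the two concentration claims, I invoke the resolvent identity $A^{-1}-B^{-1}=A^{-1}(B-A)B^{-1}$ to decompose
\[
\widehat{X}_{tj}(u)-\widetilde{X}_{tj}(u) = \be_0^{\T}[\widehat{\bS}_{tj}(u)]^{-1}\Delta_R(u) + \be_0^{\T}[\widehat{\bS}_{tj}(u)]^{-1}\Delta_S(u)[E\widehat{\bS}_{tj}(u)]^{-1}E\{\widehat{\bR}_{tj}(u)\mid\bX_t\},
\]
where $\Delta_R(u)=\widehat{\bR}_{tj}(u)-E\{\widehat{\bR}_{tj}(u)\mid\bX_t\}$ and $\Delta_S(u)=E\{\widehat{\bS}_{tj}(u)\}-\widehat{\bS}_{tj}(u)$. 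On $\Omega_{t1}(1)$, the lower bound on $\inf_u\|E\{\widehat{\bS}_{tj}(u)\}\|_{\min}$ encoded in the constant $C_S$ controls both $\|[\widehat{\bS}_{tj}(u)]^{-1}\|_{\op}$ and $\|[E\widehat{\bS}_{tj}(u)]^{-1}\|_{\op}$ by $2/C_S$, while $\|E\{\widehat{\bR}_{tj}(u)\mid\bX_t\}\|_2\lesssim X_{tj}^*\le 1+|\bX_t^*|_\infty$, since the kernel weights integrate to $O(1)$ under Condition \ref{condpartial1}(\romannumeral 3) and $E\varepsilon_{tji}=0$. Therefore it suffices to control $\sup_u\|\Delta_S(u)\|_F$ and $\sup_u\|\Delta_R(u)\|_2/(1+X_{tj}^*)$ by $\delta$. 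For fixed $u$ these are sums of $T_{tj}$ conditionally i.i.d.\ centered summands whose entries are bounded by $C/b_j$ on the kernel support (with $|\widetilde{\bU}_{tji}|\le\sqrt{2}$ and $|X_{tj}(U_{tji})|\le X_{tj}^*$), plus, in $\Delta_R$, a sub-Gaussian contribution of parameter $\sigma_j/b_j$ from $\varepsilon_{tji}$ via Condition \ref{condpartial1}(\romannumeral 1). A Bernstein-type inequality then gives a pointwise conditional tail bound $C\exp\{-c\,T_{tj}b_j\min(\delta,\delta^2)\}=C\exp\{-c\,\cT_{tj}\min(\delta,\delta^2)\}$, with the normalization by $1+X_{tj}^*$ absorbing the $X_{tj}(U_{tji})$ contribution.

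To pass from pointwise to uniform control in $u$, I discretize $[0,1]$ by an equispaced grid of $N\asymp \cT_{tj}$ points; the Lipschitz continuity of $K$ in Condition \ref{condpartial1}(\romannumeral 2) bounds the oscillation of $\widehat{\bS}_{tj}(\cdot)$ and $\widehat{\bR}_{tj}(\cdot)$ between adjacent grid points by $O(N^{-1}b_j^{-2})$, which is absorbed by the Bernstein scale under Condition \ref{condpartial1}(\romannumeral 4). A union bound over the $N$ grid points and (only for \eqref{lemmaB8eq3}) over $j\in[p]$ produces the polynomial factor $p$ in both displays; specializing to $\Delta_S$ with $\delta$ of constant order gives \eqref{lemmaB8eq3} directly. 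The main technical obstacle is the Bernstein bookkeeping: the summands have both $L_\infty$-size and variance of order $1/b_j$, so the sub-Gaussian regime yields rate $T_{tj}b_j\delta^2=\cT_{tj}\delta^2$ and the sub-exponential regime yields $T_{tj}b_j\delta=\cT_{tj}\delta$, combining to $\cT_{tj}\min(\delta,\delta^2)$ as required; the mesh size $N\asymp\cT_{tj}$ and the fact that $b_j\gtrsim 1/T_{tj}$ (Condition \ref{condpartial1}(\romannumeral 4)) together ensure the discretization error never dominates. Once this balance is tracked, the remaining steps are routine chaining and union bounds.
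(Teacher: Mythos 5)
Your resolvent decomposition of $\widehat{X}_{tj}-\widetilde{X}_{tj}$, the Bernstein bookkeeping that produces the pointwise rate $\exp\{-c\,\cT_{tj}\min(\delta,\delta^2)\}$, and the Taylor/reproducing argument for \eqref{lemmaB8eq2} all match the paper. The gap is in how you pass from pointwise-in-$u$ bounds to the norm $\|\widehat{\bX}_t-\widetilde{\bX}_t\|_{\cH,\infty}$. First, $\|\cdot\|_{\cH}$ is the $L_2([0,1])$ norm in $u$, so uniform-in-$u$ control of $\Delta_R$ is not actually needed; more importantly, the chaining you propose does not deliver the stated bound. The summands of $\Delta_R(u)$ contain the unbounded sub-Gaussian errors $\varepsilon_{tji}$ and the values $X_{tj}(U_{tji})$, so the oscillation of $\widehat{\bR}_{tj}$ between adjacent grid points is $O(N^{-1}b_j^{-2})$ times a \emph{random} average of $|Y_{tji}|$, not a deterministic quantity; and with your mesh $N\asymp\cT_{tj}=T_{tj}b_j$ even the deterministic part is $O(T_{tj}^{-1}b_j^{-3})$, which Condition \ref{condpartial1}(\romannumeral 4) ($\widebar{T}_jb_j\to\infty$, i.e.\ $b_j\gg T_{tj}^{-1}$) does not force to be small, since $T_{tj}b_j^3$ may tend to $0$. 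Enlarging $N$ to fix this reintroduces a union-bound prefactor $N$ in front of $\exp\{-C_2\cT_t\min(\delta,\delta^2)\}$; because \eqref{lemmaB8eq1} is asserted for all $\delta>0$ with absolute constants, a factor $N=N(t,j)\to\infty$ can be absorbed neither into $C_1$ nor into the exponential when $\delta$ is small (there $\cT_t\min(\delta,\delta^2)\to 0$ while $\log N$ stays fixed).

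The paper's route avoids both problems. Uniformity in $u$ is required only for $\widehat{\bS}_{tj}$, whose summands are deterministically bounded by $C/b_j$; the supremum $W=\sup_u\|\widehat{\bS}_{tj}(u)-E\{\widehat{\bS}_{tj}(u)\}\|_F$ is controlled by a Talagrand-type concentration inequality for bounded empirical processes (Theorem 12.5 of Boucheron et al.), giving $E(W)\lesssim(T_{tj}b_j)^{-1/2}$ and a grid-free tail bound, which is exactly \eqref{lemmaB8eq3} after a union bound over $j$. For the $\widehat{\bR}$ part the paper keeps the Bernstein bound pointwise in $u$, converts it into moment bounds $E\{|\widehat X_{tj}(u)-\widetilde X_{tj}(u)|^{2q}(1+X_{tj}^*)^{-2q}I\{\Omega_{tj1}(1)\}\mid\bX_t\}\le q!C(4C\cT_{tj})^{-q}+(2q)!C(4C\cT_{tj})^{-2q}$, integrates over $u$ via Jensen and Fubini to obtain the same moment bound for $\|\widehat X_{tj}-\widetilde X_{tj}\|_{\cH}^{2q}$, and then converts the moments back into the tail \eqref{lemmaB8eq1}; only the final union bound over $j\in[p]$ contributes the factor $p$. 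To repair your argument, replace the grid step for $\Delta_R$ by this moments-plus-Fubini device, and restrict any discretization to $\widehat{\bS}_{tj}$ (where the summands are bounded), or better, invoke the bounded-empirical-process concentration directly.
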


    \begin{proof}
        We organize our proof in four steps. 

        \subsubsection{Definition and Decomposition}\label{SecB.9.1}

       For any square matrix \(\bB\), write \(\|\bB\|_{\min} = \{\lambda_{\min}(\bB^\T\bB)\}^{1/2}, \|\bB\|_F = (\sum_{j,k}B_{jk}^2)^{1/2}\).  Recall that \(\widehat{X}_{tj}(u) = \be_0^\T\{\widehat{\bS}_{tj}(u)\}^{-1}\widehat{\bR}_{tj}(u)\).  From Lemma \ref{LemmaPartial3}, $E\{\widehat{\bS}_{tj}(u)\}$ is positive definite. If $\widehat{\bS}_{tj}(u)$ is positive definite, we can decompose $\widehat{X}_{tj}(u)-\widetilde{X}_{tj}(u)$ as \begin{align*}
            \widehat{X}_{tj}(u)-\widetilde{X}_{tj}(u) = & \be_0^\T [E\{\widehat{\bS}_{tj}(u)\}]^{-1}[\widehat{\bR}_{tj}(u) - E\{\widehat{\bR}_{tj}(u)\mid\bX_t\}]\\
            & -\be_0^\T \{\widehat{\bS}_{tj}(u)\}^{-1} [\widehat{\bS}_{tj}(u)-E\{\widehat{\bS}_{tj}(u)\}][E\{\widehat{\bS}_{tj}(u)\}]^{-1}\widehat{\bR}_{tj}(u),
        \end{align*}
        which implies that \begin{equation}\label{ProofThm5-1}
            \begin{aligned}
                |\widehat{X}_{tj}(u)-\widetilde{X}_{tj}(u)| \le & \|E\{\widehat{\bS}_{tj}(u)\}\|_{\min}^{-1}|\widehat{\bR}_{tj}(u) - E\{\widehat{\bR}_{tj}(u)\mid\bX_t\}|_2\\
            & + \|\widehat{\bS}_{tj}(u)\|_{\min}^{-1}\|E\{\widehat{\bS}_{tj}(u)\}\|_{\min}^{-1}|\widehat{\bR}_{tj}(u)|_2\|\widehat{\bS}_{tj}(u)-E\{\widehat{\bS}_{tj}(u)\}\|_F.
            \end{aligned}  
            \end{equation}
        \subsubsection{Proof of equation (\ref{lemmaB8eq1})}

        Similar to equation (A.3) in \cite{guo2025sparse}, we have there exists some positive absolute constant \(C\) such that for any \(\delta > 0\) and \(u\in[0,1]\), \begin{equation*}
            P\left[\|\widehat{\bS}_{tj}(u)-E\{\widehat{\bS}_{tj}(u)\}\|_F\ge \delta\right] \le 8\exp\left(-\frac{CT_{tj}b_{j}\delta^2}{1+\delta}\right).
        \end{equation*}
        Also notice that $\widehat{\bS}_{tj}(u)$ is independent of $\bX_t$, so adding conditioning on $\bX_t$ does not change the bound. So we have \begin{equation}\label{ProofThm5-2}
            P\left[\|\widehat{\bS}_{tj}(u)-E\{\widehat{\bS}_{tj}(u)\}\|_F\ge \delta\mid \bX_t\right] \le 8\exp\left(-\frac{CT_{tj}b_j\delta^2}{1+\delta}\right) \quad\mbox{a.s.}
        \end{equation} 
        Lemma \ref{LemmaPartial4} implies that there exist some positive absolute constants \(C_1, C_2\) such that for any \(\delta > 0\) and \(u\in[0,1]\) \begin{equation}\label{ProofThm5-3}
            P\left[|\widehat{\bR}_{tj}(u) - E\{\widehat{\bR}_{tj}(u)\mid\bX_t\}|_2 \ge\delta\mid \bX_t \right] \le C_1\exp\left\{-\frac{C_2T_{tj}b_{j}\delta^2/(1+X_{tj}^*)^2}{1+\delta/(1+X_{tj}^*)}\right\}\quad \mbox{a.s.}
        \end{equation}

        Denote \(E\{\widehat{\bS}_{tj}(u)\}\) as \(\bM_{b_{j}}(u)\) since it only depends on bandwidth $b_j$ and the kernel function $K$. Lemma \ref{LemmaPartial3} implies \begin{equation}\label{PfB8eq1}
            \inf_{b_j\in(0,1/2]}\inf_{u\in[0,1]}\|\bM_{b_{j}}(u)\|_{\min}\ge C_S,
        \end{equation} where $C_S$ is an absolute positive constant that can be explicitly determined by density $f_U$ and kernel function $K$. Recall that \(\Omega_{tj1}(\delta')= \left\{\sup_{u\in[0,1]}\|\widehat{\bS}_{tj}(u)-E\{\widehat{\bS}_{tj}(u)\}\|_F\le C_S\delta'/2\right\},\delta'\in(0,1]\). On the event \(\Omega_{tj1}(1)\), we have for all \(u\in[0,1]\), by \eqref{PfB8eq1}, \(\|\widehat{\bS}_{tj}(u)\|_{\min} \ge C_S/2\) and $\widehat{\bS}_{tj}(u)$ is inversible. So using decomposition in (\ref{ProofThm5-1}), we have on the event $\Omega_{tj1}(1)$, \begin{align*}
            |\widehat{X}_{tj}(u)-\widetilde{X}_{tj}(u)| \le & C_S^{-1}|\widehat{\bR}_{tj}(u)-E\{\widehat{\bR}_{tj}(u)\mid \bX_t\}|_2 \\
            & + 2C_S^{-2}|\widehat{\bR}_{tj}(u)|_2\|\widehat{\bS}_{tj}(u)-E\{\widehat{\bS}_{tj}(u)\}\|_F \quad \mbox{a.s.}
        \end{align*}
        Now we calculate $|E\{\widehat{\bR}_{tj}(u)\mid\bX_t\}|_2$. We first compute $E\{\widehat{R}_{tj1}(u)\mid\bX_t\}$ while $E\{\widehat{R}_{tj2}(u)\mid\bX_t\}$ can be similarly evaluated. Noticing that $E(\varepsilon_{tji})=0$ and $\varepsilon_{tji},i\in [T_{tj}]$ are independent of $X_{tj}$, we have \begin{align*}
            |E\{\widehat{R}_{tj1}(u)\mid\bX_t\}| &= |E\{Y_{tj1}K_{b_{j}}(U_{tj1}-u)\mid\bX_t\}| = |E\{X_{tj}(U_{tj1})K_{b_{j}}(U_{tj1}-u)\mid\bX_t\}|\\
            & = \frac{1}{b_{j}}\left|\int_0^1X_{tj}(v)K\left(\frac{v-u}{b_{j}}\right)f_U(v)\dv\right| \le M_fc_KX_{tj}^*\quad\mbox{a.s.}
        \end{align*}
        Similarly $|E\{\widehat{R}_{tj2}(u)\mid\bX_t\}| \le M_fc_KX_{tj}^*$ a.s. Since $|\widehat{\bR}_{tj}(u)|_2 \le |E\{\widehat{\bR}_{tj}(u)\mid \bX_t\}|_2 + |\widehat{\bR}_{tj}(u)-E\{\widehat{\bR}_{tj}(u)\mid \bX_t\}|_2$, we have on the event $\Omega_{tj1}(1)$, $$|\widehat{X}_{tj}(u)-\widetilde{X}_{tj}(u)| \le C|\widehat{\bR}_{tj}(u)-E\{\widehat{\bR}_{tj}(u)\mid\bX_t\}|_2 + CX_{tj}^*\|\widehat{\bS}_{tj}(u)-E\{\widehat{\bS}_{tj}(u)\}\|_F.$$
        Here $C$ is an absolute constant since it only depends on $C_S$, which is also an absolute constant. In the following, let $C_1,C_2$ be absolute constants that may vary from line to line. Combining the probability bound (\ref{ProofThm5-2}) and (\ref{ProofThm5-3}), we have $$P\left\{|\widehat{X}_{tj}(u)-\widetilde{X}_{tj}(u)|\ge \delta, \Omega_{tj1}(1) \mid \bX_t\right\} \le C_1\exp\left\{-\frac{C_2T_{tj}b_{j}\delta^2/(1+X^*_{tj})^2}{1+\delta/(1+X_{tj}^*)}\right\}\quad \mbox{a.s.}$$ 
        Now applying the first part of Lemma 6 in \cite{guo2023consistency}, we have for any \(u\in[0,1]\), and integer \(q \ge 1\), $$E\left\{\frac{|\widehat{X}_{tj}(u)-\widetilde{X}_{tj}(u)|^{2q}}{(1+X^*_{tj})^{2q}} I\{\Omega_{tj1}(1)\}\mid \bX_t\right\} \le q!C_1(4C_2T_{tj}b_{j})^{-q} + (2q)!C_1(4C_2T_{tj}b_{j})^{-2q},$$
        where $I$ is the indicator function. Using Fubini's Theorem, we have \begin{equation*}
            \begin{aligned}
                &E\left\{\frac{\|\widehat{X}_{tj}(u)-\widetilde{X}_{tj}(u)\|_\cH^{2q}}{(1+X^*_{tj})^{2q}}I\{\Omega_{tj1}(1)\}\mid \bX_t\right\}\\
                & = 
                E\left\{\frac{\left[\int_0^1\{\widehat{X}_{tj}(u)-\widetilde{X}_{tj}(u)\}^2\du\right]^{q}}{(1+X^*_{tj})^{2q}}I\{\Omega_{tj1}(1)\}\mid \bX_t\right\}\\
                & \le E\left\{\frac{\int_0^1\{\widehat{X}_{tj}(u)-\widetilde{X}_{tj}(u)\}^{2q}\du}{(1+X^*_{tj})^{2q}}I\{\Omega_{tj1}(1)\}\mid \bX_t\right\}\\
                & = \int_0^1 E\left\{\frac{|\widehat{X}_{tj}(u)-\widetilde{X}_{tj}(u)|^{2q}}{(1+X^*_{tj})^{2q}}I\{\Omega_{tj1}(1)\}\mid \bX_t\right\}\du\\
                &\le q!C_1(4C_2T_{tj}b_{j})^{-q} + (2q)!C_1(4C_2T_{tj}b_{j})^{-2q}.
            \end{aligned}
        \end{equation*}
        Applying the second part of Lemma 6 in \cite{guo2025sparse} again, noticing $T_{tj}b_{j}$ goes to infinity, we have \begin{equation*}
            P\left\{\|\widehat{X}_{tj} - \widetilde{X}_{tj}\|_\cH \ge (1+X_{tj}^*)\delta, \Omega_{tj1}(1) \mid \bX_t\right\} \le C_1\exp\{-C_2T_{tj}b_{j}\min(\delta,\delta^2)\}.
        \end{equation*}
        Using union bound argument, we conclude that $$P\left\{\|\widehat{\bX}_{t} - \widetilde{\bX}_{t}\|_{\cH,\infty} \ge (1+|\bX_{t}^*|_\infty) \delta, \Omega_{t1}(1)\mid \bX_t\right\}  \le C_1p\exp\{-C_2\cT_{t}\min(\delta,\delta^2)\},$$
        and we finish the proof of \eqref{lemmaB8eq1}. 

        \subsubsection{Proof of equation (\ref{lemmaB8eq3})}
        Denote \(W = \sup_{u\in[0,1]}\|\widehat{\bS}_{tj}(u)-E\{\widehat{\bS}_{tj}(u)\}\|_F\). Similar to equation (A.15) in \cite{guo2025sparse}, using Theorem 12.5 of \cite{Boucheron2013}, there exists constant $C_1,C_2$ such that for any \(\delta > 0\), \begin{equation}\label{PfB8eq2}
            E(W) \le C_1(T_{tj}b_{j})^{-1/2},
        \end{equation}\begin{equation}\label{ProofThm5-6}
            P\{W-E(W) > \delta\} \le 4\exp\left(-\frac{C_2T_{tj}b_{j}\delta^2}{1+\delta}\right).
        \end{equation}
        Recall that \(\Omega_{tj1}(\delta')= \left\{\sup_{u\in[0,1]}\|\widehat{\bS}_{tj}(u)-E\{\widehat{\bS}_{tj}(u)\}\|_F\le C_S\delta'/2\right\},\delta'\in(0,1]\). Then according to \eqref{PfB8eq2}, for any $\delta' \ge 3C_S^{-1}C_1(T_{tj}b_{j})^{-1/2}$, we have $E(W) \le C_S\delta'/3$. Thus from \eqref{ProofThm5-6}, if $\delta' \ge 3C_S^{-1}C_1(T_{tj}b_{j})^{-1/2}$ there exists absolute constant $C$ such that \begin{equation*}
            1-P\{\Omega_{tj1}(\delta')\} \le 4\exp\left\{-\frac{CT_{tj}b_{j}(\delta')^2}{1+\delta'}\right\}.
        \end{equation*}
        For $\delta' < 3C_S^{-1}C_1(T_{tj}b_{j})^{-1/2}$, we can choose large enough constants $C,C'$ such that for all $\delta'\in(0,1]$, \begin{equation*}
            1-P\{\Omega_{tj1}(\delta')\} \le C\exp\left\{-\frac{C'T_{tj}b_{j}(\delta')^2}{1+\delta'}\right\}.
        \end{equation*}
        Since $\Omega_{t1}(1)=\bigcap_{j\in[p]}\Omega_{tj1}(1)$. Using union bound argument we have there exists absolute constant $C_1,C_2$ such that \begin{align*}
            1-P\{\Omega_{t1}(1)\} \le C_1p\exp(-C_2\cT_t),
        \end{align*}
        and this finishes the proof of \eqref{lemmaB8eq3}.

        \subsubsection{Proof of Equation \ref{lemmaB8eq2}}

        The remaining task is to bound \(\max_{j\in[p]}\|\widetilde{X}_{tj} - X_{tj}\|_\cH\). Using argument in the proof of equation (A.35) in \cite{guo2025sparse}, we have $\max_{j\in[p]}\|\widetilde{X}_{tj}-X_{tj}\|_\cH\le C\max_{j\in[p]}b_{j}^2X^{(2)*}_{tj}$. This finishes the proof of Lemma \ref{LemmaPartial1}.
    \end{proof}

    \subsection{Lemma \ref{LemmaPartial4} and its proof}

    \begin{lemma}\label{LemmaPartial4}
        Recall the definition $\widehat{\bR}_{tj}(u) = \frac{1}{T_{tj}}\sum_{i = 1}^{T_{tj}}\widetilde{\bU}_{tji}Y_{tji} K_{b_{j}}(U_{tji}-u)$ as in Section \ref{SecB.9.1}. Recall that $\bX_t^* = (X_{t1}^*,\dots,X_{tp}^*)^\T, X_{tj}^* = \sup_{u\in[0,1]}|X_{tj}(u)|$. Assume all conditions in Lemma \ref{LemmaPartial1} hold. Then there exists some positive absolute constant \(C\) such that for any \(\delta > 0\) and \(u\in[0,1]\) \begin{equation}\label{PfLmB9-1}
            P\left[|\widehat{\bR}_{tj}(u) - E\{\widehat{\bR}_{tj}(u)\mid\bX_t\}|_2 \ge\delta\mid \bX_t \right] \le C\exp\left\{-\frac{CT_{tj}b_{j}\delta^2/(1+X_{tj}^*)^2}{1+\delta/(1+X_{tj}^*)}\right\}\quad \mbox{a.s.}
        \end{equation}
    \end{lemma}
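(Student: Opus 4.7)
The plan is to apply a Bernstein-type concentration inequality componentwise, conditional on $\bX_t$. Set
$$\bZ_{tji} := \widetilde{\bU}_{tji}\, Y_{tji}\, K_{b_j}(U_{tji}-u), \qquad i \in [T_{tj}],$$
so that $\widehat{\bR}_{tj}(u) = T_{tj}^{-1}\sum_i \bZ_{tji}$. By Condition \ref{condpartial1}(iii), the collections $\{U_{tji}\}$ and $\{\varepsilon_{tji}\}$ are i.i.d.\ and mutually independent of $\bX_t$; hence, conditional on $\bX_t$, the vectors $\bZ_{tji}$ are i.i.d.\ over $i \in [T_{tj}]$. I will bound each component $Z_{tji}^{(l)}$, $l \in \{1,2\}$, separately and then take a union bound over the two coordinates.

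The two quantities I need are the conditional variance and the conditional sub-exponential norm. Since $K$ is supported on $[-1,1]$ and uniformly bounded, one has $|\widetilde U_{tji}^{(l)}| \le 1$ and $|K_{b_j}(U_{tji}-u)| \lesssim b_j^{-1}$; writing $Y_{tji} = X_{tj}(U_{tji}) + \varepsilon_{tji}$ and using $|X_{tj}(U_{tji})| \le X_{tj}^*$ together with the sub-Gaussianity of $\varepsilon_{tji}$ from Condition \ref{condpartial1}(i), I obtain the almost-sure bound
$$\bigl\| Z_{tji}^{(l)} - E\bigl(Z_{tji}^{(l)}\,\big|\,\bX_t\bigr)\bigr\|_{\psi_1\mid \bX_t} \;\lesssim\; (1+X_{tj}^*)\,b_j^{-1},$$
where $\|\cdot\|_{\psi_1\mid\bX_t}$ denotes the conditional Orlicz norm. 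For the conditional second moment, changing variables $v = (U - u)/b_j$ and using $\inf f_U \ge m_f$, $\sup f_U \le M_f$, and $\max_j \sigma_j = O(1)$ gives
$$E\bigl\{(Z_{tji}^{(l)})^2 \,\big|\, \bX_t\bigr\} \;\lesssim\; \{(X_{tj}^*)^2 + \sigma_j^2\}\, b_j^{-1} \;\lesssim\; (1+X_{tj}^*)^2\, b_j^{-1}.$$

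With these two ingredients in hand, I apply Bernstein's inequality for centered sub-exponential random variables (e.g.\ Theorem~2.8.1 in \cite{vershynin2018high}) conditionally on $\bX_t$, exploiting the almost-sure existence of regular conditional probabilities on the Polish space $\mathbb{R}^2$ (as in the proof of Lemma~\ref{lemmaTechComposite3}). This yields, for each $l=1,2$,
$$P\bigl\{|\widehat R_{tj}^{(l)}(u)-E(\widehat R_{tj}^{(l)}(u)\mid \bX_t)| \ge \delta \,\bigm|\, \bX_t\bigr\} \;\le\; 2\exp\!\left\{-C\min\!\left(\frac{T_{tj}b_j\,\delta^2}{(1+X_{tj}^*)^2},\,\frac{T_{tj}b_j\,\delta}{1+X_{tj}^*}\right)\right\}$$
almost surely, which can be rewritten in the mixed form $-C T_{tj}b_j \delta^2/\{(1+X_{tj}^*)^2(1+\delta/(1+X_{tj}^*))\}$ appearing in \eqref{PfLmB9-1}. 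A union bound over $l\in\{1,2\}$ gives the bound on the $\ell_2$ norm $|\cdot|_2$ at the cost of an absolute constant factor in front, completing the proof.

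The main technical obstacle is that $\varepsilon_{tji}$ is sub-Gaussian rather than bounded, so the standard Bernstein inequality for bounded increments does not apply directly; instead, one needs the Bernstein inequality under a sub-exponential moment control, which naturally produces the mixed Gaussian/exponential tail structure visible in \eqref{PfLmB9-1}. A second care-point is verifying that conditioning on $\bX_t$ is handled rigorously, but this follows from the independence assertion in Condition~\ref{condpartial1}(iii) together with the existence of regular conditional probabilities as already invoked elsewhere in the paper.
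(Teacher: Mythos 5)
Your overall architecture (condition on $\bX_t$, treat the two coordinates separately, control second moments and higher moments of the kernel-weighted summands, apply a Bernstein-type bound, then take a union bound) mirrors the paper's proof, and your $\psi_1$-norm bound $\lesssim (1+X_{tj}^*)b_j^{-1}$ and conditional variance bound $\lesssim (1+X_{tj}^*)^2 b_j^{-1}$ per summand are both correct. However, there is a genuine gap in the concentration step. The inequality you cite (Vershynin, Theorem 2.8.1) does not accept the variance as a separate input: its sub-Gaussian regime is governed by $\sum_i \|Z_{tji}^{(l)}\|_{\psi_1\mid\bX_t}^2 \asymp T_{tj}(1+X_{tj}^*)^2 b_j^{-2}$, not by the true conditional variance $\asymp T_{tj}(1+X_{tj}^*)^2 b_j^{-1}$. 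Running that inequality with $t = T_{tj}\delta$ yields an exponent of order
\begin{equation*}
-\,C\min\!\left(\frac{T_{tj}b_j^2\,\delta^2}{(1+X_{tj}^*)^2},\ \frac{T_{tj}b_j\,\delta}{1+X_{tj}^*}\right),
\end{equation*}
which in the Gaussian regime is weaker than the target $T_{tj}b_j\delta^2/(1+X_{tj}^*)^2$ by a factor of $b_j \to 0$. This is exactly the classical pitfall for kernel estimators: the sup-norm of a summand scales like $b_j^{-1}$ but its variance scales like $b_j^{-1}$ rather than $b_j^{-2}$, because integrating $K^2((v-u)/b_j)/b_j^2$ against the design density recovers one factor of $b_j$. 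Any bound that only tracks the $\psi_1$ (or sup) scale throws that factor away.

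The fix, which is what the paper does, is to use the Bernstein inequality under a moment-growth condition (Theorem 2.10 and Corollary 2.11 of \cite{Boucheron2013}) rather than the $\psi_1$-based version: one verifies $\sum_i E\{|Z_{tji}^{(l)}|^q\mid\bX_t\} \le \tfrac{q!}{2}\, v\, c^{q-2}$ with $v \asymp T_{tj}b_j^{-1}(1+X_{tj}^*)^2$ and $c \asymp b_j^{-1}(1+X_{tj}^*)$, the point being that $E\{K_{b_j}(U-u)^q\} \asymp b_j^{-q+1}$ (one power of $b_j$ gained from the change of variables for every moment, not just the second). For the noise part this requires converting sub-Gaussianity into the factorial moment bound $E|\varepsilon_{tji}|^q \le C^q\sigma_j^q q^{q/2} \lesssim C^q\sigma_j^q q!$, which the paper carries out after splitting $Y_{tji}$ into $X_{tj}(U_{tji})$ and $\varepsilon_{tji}$. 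With the correct $(v,c)$ pair, the exponent $-t^2/\{2(v+ct)\}$ evaluated at $t = T_{tj}\delta$ produces exactly the claimed mixed tail. Your decomposition and moment calculations would all survive this substitution; only the final inequality needs to be replaced.
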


    \begin{proof}
        In this proof, let $C$ be an absolute constant which might vary from line to line. Let \(\widehat{\bR}_{tj}(u) = \{\widehat{R}_{tj1}(u),\widehat{R}_{tj2}(u)\}^\T\). We focus on $\widehat{R}_{tj1}$, while \(\widehat{R}_{tj2}\) can be demonstrated in a similar manner. Define \(\widehat{R}_{tj3}(u) = T_{tj}^{-1}\sum_{i = 1}^{T_{tj}}X_{tj}(U_{tji})K_{b_{j}}(U_{tji}-u), \widehat{R}_{tj4}(u) = T_{tj}^{-1}\sum_{i = 1}^{T_{tj}}\varepsilon_{tji}K_{b_{j}}(U_{tji}-u)\). Then \(\widehat{R}_{tj1}(u)-E\{\widehat{R}_{tj1}(u)\}\) can be rewritten as $$\widehat{R}_{tj1}(u)-E\{\widehat{R}_{tj1}(u)\} = \widehat{R}_{tj3}(u)-E\{\widehat{R}_{tj3}(u)\} + \widehat{R}_{tj4}(u).$$
        We first deal with $\widehat{R}_{tj4}$. Since we have assumed $\varepsilon_{tji}$ are sub-Gaussian random variables with $E\{\exp(\varepsilon_{tji}z)\}\le \exp(C^2\sigma_j^2z^2/2)$ and the variance $\sigma_j^2$ are uniformly bounded by $\sigma^2$, by Proposition 2.5.2 of \cite{vershynin2018high}, for any integer $q$, we have $E(|\varepsilon|^q) \le C^q\sigma^qq^{q/2}$, where $C$ is an absolute constant. Also notice $q^{q/2} \le Cq!/2$. Define $c_K = \sup_{u\in[-1,1]}K(u)$. We have \begin{equation*}
            \begin{aligned}
                \sum_{l=1}^{T_{tj}}E\sth{\varepsilon_{tji}^2K_{b_{j}}(U_{tji}-u)^2} & = T_{tj}E\pth{\varepsilon_{tji}^2}\int_0^1{b_{j}}^{-2}K\left(\frac{v-u}{b_{j}}\right)^2f_U(v)\dv\\
                & \le CM_fT_{tj}b_{j}^{-1}c_K^2\sigma^2,\\
                \sum_{l=1}^{T_{tj}}E\sth{|\varepsilon_{tji}|^qK_{b_{j}}(U_{tji}-u)^q} & = T_{tj}E\pth{\varepsilon_{tji}^q}\int_0^1{b_{j}}^{-q}K\left(\frac{v-u}{b_{j}}\right)^qf_U(v)\dv\\
                & \le CM_fT_{tj}b_{j}^{-q+1}c_K^q\sigma^q q!/2.
            \end{aligned}
        \end{equation*}
        By Bernstein inequality (Theorem 2.10 and Corollary 2.11 of \citealt{Boucheron2013}) we have \begin{equation*}
            \begin{aligned}
                P\qth{|\widehat{R}_{tj4}(u) - E\{\widehat{R}_{tj4}(u)\}|\ge\delta} \le 2\exp\pth{-\frac{CT_{tj}b_{j}\delta^2}{1+\delta}}.
            \end{aligned}
        \end{equation*}
        Here $C$ is an absolute constant. Since $\hat{R}_{tj3}(u)$ is independent of $\bX_t$, we have the conditional inequality takes the same form, which is \begin{equation}\label{ProofThm5-4}
            \begin{aligned}
                P\qth{|\widehat{R}_{tj4}(u) - E\{\widehat{R}_{tj4}(u)\}|\ge\delta\mid \bX_t} \le 2\exp\pth{-\frac{CT_{tj}b_{j}\delta^2}{1+\delta}}\quad\mbox{a.s.}
            \end{aligned}
        \end{equation} 
        As for the concentration bound for \(\widehat{R}_{tj3}(u)\), we have\begin{equation*}
            \begin{aligned}
                \sum_{i = 1}^{T_{tj}}E\{X_{tj}(U_{tji})^2K_{b_{j}}(U_{tji}-u)^2\mid \bX_t\} &= T_{tj}\int_0^1 X_{tj}^2(v)b_{j}^{-2}K\left(\frac{v-u}{b_{j}}\right)^2f_U(v)\dv \\
                &\le M_fc_K^2T_{tj}b_{j}^{-1}(X_{tj}^*)^2.
            \end{aligned}
        \end{equation*}
        Similarly we have \(\sum_{i = 1}^{T_{tj}}E\{X_{tj}(U_{tji})^qK_{b_{j}}(U_{tji}-u)^q\mid \bX_t\}\le T_{tj}M_fc_K^q b_j^{-q+1}(X_{tj}^*)^q\). By Bernstein inequality (Theorem 2.10 and Corollary 2.11 of \citealt{Boucheron2013}) we have \begin{equation}\label{ProofThm5-5}
            P\left[|\widehat{R}_{tj3}(u) - E\{\widehat{R}_{tj3}(u)\}|\ge\delta\mid\bX_t\right] \le 2\exp\left\{-\frac{CT_{tj}b_{j}\delta^2/(X_{tj}^*)^2}{1+\delta/X_{tj}^*}\right\}.
        \end{equation}
        And then (\ref{PfLmB9-1}) follows from (\ref{ProofThm5-4}) and (\ref{ProofThm5-5}).
    \end{proof}

    \subsection{Lemma \ref{LemmaPartial3} and its proof}

    \begin{lemma}\label{LemmaPartial3}
        Recall in Section \ref{SecB.9.1} we defined $\widehat{\bS}_{tj}(u) = T_{tj}^{-1}\sum_{i = 1}^{T_{tj}}\widetilde{\bU}_{tji}\widetilde{\bU}_{tji}^\T K_{b_{j}}(U_{tji}-u)$, here $\widetilde{\bU}_{tji} = \{1,(U_{tji}-u)/b_{j}\}^\T$. Under all conditions of Lemma \ref{LemmaPartial1}, we have \begin{align*}
            \inf_{b_{j}\in(0,1/2]}\inf_{u\in[0,1]}\|E\{\widehat{\bS}_{tj}(u)\}\|_{\min}\ge m_fC_K,
        \end{align*}
        where $C_K$ is a positive constant that only depends on kernel function $K$, and $m_f$ 
        is the infimum of density of $U$ defined as in Condition \ref{condpartial1}.
    \end{lemma}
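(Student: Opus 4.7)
The plan is to compute $E\{\widehat{\bS}_{tj}(u)\}$ explicitly via a change of variables and bound it below in the positive semidefinite order by a matrix depending only on $K$ and $m_f$.

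First, I would use the fact that the $\{U_{tji}\}_{i=1}^{T_{tj}}$ are i.i.d. with density $f_U$ supported on $[0,1]$ to write
$$E\{\widehat{\bS}_{tj}(u)\} = \int_0^1 \begin{pmatrix} 1 & (v-u)/b_j \\ (v-u)/b_j & (v-u)^2/b_j^2 \end{pmatrix}\frac{1}{b_j} K\!\left(\frac{v-u}{b_j}\right) f_U(v)\,\mathrm{d}v.$$
Substituting $w = (v-u)/b_j$ and using $\mathrm{supp}(K) \subseteq [-1,1]$ yields
$$E\{\widehat{\bS}_{tj}(u)\} = \int_{I(u,b_j)} \begin{pmatrix} 1 & w \\ w & w^2 \end{pmatrix} K(w) f_U(u+b_j w)\,\mathrm{d}w,$$
where $I(u,b_j) = [\max(-1,-u/b_j),\min(1,(1-u)/b_j)]$. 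For any unit vector $(a,b)^\T \in \mathbb{R}^2$, applying $f_U \ge m_f$ on $[0,1]$ gives
$$(a,b) E\{\widehat{\bS}_{tj}(u)\} (a,b)^\T \ge m_f \int_{I(u,b_j)} (a+bw)^2 K(w)\,\mathrm{d}w.$$

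Second, I would establish the key geometric observation that for every $u \in [0,1]$ and $b_j \in (0,1/2]$, the interval $I(u,b_j)$ contains at least one of the half-windows $[0,1]$ or $[-1,0]$. Indeed, if $u \le 1/2$ then $(1-u)/b_j \ge (1/2)/(1/2) = 1$, so $\min(1,(1-u)/b_j) = 1$ and hence $I(u,b_j) \supseteq [0,1]$; symmetrically, if $u \ge 1/2$ then $u/b_j \ge 1$ and $I(u,b_j) \supseteq [-1,0]$. Consequently,
$$\|E\{\widehat{\bS}_{tj}(u)\}\|_{\min} \ge m_f \cdot \min\!\left\{\inf_{a^2+b^2=1}\!\int_0^1\! (a+bw)^2 K(w)\,\mathrm{d}w,\; \inf_{a^2+b^2=1}\!\int_{-1}^0\! (a+bw)^2 K(w)\,\mathrm{d}w\right\}.$$

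Third, I would show that the right-hand infima are equal and strictly positive, yielding the constant $C_K$. By symmetry of $K$, the substitution $w \mapsto -w$ and $b \mapsto -b$ identifies the two infima, so it suffices to consider $\int_0^1 (a+bw)^2 K(w)\,\mathrm{d}w$. This is the quadratic form associated to the symmetric matrix
$$\bM_K := \begin{pmatrix} \int_0^1 K(w)\,\mathrm{d}w & \int_0^1 w K(w)\,\mathrm{d}w \\ \int_0^1 w K(w)\,\mathrm{d}w & \int_0^1 w^2 K(w)\,\mathrm{d}w \end{pmatrix}.$$
Since $K$ is a symmetric probability density on $[-1,1]$ with $K(0)=1$, the function $K$ is positive on a set of positive measure in $[0,1]$, so by the strict Cauchy--Schwarz inequality (because $1$ and $w$ are linearly independent on that set) we have $\det(\bM_K) > 0$. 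The diagonal entries are also positive, so $\bM_K$ is positive definite, and setting $C_K := \lambda_{\min}(\bM_K) > 0$ completes the argument, giving $\|E\{\widehat{\bS}_{tj}(u)\}\|_{\min} \ge m_f C_K$ uniformly in $u$ and $b_j$.

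The only mildly delicate step is the boundary analysis in the second paragraph, which turns on the hypothesis $b_j \le 1/2$ precisely so that at least one of the two tails $[-1,0]$, $[0,1]$ remains inside the effective integration window for every $u$; everything else is a direct positive-definiteness computation for the local linear design matrix.
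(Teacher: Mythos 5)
Your proof is correct, and it takes a genuinely different route from the paper's. The paper handles the non-uniform design density by splitting $f_U = m_f f_{U_1} + (1-m_f) f_{U_2}$ into two densities and invoking positive semidefiniteness of the $f_{U_2}$-piece (via a separate lemma on matrices with nonnegative entries), whereas you simply lower-bound $f_U \ge m_f$ pointwise inside the quadratic form $\int (a+bw)^2 K(w) f_U(u+b_jw)\,\mathrm{d}w$ --- more direct and equally rigorous. More substantively, the paper treats the boundary by parametrizing $u = \tilde u b$, proving pointwise positivity of $\|\widetilde{\bM}(\tilde u)\|_{\min}$ for each $\tilde u\in[0,1]$ via a discriminant argument, and then extracting a uniform constant from continuity and compactness; your key geometric observation that for $b_j \le 1/2$ the effective integration window $I(u,b_j)$ always contains one of the two fixed half-windows $[0,1]$ or $[-1,0]$ collapses the whole continuum of boundary configurations onto a single fixed matrix $\bM_K$, whose positive definiteness follows from strict Cauchy--Schwarz (using that $K$ is continuous with $K(0)=1$, so $K>0$ on a set of positive measure in $[0,1]$, where $1$ and $w$ are linearly independent). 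This buys you an explicit constant $C_K = \lambda_{\min}(\bM_K)$ and avoids the compactness step entirely, at the cost of using the hypothesis $b_j \le 1/2$ in an essential way --- which is harmless, since the lemma assumes it. Both arguments are sound; yours is the more elementary and self-contained of the two.
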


    \begin{proof}
        In this section of proof we abbreviate $b_{j}$ as $b$. Denote \(E\{\widehat{\bS}_{tj}(u)\}\) as \(\bM_b(u)\) since it only depends on bandwidth $b$ and the kernel function $K$. 
        
        (\romannumeral 1) We first restrict ourselves with the case where the density function of $U$ satisfies $f_U(u) = 1$. Then for $u\in (b,1-b)$, we have \begin{align*}
            \bM_b(u) = \begin{pmatrix}
                \int_{-1}^1 K(v)\mathrm{d}v & \int_{-1}^1 vK(v)\mathrm{d}v\\
                \int_{-1}^1 vK(v)\mathrm{d}v & \int_{-1}^1 v^2K(v)\mathrm{d}v
            \end{pmatrix}.
        \end{align*}
        For other scenarios, we only consider $u\in[0,b]$, since $u\in[1-b,1]$ can be tackled in the similar manner due to symmetry of kernel. Assume that $u = \Tilde{u}h$ with $ \Tilde{u}\in[0,1]$. Then we have \begin{align*}
            \bM_b(u) = \widetilde{\bM}(\Tilde{u}) =\begin{pmatrix}
                \int_{-\Tilde{u}}^1 K(v)\mathrm{d}v & \int_{-\Tilde{u}}^1 vK(v)\mathrm{d}v\\
                \int_{-\Tilde{u}}^1 vK(v)\mathrm{d}v & \int_{-\Tilde{u}}^1 v^2K(v)\mathrm{d}v
            \end{pmatrix}.
        \end{align*}
        Elementary calculation gives $$\|\widetilde{\bM}(\Tilde{u})\|_{\min} = \{a(\Tilde{u})+c(\Tilde{u})-\sqrt{a(\Tilde{u})^2-2a(\Tilde{u})c(\Tilde{u})+4b(\Tilde{u})^2+c(\Tilde{u})^2}\}/2$$ with $a(\Tilde{u}) = \int_{-\Tilde{u}}^1 K(v)\dv, b(\Tilde{u}) = \int_{-\Tilde{u}}^1 vK(v)\mathrm{d}v, c(\Tilde{u}) = \int_{-\Tilde{u}}^1 v^2K(v)\mathrm{d}v$, and $\|\widetilde{\bM}(\Tilde{u})\|_{\min}$ is a continuous function of $\Tilde{u}$.

        For any $\Tilde{u}\in[0,1]$, define $f(\gamma;\Tilde{u})=\int_{-\tilde{u}}^1 (u-\gamma)^2K(u)\mathrm{d}u$ as a quadratic function of $\gamma$. It satisfies $f(\gamma;\Tilde{u}) > 0$ for any real number $\gamma$ since we have assumed $K(u)$ is a symmetric Lipschitz continuous probability density function with support $[-1,1]$. So the discriminant of $f(\gamma;\Tilde{u})$ must be strictly greater than $0$, which results in $a(\Tilde{u})c(\Tilde{u}) > b(\Tilde{u})^2$ and further $\|\widetilde{\bM}(\Tilde{u})\|_{\min} > 0$ for any $\Tilde{u}\in[0,1]$. Since $\|\widetilde{\bM}(\Tilde{u})\|_{\min}$ is a continuous function on a compact interval, it must achieve minimum at some point in $[0,1]$. This shows $\|\widetilde{\bM}(\Tilde{u})\|_{\min} \ge C_K$, where $C_K$ is a constant that only depends on kernel function $K$. It implies $\inf_{u\in[0,1-b]}\|\bM_b(u)\|_{\min} \ge C_K$ for any $b\in(0,1/2]$, which further implies $\inf_{b\in(0,1/2]}\inf_{u\in[0,1-b]}\|\bM_b(u)\|_{\min} \ge C_K$. The symmetry of kernel function $K$ gives $\inf_{b\in(0,1/2]}\inf_{u\in[0,1]}\|\bM_b(u)\|_{\min} \ge C_K$.

        (\romannumeral 2) We then consider the case where $f_U(u)$ does not equal $1$. Define $f_{U_1}(u)=1$, $f_{U_2}(u)= \{f_U(u)-m_f\}/(1-m_f)$. They are both density functions. Matrix $\bM_b(u)$ can be decomposed as $\bM_b(u)= m_f\bM_b^{U_1}(u)+ (1-m_f)\bM_b^{U_2}(u)$, where \begin{align*}
            \bM_b^{U_1}(u) &= \begin{pmatrix}
                \int_{-1}^1 K(v/h-u)\mathrm{d}v & \int_{-1}^1 vK(v/h-u)\mathrm{d}v\\
                \int_{-1}^1 vK(v/h-u)\mathrm{d}v & \int_{-1}^1 v^2K(v/h-u)\mathrm{d}v
            \end{pmatrix},\\
            \bM_b^{U_2}(u) &= \begin{pmatrix}
                \int_{-1}^1 f_{U_2}(v)K(v/h-u)\mathrm{d}v & \int_{-1}^1 vf_{U_2}(v)K(v/h-u)\mathrm{d}v\\
                \int_{-1}^1 vf_{U_2}(v)K(v/h-u)\mathrm{d}v & \int_{-1}^1 v^2f_{U_2}(v)K(v/h-u)\mathrm{d}v
            \end{pmatrix}.
        \end{align*}
        By identical analysis in (\romannumeral 1),  $\inf_{b\in(0,1/2]}\inf_{u\in[0,1]}\|\bM_b^{U_1}(u)\|_{\min} \ge C_K$. Also $\bM_b^{U_2}(u)$ is semi-positive definite. This gives $\inf_{b\in(0,1/2]}\inf_{u\in[0,1]}\|\bM_b(u)\|_{\min} \ge m_f C_K$ and we finish the proof.
    \end{proof}

    \subsection{Lemma \ref{LemmaPartial2} and its proof}

    \begin{lemma}\label{LemmaPartial2}
        Assume $p > 2$. Let $a>C_0\log p$ for some absolute constant $C_0 > 0$. For any real numbers $C_1 > 1, C_2 > 0$, there exists an absolute constant $C$ that only depends on $C_0,C_1,C_2$ such that
        \begin{equation}\label{lemB9-1}
            \int_0^\infty 1\wedge C_1p\exp\{-C_2a\min(\delta,\delta^2)\}\mathrm{d}\delta \le C\pth{\frac{\log p}{a}}^{1/2},
        \end{equation}\begin{equation}\label{lemB9-2}
            \int_0^\infty \delta\qth{1\wedge C_1p\exp\{-C_2a\min(\delta,\delta^2)\}}\mathrm{d}\delta \le C\frac{\log p}{a}.
        \end{equation}
    \end{lemma}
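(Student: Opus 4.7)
The plan is to exploit the shape of the integrand $g(\delta) := 1\wedge C_1p\exp\{-C_2 a \min(\delta,\delta^2)\}$: it equals $1$ up to a crossover point $\delta_0$, then decays like a Gaussian on $[\delta_0,1]$, and like a pure exponential on $[1,\infty)$. Setting $C_1 p\exp(-C_2 a \delta^2)=1$ gives
\begin{equation*}
\delta_0 \;=\; \sqrt{\frac{\log(C_1 p)}{C_2 a}}.
\end{equation*}
Because $a>C_0\log p$ and $p>2$, choosing $C_0$ large (depending on $C_1,C_2$) forces $\delta_0\le 1$, so on $[0,\delta_0]$ we use the trivial bound $g\le 1$, on $[\delta_0,1]$ we use $g\le C_1 p\exp(-C_2 a\delta^2)$, and on $[1,\infty)$ we use $g\le C_1 p\exp(-C_2 a\delta)$.

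For \eqref{lemB9-1} I would split $\int_0^\infty g\,\mathrm{d}\delta = I_1+I_2+I_3$ along these three intervals. The first piece gives $I_1\le \delta_0 \lesssim \sqrt{\log p /a}$. For the second piece, write $\int_{\delta_0}^1 C_1p\exp(-C_2 a\delta^2)\,\mathrm{d}\delta$ and substitute $u=\delta\sqrt{C_2 a}$ to reduce to a Gaussian tail starting from $u_0=\delta_0\sqrt{C_2 a}=\sqrt{\log(C_1 p)}$; the standard bound $\int_{u_0}^\infty e^{-u^2}\mathrm{d}u\le e^{-u_0^2}/(2u_0)$ combined with $C_1 p e^{-u_0^2}=1$ gives $I_2\lesssim 1/\sqrt{a\log p}\lesssim \sqrt{\log p/a}$. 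For the third piece, $I_3\le C_1 p\int_1^\infty e^{-C_2 a\delta}\mathrm{d}\delta = (C_1 p/C_2 a)\exp(-C_2 a)$, and under $a>C_0\log p$ with $C_0$ large, $p\exp(-C_2 a)\le p^{1-C_0 C_2}$ is negligible compared to $\sqrt{\log p/a}$.

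For \eqref{lemB9-2} the same splitting works with the factor $\delta$. On $[0,\delta_0]$ we get $\int_0^{\delta_0}\delta\,\mathrm{d}\delta = \delta_0^2/2 \lesssim \log p/a$. On $[\delta_0,1]$, $\int_{\delta_0}^1 \delta\cdot C_1 p \exp(-C_2 a\delta^2)\mathrm{d}\delta$ has a clean antiderivative $-(2C_2 a)^{-1}C_1 p\exp(-C_2 a\delta^2)$, and evaluating at $\delta_0$ (using $C_1 p\exp(-C_2 a\delta_0^2)=1$) yields $\le 1/(2C_2 a)\lesssim \log p/a$. On $[1,\infty)$, $\int_1^\infty \delta\cdot C_1 p e^{-C_2 a\delta}\mathrm{d}\delta \lesssim (p/a)e^{-C_2 a}$, again negligible when $a>C_0\log p$ with $C_0$ large enough.

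The only subtle point, and the one requiring the hypothesis $a>C_0\log p$, is ensuring $\delta_0\le 1$ so that the $\min(\delta,\delta^2)=\delta^2$ regime controls the transition, and also absorbing the $[1,\infty)$ tail into the bound $C\sqrt{\log p/a}$ (resp.\ $C\log p/a$). Both requirements translate into a lower bound on $C_0$ depending on $C_1,C_2$; once $C_0$ is chosen accordingly, the absolute constant $C$ depends only on $C_0,C_1,C_2$ as claimed. No high-powered tools are needed—just splitting at the natural crossover $\delta_0$ and bounding a Gaussian-type tail.
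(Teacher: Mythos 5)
Your computations for the regime where the crossover point $\delta_0=\sqrt{\log(C_1p)/(C_2a)}$ lies in $[0,1]$ are correct and match the paper's second case almost exactly (same split at $\delta_0$ and at $1$, same Gaussian-tail bound on $[\delta_0,1]$, same exponential tail on $[1,\infty)$). The problem is how you dispose of the complementary regime. You write that "choosing $C_0$ large (depending on $C_1,C_2$) forces $\delta_0\le 1$," but $C_0$ is part of the hypothesis, not yours to choose: the lemma asserts the bounds for a given $C_0>0$ with a constant $C$ depending on that $C_0$, and it is applied in the proof of Theorem 7 where the threshold comes from the externally imposed condition $\log(p\vee n)/\min_j\widebar T_jb_j\to 0$. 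If $C_0<C_2^{-1}$, the range $C_0\log p<a\le C_2^{-1}\log(C_1p)$ is nonempty, there $\delta_0>1$, your interval $[\delta_0,1]$ is empty, and your three-piece decomposition does not apply. So as written the proof covers only half the hypothesis.

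The gap is easily repaired, and the paper's proof shows how: when $C_1p\exp(-C_2a)>1$, split instead at $\delta_1=\log(C_1p)/(C_2a)$ (which exceeds $1$, so the relevant decay beyond $\delta_1$ is the pure exponential $e^{-C_2a\delta}$), bound the integrand by $1$ on $[0,\delta_1]$ and integrate the exponential tail to get $\int_0^\infty g\,\mathrm{d}\delta\le\{\log(C_1p)+1\}/(C_2a)$. This is of order $\log p/a$, not $(\log p/a)^{1/2}$, and the final step is to absorb it using the hypothesis: since $a>C_0\log p$ one has $\log p/a<C_0^{-1/2}(\log p/a)^{1/2}$, which is where the dependence of $C$ on $C_0$ actually enters for \eqref{lemB9-1}. (For \eqref{lemB9-2} the analogous case gives $\{1+\log(C_1p)+\log^2(C_1p)\}/(C_2a^2)$, absorbed into $\log p/a$ the same way.) With that case added, your argument is complete.
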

    (\romannumeral 1) We first tackle (\ref{lemB9-1}). We first deal with the case $\exp(-C_2a)C_1p > 1$, namely $a<C_2^{-1}\log(C_1 p)$, then we have \begin{equation*}
        \begin{aligned}
            &\int_0^\infty 1\wedge C_1p\exp\{-C_2a\min(\delta,\delta^2)\}\mathrm{d}\delta\le \int_0^{\frac{\log(C_1p)}{C_2a}} 1\mathrm{d}\delta + \int_{\frac{\log(C_1p)}{C_2a}}^\infty C_1p\exp\{-C_2a\delta\}\mathrm{d}\delta\\
            & \le \frac{\log(C_1p)}{C_2a} + \frac{C_1p}{C_2a}\exp\{-\log(C_1p)\} \le \frac{\log(C_1p)+1}{C_2a}\\
            &\le \frac{\log(C_1)+1}{C_2 a} + \frac{(\log p)^{1/2}}{a^{1/2}C_2C_0^{1/2}}.
        \end{aligned}
    \end{equation*}
    In the last line we use the fact that $a^{1/2}C_0^{-1/2}(\log p)^{-1/2} > 1$ from our condition. Also notice that $\log (C_1) + 1$ is dominated by $(\log p)^{1/2}$ by an absolute constant. Thus we can find large enough constant $C$ such that \eqref{lemB9-1} holds.
    
    Then we deal with the case $\exp(-C_2a)C_1p < 1$, namely $a>C_2^{-1}\log(C_1 p)$. We have \begin{equation*}
        \begin{aligned}
            &\int_0^\infty 1\wedge C_1p\exp\{-C_2a\min(\delta,\delta^2)\}\mathrm{d}\delta\\
            &\le \int_0^{\sqrt{\frac{\log(C_1p)}{C_2a}}} 1\mathrm{d}\delta + \int_{\sqrt{\frac{\log(C_1p)}{C_2a}}}^1 C_1p\exp\{-C_2a\delta^2\}\mathrm{d}\delta + \int_{1}^\infty C_1p\exp\{-C_2a\delta\}\mathrm{d}\delta\\
            & \le \sth{\frac{\log(C_1p)}{C_2a}}^{1/2} + \frac{C_1p}{(C_2 a)^{1/2}}\int_{\sqrt{\log(C_1p)}}^{\sqrt{C_2a}}\exp(-\delta^2)\mathrm{d}\delta + \frac{C_1p}{C_2a}\exp(-C_2a)\\
            & \le \sth{\frac{\log(C_1p)}{C_2a}}^{1/2} + \frac{C_1p}{(C_2 a)^{1/2}}\int_{\sqrt{\log(C_1p)}}^\infty \exp(-\delta^2)\mathrm{d}\delta + \frac{1}{C_2a}\\
            & \le \sth{\frac{\log(C_1p)}{C_2a}}^{1/2} + \frac{1}{C_2a} + \frac{C_1p}{(C_2 a)^{1/2}}\int_0^\infty\exp\{-\log(C_1p)\}\exp\{-\delta^2 - 2\delta\sqrt{\log(C_1p)}\}\mathrm{d}\delta\\
            & \le \sth{\frac{\log(C_1p)}{C_2a}}^{1/2} + \frac{1}{C_2a} + \pth{\frac{\pi/2}{C_2a}}^{1/2}.
        \end{aligned}
    \end{equation*}
    Since we assume $a >C_0\log p\vee 1$, we can find another sufficiently large constant $C$ such that (\ref{lemB9-1}) holds. 

    (\romannumeral 2) For the proof of (\ref{lemB9-2}), we first deal with the case $\exp(-C_2a)C_1p > 1$, namely $a<C_2^{-1}\log(C_1 p)$, then we have \begin{equation*}
        \begin{aligned}
            &\int_0^\infty \delta\qth{1\wedge C_1p\exp\{-C_2a\min(\delta,\delta^2)\}}\mathrm{d}\delta\\
            &\le \int_0^{\frac{\log(C_1p)}{C_2a}} \delta \mathrm{d}\delta + \int_{\frac{\log(C_1p)}{C_2a}}^\infty C_1p\delta\exp\{-C_2a\delta\}\mathrm{d}\delta\\
            & \le \sth{\frac{\log(C_1p)}{C_2a}}^2 + \frac{C_1p\exp\sth{-C_2a\frac{\log(C_1p)}{C_2a}}(1+C_2a\frac{\log(C_1p)}{C_2a})}{C_2^2a^2}\\
            & \le \frac{1+\log(C_1p)+\log^2(C_1p)}{C_2a^2}.
        \end{aligned}
    \end{equation*}
    The last line is dominated by $a^{-1} \log p$. Therefore we can find a sufficiently large constant $C$ such that \eqref{lemB9-2} holds.
    
    Next we deal with the case $\exp(-C_2a)C_1p < 1$, namely $a>C_2^{-1}\log(C_1 p)$. We have \begin{equation*}
        \begin{aligned}
            &\int_0^\infty \delta [1\wedge C_1p \exp\{-C_2a\min(\delta,\delta^2)\}]\mathrm{d}\delta\\
            &\le \int_0^{\sqrt{\frac{\log(C_1p)}{C_2a}}} \delta\mathrm{d}\delta + \int_{\sqrt{\frac{\log(C_1p)}{C_2a}}}^1 C_1p\delta\exp\{-C_2a\delta^2)\}\mathrm{d}\delta + \int_{1}^\infty C_1p\delta\exp\{-C_2a\delta)\}\mathrm{d}\delta\\
            & \le \frac{\log(C_1p)}{C_2a} + \frac{C_1p}{2C_2a}\exp\sth{-C_2a\frac{\log(C_1p)}{C_2a}} + C_1p\frac{\exp(-C_2a)(1+C_2a)}{C_2^2a^2}\\
            & \le \frac{\log(C_1p)+1}{C_2a} + \frac{1+C_2a}{C_2^2a^2}.
        \end{aligned}
    \end{equation*}
    Since we assume $a>C_0\log p $, we can find another sufficiently large constant $C$ such that (\ref{lemB9-2}) holds.

    \subsection{Lemma \ref{LemmaB13} and its proof}

    \begin{lemma}\label{LemmaB13}
        Assume $\bA,\bB\in\eR^{p\times p}$ and for any $j,k\in[p]$, $A_{jk} \ge 0, B_{jk}\ge 0$. Then $\|\bA+\bB\|_2 \ge \|\bA\|_2, \|\bA+\bB\|_2\ge \|\bB\|_2$. 
    \end{lemma}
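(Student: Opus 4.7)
The plan is to reduce the claim to an elementary optimization fact about nonnegative matrices: for any $\bC \in \eR^{p \times p}$ with $C_{jk} \ge 0$ for all $j,k$, the spectral norm $\|\bC\|_2$ is attained at some nonnegative unit vector. First I would verify this by the standard absolute-value trick. For any unit vector $\bx$, let $\bx^+$ denote its coordinatewise absolute value; then $|\bx^+|_2 = |\bx|_2 = 1$, and by nonnegativity of $\bC$, $(\bC\bx^+)_i = \sum_j C_{ij}|x_j| \ge |(\bC\bx)_i|$ entrywise, whence $|\bC\bx^+|_2 \ge |\bC\bx|_2$. Taking the supremum over $\bx$ therefore yields $\|\bC\|_2 = \sup\{|\bC\bv|_2 : |\bv|_2 = 1,\ \bv \ge \0\}$.

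Next I would pick $\bv \ge \0$ with $|\bv|_2 = 1$ attaining $|\bA\bv|_2 = \|\bA\|_2$, whose existence is ensured by compactness of the nonnegative unit sphere. Since $\bA$ and $\bB$ have nonnegative entries and $\bv \ge \0$, both $\bA\bv$ and $\bB\bv$ are coordinatewise nonnegative, so $\langle \bA\bv, \bB\bv\rangle \ge 0$. Expanding the squared Euclidean norm gives
\[
|(\bA + \bB)\bv|_2^2 = |\bA\bv|_2^2 + 2\langle \bA\bv,\bB\bv\rangle + |\bB\bv|_2^2 \ge |\bA\bv|_2^2 = \|\bA\|_2^2,
\]
so the variational definition of the spectral norm gives $\|\bA + \bB\|_2 \ge |(\bA+\bB)\bv|_2 \ge \|\bA\|_2$. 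Repeating the argument with a unit nonnegative vector attaining $\|\bB\|_2$ in place of $\|\bA\|_2$ delivers the other inequality.

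There is no substantive obstacle — the proof is essentially a one-step Perron--Frobenius reduction, with the only nontrivial input being the nonnegative-optimizer lemma, which is itself immediate. The only point on which I would tread carefully is the convention for $\|\cdot\|_2$: the argument above works under the spectral-norm reading (largest singular value), consistent with how $\|\cdot\|_2$ is used in Example~\ref{example2} and elsewhere in the paper. Under the literal reading $\rho_{\min}(\bA\bA^\T)$ appearing in the notation section the statement would be false (e.g.\ take $\bA=\mathrm{diag}(1,2)$ and $\bB$ rank-one with a single nonzero entry), so clarifying that convention is the only real preliminary step.
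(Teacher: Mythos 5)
Your argument is correct and is essentially the paper's own proof: both reduce to a nonnegative maximizer via the coordinatewise absolute-value trick and then use entrywise monotonicity, the only cosmetic difference being that the paper works with the bilinear characterization $\sup_{|\bx|_2=|\by|_2=1}\bx^\T\bA\by$ and bounds $\bx_2^\T(\bA+\bB)\by_2\ge\bx_2^\T\bA\by_2\ge\bx_1^\T\bA\by_1$, while you work with $\sup_{|\bv|_2=1}|\bA\bv|_2$ and expand the square (in fact $(\bA+\bB)\bv\ge\bA\bv\ge 0$ entrywise already gives $|(\bA+\bB)\bv|_2\ge|\bA\bv|_2$ without the cross term). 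Your closing caveat is also well taken: the lemma holds only under the largest-singular-value reading of $\|\cdot\|_2$, which is how the paper actually uses it, the $\rho_{\min}(\bA\bA^\T)$ in the notation section being a typo.
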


    \begin{proof}
        For any matrix $\bA$, we have \begin{align*}
            \nth{\bA}_2=\sup_{\bx,\by\in\eR^p,|\bx|_2=|\by|_2=1}\bx^\T\bA\by.
        \end{align*}
        Assume $\|\bA\|_2 = \bx_1^\T\bA\by_1$ with $|\bx_1|_2=|\by_1|_2=1$. Define $\bx_2,\by_2\in\eR^p$ as $x_{2j}=|x_{1j}|,y_{2j}=|y_{2j}|$ for any $j\in[p]$. Then \begin{align*}
        \|\bA+\bB\|_2\ge \bx_2^\T(\bA+\bB)\by_2 \ge \bx_2^\T\bA\by_2 \ge \bx_1^\T\bA\by_1 =\|\bA\|_2.
        \end{align*}
        Here the first inequality comes from the definition of spectral norm of matrix. The second inequality follows from the fact that each entry of $\bB$ and each entry of $\bx_2,\by_2$ are positive. The third inequality uses the fact that $\bA$ have positive entries. Similarly we can prove $\|\bA+\bB\|_2 \ge \|\bB\|_2$.
        
    \end{proof}
        
    \section{Derivations in examples}\label{supsec.exmcal}

   \subsection{Derivation in Example \ref{example1}}\label{SectionC1}

   In this section we calculate the dependence adjusted norms in Example \ref{example1}. Let $\bvarepsilon_0^*(\cdot) = \bvarepsilon_0(\cdot) - \bvarepsilon_0'(\cdot)$, where $\bvarepsilon_0'(\cdot)$ is an i.i.d. copy of $\bvarepsilon_0(\cdot)$. For calculation of $\omega_{t,q}$, from (\ref{Ex1eq1}) we have 
   \begin{equation*}\begin{aligned}
            \omega_{t,q} &= \left\|\left\|\int_0^1 \bA_t(\cdot,v)\bvarepsilon^*_0(v)\dv\right\|_{\cH,\infty}\right\|_q = \left\|\max_{j\in[p]}\left\|\sum_{k=1}^p\int_0^1A_{tjk}(\cdot,v)\varepsilon_{0k}^*(v)\dv\right\|_\cH\right\|_q\\
            & \le \left\|\max_{j\in[p]}\pth{\sum_{k=1}^p\|A_{tjk}\|_\cS}\max_{k\in[p]}\|\varepsilon_{0k}^*\|_\cH\right\|_q \le \|\bA_t\|_{\cS,\infty}\left\|\max_{k\in[p]}\|\varepsilon_{0k}^*\|_\cH\right\|_q \\
            & \le \|\bA_t\|_{\cS,\infty} \sth{\sum_{j=1}^p E\pth{\|\varepsilon_{0j}^*\|_\cH^q}}^{1/q} \le C_q\|\bA_t\|_{\cS,\infty}p^{1/q}\mu_q^{1/q}.
            \end{aligned}\end{equation*}
            For calculation of $\delta_{t,q,j}$, analogously we have \begin{align*}
            \delta_{t,q,j} &= \left\|\left\|\int_0^1 A_{tj\cdot}(\cdot,v)\bvarepsilon_0^*(v)\dv\right\|_{\cH}\right\|_q \le \left\|\left\|\sum_{k=1}^p\int_0^1A_{tjk}(\cdot,v)\varepsilon_{0k}^*(v)\dv\right\|_\cH\right\|_q \\
            &\le \left\|\pth{\sum_{k=1}^p\|A_{tjk}\|_\cS}\max_{k\in[p]}\|\varepsilon_{0k}^*\|_\cH\right\|_q \le \|\bA_{tj\cdot}\|_{\cS,1}\left\|\max_{k\in[p]}\|\varepsilon_{0k}^*\|_\cH\right\|_q\\
            & \le C_q\|\bA_{tj\cdot}\|_{\cS,1}p^{1/q}\mu_q^{1/q}.
        \end{align*} 

   \subsection{Derivation in Example \ref{example2}}\label{SectionC2}

   In this section we calculate  dependence adjusted norms in Example \ref{example2}. From Theorem 3.1 in \cite{Bbosq1},  the stationary solution of (\ref{armodel}) is \begin{align*}
       \bX_t(\cdot) = \sum\limits_{m = 0}^{\infty}\bA^{(m)}(\bvarepsilon_{t-m})(\cdot),
   \end{align*}
   where with a little abuse of notation, we use $\bA$ to denote both the coefficient matrix function $\bA(u,v)$ and the integral operator $\bA(\bbf)(\cdot) = \int_0^1 \bA(\cdot,v)\bbf(v)\mathrm{d}v$, and we use $\bA^{(m)}$ to denote $m$-times composition of integral operator $\bA$. For example, we have $\bA^{(2)}(\bbf)(\cdot)=\iint_{[0,1]^2} \bA(\cdot,u)\bA(u,v)\bbf(v)\mathrm{d}u\dv$. 
   
   Let $\bvarepsilon_0^*(\cdot) = \bvarepsilon_0(\cdot) - \bvarepsilon_0'(\cdot)$, where $\bvarepsilon_0'(\cdot)$ is an i.i.d. copy of $\bvarepsilon_0(\cdot)$. It follows from \Cref{def1} that \begin{align}\label{C2eq2}
        \omega_{m,q} &= \left\|\|\bA^{(m)}(\bvarepsilon_0^*)(\cdot)\|_{\cH,\infty}\right\|_q = \left\|\|(\bA^{(j)})^{(k_1)}\bA^{(k_2)}(\bvarepsilon_0^*)(\cdot)\|_{\cH,\infty}\right\|_q,
    \end{align}
    where $k_1 = \max\{k\in\eN: kj\le m\},k_2 = m-k_1j$. 

    Recall that in Example \ref{example2}, we define $\widetilde{\bA}$ to be a numeric matrix with $\widetilde{A}_{jk}=\|A_{jk}\|_\cS$. In the following we want to prove for any positive integer $k$, $\|\bA^{(k)}\|_{\cS,2} \le \|\widetilde{\bA}^k\|_2$. The case $k=1$ can be directly verified. 
    
    For $k=2$, we have for any $j,k\in[p]$, \begin{equation}\label{C2eq1}
        \begin{aligned}
            \|(\bA^{(2)})_{jk}\|_{\cS} &= \sth{\int_{[0,1]^2} \pth{\int_{0}^1\sum_{l=1}^p A_{jl}(u,w)A_{lk}(w,v)\mathrm{d}w}^2\du\dv}^{1/2}\\
        &\le \sum_{l=1}^p\sth{\int_{[0,1]^2} \pth{\int_{0}^1 A_{jl}(u,w)A_{lk}(w,v)\mathrm{d}w}^2\du\dv}^{1/2}\\
        &\le \sum_{l=1}^p\sth{\int_{[0,1]^2} \pth{\int_{0}^1 A_{jl}(u,w)^2\mathrm{d}w}\pth{\int_{0}^1 A_{lk}(w,v)^2\mathrm{d}w}\du\dv}^{1/2}\\
        &= \sum_{l=1}^p\sth{\pth{\int_{[0,1]^2} A_{jl}(u,w)^2\mathrm{d}w\du}\pth{\int_{[0,1]^2} A_{lk}(w,v)^2\mathrm{d}w\dv}}^{1/2}\\
        &= \sum_{l=1}^p\|A_{jl}\|_\cS\|A_{lk}\|_\cS = \pth{\widetilde{\bA}^2}_{jk}.
        \end{aligned}
    \end{equation}
    The first line follows from the definition. The second line uses Minkowski inequality and the third line applies Cauchy--Schwarz inequality. This shows that each entry of $\widetilde{\bA^{(2)}}$ is smaller than $\widetilde{\bA}^2$. Using Lemma \ref{LemmaB13}, we have $\|\bA^{(2)}\|_{\cS,2} = \|\widetilde{\bA^{(2)}}\|_2 \le \|\widetilde{\bA}^2\|_2$. Induction gives $\|\bA^{(k)}\|_{\cS,2}\le \|\widetilde{\bA}^k\|_2$.
    
    For any curve $\bbf(\cdot)$, $\bA(\bbf)$ is a $p$-dimensional curve. Let $\widetilde{\bbf}$ be a scalar vector such that $\widetilde{f}_j=\|f_j\|_\cH$. Similar to \eqref{C2eq1}, we can show that the $j$-th entry of $\bA(\bbf)$ satisfies \begin{align*}
        \|\sth{\bA(\bbf)}_j\|_\cH & \le \sum_{l=1}^p \|A_{jl}\|_\cS\|f_l\|_\cH \le (\widetilde{\bA}\widetilde{\bbf})_j.
    \end{align*}
    Thus we have $\|\bA(\bbf)\|_{\cH,2} \le \|\bA\|_{\cS,2}\|\bbf\|_{\cH,2}$. Similarly $\|\bA^{(k)}(\bbf)\|_{\cH,2} \le \|\bA^{(k)}\|_{\cS,2}\|\bbf\|_{\cH,2}$. Using \eqref{C2eq2}, we have \begin{align*}
        \omega_{m,q} &= \left\|\|(\bA^{(j)})^{(k_1)}\bA^{(k_2)}(\bvarepsilon_0^*)(\cdot)\|_{\cH,\infty}\right\|_q \le \left\|\|(\bA^{(j)})^{(k_1)}\bA^{(k_2)}(\bvarepsilon_0^*)(\cdot)\|_{\cH,2}\right\|_q\\
        & \le \nth{\nth{(\widetilde{\bA}^j)^{k_1}\widetilde{\bA}^{k_2}}_2\nth{\bvarepsilon_0^*}_{\cH,2}}_q \le c'c^{m/j-1}\nth{\pth{\sum_{j=1}^p\|\varepsilon^*_{0j}\|_\cH^2}^{1/2}}_q\\
        & = c'c^{m/j-1}\nth{\sum_{j=1}^p\|\varepsilon^*_{0j}\|_\cH^2}_{q/2}^{1/2} \le c'c^{m/j-1}p^{1/2}\|\|\varepsilon_{0j}^*\|_\cH\|_q \le C_qc'c^{m/j-1}p^{1/2}\mu_q^{1/q}.
    \end{align*}
    Define $f(\alpha) = \sup_{m\ge 0}(m+1)^\alpha c^{m/j-1}/(1-c^{1/j})$ and some calculation gives (\ref{ex2eq2}).

    \subsection{Proof of Example \ref{example4}}

    For any continuously differentiable function $f(u)$ defined on the interval $[0,1]$, $|f(u)|$ is a continuous function defined on $[0,1]$ so it must achieve minimum and maximum at some point. Assume it achieves minimum at $u_0$ and achieves maximum at $u_1$. Then $|f(u_0)|\le \|f\|_\cH$. So we have \begin{align*}
        \sup_{u\in[0,1]}|f(u)| &= |f(u_1)| = |f(u_0)| + |f(u_1)|-|f(u_0)| \\
        &\le \|f\|_\cH + \left|\int_{u_0}^{u_1} |\partial_u f(u)|\mathrm{d}u\right| \le \|f\|_\cH + \int_{0}^{1} |\partial_u f(u)|\mathrm{d}u\\
        &\le \|f\|_{\cH} + \|\partial_u f\|_{\cH}.
    \end{align*}
    In the last line we use Jensen's inequality. This implies $|\bX_t^*|_\infty \le \|\bX_t\|_{\cH,\infty} + \|\partial_u\bX_t\|_{\cH,\infty}$. Thus $E(|\bX_t^*|_\infty^2) \le 2E(\|\bX_t\|_{\cH,\infty}^2) + 2E(\|\partial_u\bX_t\|_{\cH,\infty}^2) \lesssim E(\|\bX_t\|_{\cH,\infty}^2)$. Similar argument gives $E(|\bX_t^{(2)*}|_\infty^2)\lesssim E(\|\bX_t\|_{\cH,\infty}^2)$.

    \subsection{Proof of Example \ref{example3}}

    We prove $E(|\bX_t^{*}|_\infty^2) \lesssim E(\|\bX_t\|_{\cH,\infty}^2)$ and the other claim $E(|\bX_t^{(2)*}|_\infty^2) \lesssim E(\|\bX_t\|_{\cH,\infty}^2)$ follows similarly. For the right hand side we have \begin{equation}\label{Eq2C3}
        \begin{aligned}
            E(\|\bX_t\|_{\cH,\infty}^2) &= E\pth{\max_{j\in[p]}\|X_{tj}\|_\cH^2} = E\pth{\max_{j\in[p]}\nth{\sum_{l=1}^\infty \xi_{tjl}\vartheta_{tjl}\psi_{tjl}}_\cH^2} = E\pth{\max_{j\in[p]}\sum_{l=1}^\infty \xi_{tjl}^2\vartheta_{tjl}^2}\\
            & \ge CE\pth{\max_{j\in[p]}\xi_{tj1}^2}.
        \end{aligned}
    \end{equation}
    For the left hand side we have \begin{equation}\label{Eq3C3}
        \begin{aligned}
        E(|\bX_t^{*}|_\infty^2) &= E\qth{\max_{j\in[p]}\sth{\sup_{u\in[0,1]}\left|\sum_{l=1}^\infty \xi_{tjl}\vartheta_{tjl}\psi_{tjl}(u)\right|}^2}\\
        & \le E\qth{\max_{j\in[p]}\sth{\sum_{l=1}^\infty\sup_{u\in[0,1]}\left|\psi_{tjl}(u)\right|l^{-\tilde\delta_1-1} \xi_{tjl}\vartheta_{tjl}l^{\tilde\delta_1+1}}^2}\\
        & \le E\qth{\max_{j\in[p]}\sth{\sum_{l=1}^\infty\sup_{u\in[0,1]}\left|\psi_{tjl}(u)\right|^2l^{-2\tilde\delta_1-2}}\sth{\sum_{l=1}^\infty \xi_{tjl}^2\vartheta_{tjl}^2l^{2\tilde\delta_1+2}}}\\
        & \lesssim E\pth{\max_{j\in[p]}\sum_{l=1}^\infty \xi_{tjl}^2\vartheta_{tjl}^2l^{2\tilde\delta_1+2}} \lesssim E\pth{\max_{j\in[p]}\xi_{tj1}^2}\sum_{l=1}^\infty\vartheta_{tjl}^2l^{2\tilde\delta_1+2}\\
        & \lesssim E\pth{\max_{j\in[p]}\xi_{tj1}^2},
    \end{aligned}
    \end{equation}
    where in the third line we use Cauchy--Schwarz inequality, and in the last line we use $\vartheta_{tjl}\asymp\tilde\delta^{-l},\tilde\delta > 1$ and $\sup_{u\in[0,1]}|\psi_{tjl}(u)|\asymp l^{\tilde\delta_1}$. Combining \eqref{Eq2C3} and \eqref{Eq3C3}, we show that $E(|\bX_t^{*}|_\infty^2) \lesssim E(\|\bX_t\|^2_{\cH,\infty})$.

    \section{Some additional results of covariance function and spectral density function estimation}\label{sec.suppD}

    \begin{proposition}[Convergence rate of spectral density estimation at a fixed frequency]\label{propS.3}
        Assume all conditions for Theorem \ref{thm2} hold. Then at a fixed frequency $\theta$, we have \begin{align}\label{propS3eq1}
            P\left\{\Big\|\widehat{\bbf}_\theta-E(\widehat{\bbf}_\theta)\Big\|_{\cS,\max} > \cM_{q,\alpha}^\bX x\right\} & \le C_{q,\alpha}x^{-q/2}(\log p)^{5q/4}F^*_{n,m_0}+C_\alpha p^2\exp\left(-\frac{C_\alpha' x^2n}{m_0}\right),\notag\\
            P\left\{\Big\|\widehat{\bbf}_\theta-E(\widehat{\bbf}_\theta)\Big\|_{\cS,\max} > \bPhi_{q,\alpha}^\bX x\right\} & \le C_{q,\alpha}x^{-q/2}F^*_{n,m_0}+C_\alpha \exp\left(-\frac{C_\alpha' x^2n}{m_0}\right).
        \end{align}
        The above concentration inequality gives that, at a fixed frequency $\theta$, \begin{align*}
            \left\|\widehat{\bbf}_\theta-\bbf_\theta\right\|_{\cS,\max} = O_P(\cH_6),
        \end{align*}
        where \begin{align}\label{RPropS.1}
            \cH_6 & = R(m_0)+\bPhi_{q,\alpha}^\bX C_\bX'\left\{(F^*_{n,m_0})^{2/q}(\log p)^{5/2} +\sqrt{\frac{\log(p\vee m_0)}{n}}\right\},\\
            C_\bX' &= \min\sth{1, \frac{\cM_{q,\alpha}^\bX (\log p)^{5/2}}{\bPhi_{q,\alpha}^\bX p^{4/q}}}, \notag
        \end{align}
        and \(F^*_{n,m} = n^{1-q/2}m^{q/2-1}\) (resp., \(n^{1-q/2}m^{q/2-1} + n^{-q/4-\alpha q/2}m^{q/4}\)) if \(\alpha > 1/2 - 2/q\) (resp., \(\alpha \le 1/2-2/q\)).
    \end{proposition}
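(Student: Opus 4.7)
The plan is to recycle the architecture of the proof of Theorem \ref{thm2}, but to bypass the grid/union-bound step, which is precisely what produces the extra $m_0$ in the bounds of Theorem \ref{thm2}. At a single frequency $\theta$, there is no need to discretize $[0,2\pi]$, so the factor $(8m_0+1)$ that was paid by the Bonferroni inequality in \eqref{pfthm2-1} simply disappears, trading $F_{n,m_0}=n^{1-q/2}m_0^{q/2}$ for $F^*_{n,m_0}=n^{1-q/2}m_0^{q/2-1}$ in the polynomial part and removing the prefactor $m_0$ in the Gaussian part.

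First, I would carry out the same decomposition as in the proof of Theorem \ref{thm2}, writing
$2\pi n\,\widehat{\bbf}_\theta=\bQ_1(\theta)+\bQ_2(\theta)$ with the coefficients $a_{st}(\theta)$ satisfying $|a_{st}(\theta)|\le 3$ and $a_{st}(\theta)=0$ whenever $|t-s|>m_0$. For the first concentration inequality, I would invoke Lemma \ref{nonGaussianConcen} directly on $E_0\{\bQ_\omega(\theta)\}$ with $B=m_0$ for each of $\omega=1,2$, noting (as in the proof of Theorem \ref{thm2}) that Lemma \ref{nonGaussianConcen} remains valid with coefficients uniformly bounded by $3$ rather than $1$. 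This yields a bound of the form
$C_{q,\alpha} x^{-q/2}(\log p)^{5q/4}\|\|\bX_1\|_{\cH,\infty}\|_{q,\alpha}^q\,(n m_0^{q/2-1}+\ldots) + C_\alpha p^2\exp\{-x^2/(C_\alpha(\bPhi_{4,\alpha}^\bX)^2 n m_0)\}$, and dividing by $2\pi n$ and rescaling $x$ by $\cM_{q,\alpha}^\bX$ gives the first line of \eqref{propS3eq1} with $F^*_{n,m_0}$ as claimed.

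For the second concentration inequality, I would instead apply Lemma \ref{lm:elementwisecon1} to the scalar quadratic forms $E_0\{Q_{\omega jk}(\theta)\}$ componentwise, then aggregate by Bonferroni across the $p^2$ pairs $(j,k)$; the arithmetic is identical to the analogous step in the proof of Theorem \ref{thm2}, only with the $8m_0+1$ grid points absent. Finally, the deterministic truncation--smoothing estimate $\|E(\widehat{\bbf}_\theta)-\bbf_\theta\|_{\cS,\max}\le R(m_0)$ is already established inside the proof of Theorem \ref{thm2} (it is uniform in $\theta$), so combining it with the two concentration inequalities and Lemma \ref{lem1} delivers the rate $\cH_6$ in \eqref{RPropS.1} by exactly the same balance-of-three-terms argument used after Theorem \ref{thm2}.

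The only conceptual care needed is to track the constants through the Nagaev-type inequalities and confirm that the saving of the $m_0$ factor propagates through both the polynomial and the Gaussian parts; all technical difficulty has in fact been pushed into Lemmas \ref{nonGaussianConcen} and \ref{lm:elementwisecon1}, whose proofs handle the martingale-in-Banach-space machinery once and for all. Consequently, the hardest part of this proof is essentially bookkeeping, namely checking that avoiding the union bound over the $8m_0$ trigonometric-polynomial nodes produces exactly the claimed reduction from $F_{n,m_0}$ to $F^*_{n,m_0}$ and from $m_0p^2\exp(\cdot)$ to $p^2\exp(\cdot)$ (respectively $\exp(\cdot)$ for the elementwise-based bound after Bonferroni reabsorbs into the polynomial term).
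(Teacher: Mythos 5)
Your proposal follows essentially the same route as the paper's proof: the same decomposition $2\pi n\,\widehat{\bbf}_\theta=\bQ_1(\theta)+\bQ_2(\theta)$, a direct application of Lemmas \ref{nonGaussianConcen} and \ref{lm:elementwisecon1} at the single frequency, and the correct observation that skipping the $(8m_0+1)$-point grid union bound is exactly what converts $F_{n,m_0}$ into $F^*_{n,m_0}$ and removes the $m_0$ prefactor from the exponential term. The only caveat is your final parenthetical about Bonferroni over the $p^2$ pairs "reabsorbing" into the polynomial term, which is not quite how the bookkeeping works, but this does not affect the substance of the argument.
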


    \begin{proof}
    Similar to (\ref{pfthm2-1}) we have \begin{equation*}
            P\left\{2\pi n \left\|E_0\left(\widehat{\bbf}_{\theta}\right)\right\|_{\cS,\max} > x\right\} \le \sum_{\omega = 1}^{2}P\left[\left\|E_0\left\{\bQ_\omega\left(\theta\right)\right\}\right\|_{\max} >x/2\right].
        \end{equation*}
        The above equation has an elimination of factor $m_0$ compared with \eqref{pfthm2-1} since we do not need to take maximum with respect to $\theta$ and a step of Bonferroni correction can be saved. Now we use Lemma \ref{nonGaussianConcen} and \ref{lm:elementwisecon1} to obtain \begin{equation*}
            \begin{aligned}
                P\left[\left\|E_0\left\{\bQ_\omega\left(\theta\right)\right\}\right\|_{\cS,\max} > x\right] \le & C_{q,\alpha}x^{-q/2}l^{5q/4}\|\|\bX_1\|_{\cH,\infty}\|_{q,\alpha}^qF_{n,m_0}' \\
                & + C_\alpha p^2\exp\left\{-\frac{x^2}{C_\alpha\left(\bPhi_{4,\alpha}^\bX\right)^2nm_0}\right\},\\
                P\left[\left\|E_0\left\{Q_{\omega jk}\left(\theta\right)\right\}\right\|_{\cS,\max} > x\right] \le & C_{q,\alpha}x^{-q/2}\|\|X_{1j}\|_{\cH,\infty}\|_{q,\alpha}^q\|\|X_{1k}\|_{\cH,\infty}\|_{q,\alpha}^qF_{n,m_0}' \\
                &+ C_\alpha \exp\left\{-\frac{x^2}{C_\alpha\left(\bPhi_{4,\alpha}^\bX\right)^2nm_0}\right\}.
            \end{aligned}
        \end{equation*}
        Here $F'_{n,m_0}=nm_0^{q/2-1}$ (resp., $nm_0^{q/2-1}+n^{q/4-\alpha q/2}m_0^{q/4}$) if $\alpha > 1/2-2/q$ (resp., $\alpha \le 1/2-2/q$). Equation (\ref{propS3eq1}) comes from $\bPhi_{4,\alpha}^\bX \le \cM_{q,\alpha}^\bX , \|\|\bX_1\|_{\cH,\infty}\|_{q,\alpha}^2 = \cM_{q,\alpha}^\bX $, some elementary calculation and Bonferroni inequality.
        \color{black}
        \end{proof}

     \section{Additional results of sparse spectral density operator estimation}\label{Sec.Add_Sparse}

     We define the class of approximately sparse spectral density operators (at a fixed frequency).

    \begin{definition}\label{DefS.3}
        Let $0\le q^* < 1$. We define the following class $\cC\{q^*,s_0(p),\theta\}$ as approximately sparse spectral density function at frequency $\theta$ if $$\cC\{q^*,s_0(p),\theta\} = \left\{\bbf_{\theta}:\bbf_\theta\succeq 0,\max\limits_{k\in[p]}\sum_{j = 1}^p\|f_{\theta, jk}\|^{q^*}_\mathcal{S} \le s_0(p)\right\}.$$
    \end{definition}

    In the following, we define the functional thresholding operators (at a fixed frequency).

    \begin{definition}
        We define that $s_\lambda^*:\eS\rightarrow\eS$ is a functional thresholding operator (at a fixed frequency) if it satisfies the following three conditions:

            (\romannumeral 1) $\|s_\lambda^*(Z)\|_{\cS}\le c\|Y\|_{\cS}$ for all $Z,Y\in\eS$ satisfying $\|Z-Y\|_{\cS}\le \lambda$ and some $c > 0$.

            (\romannumeral 2) $\|s_\lambda^*(Z)\|_{\cS} = 0$ for all $\|Z\|_{\cS} \le \lambda$.

            (\romannumeral 3) $\|s_\lambda^*(Z)-Z\|_{\cS}\le \lambda$ for all $Z\in\eS$.
            
    \end{definition}


    To estimate $\bbf_\theta$ in \Cref{DefS.3}, we use the following threshold estimator $$\widehat{\bbf}_{\theta}^{\cT*} = \left\{\hat{f}_{\theta, jk}^{\cT*}\right\}_{j,k\in[p]}\quad\mbox{with} \quad\hat{f}_{\theta, jk}^{\cT*} = s_\lambda^*\left(\hat{f}_{\theta, jk}\right). $$

    \begin{proposition}\label{propS.2}
    Suppose that all conditions in Theorem \ref{thm2} hold and $\lambda^{-1}\cH_6\rightarrow\infty$, where $\cH_6$ is defined in \Cref{RPropS.1}. Then uniformly over $\cC\{q^*,s_0(p),\theta\}$, \begin{align*}
                &\|\widehat{\bbf}_{\theta}^{\cT*}-\bbf_{\theta}\|_{\cS,1}  = \max\limits_{k\in[p]}\sum_{j = 1}^p \|\hat{f}_{\theta, jk}^{\cT*}-f_{\theta, jk}\|_{\cS}\nonumber = O_P\left\{s_0(p)\lambda^{1-q^*}\right\}.
            \end{align*}
    \end{proposition}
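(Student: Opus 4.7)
\textbf{Proof plan for Proposition \ref{propS.2}.} The strategy mirrors the proof of Theorem \ref{thm5}, replacing the uniform-over-$\theta$ concentration bound $\cH_1$ from Theorem \ref{thm2} by the pointwise bound $\cH_6$ from Proposition \ref{propS.3}. First, I would introduce the ``good event''
\[
\Omega_n = \Big\{\max_{j,k\in[p]}\|\hat f_{\theta,jk} - f_{\theta,jk}\|_{\cS} \le \lambda\Big\}.
\]
By Proposition \ref{propS.3}, $\max_{j,k}\|\hat f_{\theta,jk}-f_{\theta,jk}\|_{\cS} = O_P(\cH_6)$ at the fixed frequency $\theta$, so the hypothesis $\cH_6/\lambda \to 0$ (as the proposition assumes) yields $P(\Omega_n) \to 1$. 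It therefore suffices to establish the stated $O(s_0(p)\lambda^{1-q^*})$ bound deterministically on $\Omega_n$.

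Next, working on $\Omega_n$, I would split the $\|\cdot\|_{\cS,1}$-row-sum of $\widehat\bbf_\theta^{\cT*} - \bbf_\theta$ into four pieces according to the four combinations of the indicators
\[
I\big(\|\hat f_{\theta,jk}\|_{\cS} \ge \lambda\big)\,I\big(\|f_{\theta,jk}\|_{\cS} \ge \lambda\big),
\quad
I\big(\|\hat f_{\theta,jk}\|_{\cS} \ge \lambda\big)\,I\big(\|f_{\theta,jk}\|_{\cS} < \lambda\big),\ \ldots
\]
as is done in the proof of Theorem \ref{thm5}. The three defining properties of a functional thresholding operator $s_\lambda^*$ then yield, on $\Omega_n$ and summand by summand, the bound
\[
\|\hat f_{\theta,jk}^{\cT*} - f_{\theta,jk}\|_{\cS}
\;\le\; C\Big\{\lambda\,I(\|f_{\theta,jk}\|_{\cS} \ge \lambda) + \|f_{\theta,jk}\|_{\cS}\,I(\|f_{\theta,jk}\|_{\cS} < 2\lambda)\Big\},
\]
where the first term uses property (iii) together with $\|\hat f_{\theta,jk} - f_{\theta,jk}\|_{\cS}\le\lambda$, and the second term uses properties (i) and (ii) to handle the small-$f$ cases (with the ``intermediate'' case $\|\hat f_{\theta,jk}\|_{\cS}\ge\lambda > \|f_{\theta,jk}\|_{\cS}$ handled by combining (i) with the event $\Omega_n$).

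Finally, I would convert this pointwise bound into a row-sum bound by the standard sparsity trick
\[
\lambda\,I(\|f_{\theta,jk}\|_{\cS} \ge \lambda) + \|f_{\theta,jk}\|_{\cS}\,I(\|f_{\theta,jk}\|_{\cS} < 2\lambda)
\;\le\; C\lambda^{1-q^*}\|f_{\theta,jk}\|_{\cS}^{q^*},
\]
and sum over $j\in[p]$, invoking the membership $\bbf_\theta \in \cC\{q^*,s_0(p),\theta\}$ of Definition \ref{DefS.3} to produce the factor $s_0(p)$. Taking the maximum over $k$ gives the claimed uniform rate $O_P(s_0(p)\lambda^{1-q^*})$.

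I do not anticipate any genuinely new obstacle here: Proposition \ref{propS.3} already supplies the only ingredient that differs from Section \ref{Sec.A.5}, namely a pointwise (rather than uniform-in-$\theta$) Nagaev-type bound. The only mild point to be careful about is that the fixed-frequency rate $\cH_6$ still contains the truncation/smoothing bias $R(m_0)$, so the requirement $\cH_6/\lambda \to 0$ implicitly forces $m_0$ to be chosen large enough to control $R(m_0)$ below the thresholding level $\lambda$; this is automatic under the stated assumption and requires no extra argument.
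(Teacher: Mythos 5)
Your proposal is correct and follows essentially the same route as the paper: the paper's own proof simply introduces the good event $\Omega_{n2}=\{\max_{j,k\in[p]}\|\hat{f}_{\theta,jk}-f_{\theta,jk}\|_{\cS}\le\lambda\}$, repeats the four-way indicator decomposition from the proof of Theorem \ref{thm5} verbatim at the fixed frequency, and uses Proposition \ref{propS.3} to show $P(\Omega_{n2})\to 1$. Note that you have (correctly) read the hypothesis as $\lambda^{-1}\cH_6\to 0$; the statement as printed says $\lambda^{-1}\cH_6\to\infty$, which is evidently a typo for the condition analogous to $\lambda^{-1}\cH_1=o(1)$ in Theorem \ref{thm5}.
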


    \begin{proof}
        Let $\Omega_{n2} = \left\{\|\hat{f}_{\theta, jk}-f_{\theta, jk}\|_\cS\le \lambda\right\}$. Using similar argument as in the proof of Theorem \ref{thm5}, we have $\sum_{k = 1}^p\|\hat{f}_{\theta, jk}^{\cT*}-f_{\theta, jk}\|_\cS \lesssim s_0(p)\lambda^{1-q^*}$ under $\Omega_{n2}$. We have $1-P\{\Omega_{n2}\} = o(1)$ given $\lambda^{-1}\cH_6\rightarrow\infty$ and we finish the proof.
    \end{proof}

    We define the class of truly sparse spectral density operators at frequency $\theta$ to be \begin{align*}
        \cC\{s_0(p),\theta\} &= \left\{\bbf_{\theta}:\bbf_{\theta}\succeq 0, \max\limits_{k\in[p]}\sum_{j = 1}^p I(\|f_{\theta, jk}\|_\cS\ne 0)\le s_0(p)\right\}.
    \end{align*}
        The support at frequency $\theta$ is defined as $\mathrm{supp}^*(\bbf_\theta) = \{(j,k):\|f_{\theta, jk}\|_\cS > 0\}$. We present the following support recovery result.

        \begin{proposition}
            Suppose all conditions in Theorem \ref{propS.2} hold and $\|f_{\theta, jk}\|_\cS\ge \lambda$ for all $(j,k)\in\mathrm{supp}(\bbf_\theta)$. Then we have \begin{align*}
                \inf\limits_{\bbf_\theta\in\cC(s_0(p),\theta)}P\left\{\mathrm{supp}^*(\widehat{\bbf}_{\theta}^{\cT*})=\mathrm{supp}^*(\bbf_{\theta})\right\}\rightarrow 1\quad \mathrm{as}\quad n\rightarrow\infty.
            \end{align*}
        \end{proposition}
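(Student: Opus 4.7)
The plan is to mirror the proof of Theorem~\ref{thm6} pointwise in $\theta$, using Proposition~\ref{propS.2} (and, through it, the single-frequency concentration bound of Proposition~\ref{propS.3}) in place of its uniform-in-$\theta$ counterpart Theorem~\ref{thm5}.

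More concretely, for any fixed $\theta$ and any $\bbf_\theta \in \cC\{s_0(p),\theta\}$, I would first introduce the two ``wrong-decision'' sets
\begin{align*}
S_{n1}^{\theta} &= \{(j,k)\in[p]\times[p] : \|\hat{f}_{\theta,jk}\|_\cS > \lambda,\ \|f_{\theta,jk}\|_\cS = 0\},\\
S_{n2}^{\theta} &= \{(j,k)\in[p]\times[p] : \|\hat{f}_{\theta,jk}\|_\cS = 0,\ \|f_{\theta,jk}\|_\cS > \lambda\},
\end{align*}
which are the direct fixed-frequency analogues of $S_{n1}$ and $S_{n2}$ in the proof of Theorem~\ref{thm6}. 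Using the defining properties of the functional thresholding operator $s_\lambda^*$ (in particular property (ii), which forces $\hat{f}_{\theta,jk}^{\cT*}=0$ whenever $\|\hat{f}_{\theta,jk}\|_\cS \le \lambda$) together with the signal-strength hypothesis $\|f_{\theta,jk}\|_\cS \ge \lambda$ on the true support, I would verify the inclusion
$\{\mathrm{supp}^*(\widehat{\bbf}_\theta^{\cT*}) \ne \mathrm{supp}^*(\bbf_\theta)\} \subset \{|S_{n1}^\theta| > 0\} \cup \{|S_{n2}^\theta| > 0\}$. By the (reverse) triangle inequality, any pair $(j,k)$ witnessing non-emptiness of $S_{n1}^\theta$ or $S_{n2}^\theta$ must itself satisfy $\|\hat{f}_{\theta,jk}-f_{\theta,jk}\|_\cS > \lambda$, so both events are contained in $\{\max_{j,k}\|\hat{f}_{\theta,jk}-f_{\theta,jk}\|_\cS > \lambda\}$.

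The only remaining task is to bound the probability of that last event uniformly in $\bbf_\theta \in \cC\{s_0(p),\theta\}$. This is precisely the event $\Omega_{n2}$ appearing inside the proof of Proposition~\ref{propS.2}: under the hypothesis relating $\lambda$ to the fixed-frequency rate $\cH_6$ inherited from the assumptions of Proposition~\ref{propS.2}, the concentration inequality in Proposition~\ref{propS.3} yields $P(\Omega_{n2}^c) = o(1)$; since $\cH_6$ depends on the process only through its functional dependence measures and not on the individual element $\bbf_\theta$, this bound is automatically uniform over the class, completing the argument. I do not anticipate a genuine obstacle, as every step is a single-frequency specialization of an argument already executed in the proof of Theorem~\ref{thm6}; the only point requiring mild care is the first inclusion, where ruling out false negatives via $S_{n2}^\theta$ needs the signal-strength hypothesis to be combined with the defining properties of $s_\lambda^*$ exactly as done there.
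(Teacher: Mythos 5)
Your proposal is correct and follows essentially the same route as the paper: the paper itself states that this proposition is proved exactly as Theorem \ref{thm6}, and your argument is the faithful fixed-frequency transcription of that proof, with the sets $S_{n1}^\theta, S_{n2}^\theta$ playing the roles of $S_{n1}, S_{n2}$ and the event $\Omega_{n2}$ from the proof of Proposition \ref{propS.2} (controlled via the single-frequency concentration bound and the condition relating $\lambda$ to $\cH_6$) replacing $\Omega_{n1}$. The uniformity over $\cC\{s_0(p),\theta\}$ for the same reason you give — the bound depends only on the dependence measures — is also how the paper argues it.
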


        The proof is similar to the proof of Theorem \ref{thm6} and thus omitted here.

\end{document}